\numberwithin{equation}{section}
\theoremstyle{definition}
\newtheorem{theorem}[equation]{Theorem}
\newtheorem{corollary}[equation]{Corollary}
\newtheorem{definition}[equation]{Definition}
\newtheorem{lemma}[equation]{Lemma}
\newtheorem{example}[equation]{Example}
\newtheorem{proposition}[equation]{Proposition}
\newtheorem{remark}[equation]{Remark}
\newtheorem{construction}[equation]{Construction}
\newcommand{\colim@}[2]{%
	\vtop{\m@th\ialign{##\cr
			\hfil$#1\Operator@font colim$\hfil\cr
			\noalign{\nointerlineskip\kern1.5\ex@}#2\cr
			\noalign{\nointerlineskip\kern-\ex@}\cr}}%
}
\newcommand{\colim}{%
	\mathop{\mathpalette\colim@{\rightarrowfill@\scriptscriptstyle}}\nmlimits@
}
\renewcommand{\varprojlim}{%
	\mathop{\mathpalette\varlim@{\leftarrowfill@\scriptscriptstyle}}\nmlimits@
}
\renewcommand{\varinjlim}{%
	\mathop{\mathpalette\varlim@{\rightarrowfill@\scriptscriptstyle}}\nmlimits@
}
\let\orignewcommand\newcommand  
\let\newcommand\providecommand  
\let\newcommand\orignewcommand  
\newcommand{\cate}[1]{\mathscr{#1}}
\newcommand{\s}[1]{\mathbb{S}_{\scalebox{1}{$\scriptscriptstyle #1$}}}
\newcommand{\Sh}[2]{\cate{S}\text{hv}({#1}; {#2})}
\newcommand{\KSh}[2]{\cate{S}\text{hv}_{\cate{K}}({#1}; {#2})}
\newcommand{\Shsp}[1]{\cate{S}\text{hv}({#1})}
\newcommand{\coSh}[2]{\C\text{o}\cate{S}\text{hv}({#1}; {#2})}
\newcommand{\KcoSh}[2]{\C\text{o}\cate{S}\text{hv}_{\cate{K}}({#1}; {#2})}
\newcommand{\sHom}[3]{\text{\underline{Hom}}_{\scalebox{1}{$\scriptscriptstyle #1$}}(#2, #3)}
\newcommand{\Hom}[3]{\text{Hom}_{\scalebox{1}{$\scriptscriptstyle #1$}}(#2, #3)}
\newcommand{\Op}[1]{\cate{U}(#1)}
\newcommand{\cpt}[1]{\cate{K}(#1)}
\newcommand{\Sec}[2]{\Gamma(#1; #2)}
\newcommand{\KSec}[3]{\Gamma_{\scalebox{1}{$\scriptscriptstyle #3$}}(#1; #2)}
\newcommand{\cSec}[2]{\Gamma_{\scalebox{1}{$\scriptscriptstyle c$}}(#1; #2)}
\newcommand{\pf}[1]{#1_{\ast}}
\newcommand{\pb}[1]{#1^{\ast}}
\newcommand{\pfp}[1]{#1_{!}}
\newcommand{\pbp}[1]{#1^{!}}
\newcommand{\pfs}[1]{#1_{\sharp}}
\newcommand{\Cocont}{\C\mathrm{ocont}}
\DeclareMathOperator{\Fun}{Fun}
\DeclareMathOperator{\Pro}{Pro}
\DeclareMathOperator{\opposite}{op}
\DeclareMathOperator{\lex}{lex}
\newcommand{\Funlex}{\Fun^{\lex}}
\newcommand{\op}{^{\opposite}}
\newcommand{\C}{\cate{C}}
\newcommand{\D}{\cate{D}}
\newcommand{\E}{\cate{E}}
\newcommand{\X}{\cate{X}}
\newcommand{\Y}{\cate{Y}}
\newcommand{\Top}{\cate{T}\mathrm{op}}
\newcommand{\Cat}{\cate{C}\mathrm{at}}
\newcommand{\attrib}[1]{%
	\nopagebreak{\raggedleft\footnotesize #1\par}}
\providecommand{\infcat}{\Cat_{\scalebox{1}{$\scriptscriptstyle \infty$}}}
\providecommand{\spaces}{\cate{S}}
\providecommand{\spectra}{\cate{S}\text{p}}
\providecommand{\infsusp}{\Sigma^{\scalebox{1}{$\scriptscriptstyle \infty$}}}
\providecommand{\infloop}{\Omega^{\scalebox{1}{$\scriptscriptstyle \infty$}}}
\newsavebox{\pullback}
\sbox\pullback{%
	\begin{tikzpicture}%
		\draw (0,0) -- (1ex,0ex);%
		\draw (1ex,0ex) -- (1ex,1ex);%
\end{tikzpicture}}
\newsavebox{\pushout}
\sbox\pushout{%
	\begin{tikzpicture}%
		\draw (0ex,0ex) -- (0ex,1ex);%
		\draw (0ex,1ex) -- (1ex,1ex);%
\end{tikzpicture}}
\tikzset{%
	symbol/.style={%
		draw=none,
		every to/.append style={%
			edge node={node [sloped, allow upside down, auto=false]{$#1$}}}
	}
}
{}
\let\o\circ
\begin{document}
	\title{The six operations in topology}
	\author{Marco Volpe\footnote{University of Regensburg, Universit\"atsstrasse 31, Regensburg. email: marco.volpe@ur.de. The author was supported by the SFB 1085 (Higher Invariants) in Regensburg,
			Germany, funded by the DFG (German Science Foundation).
	}}
	\maketitle
	\begin{abstract}
		In this paper we show that the six functor formalism for sheaves on locally compact Hausdorff topological spaces, as developed for example in Kashiwara and Schapira's book \textit{Sheaves on Manifolds}, can be extended to sheaves with values in any closed symmetric monoidal $\infty$-category which is stable and bicomplete. Notice that, since we do not assume that our coefficients are presentable or restrict to hypercomplete sheaves, our arguments are not obvious and are substantially different from the ones explained by Kashiwara and Schapira. Along the way we also study locally contractible geometric morphisms and prove that, if $f:X\rightarrow Y$ is a continuous map which induces a locally contractible geometric morphism, then the exceptional pullback functor $\pbp{f}$ preserves colimits and can be related to the pullback $\pb{f}$. At the end of our paper we also show how one can express Atiyah duality by means of the six functor formalism.
	\end{abstract}
	\tableofcontents
	\section{Introduction}
	\settowidth{\versewidth}{Tell all the truth but tell it slant --}
	\begin{small}
		\begin{verse}[\versewidth]
			\,\,\,\,\,\,\,\,\,\,\,\,\,\,\,\,\,\,\,\,\,\,\,\,\,\,\,\,\,\,\,\,\,\,\,\,\,\,\,\,\,\,\,\,\,\,\,\,\,\,\,\,\,\,\,\,\,\,\,\,\,\,\,\,\,\, Tell all the truth but tell it slant -- \\
			\,\,\,\,\,\,\,\,\,\,\,\,\,\,\,\,\,\,\,\,\,\,\,\,\,\,\,\,\,\,\,\,\,\,\,\,\,\,\,\,\,\,\,\,\,\,\,\,\,\,\,\,\,\,\,\,\,\,\,\,\,\,\,\,\,\, Success in Circuit lies \\
			\,\,\,\,\,\,\,\,\,\,\,\,\,\,\,\,\,\,\,\,\,\,\,\,\,\,\,\,\,\,\,\,\,\,\,\,\,\,\,\,\,\,\,\,\,\,\,\,\,\,\,\,\,\,\,\,\,\,\,\,\,\,\,\,\,\, Too bright for our infirm Delight \\
			\,\,\,\,\,\,\,\,\,\,\,\,\,\,\,\,\,\,\,\,\,\,\,\,\,\,\,\,\,\,\,\,\,\,\,\,\,\,\,\,\,\,\,\,\,\,\,\,\,\,\,\,\,\,\,\,\,\,\,\,\,\,\,\,\,\, The Truth's superb surprise \\
			\,\,\,\,\,\,\,\,\,\,\,\,\,\,\,\,\,\,\,\,\,\,\,\,\,\,\,\,\,\,\,\,\,\,\,\,\,\,\,\,\,\,\,\,\,\,\,\,\,\,\,\,\,\,\,\,\,\,\,\,\,\,\,\,\,\, As Lightning to the Children eased \\
			\,\,\,\,\,\,\,\,\,\,\,\,\,\,\,\,\,\,\,\,\,\,\,\,\,\,\,\,\,\,\,\,\,\,\,\,\,\,\,\,\,\,\,\,\,\,\,\,\,\,\,\,\,\,\,\,\,\,\,\,\,\,\,\,\,\, With explanation kind \\
			\,\,\,\,\,\,\,\,\,\,\,\,\,\,\,\,\,\,\,\,\,\,\,\,\,\,\,\,\,\,\,\,\,\,\,\,\,\,\,\,\,\,\,\,\,\,\,\,\,\,\,\,\,\,\,\,\,\,\,\,\,\,\,\,\,\, The Truth must dazzle gradually \\
			\,\,\,\,\,\,\,\,\,\,\,\,\,\,\,\,\,\,\,\,\,\,\,\,\,\,\,\,\,\,\,\,\,\,\,\,\,\,\,\,\,\,\,\,\,\,\,\,\,\,\,\,\,\,\,\,\,\,\,\,\,\,\,\,\,\, Or every man be blind -- 
		\end{verse}
		\attrib{Emily Dickinson}
	\end{small}
	
	\medskip
	\medskip

	One of the most complete and general reference dealing with the \textit{six functor formalism} for sheaves on locally compact Hausdorff topological spaces is Masaki Kashiwara and Pierre Schapira's seminal book \textit{Sheaves on Manifolds} \cite{kashiwara1990sheaves}. However, for technical reasons related to the construction of derived functors, the authors there restrict themselves to bounded derived categories of sheaves of $R$-modules, where $R$ is assumed to have finite global dimension. From a modern perspective, considering $\infty$-categorical enhancements of derived categories, this can be regarded as the full subcategory of sheaves with values in $D(R)$ spanned by bounded objects. 
		
	Using Spaltenstein's methods \cite{spaltenstein1988resolutions}, it possible to work with unbounded derived categories of sheaves. These unbounded derived categories agree with (the homotopy category of) hypercomplete sheaves with values in $D(R)$. However, the \textit{proper basechange theorem} is known to fail in that context. To fix this, one has to work with non-hypercomplete sheaves with values in $D(R)$. On the other hand, more recent papers such as \cite{robalo2018lemma}, \cite{jin2024brane}, \cite{jin2024hamiltonian} and \cite{jin2020microlocal}, justify the need of even further generalizations to sheaves of modules over \textit{ring spectra}, in order to apply the power of six functors to generalized cohomology theories. The work of Voevodsky on \textit{stable motivic homotopy theory} has provided an analog of such constructions in the world of algebraic geometry (see \cite{cisinski2019triangulated} for a textbook source on the subject), whereas topologists have succeded only partially in this direction by introducing \textit{parametrized spectra} (see \cite{may2006parametrized}), which correspond to \textit{locally constant} sheaves of spectra. In this paper we exploit the power of the now established theory of $\infty$-categories (as developed for example in \cite{lurie2009higher} or \cite{cisinski2019higher}) to extend the six functor formalism on locally compact Hausdorff spaces to a much broader setting.
	
	Let $f: X\rightarrow Y$ be a continuous map between locally compact Hausdorff topological spaces, and let $\C$ be any stable bicomplete (i.e. complete and cocomplete) $\infty$-category. For any such map, we construct adjunctions
	$$
	\begin{tikzcd}
		\Sh{Y}{\C}\ar[r,bend left,"\pb{f}_{\C}",""{name=A, below}] & \Sh{X}{\C}\ar[l,bend left,"\pf{f}^{\C}",""{name=B,above}] \ar[from=A, to=B, symbol=\dashv]
	\end{tikzcd}
	\,\,\,\,\,\,
	\begin{tikzcd}
		\Sh{X}{\C}\ar[r,bend left,"\pfp{f}^{\C}",""{name=A, below}] & \Sh{Y}{\C}.\ar[l,bend left,"\pbp{f}_{\C}",""{name=B,above}] \ar[from=A, to=B, symbol=\dashv]
	\end{tikzcd}
	$$  
	To do this, we make use of Lurie's covariant Verdier duality equivalence \cite[Theorem 5.5.5.1]{lurie2017higher}
	$$\begin{tikzcd}
		\mathbb{D}_{\C} : \Sh{X}{\C}\ar[r, "\simeq"] & \coSh{X}{\C}
	\end{tikzcd}$$
	where the target is the $\infty$-category of $\C$-valued \textit{cosheaves} on $X$, i.e. $\Sh{X}{\C\op }\op $.  The adjunctions above are then defined to satisfy natural equivalences
	$$\mathbb{D}_{\C}\pfp{f}^{\C}\simeq(\pf{f}^{\C\op })\op \mathbb{D}_{\C} \,\,\,\,\,\, \mathbb{D}_{\C}\pbp{f}_{\C}\simeq(\pb{f}_{\C\op })\op \mathbb{D}_{\C}.$$ 
	Notice that, since we do not require $\C$ to be presentable, it is not a priori clear why a sheafification functor should exist, for presheaves taking values in $\C$ (see the discussion in \Cref{nosheafification}). As a consequence, the existence of $\pb{f}_{\C}$ is not at all obvious, and so we will have to work a bit harder than one might expect. Nevertheless, even though the presentability assumption will be enough for applications, we would like to point out that our efforts to make the results in this paper as general as possible are not vain, and actually lead to many advantages. First of all, with our definition of $\pfp{f}^{\C}$, it is basically immediate that the lower shrieks are functorial with respect to compositions of continuous maps (see \cref{functpfp}). Moreover, by working with a class of coefficients closed under the operation of passing to opposite $\infty$-categories, we do not break the symmetry which comes from Verdier duality. Specifically, whenever we prove some result involving the functors $\pf{f}^{\C}$ and $\pb{f}_{\C}$ that is true for all $\C$ stable and bicomplete, we immediately obtain a dual theorem involving the functors $\pfp{f}^{\C}$ and $\pbp{f}_{\C}$, and viceversa. Another way to put it is that our formalism applies with no distinctions to sheaves or cosheaves: this will be used in a follow-up paper (see \cite{volpe2025verdier}) in which we will prove a duality theorem for constructible sheaves on a conically smooth stratified space, as we will need to extend some results about constructible sheaves, such as homotopy invariance (see \cite{haine2020homotopy}) or the exodromy equivalence (see \cite{porta2022topological}), to constructible cosheaves. It is also worth noticing that, even if one would restrict to presentable coefficients, it would nevertheless be desirable to have formulas such as $\pfp{f}^{\spectra}\otimes\C\simeq\pfp{f}^{\C}$, and to get this one would still need to verify everything we prove in Section 5.
	\par 
	We try to outline the key ingredients in our paper that allow us to work with non-presentable coefficients. As we explained above, the main difficulty lies in showing the existence of the pullback functor $\pb{f}_{\C}$. The main tool we employ to carry out this purpose is Lurie's tensor product of cocomplete $\infty$-categories as defined in \cite[4.8.1]{lurie2017higher}: in particular, a property of this tensor product that we will use over and over is that it preserves adjunctions between cocontinuous functors (see \Cref{tensoringadj}). We show in \Cref{tensstcocont} that it restricts to a symmetric monoidal structure on $\Cocont^{st}_{\infty}$ (i.e the $\infty$-category of stable cocomplete $\infty$-categories with cocontinuous funtors between them) and use it to formulate the following theorem. 
	\begin{theorem}[\cref{shsptensCtoshC}]\label{shsptensCtoshCintr}
		Let $\C$ be a stable bicomplete $\infty$-category, and let $X$ be a locally compact Hausdorff space. Then there is an equivalence
		$$\Sh{X}{\spectra}\otimes \C\simeq\Sh{X}{\C}$$
		where $\otimes$ on the left-hand side denotes the tensor product of stable cocomplete $\infty$-categories. 
	\end{theorem}
	\cref{shsptensCtoshCintr} will play a crucial role in what follows, because it will allow us to reduce a lot of arguments involving cocontinous functors to the case of sheaves of spectra. To prove \cref{shsptensCtoshCintr}, we start by observing in \Cref{coshsttens} that the model of $\cate{K}$-sheaves (see \cite[Theorem 7.3.4.9]{lurie2009higher}) implies that $\Sh{X}{\spectra}$ is a strongly dualizable object in  $\Cocont^{st}_{\infty}$, where $\spectra$ denotes the $\infty$-category of spectra (see also \cite[Proposition 21.1.7.1]{lurie2016spectral}). Hence, for $\C$ any stable and cocomplete $\infty$-category, we get an equivalence
	$$\coSh{X}{\spectra}\otimes \C\simeq\coSh{X}{\C} .$$
	Combined with Verdier duality, this gives \cref{shsptensCtoshCintr}.
	
	Having \cref{shsptensCtoshCintr} at hand, the question of constructing $\pb{f}_{\C}$ can be reduced to the case of sheaves of spectra, where we can directly use the existence of sheafification by the presentability of $\spectra$. More precisely, we first observe that any map $f$ can be factored as the composition of a closed immersion, an open immersion and a proper map (see factorization (\ref{factorization})), and then prove the existence of a left adjoint to $\pf{f}^{\C}$ in these separate cases. When $f$ is an open immersion this is done easily in \cref{openrestrtens}, and the only non-trivial part constists in the following theorem. 
	\begin{theorem}[\Cref{properpsfcocont}, \Cref{proppshtens}]\label{properpfintr}
		Let $f: X\rightarrow Y$ be a proper map between locally compact Hausdorff topological spaces. Then the pushforward 
		$$\pf{f}^{\C} : \Sh{X}{\C}\rightarrow\Sh{Y}{\C}$$
		preserves colimits. Furthermore, there is a commutative square
		$$
		\begin{tikzcd}
			\Sh{X}{\spectra}\otimes\C\ar[r, "\simeq"]\ar[d, "{\pf{f}^{\spectra}\otimes\C}"] & \Sh{X}{\C}\ar[d, "{\pf{f}^{\C}}"] \\
			\Sh{Y}{\spectra}\otimes\C\ar[r, "\simeq"] & \Sh{Y}{\C}.
		\end{tikzcd}
		$$
	\end{theorem}
	The proof of \cref{properpfintr} essentially consists of providing a convenient description of $\pf{f}^{\C}$ through the model of $\cate{K}$-sheaves, which is easily seen to preserve colimits and to be compatible with Verdier duality. We then achieve our final goal observing that, since $\pf{f}^{\spectra}\otimes\C$ admits a left adjoint of the form $\pb{f}_{\spectra}\otimes\C$, the same is true for $\pf{f}^{\C}$. In particular by taking $f$ to be the projection $X\rightarrow\ast$, we see that the global section functor
	$$\Sh{X}{\C}\rightarrow\C$$
	admits a left adjoint. As a consequence, using the results in \cite[6.7]{cisinski2019higher}, we show in \Cref{sheafification} that the inclusion of $\Sh{X}{\C}$ in $\C$-valued presheaves on $X$ admits a left adjoint.
	
	The discussion above involves only the four functors $\pf{f}^{\C}$, $\pb{f}_{\C}$, $\pfp{f}^{\C}$ and $\pbp{f}_{\C}$, but what about the other two? Our first observation is that, a priori, there is no need to require our $\infty$-category of coefficients to have a symmetric monoidal structure to make sense of things like projection formulas or K\"{u}nneth formulas. To be more precise, one can show that there is a functor 
	$$
	\begin{tikzcd}[row sep = tiny]
		\Sh{X}{\C}\times\Sh{Y}{\D}\ar[r] & \Sh{X\times Y}{\spectra}\otimes(\C\otimes\D) \\
		(F, G)\ar[r, mapsto] & F\boxtimes G
	\end{tikzcd}
	$$
	which preserves colimits in both variables and induces an equivalence
	$$\Sh{X}{\C}\otimes\Sh{Y}{\D}\simeq\Sh{X\times Y}{\spectra}\otimes(\C\otimes\D) .$$
	Taking $X = Y$ and composing with $\pb{\Delta}_{\spectra}\otimes(\C\otimes\D)$, where $\Delta : X\hookrightarrow X\times X$ is the diagonal embedding, we get a variablewise colimit preserving functor denoted as
	$$
	\begin{tikzcd}[row sep = tiny]
		\Sh{X}{\C}\times\Sh{Y}{\D}\ar[r] & \Sh{X}{\spectra}\otimes(\C\otimes\D) \\
		(F, G)\ar[r, mapsto] & F\otimes G
	\end{tikzcd}
	$$
	(see \Cref{dfntens,fromprestostbic} for more details). For this kind of tensor product of sheaves, we prove the following formulas.
	\begin{theorem}[\Cref{monpb}, \cref{proj}, \cref{kunneth}]
		Let $f:X\rightarrow X'$, $g:Y\rightarrow Y'$ be continuous maps between locally compact Hausdorff spaces. Let $\C$ and $\D$ be stable and bicomplete $\infty$-categories, and let Then we have the following functorial identifications 
		\begin{align*}
			\pb{f}_{\C\otimes\D}(-\otimes -) &\simeq \pb{f}_{\C}(-)\otimes\pb{f}_{\D}(-) \\ 
			\pfp{f}^{\C\otimes\D}(-\otimes\pb{f}_{\D}-) &\simeq\pfp{f}^{\C}(-)\otimes (-) \\
			\pfp{(f\times g)}^{\C\otimes\D}(-\boxtimes -) &\simeq \pfp{f}^{\C}(-)\boxtimes\pfp{g}^{\D}(-).
		\end{align*}
	\end{theorem}
	In particular, when $\C$ admits a symmetric monoidal structure whose tensor preserves colimits in both variables, one obtains a cocontinuous functor
	$$\Sh{X}{\spectra}\otimes(\C\otimes\C)\rightarrow\Sh{X}{\C}$$
	whose composition with (\ref{tensshCD}) induces a symmetric monoidal structure on $\Sh{X}{\C}$: this way we can deduce all the analogous formulas in the monoidal setting. If $\C$ is also closed, we deduce their dual versions involving the internal homomorphism functor (see \Cref{tenstomon}).
	
	\begin{remark}
		Our results imply that, for any $\C$ stable and bicomplete, the contravariant functor $X\mapsto\Sh{X}{\C}$, $f\mapsto\pb{f}_{\C}$ extends to a six functor formalism in the sense of \cite{mann2022}. See \cref{mannsix} for a more detailed discussion on this matter.
	\end{remark}
	
	We describe one last advantage of our general rendition of the six functor formalism. In Definition \ref{dfnloccontr} we define \textit{locally contractible geometric morphisms} (see also \cite[Definition 3.2.1]{aizenbud2021relative}). Later, in \Cref{subm}, we specify a vast class of continuous maps between topological spaces called \textit{shape submersions} which induce a locally contractible geometric morphism (see \Cref{smoothproj}). Topological submersions are examples of such morphisms, but our definition is much more general in the sense that it does not force the fibers to be topological manifolds. Another illustrating example to keep in mind is that of the unique map $X\rightarrow\ast$, when $X$ is any CW-complex. An easy implementation of our machinery generalizes \cite[Proposition 3.3.2]{kashiwara1990sheaves} and \cite[Section 5]{verdier1965dualite} beyond the case of submersive maps.
	\begin{theorem}[\cref{smoothpb}]
		Let $f : X\rightarrow Y$ be a continuous map between locally compact Hausdorff spaces which induces a locally contractible geometric morphism. Let $\C$ be a stable and bicomplete $\infty$-category. Then $\pbp{f}_{\C}$ admits a right adjoint and we have a formula
		$$\pbp{f}_{\C}(-)\otimes\pb{f}_{\D}(-)\simeq\pbp{f}_{\C\otimes\D}(-\otimes -).$$
	\end{theorem}
	Then, inspired by parallel results in motivic homotopy theory, we conclude our paper by formulating and proving a relative version of Atiyah duality.
	
	\begin{theorem}[\cref{relatiyahdual}]
		Let $f : X\rightarrow Y$ be a proper submersion between smooth manifolds, and let $T_f\rightarrow X$ be the relative tangent bundle of $f$. Denote by $\s{X}\in\Sh{X}{\spectra}$ the constant sheaf at the sphere spectrum, and by $\text{Th}(-T_f)$ the \textit{Thom spectrum sheaf} of the virtual vector bundle $-T_f$. Then $\pfs{f}(\s{X})$ is strongly dualizable with dual  $\text{Th}(-Tf)$.
	\end{theorem}
	
	
	\subsection{Linear overview}
	
	We now give a linear overview of the contents of our paper.
	\par
	In Section 2 we recall the definition of Lurie's tensor product of cocomplete $\infty$-categories and prove some of its basic properties. In particular, we will interpret the results in \cite[6.7]{cisinski2019higher} in terms of this tensor product in \Cref{preshdual}, show that it preserves the property of being stable in \Cref{tensstcocont}, and show that compactly generated stable $\infty$-category is a strongly dualizable object in the symmetric monoidal $\infty$-category $\Cocont^{st}_{\infty}$ in \Cref{cptlygenstdual}. Afterwards we recall the definition of sheaves and cosheaves with values in a general $\infty$-category and explain how Lurie's tensor product can be used to conveniently describe $\Sh{X}{\C}$ at least when $\C$ is presentable. Most of the results in this section are not original, but we still felt the necessity to spend some time writing them up to make our discussion as self contained and reader-friendly as possible. 
	\par 
	In Section 3 we define for any geometric morphism of $\infty$-topoi $\X\rightarrow\Y$ the \textit{relative shape} $\Pi_{\infty}^{\Y}(\X)$ as a pro-object of $\Y$. We describe explicitly in \Cref{functrelsh} how this construction can be enhanced to a functor $$\Pi_{\infty}^{\Y} : \Top_{/\Y}\rightarrow\Pro(\Y)$$ and, even more, to a lax natural transformation between functors $\Top\op \rightarrow\infcat$ (see \Cref{laxshape}). These coherent structures with which we equip the shape will be used to prove easily that the shape is homotopy invariant in \Cref{hmtpyinv}. In \Cref{natthom}, we use \Cref{laxshape} also to show that the Thom spectrum gives a natural transformation of sheaves of $\mathbb{E}_{\infty}$-spaces. Later we define locally contractible geometric morphisms and give a characterization in \Cref{locconstsheqdef} which mimics the one in \cite[C3.3]{johnstone2002sketches} for the locally connected case. We also show that, when $f$ is a geometric morphism induced by a continuous map of topological spaces, the property of being locally contractible is checked more easily. Then we define shape submersions and prove in \Cref{smoothbc} a base change formula which will imply that they induce locally contractible geometric morphisms (see \Cref{smoothproj}).
	\par 
	In Section 4 we follow the approach of \cite{khan2019morel} to obtain the localization sequences associated to a decomposition of a topological space into an open subset and its closed complement. Also the results here are not so new but, after Section 5, they will imply that there is a recollement of $\Sh{X}{\C}$ associated to any open-closed decomposition of $X$ whenever $\C$ is stable and bicomplete, while this was previously known only for $\C$ presentable. 
	\par 
	Section 5 is devoted to Verdier duality, and how it can be used to show that the pushforward $\pf{f}^{\C}$ admits a left adjoint for any $\C$ stable and bicomplete in the way we have sketched at the beginning of the introduction.
	\par 
	In Section 6 we develop the six functor formalism: as usual, we prove base change (\Cref{basechange}), projection (\Cref{proj}) and K\"unneth (\Cref{kunneth}) formulas for $\pfp{f}^{\C}$, and discuss the properties of $\pbp{f}_{\C}$ when $f$ is a shape submersion in \Cref{smoothpb}.
	\par 
	At last, in Section 7 we show how the six functor formalism can be used to express a relative version of Atiyah duality for any proper submersion between smooth manifolds.
	
	
	\subsection{Acknowledgements}
	
	First of all, I would like to express my gratitude to my supervisor Denis-Charles Cisinski, who has encouraged me to work on this subject and has accompanied me during the whole process of writing the paper, providing deep insights and help whenever I felt lost. Secondly, I want to thank Andrea Gagna and Edoardo Lanari for being patient and kind enough to answer all my (sometimes meaningless) questions over the past two years. I would also like to thank Denis Nardin, Peter Haine, George Raptis, Benedikt Preis, Marc Hoyois, Clark Barwick, Guglielmo Nocera, Ivan Di Liberti and Nicola Di Vittorio for showing interest in my work, for offering their support and for pushing me to keep learning new mathematics. This paper is a part of my PhD thesis.
	
	
	\section{Sheaves and tensor products}

	The goal of this section is twofold. First, we are going to introduce Lurie's tensor product of cocomplete $\infty$-categories as defined in \cite{lurie2017higher}. Secondly, we will recall the definition of sheaves and cosheaves with values in general $\infty$-categories. We want devote some time discussing Lurie's tensor product, and its relation to $\infty$-categories of sheaves, because in the following sections it will prove to be an extremely convenient tool to describe some $\infty$-categories of sheaves and functors between them. Using Lurie's tensor product we will be able to produce a vast class of \textit{essential geometric morphisms}. We will easily extend some results regarding sheaves of spaces to sheaves with values in any presentable $\infty$-category. Lastly, we will prove the existence of a sheafification functor when the $\infty$-category of coefficients is stable and bicomplete with no presentability assumption, and construct the full six functor formalism in this setting. Most of the results in this section are not at all original and can be found for example in \cite{cisinski2019higher}, in \cite{lurie2017higher}, or are already well known. 
	\subsection{Tensor product of cocomplete $\infty$-categories}
	For the whole section we will fix two universes $\textbf{V}$ and $\textbf{U}$ such that $\textbf{V}$ is $\textbf{U}$-small, and denote by $\Cocont_{\infty}$ the $\infty$-category of \textbf{U}-small $\infty$-categories admitting \textbf{V}-small colimits, with \textbf{V}-cocontinuous functors between them. For short, we will call an object of $\Cocont_{\infty}$ a cocomplete $\infty$-category and, for any two cocomplete $\infty$-categories $\C$ and $\D$ we will denote by $\Fun_!(\C, \D)$ the $\infty$-category of functors preserving \textbf{V}-small colimits. 
	\par
	Let $\C$ and $\D$ be cocomplete. Recall that, by \cite[5.3.6]{lurie2009higher}, there exists a cocomplete $\infty$-category, denoted by $\C\otimes\D$, and a functor
	$$\boxtimes:\C\times\D\rightarrow\C\otimes\D,$$
	which preserves colimits in both variables and such that precomposing with $\boxtimes$ gives an equivalence
	\begin{equation}\label{tensor}
		\Fun_{!\times !}(\C\times\D, \E) \simeq \Fun_!(\C\otimes\D, \E)
	\end{equation}
	functorial on $\C$, $\D$ and $\E$ cocomplete, where $\Fun_{!\times !}$ indicates the $\infty$-category of bifunctors preserving \textbf{V}-small colimits in each variable. More precisely, \cite[Corollary 4.8.1.4]{lurie2017higher} shows that this operation provides $\Cocont_{\infty}$ with the structure of a symmetric monoidal $\infty$-category, and the inclusion of $\Cocont_{\infty}$ in $\infcat$ is lax monoidal, where the latter is equipped with the cartesian monoidal structure. Since we obviously have a functorial equivalence
	$$\Fun_{!\times!}(\C\times\D, \E)\simeq\Fun_!(\C, \Fun_!(\D, \E)),$$ this monoidal structure is closed.  
	\begin{remark}\label{tensoringadj}
		As usual, one may regard $\Cocont_{\infty}$ as a $(\infty, 2)$-category. It follows from (\ref{tensor}) that, for any cocomplete $\infty$-category $\C$, tensoring with $\C$ gives rise to a $2$-functor. An important consequence of this observation is that, since any adjunction is characterized by the classical triangular identities (see for example \cite[Theorem 6.1.23, (v)]{cisinski2019higher}), tensoring with $\C$ preserves adjunctions of cocontinuous functors.
	\end{remark}
	We will now present a list of results about the tensor product of cocomplete $\infty$-categories that will turn out to be very useful later.

	Let $A$ be a small $\infty$-category, $\C$ cocomplete. For any two objects $a\in A$ and $c\in\C$, denote by $a\boxtimes c = \pfp{a}c$ the left Kan extension of $c$ along $a$ (here we are considering $a$ and $c$ as functors $A\op \leftarrow\Delta^0\rightarrow\C)$. Thus we get a functor 
	$$\begin{tikzcd}[row sep = tiny]
		A\times\C \arrow[r, "y_{A/\C}"] & {\Fun(A\op ,\C)} \\
		{(a, c)} \arrow[r, maps to]                 & a\boxtimes c                   
	\end{tikzcd}$$
	which preserves colimits on the $\C$ variable that we will call the \textit{relative Yoneda embedding}. By definition, we have a functorial equivalence
	$$\Hom{}{a\boxtimes c}{F}\simeq\Hom{}{c}{F(a)}$$
	for any $F\in\Fun(A\op , \C)$.
	\begin{remark}
		Recall that in a closed symmetric monoidal $\infty$-category, an object $x$ is \textit{strongly dualizable} if and only if, for any object in $y\in\C$, the canonical map
		\begin{equation}\label{dualizable}
			y\otimes\sHom{}{x}{1}\rightarrow\sHom{}{x}{y}
		\end{equation}
		obtained as adjoint to 
		$$
		\begin{tikzcd}[column sep = small]
			y\otimes\sHom{}{x}{1}\otimes x\ar[r] & y\otimes 1\ar[r, "\simeq"] & y
		\end{tikzcd}
		$$
		is an equivalence. In the case of $\Cocont_{\infty}$, one sees easily that the map (\ref{dualizable}) can be described as induced by
		$$\Fun_!(\C,\spaces)\times\D\simeq\Fun_!(\C, \spaces)\times\Fun_!(\spaces,\D)\rightarrow\Fun_!(\C, \D)$$
		where the last functor is given by composition.
	\end{remark}
	
	\begin{remark}\label{dualretract}
		By the naturality on $x$ of the map (\ref{dualizable}), we see that the full subcategory spanned by strongly dualizable objects is closed under retracts.
	\end{remark}
	
	\begin{remark}\label{comptens}
		Consider the variablewise cocontinuous functor \begin{equation}\label{tenspshbi}
			\Fun(A\op , \spaces)\times\C\rightarrow \Fun(A\op ,\C)
		\end{equation} obtained as the extension by colimits of the relative Yoneda embedding, and denote by $F\boxtimes c$ the image of a pair $(F, c)\in\Fun(A\op , \spaces)\times\C$. This induces a cocontinous functor
		\begin{equation}\label{tenspsh}
			\Fun(A\op , \spaces)\otimes\C\rightarrow \Fun(A\op ,\C).
		\end{equation}
		By definition one has identifications
		$$\Hom{}{F\boxtimes c}{G}\simeq\Hom{}{F}{\Hom{\C}{c}{G(-)}}$$
		functorially on $F\in\Fun(A\op ,\spaces)$, $G\in\Fun(A\op ,\C)$ and $c\in\C$, where the hom-space on the right-hand side is taken on the $\infty$-category of presheaves of $\textbf{U}$-small spaces. 
		\par A very convenient way to model the functor (\ref{tenspshbi}) is as follows. Let $y : \Delta^0\hookrightarrow\spaces$ be the functor picking the terminal object of $\spaces$. Combining the fact that $y$ is fully faithful and \cite[Proposition 6.4.12]{cisinski2019higher}, we have 
		$$\pfp{y}c\o\pfp{a}y \simeq\pfp{a}(\pfp{y}c\o y)\simeq \pfp{a}c $$   
		and hence we get a commutative triangle 
		$$
		\begin{tikzcd}
			\Fun(A\op , \spaces)\times\C\ar[r, "\boxtimes"] \ar[d, "{\text{id}\times\pfp{y}}"] & \Fun(A\op , \C) \\ \Fun(A\op , \spaces)\times\Fun_!(\spaces, \C)\ar[ur, "\o"']
		\end{tikzcd}
		$$
		where the vertical arrow is an equivalence and the diagonal one is given by composition. In particular, one deduces that the functor (\ref{tenspsh}) can be seen as an instance of (\ref{dualizable}). 
	\end{remark}
	For any cocomplete $\infty$-category $\D$, precomposition with $y_{A/\C}$ induces a functor
	$$\Fun(\Fun(A\op ,\C), \D)\rightarrow\Fun(A\times\C, \D)\simeq\Fun(\C, \Fun(A,\D))
	$$
	and since colimits are computed pointwise in functor $\infty$-categories, it restricts to
	\begin{equation}\label{relyon}
		\Fun_!(\Fun(A\op ,\C), \D)\rightarrow\Fun_!(\C, \Fun(A,\D)).
	\end{equation}
	\begin{theorem}\label{preshdual}
		The functor (\ref{relyon}) is an equivalence. In particular, $\Fun(A\op , \spaces)$ is strongly dualizable in the monoidal $\infty$-category $\Cocont_{\infty}$ with dual $\Fun(A, \spaces)$, and thus the functor (\ref{tenspsh}) is an equivalence.
	\end{theorem}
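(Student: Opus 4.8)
The plan is to isolate the genuine content: the two ``in particular'' assertions will follow formally once I know that (\ref{relyon}) is an equivalence for every small $A$ and every cocomplete $\cate{D}$, so the whole proof reduces to producing an explicit inverse to (\ref{relyon}). That functor is restriction along the relative Yoneda embedding $y_{A/\cate{C}}$, and I would invert it by a coend. Given $H\in\text{Fun}_!(\cate{C},\text{Fun}(A,\cate{D}))$, viewed as a bifunctor $H\colon A\times\cate{C}\to\cate{D}$ preserving colimits in the $\cate{C}$-variable, set
$$L(H)(F):=\int^{a\in A}H(a,F(a)),\qquad F\in\text{Fun}(A^{op},\cate{C}),$$
the coend being computed as a colimit over the twisted arrow category of $A$. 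Three checks are needed. First, $L(H)$ is cocontinuous: colimits in $\text{Fun}(A^{op},\cate{C})$ are pointwise, each $H(a,-)$ is cocontinuous, and a coend is itself a colimit, so all colimits commute. Second, $L(H)\circ y_{A/\cate{C}}\simeq H$: evaluating on $a_0\otimes c_0$ gives $\int^{a}H(a,\text{Map}_A(a,a_0)\odot c_0)$, and pulling the weight through the cocontinuous $H(a,-)$ leaves $\int^{a}\text{Map}_A(a,a_0)\odot H(a,c_0)\simeq H(a_0,c_0)$ by co-Yoneda. Third, $L$ is inverse to restriction: for cocontinuous $G$ one computes $L(G\circ y_{A/\cate{C}})(F)=\int^{a}G(a\otimes F(a))\simeq G\!\left(\int^{a}a\otimes F(a)\right)\simeq G(F)$, using cocontinuity of $G$ and the density presentation $F\simeq\int^{a}a\otimes F(a)$ of an arbitrary $\cate{C}$-valued presheaf as a coend of the objects $a\otimes F(a)$ in the image of $y_{A/\cate{C}}$.

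The crux — the step I expect to cost the most — is the $\infty$-categorical bookkeeping around these coends, since the relative Yoneda embedding is \emph{not} fully faithful and so nothing can be read off from a naive Kan extension. Concretely I must (i) establish the density formula $F\simeq\int^{a}a\otimes F(a)$, which I would extract from the pointwise colimit and density results of [Lur09, 5.1.5.3--5.1.5.6] via the fact that the objects $a\otimes c$ generate $\text{Fun}(A^{op},\cate{C})$ under colimits; and (ii) promote the pointwise formula for $L(H)$ to an honest functor of $\infty$-categories, realizing it as a left Kan extension along the projection out of the twisted arrow category so that the Fubini and cofinality manipulations used above are coherent and not merely objectwise. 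A secondary point, already flagged in Remark \ref{comptens}, is that the tensoring $\odot$ of $\cate{C}$ over $\spaces$ must be taken with respect to a universe in which $\cate{C}$ is small. Once this coherence is in place, the decisive non-formal input is simply that cocontinuity of $H$ in the $\cate{C}$-variable is exactly what lets one pull weights through $H$, compensating for the failure of $y_{A/\cate{C}}$ to be fully faithful.

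Finally I would deduce the two remaining assertions. Combining the equivalence (\ref{relyon}) with Lurie's universal property of presheaves [Lur09, Theorem 5.1.5.6], in the form $\text{Fun}_!(\text{Fun}(A^{op},\spaces),\cate{E})\simeq\text{Fun}(A,\cate{E})$ for cocomplete $\cate{E}$, I get natural equivalences
$$\text{Fun}_!(\cate{C},\text{Fun}(A,\cate{D}))\simeq\text{Fun}(A,\text{Fun}_!(\cate{C},\cate{D}))\simeq\text{Fun}_!(\text{Fun}(A^{op},\spaces),\text{Fun}_!(\cate{C},\cate{D}))\simeq\text{Fun}_!(\text{Fun}(A^{op},\spaces)\otimes\cate{C},\cate{D}),$$
the first by pointwise colimits, the second by [Lur09, Theorem 5.1.5.6] applied to the cocomplete category $\text{Fun}_!(\cate{C},\cate{D})$, and the third by the tensor--hom adjunction (\ref{tensor}). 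Tracing $y_{A/\cate{C}}$ through, this exhibits (\ref{relyon}) as restriction along the canonical map $\text{Fun}(A^{op},\spaces)\otimes\cate{C}\to\text{Fun}(A^{op},\cate{C})$, i.e. as (\ref{tenspsh}) after applying $\text{Fun}_!(-,\cate{D})$. Since (\ref{relyon}) is an equivalence for all $\cate{D}$, the Yoneda lemma in $\cate{C}\text{ocont}_{\infty}$ forces (\ref{tenspsh}) to be an equivalence; and strong dualizability of $\text{Fun}(A^{op},\spaces)$ with dual $\text{Fun}(A,\spaces)$ is then precisely (\ref{tenspsh}) for $A^{op}$, via the identification of the dualizability map (\ref{dualizable}) with (\ref{tenspsh}) recorded in Remark \ref{comptens}.
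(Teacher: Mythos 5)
Your proposal is correct, and its second half (the deduction of the two \enquote{in particular} clauses) is essentially identical to the paper's: the same chain of equivalences ending in $\text{Fun}_!(\text{Fun}(A^{op},\spaces)\otimes\cate{C},\cate{D})$, the identification of the composite with precomposition by (\ref{tenspsh}), the Yoneda-style argument in $\cate{C}\text{ocont}_{\infty}$, and the appeal to Remark \ref{comptens} (applied to $A^{op}$) for strong dualizability. Where you genuinely diverge is the core statement: the paper does not prove that (\ref{relyon}) is an equivalence but cites [Cis19, 6.7], whose key input is the canonical presentation $F\simeq\varinjlim_{c\rightarrow F(a)}a\otimes c$ indexed by the Grothendieck construction of $(a,c)\mapsto\Hom{\cate{C}}{c}{F(a)}$ --- an indexing category that is \emph{not} small and must be shown to be finally small ([Cis19, Lemma 6.7.5]). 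You instead build an explicit inverse $L(H)(F)=\int^{a}H(a,F(a))$ and use the density formula $F\simeq\int^{a}a\otimes F(a)$ over the twisted arrow category $\text{Tw}(A)$, which is small outright, so the finality subtlety disappears entirely; the price is the coherent coend calculus you rightly flag as the crux. One corrective note there: [Lur09, 5.1.5.3--5.1.5.6] concerns space-valued presheaves and will not by itself deliver your step (i); what you actually need is the end formula for mapping spaces in functor categories, $\Hom{}{F}{G}\simeq\varprojlim_{\text{Tw}(A)}\Hom{\cate{C}}{F(a)}{G(a')}$ (Glasman; Gepner--Haugseng--Nikolaus), after which your mapping-space computation against arbitrary $G$ proves density, with the universe bookkeeping you already mention. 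In sum: your route buys a self-contained proof with a small indexing category and an explicit inverse functor, while the paper's buys brevity by outsourcing both the presentation and its size control to Cisinski.
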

	\begin{proof}
		A complete proof of the first statement can be found in \cite[6.7]{cisinski2019higher}. The main ingredient of the proof is that, by \cite[Lemma 6.7.7]{cisinski2019higher}, any $F\in\Fun(A\op , \C)$ can be written canonically as 
		$$F = \varinjlim_{\scalebox{0.5}{$c\rightarrow F(a$)}}a\boxtimes c$$
		where the colimit is indexed by the Grothendieck construction of the functor $(a,c)\mapsto\Hom{\C}{c}{F(a)}$. Furthermore, even though this indexing $\infty$-category is not small a priori, \cite[Lemma 6.7.5]{cisinski2019higher} proves that it is finally small. From this one may deduce easily the theorem, in a similar spirit to how one proves that $\Fun(A\op , \spaces)$ is the free cocompletion under small colimits of $A$.
		\par
		To prove the last statement, we just observe that we have canonical equivalences
		\begin{align*}
			\Fun_!(\Fun(A\op ,\C), \D)&\simeq\Fun_!(\C, \Fun(A,\D)) \\&\simeq \Fun_!(\C, \Fun_!(\Fun(A\op ,\spaces),\D)) \\
			&\simeq\Fun_!(\Fun(A\op ,\spaces)\otimes\C, \D)
		\end{align*}
		whose composition is given by precomposing with (\ref{tenspsh}), and so we may conclude by \Cref{comptens}.
	\end{proof}
	
	Let $u:A\rightarrow B$ be a functor between small $\infty$-categories, and let $\C$ be any cocomplete $\infty$-category. We will denote by $\pb{u}_{\C}$ ($\pfp{u}^{\C}$) the precomposition with (the left Kan extension along) $u$ for $\C$-valued presheaves.
	
	\begin{corollary}\label{tenskanext}
		Let $u:A\rightarrow B$ be a functor between small $\infty$-categories, and let $\C$ be any cocomplete $\infty$-category. Then we have equivalences $\pfp{u}^{\spaces}\otimes\C\simeq\pfp{u}^{\spaces}$ and $\pb{u}_{\spaces}\otimes\C\simeq\pb{u}_{\C}$ .
	\end{corollary}
	\begin{proof}
		By \Cref{tensoringadj} and \Cref{preshdual}, we have an adjunction $\pfp{u}\otimes\C\dashv\pb{u}\otimes\C$ of cocontinuous functors between $\C$-valued presheaves. By uniqueness of adjoints, it suffices to show that $\pb{u}\otimes\C\simeq\pb{u}$. But this is clear because by \Cref{comptens} we have a commutative square
		$$
		\begin{tikzcd}
			\Fun(B\op , \spaces)\times\C\ar[r, "{\pb{u}\times\text{id}}"]\ar[d, "\boxtimes"] & \Fun(A\op , \spaces)\times\C\ar[d, "\boxtimes"] \\ \Fun(B\op , \C)\ar[r, "{\pb{u}}"] & \Fun(B\op , \C).
		\end{tikzcd}
		$$
	\end{proof}
	Denote by $\Cocont^{pt}_{\infty}$ ($\Cocont^{st}_{\infty}$) the full subcategory of $\Cocont_{\infty}$ spanned by pointed (stable) cocomplete $\infty$-categories.
	
	\begin{lemma}\label{univpropsptsp}
		Let $\C$ be any cocomplete $\infty$-category, and denote by $\emptyset\in\C$ the initial object.  Write $\text{ev}_{S^0}:\Fun_!(\spaces_{\ast},\C)\rightarrow\C$ for the functor given by evaluation at $S^0\in\spaces_{\ast}$. Similarly, write $\text{ev}_{\s{}}:\Fun_!(\spectra,\C)\rightarrow\C$ for the functor given by evaluating at $\s{}\in\spectra$. Then the following statements are true.
		\begin{enumerate}
			\item $\text{ev}_{S^0}$ factors through an equivalence of $\infty$-categories
			$$\Fun_!(\spaces_{\ast},\C)\simeq\C_{/\emptyset}.$$ 
			In particular, if $\C$ is pointed then $\text{ev}_{S^0}$ is an equivalence of $\infty$-categories. 
			\item $\text{ev}_{\s{}}$ factors through an equivalence of $\infty$-categories
			$$\Fun_!(\spectra,\C)\simeq\varprojlim(\cdots\xrightarrow{\Sigma}\C_{/\emptyset}\xrightarrow{\Sigma}\C_{/\emptyset}).$$
			In particular, if $\C$ is stable, $\text{ev}_{\s{}}$ is an equivalence of $\infty$-categories.
		\end{enumerate}
	\end{lemma}
	
	\begin{proof}
		We start by proving statement 1. Notice that, since $\ast\in\spaces$ is terminal, $\spaces_{\ast}$ embeds fully faithfully in $\Fun(\Delta^1,\spaces)$. This embedding admits a left adjoint $L$, explicitly given by $(X\rightarrow Y)\mapsto Y_{+}$. $L$ is therefore a cocontinuous (Bousfield) localization. We therefore obtain a fully faithful functor $\Phi:\Fun_!(\spaces_{\ast},\C)\hookrightarrow\Fun(\Delta^1,\C)$ given by the composition
		$$\Fun_!(\spaces_{\ast},\C)\xhookrightarrow{\pb{L}}\Fun_!(\Fun(\Delta^1,\spaces),\C)\xhookrightarrow{\pb{y}}\Fun(\Delta^1,\C),$$
		where the second functor is given by precomposition with the yoneda embedding, up to identifying $\Delta^1$ with its opposite. Tracing through the definitions, $\Phi$ can be described as sending $F:\spaces_{\ast}\rightarrow\C$ to the arrow $(F(S^0)\rightarrow F(\ast))$. Since $F$ preserves colimits and $\ast\in\spaces_{\ast}$ is an initial object, we see that $\Phi$ factors through $\C_{/\emptyset}$. To conclude our proof, it suffice to show that, for any arrow $f:c\rightarrow\emptyset$, $f$ belongs to the essential image of $\Phi$. One checks that $f\simeq \Phi(F_f)$, where $F_f$ denotes the left kan extension of $\Delta^1\xrightarrow{f}\C$ along $\Delta^1\xrightarrow{S^0\rightarrow\ast}\spaces_{\ast}$. 
		
		We now focus on statement 2. Denote by $\spectra^{fin}$ the $\infty$-category of finite spectra, and by $\spaces_{\ast}^{fin}$ the $\infty$-category of finite pointed spaces. Recall that $\spectra^{fin}$ can be described as the \textit{Spanier-Whitehead category} \cite{adams1974stable, spanier1953first}
		$\spectra^{fin}\simeq\varinjlim(\spaces_{\ast}\xrightarrow{\Sigma}\spaces_{\ast}\xrightarrow{\Sigma}\cdots).$ Here the colimit is taken in the $\infty$-category of small $\infty$-categories with finite colimits and right exact functors between them. Since $\spectra\simeq\text{Ind}(\spectra^{fin})$, the desired result then follows from combining statement 1 with the previous description of $\spectra^{fin}$.
	\end{proof}
	
	\begin{lemma}\label{tensstcocont}
		Let $\D$ be any cocomplete $\infty$-category. Then the following statements are true.
		\begin{enumerate}
			\item If $\C$ is a pointed cocomplete $\infty$-category, then $\C\otimes\D$ is pointed.
			\item If $\C$ is a stable cocomplete $\infty$-category, then $\C\otimes\D$ is stable.
		\end{enumerate}  
		In particular, $\Cocont^{pt}_{\infty}$ (respectively $\Cocont^{st}_{\infty}$) inherits a monoidal structure from $\Cocont_{\infty}$ and the inclusion in $\Cocont_{\infty}$ admits a left adjoint given by tensoring with $\spaces_{\ast}$ (respectively $\spectra$).
	\end{lemma}
	\begin{proof}
		We begin by proving statement 1. First of all, notice that $\Delta^0\otimes\D\simeq\Delta^0$. Recall also that $\C$ is pointed if and only if the zero object $\Delta^0\rightarrow\C$ is simultaneously a right and a left adjoint of the unique functor $\C\rightarrow\Delta^0$. Thus one may tensor these two adjunctions with $\D$ and obtain by \cref{tensoringadj} that $\C\otimes\D$ is pointed.
		\par
		We now turn to proving statement 2, so we assume that $\C$ is stable. Denote by $i_{\epsilon}:\Delta^0\hookrightarrow\Delta^1$ the inclusion corresponding to $\epsilon= 0,1$. Notice that, for any cocomplete pointed $\infty$-category $\E$, one can define a suspension functor as a composition
		$$
		\begin{tikzcd}
			\Sigma_{\E}:\E \arrow[r, "X\mapsto (X\rightarrow0)"] & {\Fun(\Delta^1, \E)} \arrow[r, "\text{cofib}"] & \E.
		\end{tikzcd}
		$$
		Since the first functor preserves colimits and is right adjoint to $\pb{(i_0)}_{\E}$, and the second functor is left adjoint to $\pfp{(i_1)}^{\E}$, by \cref{tensoringadj} and \cref{tenskanext} we see that $\Sigma_{\C\otimes\D}\simeq\Sigma_{\C}\otimes\D$. Since $\Sigma_{\C}$ is an equivalence, one deduces that $\Sigma_{\C\otimes\D}$ must be an equivalence as well, and therefore by \cite[Corollary 1.4.2.27]{lurie2017higher}, one concludes that $\C\otimes\D$ is stable.
		\par
		To prove that last part of the statement, it suffices to show that, for any pointed (stable) and cocomplete $\infty$-category $\C$, the evaluation at $S^0$ (respectively $\s{}$) induces an equivalence $\Fun_!(\spaces_{\ast}, \C)\simeq\C$ (respectively $\Fun_!(\spectra, \C)\simeq\C$), but this follows from \cref{univpropsptsp}. 
	\end{proof}
	\begin{remark}\label{relyonstable}
		Let $A$ be any small $\infty$-category and $\C$ any object of $\Cocont^{st}_{\infty}$. By \cref{tensstcocont}, there exist an essentially unique functor $\boxtimes^{st}$ which fits in a commutative triangle
		$$\begin{tikzcd}
			\Fun(A\op , \spaces)\times \C\ar[r, "\boxtimes"] \ar[d, "{\infsusp_+\times\C}"] & \Fun(A\op , \C) \\
			\Fun(A\op , \spectra)\times \C.\ar[ur, dotted, "\boxtimes^{st}"'] &	\end{tikzcd}$$ 
		Moreover, $\boxtimes^{st}$ induces an equivalence
		$$\Fun(A\op ,\C)\simeq\Fun(A\op ,\spectra)\otimes\C.$$ \Cref{comptens} implies that one has identifications
		$$\Hom{}{F\boxtimes^{st} c}{G}\simeq\Hom{}{F}{\sHom{\C}{c}{G(-)}}$$
		functorially on $F\in\Fun(A\op ,\spectra)$, $G\in\Fun(A\op ,\C)$ and $c\in\C$, where $\sHom{\C}{c}{-}$ denotes the canonical enrichment of $\C$ in $\textbf{U}$-small spectra, and we have a commutative triangle
		$$
		\begin{tikzcd}
			\Fun(A\op , \spectra)\times\C\ar[r, "\boxtimes^{st}"] \ar[d, "{\text{id}\times\pfp{y}}"] & \Fun(A\op , \C) \\ \Fun(A\op , \spectra)\times\Fun_!(\spectra, \C)\ar[ur, "\o"']
		\end{tikzcd}
		$$
		
	\end{remark}
	\begin{proposition}\label{cptlygenstdual}
		Let $\C=\text{Ind}(\C^{\omega})$ be a compactly generated stable $\infty$-category. Then $\C$ is a strongly dualizable object in $\Cocont^{st}_{\infty}$, with dual $\text{Ind}((\C^{\omega})\op)$.
	\end{proposition}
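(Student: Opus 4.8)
The monoidal $\infty$-category $\cate{C}\text{ocont}^{st}_{\infty}$ is closed, with internal hom $\text{Fun}_!(-,-)$ (which is stable when its target is) and unit $\spectra$, so that the dual of $\cate{C}$, once it exists, is forced to be $\text{Fun}_!(\cate{C},\spectra)$. The plan is to exhibit $\cate{C}$ as a retract of a spectral presheaf category inside $\cate{C}\text{ocont}^{st}_{\infty}$ and then invoke the fact that strongly dualizable objects are closed under retracts. As a preliminary I record that spectral presheaf categories are already strongly dualizable: the reflector $-\otimes\spectra:\cate{C}\text{ocont}_{\infty}\to\cate{C}\text{ocont}^{st}_{\infty}$ of Lemma \ref{tensstcocont} is a smashing localization (the local objects are the stable ones and $\spectra\otimes\spectra\simeq\spectra$), hence symmetric monoidal, and symmetric monoidal functors preserve strong dualizability. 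Applying it to the dualizable object $\text{Fun}(A^{op},\spaces)$ of Theorem \ref{preshdual}, together with the equivalence $\text{Fun}(A^{op},\spaces)\otimes\spectra\simeq\text{Fun}(A^{op},\spectra)$ furnished by that same theorem, shows that $\text{Fun}(A^{op},\spectra)$ is strongly dualizable in $\cate{C}\text{ocont}^{st}_{\infty}$, with dual $\text{Fun}(A,\spectra)$.

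Now I would write $\cate{C}\simeq\text{Ind}(\cate{C}_0)$, where $\cate{C}_0:=\cate{C}^{\omega}$ is the essentially small, idempotent complete, stable $\infty$-category of compact objects, and realize $\text{Ind}(\cate{C}_0)$ as the full subcategory of $\text{Fun}(\cate{C}_0^{op},\spectra)$ spanned by the exact functors, i.e. those sending finite colimits in $\cate{C}_0$ to finite limits in $\spectra$. This is a reflective localization, so the inclusion $\iota$ admits a cocontinuous left adjoint $L$ with $L\iota\simeq\text{id}$. The decisive point is that $\iota$ is itself cocontinuous: colimits in $\text{Ind}(\cate{C}_0)$ are computed by taking the pointwise colimit in $\text{Fun}(\cate{C}_0^{op},\spectra)$ and reflecting, so it suffices that the exact functors be closed under pointwise colimits. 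They are closed under filtered colimits because filtered colimits of spectra commute with finite limits, and under finite colimits because in the stable $\infty$-category $\spectra$ a square is a pullback exactly when it is a pushout, so a finite colimit of pullback squares is again a pullback square. Thus $\iota$ preserves all colimits, and $\iota$ and $L$ are morphisms in $\cate{C}\text{ocont}^{st}_{\infty}$ displaying $\cate{C}$ as a retract of $\text{Fun}(\cate{C}_0^{op},\spectra)$.

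Since $\text{Fun}(\cate{C}_0^{op},\spectra)$ is strongly dualizable and strong dualizability is preserved under retracts in any symmetric monoidal $\infty$-category (this can be checked in the homotopy category, where it is classical), I conclude that $\cate{C}$ is strongly dualizable; tracing the identifications, its dual is $\text{Fun}_!(\text{Ind}(\cate{C}_0),\spectra)\simeq\text{Fun}^{ex}(\cate{C}_0,\spectra)\simeq\text{Ind}(\cate{C}_0^{op})$. I expect the only genuinely non-formal step, and the one where the stability hypothesis is indispensable, to be the cocontinuity of $\iota$: for a general compactly generated non-stable $\cate{C}$ the inclusion of $\text{Ind}(\cate{C}_0)$ into presheaves preserves filtered but not finite colimits, so this retract argument is special to the stable world.
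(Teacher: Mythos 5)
Your proof is correct, but it reaches the result by a genuinely different mechanism than the paper's. The paper also starts from $\cate{C}\simeq\text{Fun}_{ex}(A^{op},\spectra)$ with $A$ small and stable, and also uses the colimit-preserving embedding of $\cate{C}$ into spectral presheaves, but it then verifies the duality criterion head-on: it constructs the reflection $L:\text{Fun}(A^{op},\cate{D})\rightarrow\text{Fun}_{ex}(A^{op},\cate{D})$ explicitly, as the cocontinuous extension of $(a,x)\mapsto y^{st}(a)\otimes x$, for an \emph{arbitrary} stable cocomplete $\cate{D}$ (where no adjoint functor theorem is available), tensors the spectral-coefficient adjunction $L\dashv i$ with $\cate{D}$ via Remark \ref{tensoringadj}, and matches the essential images of $(Li)\otimes\cate{D}$ and $Li$ on the generators $a\otimes x$; this yields the stronger, independently meaningful identification $\text{Fun}_{ex}(A^{op},\spectra)\otimes\cate{D}\simeq\text{Fun}_{ex}(A^{op},\cate{D})$. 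You instead stay entirely at spectral coefficients, prove carefully that the inclusion $\iota$ is cocontinuous (a point the paper only asserts; your argument via pushout-equals-pullback and exactness of filtered colimits is exactly right, and it is indeed the one place where stability is essential), and then black-box the rest through two formal facts: monoidality of the reflector $-\otimes\spectra$ (legitimate, by the idempotency of $\spectra$ recorded in Remark \ref{presptdst} together with [Lur17, Proposition 4.8.2.9]) and closure of strong dualizability under retracts. Amusingly, the paper itself invokes precisely your retract principle one step later, in the proof of Corollary \ref{coshsttens}, so your proof is very much in its spirit. One small point of hygiene: in a bare symmetric monoidal homotopy category, a retract of a dualizable object is dualizable only after splitting an idempotent on the candidate dual, so "classical" is slightly glib; here it is harmless because the monoidal structure is closed, so one can argue via the criterion (\ref{dualizable}) --- by functoriality in the dualizing variable, the canonical map $\cate{D}\otimes\text{Fun}_!(\cate{C},\spectra)\rightarrow\text{Fun}_!(\cate{C},\cate{D})$ for $\cate{C}$ is a retract, in the arrow category, of the corresponding equivalence for $\text{Fun}(\cate{C}_0^{op},\spectra)$, and a retract of an equivalence is an equivalence. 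Net effect: your route is shorter and more formal but delivers only dualizability (with the expected dual $\text{Ind}(\cate{C}_0^{op})$), whereas the paper's computation additionally produces the explicit formula for the tensor product $\cate{C}\otimes\cate{D}$ for compactly generated $\cate{C}$.
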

	\begin{proof}
		Since $\C$ is stable and compactly generated, it follows that there exists a small stable $\infty$-category $A$ with finite colimits such that $\C\simeq\Fun_{ex}(A\op ,\spectra)$. Thus, since for any $\D$ stable and cocomplete we have $\Fun_!(\C,\D)\simeq\Fun_{ex}(A,\D)$, to prove the proposition we have to show that the canonical functor
		$$\Fun_{ex}(A\op ,\spectra)\otimes\D\rightarrow \Fun_{ex}(A\op ,\D)$$
		is an equivalence.
		We first prove that the inclusion $i:\Fun_{ex}(A\op ,\D)\hookrightarrow\Fun(A\op ,\D)$ admits a left adjoint $L$.
		\par 
		For any $a\in A$, denote by $y^{st}(a)$  the spectrally enriched representable functor associated to $a$, obtained as usual through the equivalence
		\begin{equation}\label{stableyoneda}
			\Fun_{ex}(A\op , \spectra)\simeq\Fun_{lex}(A\op , \spaces).
		\end{equation} 
		We define $L$ as the unique (up to a contractible space of choices) cocontinuous functor extending
		$$
		\begin{tikzcd}[row sep = tiny]
			A\times\D \ar[r] & \Fun_{ex}(A\op , \D) \\
			(a, x) \ar[r, mapsto] & y^{st}(a)\boxtimes^{st} x.
		\end{tikzcd}
		$$	
		Indeed, $y^{st}(a)\boxtimes^{st} x$ is exact as it can be modelled by the composition of two finite colimit preserving functors. When $\D = \spectra$, by (\ref{stableyoneda}) and the Yoneda lemma, one sees that $L$ is left adjoint to $i$. 
		\par 
		Let $\D$ be any stable cocomplete $\infty$-category. To see that $L$ is the desired left adjoint, we observe that for any $F\in\Fun_{ex}(A\op , \D)$, $x\in\D$, we have functorial identifications
		\begin{align*}
			\Hom{}{y^{st}(a)\boxtimes^{st} x}{F}&\simeq\Hom{}{y^{st}(a)}{\sHom{\D}{x}{F(-)}} \\
			&\simeq\Hom{}{y(a)}{\Hom{\D}{x}{F(-)}} \\
			&\simeq\Hom{}{a\boxtimes x}{F}
		\end{align*}
		where the hom-space on the right-hand side is taken on the $\infty$-category of presheaves of $\textbf{U}$-small spectra on $A$, and the second equivalence follows from the fact that $F$, and hence $\sHom{\D}{x}{F(-)}$, is exact.
		\par 
		Now notice that $i : \Fun_{ex}(A\op , \spectra)\hookrightarrow \Fun(A\op , \spectra)$ preserves colimits, and so, by tensoring with $\D$, one obtains an adjunction between cocontinuous functors
		$$\begin{tikzcd}
			\Fun(A\op , \D)\ar[r,bend left,"L\otimes\D",""{name=A, below}] & \Fun_{ex}(A\op , \spectra)\otimes\D\ar[l,bend left,"i\otimes\D",""{name=B,above}] \ar[from=A, to=B, symbol=\dashv]
		\end{tikzcd}$$ 
		where $i\otimes\D$ is fully faithful. Since $\Fun_{ex}(A\op , \spectra)\otimes\D$ and $\Fun_{ex}(A\op , \D)$ can be respectively identified with the essential images of $(Li)\otimes\D$ and $Li$, to conclude the proof it suffices to show that the two functors are naturally equivalent, but this is true because they coincide on objects of the type $a\boxtimes x$.
	\end{proof}
	Recall that an $\infty$-category $\C$ is called \textit{\textbf{V}-presentable} (for short, when there is no possibility of confusion we will only write presentable) if there exists a \textbf{V}-small $\infty$-category $A$ such that $\C$ is a left Bousfield localization of $\Fun(A\op , \spaces)$ by a \textbf{V}-small set of morphisms in $\Fun(A\op , \spaces)$. If we furthermore assume that the localization functor $\Fun(A\op , \spaces)\rightarrow\C$ is left exact, we will say that $\C$ is an \textit{$\infty$-topos}. It follows easily by this definition that any presentable $\infty$-category is complete and cocomplete. Presentable categories are equivalently defined as follows. Recall that, for $\C$ any $\infty$-category and $S$ a class of morphisms in $\C$, we define an object $X\in\C$ to be \textit{$S$-local} if, for every morphim $f:A\rightarrow B$ in $S$, the induced morphism
	$$\Hom{\C}{B}{X}\rightarrow\Hom{\C}{A}{X}$$
	is invertible. Then we say that an $\infty$-category $\C$ is \textit{\textbf{V}-presentable} is there exists a \textbf{V}-small class $S$ of morphisms in $\Fun(A\op , \spaces)$ such that $\C$ is equivalent to the full subcategory of $\Fun(A\op , \spaces)$ spanned by $S$-local objects. We refer the reader to \cite[Sections 5.5.1, 5.5.4]{lurie2009higher} for proofs of the equivalence of the above definitions of presentable $\infty$-categories.
	\par 
	We denote by $\text{Pr}_{L}$ the full subcategory of $\Cocont_{\infty}$ spanned by presentable $\infty$-categories and $\text{Pr}_{R} = \text{Pr}_{L}\op $. Notice that, by the adjoint functor theorem (see for example \cite[Proposition 7.11.8]{cisinski2019higher}), the morphisms in $\text{Pr}_{L}$ are left adjoints and, consequently, morphisms in $\text{Pr}_{R}$ are right adjoints. We also denote by $\Top$ the non full subcategory of $\text{Pr}_{R}$ whose objects are $\infty$-topoi and morphisms are functors which admit a left exact left adjoint (such functors are called \textit{geometric morphisms}).
	\begin{proposition}\label{tenspres}
		Let $\C$ and $\D$ be two presentable $\infty$-categories. Then $\C\otimes\D$ is presentable and there a canonical equivalence $\C\otimes\D\simeq\text{RFun}(\C\op , \D)$. In particular, $\text{Pr}_L$ inherits a symmetric monoidal structure.
	\end{proposition}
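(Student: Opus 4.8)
The plan is to reduce to the presheaf case, which is already handled by Theorem \ref{preshdual}, and then transport the result to an arbitrary presentable $\cate{C}$ by writing it as a reflective localization of a presheaf category and tensoring that localization with $\cate{D}$. First I would dispose of the presheaf case. For a small $\infty$-category $A$, the last assertion of Theorem \ref{preshdual} gives $\text{Fun}(A^{op},\spaces)\otimes\cate{D}\simeq\text{Fun}(A^{op},\cate{D})$ via (\ref{tenspsh}). On the other hand, writing $\cate{E}:=\text{Fun}(A^{op},\spaces)$ and $y\colon A\to\cate{E}$ for the Yoneda embedding, the opposite $\cate{E}^{op}$ is the free completion of $A^{op}$, so restriction along $y^{op}\colon A^{op}\to\cate{E}^{op}$ yields $\text{RFun}(\cate{E}^{op},\cate{D})\simeq\text{Fun}(A^{op},\cate{D})$ (this is the dual of the universal property of presheaf categories, [Lur09, Theorem 5.1.5.6]). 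Composing, the proposition holds when $\cate{C}$ is a presheaf category.

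Now let $\cate{C}$ be arbitrary presentable, realized as the subcategory of $S$-local objects in $\cate{E}=\text{Fun}(A^{op},\spaces)$ for a small set $S$, with reflective localization $L\colon\cate{E}\rightleftarrows\cate{C}\colon i$ and $i$ fully faithful. Since $L$ is cocontinuous with right adjoint $i$, Remark \ref{tensoringadj} produces an adjunction $L\otimes\cate{D}\dashv i\otimes\cate{D}$ of cocontinuous functors between $\cate{E}\otimes\cate{D}$ and $\cate{C}\otimes\cate{D}$. As tensoring with $\cate{D}$ is a $2$-functor, it carries the invertible counit $Li\xrightarrow{\sim}\mathrm{id}_{\cate{C}}$ to the counit of this adjunction, so $i\otimes\cate{D}$ is again fully faithful and $\cate{C}\otimes\cate{D}$ is a reflective localization of $\cate{E}\otimes\cate{D}\simeq\text{Fun}(A^{op},\cate{D})$, the latter being presentable by [Lur09, Proposition 5.5.3.6]. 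To conclude presentability I would check this localization is accessible by exhibiting its local objects as those right-orthogonal to the small set $\{f\otimes d\}$, with $f\in S$ and $d$ ranging over a set of colimit-generators of $\cate{D}$, image of $S$ under (\ref{tenspsh}).

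For the identification $\cate{C}\otimes\cate{D}\simeq\text{RFun}(\cate{C}^{op},\cate{D})$ I would compare two full subcategories of $\text{Fun}(A^{op},\cate{D})\simeq\text{RFun}(\cate{E}^{op},\cate{D})$. The opposite $i^{op}\colon\cate{C}^{op}\hookrightarrow\cate{E}^{op}$ is fully faithful with left adjoint $L^{op}$, which preserves limits, so precomposition with $L^{op}$ embeds $\text{RFun}(\cate{C}^{op},\cate{D})$ into $\text{RFun}(\cate{E}^{op},\cate{D})$ with essential image the limit-preserving functors inverting every morphism of $S$. Under the presheaf-case equivalence this is exactly the essential image of $i\otimes\cate{D}$, which yields the desired canonical equivalence. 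The monoidal structure on $\text{Pr}_{L}$ is then immediate, in the spirit of Lemma \ref{tensstcocont}: the full subcategory $\text{Pr}_{L}\subseteq\cate{C}\text{ocont}_{\infty}$ contains the unit $\spaces$ and, by the above, is closed under $\otimes$, so the symmetric monoidal structure of $\cate{C}\text{ocont}_{\infty}$ restricts to it.

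The main obstacle is the last matching step: showing that, inside $\text{Fun}(A^{op},\cate{D})$, the essential image of $i\otimes\cate{D}$ coincides with the subcategory of limit-preserving functors $\cate{E}^{op}\to\cate{D}$ inverting $S$. This amounts to tracking the duality of Theorem \ref{preshdual} carefully enough to see that \enquote{lying in the image of $i\otimes\cate{D}$} and \enquote{inverting $S$ as a functor on $\cate{E}^{op}$} are the same condition; the accessibility of the localization, and hence presentability of $\cate{C}\otimes\cate{D}$, falls out of the same explicit description of the local objects.
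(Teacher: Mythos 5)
There is a genuine gap at the foundation of your localization step. Remark \ref{tensoringadj} only lets you tensor adjunctions that live inside $\cate{C}\text{ocont}_{\infty}$, i.e.\ adjunctions in which \emph{both} functors preserve small colimits. In the adjunction $L\dashv i$ exhibiting $\cate{C}$ as a reflective localization of $\cate{E}=\text{Fun}(A^{op},\spaces)$, the inclusion $i$ is essentially never cocontinuous (think of sheaves inside presheaves: colimits of sheaves are computed by sheafifying the presheaf colimit), so $i$ is not a morphism of $\cate{C}\text{ocont}_{\infty}$ and the functor ``$i\otimes\cate{D}$'' is simply not defined; a fortiori you cannot conclude that $\cate{C}\otimes\cate{D}$ is a reflective subcategory of $\text{Fun}(A^{op},\cate{D})$ by transporting the triangle identities. (Note that the paper is careful about exactly this point elsewhere: it only tensors an adjunction after proving the right adjoint cocontinuous, e.g.\ Lemma \ref{properpsfcocont} before Proposition \ref{proppshtens}.) The correct mechanism runs through universal properties instead: by Proposition \ref{lfunloc}, $\text{Fun}_!(\cate{C},\cate{Z})$ sits inside $\text{Fun}_!(\cate{E},\cate{Z})$ as the functors inverting $S$, so the defining equivalence (\ref{tensor}) identifies $\text{Fun}_!(\cate{C}\otimes\cate{D},\cate{Z})$ with the cocontinuous functors out of $\text{Fun}(A^{op},\cate{D})$ inverting the small set $\{f\otimes d\}$ (with $f\in S$ and $d$ in a generating set), exhibiting $\cate{C}\otimes\cate{D}$ as the accessible Bousfield localization of $\text{Fun}(A^{op},\cate{D})$ at this set. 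Presentability, the fully faithful right adjoint, and the description of the local objects then come \emph{out} of this identification rather than feeding into it; this is what the proof of [Lur17, Proposition 4.8.1.15] actually does.

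Beyond this, the step you yourself flag as ``the main obstacle'' --- that the local objects coincide with the limit-preserving functors $\cate{E}^{op}\rightarrow\cate{D}$ inverting $S$ --- is left unproven, so the identification $\cate{C}\otimes\cate{D}\simeq\text{RFun}(\cate{C}^{op},\cate{D})$ is not actually established in your write-up. For comparison, the paper does not re-derive any of this: it localizes both variables simultaneously, identifying $\cate{C}\otimes\cate{D}$ with the $S\otimes S'$-local objects of $\text{Fun}((A\times B)^{op},\spaces)$ via the proof of [Lur17, Proposition 4.8.1.15], and cites [Lur17, Lemma 4.8.1.16] and [Lur17, Proposition 4.8.1.17] for the $\text{RFun}$ description and for the monoidal structure on $\text{Pr}_{L}$. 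Your one-variable-at-a-time reduction is salvageable along the lines sketched above (with the matching step handled via the hom formula of Remark \ref{comptens}), but as written the argument both starts from an undefined functor and stops short of its key verification.
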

	\begin{proof}
		Let $A$ and $B$ be two small $\infty$-categories, $S$ and $S'$ two small sets of morphisms of $\Fun(A\op ,\spaces)$ and $\Fun(B\op , \spaces)$ respectively such that $\C$ and $\D$ are equivalent the full subcategories of $S$ and $S'$-local objects. By \Cref{preshdual} we have
		\begin{align*}
			\Fun(A\op ,\spaces)\otimes\Fun(B\op ,\spaces)&\simeq\Fun(A\op ,\Fun(B\op ,\spaces)) \\
			&\simeq \Fun((A\times B)\op ,\spaces).
		\end{align*} It then follows from the proof of \cite[Proposition 4.8.1.15]{lurie2017higher} that $\C\otimes\D$ can be identified with the full subcategory of $\Fun((A\times B)\op ,\spaces)$ spanned by $S\otimes S'$-local objects, where $S\otimes S'$ is the image of $S\times S'$ under the canonical functor 
		$$\Fun(A\op ,\spaces)\times\Fun(B\op ,\spaces)\rightarrow\Fun((A\times B)\op ,\spaces).$$ The proof of the last assertion follows from \cite[Lemma 4.8.1.16]{lurie2017higher} and \cite[Proposition 4.8.1.17]{lurie2017higher}.
	\end{proof}	
	\begin{remark}\label{presptdst}
		Let $\C$ be a presentable $\infty$-category. One can deduce easily from \Cref{tenspres} identifications $\C\otimes\spaces_{\ast}\simeq\C_{\ast}$ and $\C\otimes\spectra\simeq\text{Sp}(\C)$, where $\C_{\ast}$ denotes the $\infty$-category of \textit{pointed objects} of $\C$, and $\text{Sp}(\C)$ denotes the $\infty$-category of \textit{spectrum objects} of $\C$, i.e. the limit of the tower
		$$\begin{tikzcd}
			\dots\arrow[r, "\Omega"] & \C_{\ast}\arrow[r, "\Omega"] & \C_{\ast}
		\end{tikzcd}$$
		where $\Omega$ is the usual loop functor.
		Both these constructions come with canonical functors $\C_{\ast}\rightarrow\C$ and $\infloop:\text{Sp}(\C)\rightarrow\C$, and since $\C$ is presentable one can show that these admit left adjoints $(-)_+ : \C\rightarrow\C_{\ast}$ and $\infsusp_{+} : \C\rightarrow\text{Sp}(\C)$. By construction, we have a factorization
		$$
		\begin{tikzcd}
			\C\arrow[r, "(-)_+"] & \C_{\ast}\arrow[r, "\infsusp"] & \text{Sp}(\C).
		\end{tikzcd}
		$$
		In particular we see that, if $\C$ is presentable and pointed (stable), by tensoring $(-)_+ : \spaces\rightarrow\spaces_{\ast}$ ($\infsusp_{+} : \spaces\rightarrow\spectra$) with $\C$ we obtain an equivalence $\C\otimes\spaces_{\ast}\simeq\C$ ($\C\otimes\spectra\simeq\text{Sp}(\C)$). Thus, if $\C$, $\D$ and $\E$ are presentable $\infty$-categories where $\D$ is pointed and $\E$ is stable, we get functorial identifications
		$$\Fun_!(\C\otimes\spaces_{\ast}, \D)\simeq\Fun_!(\C,\Fun_!(\spaces_{\ast}, \D))\simeq\Fun_!(\C, \D)$$
		$$\Fun_!(\C\otimes\spectra, \E)\simeq\Fun_!(\C,\Fun_!(\spectra, \E))\simeq\Fun_!(\C, \E)$$
		induced respectively by precomposing with $(-)_+$ and $\infsusp_{+}$. Furthermore, we see that $(-)_+ :\spaces\rightarrow\spaces_{\ast}$ and $\infsusp_{+}: \spaces\rightarrow\spectra$ make $\spaces_{\ast}$ and $\spectra$ into idempotent algebras with respect to Lurie's tensor product: by \cite[Proposition 4.8.2.9]{lurie2017higher} this implies that there are canonical variablewise cocontinuous symmetric monoidal structures on $\spaces_{\ast}$ and $\spectra$ with unit objects given by $S^0\coloneqq(\ast)_+$ and $\s{}\coloneqq  \infsusp_{+}(\ast)$, and one can show that these coincide with the usual smash products of pointed spaces and spectra. In particular, we see that the functors 
		$$
		\begin{tikzcd}
			\spaces\arrow[r, "(-)_+"] & \spaces_{\ast}\arrow[r, "\infsusp"] & \spectra
		\end{tikzcd}
		$$
		are all symmetric monoidal, where $\spaces$ is equipped with the cartesian monoidal structure.
	\end{remark}
	\subsection{Sheaves and cosheaves}
	We now pass to recalling the definition of sheaves with values in an $\infty$-category. Let $X$ be a small $\infty$-category equipped with a Grothendieck topology. Recall that there is a small $\infty$-category $\text{Cov}(X)$, as defined in \cite[Notation 6.2.2.8]{lurie2009higher}, which can be described informally as having for objects pairs $(x, R)$, where $x\in X$ and $R\hookrightarrow y(x)$ is a sieve covering $x$, and morphisms between $(x, R)$ and $(y, R')$ are just maps $f:x\rightarrow y$ in $X$ such that the restriction of $y(f)$ to $R$ factors through $R'$. There is an obvious projection $\rho: \text{Cov}(X)\rightarrow X$ which has a section $s: X\rightarrow\text{Cov}(X)$ defined on objects by sending $x$ to $(x, y(x))$.
	\begin{definition}
		Let $\C$ be a complete $\infty$-category. With the same notations as above, we say that a functor $F\in\Fun(X\op , \C)$ is a \textit{sheaf} if the unit morphism
		$$\pb{\rho}F\rightarrow\pf{s}\pb{s}\pb{\rho}F\simeq\pf{s}F$$
		is an equivalence. Dually, for a cocomplete $\infty$-category $\C$, we say that a functor $F\in\Fun(X, \C)$ is a \textit{cosheaf} if the counit morphism
		$$\pfp{s}F\simeq\pfp{s}\pb{s}\pb{\rho}F\rightarrow\pb{\rho}F$$
		is an equivalence. We denote by $\Sh{X}{\C}$ ($\coSh{X}{\C}$) the full subcategory of $\Fun(X\op , \C)$ ($\Fun(X, \C)$) spanned by (co)sheaves. When $\C$ is the $\infty$-category of spaces $\spaces$, we will simply write $\Shsp{X}$.
	\end{definition}
	\begin{remark}
		More concretely, one can describe a sheaf as a functor $F$ such that for any covering sieve $R\hookrightarrow y(x)$ the canonical morphism
		$$F(x)\rightarrow\varprojlim_{y(x')\rightarrow R}F(x')$$
		is an equivalence. Notice also that we clearly have an equivalence $\coSh{X}{\C}\simeq\Sh{X}{\C\op }\op $.
	\end{remark}
	\begin{remark}
		It is well known that, for any $\infty$-site $X$, the $\infty$-category $\Shsp{X}$ is an $\infty$-topos. Unlike the case of 1-topoi, it's still unclear whether any $\infty$-topos is equivalent to $\Shsp{X}$ for some $\infty$-site $X$ (see \cite{345680}).
	\end{remark}
	
	We now give another description of $\infty$-categories of sheaves and cosheaves.
	
	\begin{lemma}\label{coshdistr}
		Let $X$ be an $\infty$-site, $\C$ be any cocomplete $\infty$-category. Then the restriction along the functor
		$\begin{tikzcd}[column sep= small]
			X\ar[r, "y"] & \Fun(X\op , \spaces)\ar[r, "L"]& \Shsp{X}
		\end{tikzcd}$
		defines an equivalence
		$$\coSh{X}{\C}\simeq \Fun_!(\Shsp{X}, \C),$$
		where $y$ is the Yoneda embedding and $L$ is the sheafification functor. Equivalently, a functor $X\rightarrow\C$ is a cosheaf if and only if its extension by colimits $\Fun(X\op , \spaces)\rightarrow\C$ factors through $L$. Dually, for any complete $\infty$-category $\C$, we have an equivalence
		$$\Sh{X}{\C}\simeq \Fun_{\ast}(\Shsp{X}\op , \C),$$
		where $\Fun_{\ast}$ denotes the full subcategory spanned by limit preserving functors.
	\end{lemma}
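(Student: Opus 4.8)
The plan is to prove the cosheaf statement and then obtain the sheaf statement by dualizing (replacing $\cate{C}$ with $\cate{C}^{op}$ and using $\coSh{X}{\cate{C}}\simeq\Sh{X}{\cate{C}^{op}}^{op}$ together with $\Fun_!(\Shsp{X},\cate{C})^{op}\simeq\Fun_*(\Shsp{X}^{op},\cate{C}^{op})$). So the heart of the matter is the first equivalence $\coSh{X}{\cate{C}}\simeq\Fun_!(\Shsp{X},\cate{C})$.

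The key idea is to run the argument in two stages, comparing everything against the presheaf picture supplied by Theorem~\ref{preshdual}. First I would recall that Theorem~\ref{preshdual} (or rather its first assertion, specialized to $\cate{D}=\cate{C}$) already gives an equivalence $\Fun_!(\Fun(X^{op},\spaces),\cate{C})\simeq\Fun_!(\cate{C},\Fun(X,\cate{C}))$ — but more usefully, the universal property of presheaves as the free cocompletion says that restriction along the Yoneda embedding $y:X\to\Fun(X^{op},\spaces)$ induces an equivalence $\Fun_!(\Fun(X^{op},\spaces),\cate{C})\simeq\Fun(X,\cate{C})$. Under this equivalence, a colimit-preserving functor $\Fun(X^{op},\spaces)\to\cate{C}$ is recovered as the left Kan extension of its restriction along $y$. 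This identifies $\Fun(X,\cate{C})$ (all cosheaves and non-cosheaves alike, i.e. the would-be ``precosheaves'') with cocontinuous functors out of presheaves, which is the unrestricted version of what I want.

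Second, I would cut down by the sheafification adjunction. Since $\Shsp{X}$ is a left Bousfield localization of $\Fun(X^{op},\spaces)$ with localization functor $L$ (left adjoint to the fully faithful inclusion $j:\Shsp{X}\hookrightarrow\Fun(X^{op},\spaces)$), precomposition with $L$ identifies $\Fun_!(\Shsp{X},\cate{C})$ with the full subcategory of $\Fun_!(\Fun(X^{op},\spaces),\cate{C})$ spanned by those cocontinuous functors that invert the local equivalences (equivalently, that send the unit $\id\to jL$ to an equivalence, equivalently that factor through $L$). The main bookkeeping step is then to translate this ``factors through $L$'' condition, under the free-cocompletion equivalence above, into the cosheaf condition on the restriction $F:X\to\cate{C}$. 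Concretely, a cocontinuous functor $\Phi:\Fun(X^{op},\spaces)\to\cate{C}$ factors through $L$ iff it inverts every morphism $R\hookrightarrow y(x)$ coming from a covering sieve (these generate the local equivalences as a strongly saturated class, by [Lur09, 6.2.2]); and since $\Phi$ is the left Kan extension of $F=\Phi\circ y$, applying $\Phi$ to such a morphism yields exactly the comparison map $\varinjlim_{y(x')\to R}F(x')\to F(x)$, whose being an equivalence is precisely the cosheaf condition in the dual form recorded in the preceding Remark. This shows the essential image of restriction is exactly $\coSh{X}{\cate{C}}$, and that the inverse equivalence is left Kan extension along $L\circ y$ as claimed.

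The hard part will be the last translation: making precise that ``$\Phi$ inverts all local equivalences'' reduces to inverting the generating sieve inclusions, and then checking that $\Phi$ applied to $(R\hookrightarrow y(x))$ really is the cosheaf comparison map. The reduction to generators requires that the collection of morphisms inverted by a \emph{cocontinuous} $\Phi$ is closed under colimits in the arrow category and satisfies the 2-out-of-3 and pushout-stability properties defining strong saturation — this is where cocontinuity of $\Phi$ is essential and where one leans on Lurie's description of the saturated class generated by the covering sieves. Everything else (the free cocompletion equivalence, the localization bijection on cocontinuous functors, and the final dualization) is formal.
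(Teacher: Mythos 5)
Your proposal is correct and takes essentially the same route as the paper: the paper's proof likewise combines the free-cocompletion property of $\text{Fun}(X^{op},\spaces)$ with the universal property of the Bousfield localization $L$ to embed $\text{Fun}_!(\Shsp{X},\cate{C})$ into cocontinuous functors on presheaves as those inverting the covering sieve inclusions $R\hookrightarrow y(x)$, identifies the image of such an inclusion under the Kan-extended functor with the comparison map $\varinjlim_{y(x')\rightarrow R}F(x')\rightarrow F(x)$, and handles the sheaf statement by duality. The only difference is cosmetic: the reduction from all local equivalences to the generating sieve inclusions, which you justify explicitly via strong saturation and [Lur09, 6.2.2], is left implicit in the paper's appeal to ``the universal property of localizations.''
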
 
	\begin{proof}
		Since $L$ commutes with colimits, by the universal property of localizations composition with $L$ embeds $\Fun_!(\Shsp{X}, \C)$ in $\Fun_!(\Fun(X\op ,\spaces), \C)$ as the full subcategory of functors sending covering sieves $R\hookrightarrow y(x)$ to equivalences in $\C$. On the other hand, a functor $F: X\rightarrow\C$ is a cosheaf precisely if there is an equivalence $$\varinjlim_{y(x')\rightarrow R}F(x')\simeq F(x)$$ for any sieve $R$ on $x\in X$, thus precisely if its extension by colimits $\Fun(X\op ,\spaces)\rightarrow\C$ lies in $\Fun_!(\Shsp{X}, \C)$.
	\end{proof}
	
	We provide a couple of examples of cosheaves.
	
	\begin{example}\label{pbcosh}
		\begin{enumerate}[(i)]
			\item Let $f:X\rightarrow Y$ be a continuous map between topological spaces. Recall that this induces a geometric morphism $\pf{f}:\Shsp{X}\rightarrow\Shsp{Y}$, which amounts to an adjunction $\pb{f}\dashv\pf{f}$, where $\pf{f}:\Shsp{X}\rightarrow\Shsp{Y}$ is defined by $\Sec{U} {\pf{f}{F} } = \Sec{f^{-1}(U)}{F}$ for any $U\subseteq Y$. By \cref{coshdistr}, one may characterize $\pb{f}:\Shsp{Y}\rightarrow\Shsp{X}$ as the essentially unique $\Shsp{X}$-valued cosheaf on $Y$ with the property that $\pb{f}(y(U)) = y(f^{-1}(U))$. 
			\item Let $\text{Top}$ be the 1-category of topological spaces, and $\text{Kan}\hookrightarrow\text{s}\cate{S}\text{et}$ be the full subcategory of all simplicial sets consisting of Kan complexes. Recall that there is a functor $\text{Top}\rightarrow\text{Kan}$ defined by assigning to each topological space $X$ its \textit{singular complex}, i.e. the simplicial set defined by $n\mapsto \Hom{\text{Top}}{\Delta^n}{X}$, where $\Delta^n$ is the standard n-simplex. Recall also that, by \cite[Theorem 7.8.9]{cisinski2019higher}, there is a functor $\text{s}\cate{S}\text{et}\rightarrow\spaces$ which identifies $\spaces$ as a localization of $\text{s}\cate{S}\text{et}$ at the class of weak homotopy equivalences. We define $\text{Sing} : \text{Top}\rightarrow\spaces$ as the composition of the two functors defined above. It is proven in \cite[A.3]{lurie2017higher} that, for any topological space $X$, the restriction of $\text{Sing}$ to poset of open subsets of $X$ $\Op{X}$ is indeed a cosheaf: this may be regarded as a non-truncated version of the classical Seifert-Van Kampen theorem. Furthermore, one can also show that $\text{Sing}$ is a \textit{hypercomplete cosheaf} (see \cite[Lemma A.3.10]{lurie2017higher}): this means that, as cocontinuous functor $\Shsp{X}\rightarrow\spaces$, $\text{Sing}$ factors through the \textit{hypercompletion} of $\Shsp{X}$.
		\end{enumerate} 
		
	\end{example}
	
	\begin{corollary}
		Let $X$ be an $\infty$-site, $\C$ be any presentable $\infty$-category. Then the inclusion $\coSh{X}{\C}\hookrightarrow\Fun(X, \C)$ admits a right adjoint.
	\end{corollary}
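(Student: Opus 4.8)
The plan is to deduce the existence of the right adjoint from the adjoint functor theorem: I would show that both the source and the target of the inclusion are presentable and that the inclusion preserves small colimits, so that a right adjoint exists automatically.

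First I would observe that the target $\text{Fun}(X, \cate{C})$ is presentable, since $X$ is small and $\cate{C}$ is presentable. For the source, I would invoke Lemma \ref{coshdistr}, which provides an equivalence $\coSh{X}{\cate{C}}\simeq\text{Fun}_!(\Shsp{X}, \cate{C})$. As $\Shsp{X}$ is an $\infty$-topos, and hence presentable, and $\cate{C}$ is presentable, the $\infty$-category $\text{Fun}_!(\Shsp{X}, \cate{C})$ of colimit-preserving functors is the internal mapping object of the closed symmetric monoidal structure on $\text{Pr}_{L}$ of Proposition \ref{tenspres}, and is in particular presentable. Thus $\coSh{X}{\cate{C}}$ is presentable.

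Next I would check that the inclusion preserves small colimits. The quickest route is to note directly that cosheaves are closed under small colimits inside $\text{Fun}(X, \cate{C})$: colimits there are computed pointwise, so for a diagram $(F_i)_i$ of cosheaves and a covering sieve $R\hookrightarrow y(x)$ one has
$$\varinjlim_{y(x')\to R}\bigl(\varinjlim_i F_i\bigr)(x')\simeq\varinjlim_i\varinjlim_{y(x')\to R}F_i(x')\simeq\varinjlim_i F_i(x)\simeq\bigl(\varinjlim_i F_i\bigr)(x),$$
using that colimits commute with colimits and that each $F_i$ is a cosheaf. Hence $\varinjlim_i F_i$ is again a cosheaf and the inclusion preserves colimits. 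Equivalently, under Lemma \ref{coshdistr} the inclusion is identified with restriction along $L\circ y: X\to\Shsp{X}$, which is visibly cocontinuous since colimits in functor categories are computed pointwise and a colimit of cocontinuous functors is cocontinuous.

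Finally, the inclusion is then a colimit-preserving functor between presentable $\infty$-categories, so it admits a right adjoint by the adjoint functor theorem (see for example [Cis19, Proposition 7.11.8]). I expect the only non-formal ingredient to be the presentability of $\coSh{X}{\cate{C}}$; once the description furnished by Lemma \ref{coshdistr} is available this follows from the good properties of $\text{Pr}_{L}$, and everything else is routine. Note that, in contrast to the sheaf case, one obtains a \emph{right} adjoint (a coreflection) precisely because cosheaves are closed under colimits rather than limits, and that this argument genuinely uses presentability of $\cate{C}$, so it does not dualize to sheaves with values in $\cate{C}^{op}$.
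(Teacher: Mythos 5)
Your proof is correct and takes essentially the same route as the paper: the paper likewise deduces presentability of $\coSh{X}{\cate{C}}$ from Lemma \ref{coshdistr}, citing [Lur09, Proposition 5.5.3.8] for the presentability of $\text{Fun}_!(\Shsp{X}, \cate{C})$ (the same fact you obtain via the internal hom of $\text{Pr}_L$), and then applies the adjoint functor theorem to the colimit-preserving inclusion. Your pointwise verification that cosheaves are closed under colimits simply spells out what the paper dismisses as obvious.
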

	
	\begin{proof}
		By \cite[Proposition 5.5.3.8]{lurie2009higher} and the previous lemma, the $\infty$-category $\coSh{X}{\C}$ is presentable. Thus, since $\coSh{X}{\C}\hookrightarrow\Fun(X, \C)$ obviously preserves colimits, we may conclude by the adjoint functor theorem.
	\end{proof} 
	
	\begin{corollary}\label{tensshC}
		Let $X$ be an $\infty$-site, $\C$ be any presentable $\infty$-category. Then we have an equivalence $\Shsp{X}\otimes\C\simeq\Sh{X}{\C}$.
	\end{corollary}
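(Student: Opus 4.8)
The plan is to deduce the equivalence by comparing two previously established descriptions of the relevant categories, so that the whole statement collapses to the recognition that two full subcategories of a functor category agree. First I would note that $\Shsp{X}$ is an $\infty$-topos, hence presentable, and that $\cate{C}$, being presentable, is in particular complete and cocomplete; thus both Proposition \ref{tenspres} and the dual half of Lemma \ref{coshdistr} are available. Proposition \ref{tenspres}, applied to the presentable $\infty$-categories $\Shsp{X}$ and $\cate{C}$, gives
$$\Shsp{X}\otimes\cate{C}\simeq\text{RFun}(\Shsp{X}^{op},\cate{C}),$$
and the dual statement of Lemma \ref{coshdistr}, valid since $\cate{C}$ is complete, gives
$$\Sh{X}{\cate{C}}\simeq\text{Fun}_{\ast}(\Shsp{X}^{op},\cate{C}).$$
Both right-hand sides are full subcategories of $\text{Fun}(\Shsp{X}^{op},\cate{C})$, the first spanned by functors admitting a left adjoint and the second by functors preserving small limits, so it suffices to check that these two conditions single out the same functors.

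The hard part, and indeed the only nonformal point, is the implication that a small-limit-preserving functor $G:\Shsp{X}^{op}\to\cate{C}$ automatically admits a left adjoint; the reverse implication is immediate, since right adjoints preserve limits. The obstruction to invoking the adjoint functor theorem [Cis19, Proposition 7.11.8] directly is that $\Shsp{X}^{op}$ is not presentable, so the accessibility of $G$ is not granted for free. To circumvent this I would argue pointwise: for each $c\in\cate{C}$ the functor $\Hom{\cate{C}}{c}{G(-)}:\Shsp{X}^{op}\to\spaces$ preserves small limits, whence by the representable functor theorem for the presentable $\infty$-category $\Shsp{X}$ [Lur09, Proposition 5.5.2.2] it is representable. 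Assembling the representing objects into a functor $\cate{C}\to\Shsp{X}$ produces the desired left adjoint to $G$, with no accessibility hypothesis needed. This identifies $\text{RFun}(\Shsp{X}^{op},\cate{C})$ with $\text{Fun}_{\ast}(\Shsp{X}^{op},\cate{C})$, and concatenating the two displayed equivalences yields $\Shsp{X}\otimes\cate{C}\simeq\Sh{X}{\cate{C}}$.

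A final routine check, which I would only sketch, is that the composite equivalence is implemented by the expected comparison functor induced by the variablewise colimit-preserving pairing $\Shsp{X}\times\cate{C}\rightarrow\Sh{X}{\cate{C}}$ sending $(F,c)$ to the $\cate{C}$-valued sheaf obtained by tensoring $F$ with $c$, so that the identification is canonical rather than merely abstract. This compatibility is what makes the equivalence usable later when reducing statements about $\Sh{X}{\cate{C}}$ to the case of sheaves of spaces.
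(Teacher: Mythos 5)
Your proposal is correct and takes essentially the same route as the paper, which also concatenates Proposition \ref{tenspres} with the dual half of Lemma \ref{coshdistr} after identifying $\text{Fun}_{\ast}(\Shsp{X}^{op},\cate{C})$ with $\text{RFun}(\Shsp{X}^{op},\cate{C})$. The paper disposes of that identification with a bare citation of the adjoint functor theorem, whereas you correctly flag that $\Shsp{X}^{op}$ is not presentable and supply the pointwise representability argument via [Lur09, Proposition 5.5.2.2] --- a legitimate and slightly more careful justification of the same step.
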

	
	\begin{proof}
		It follows from the adjoint functor theorem that $\Fun_{\ast}(\Shsp{X}\op ,\C)\simeq\text{RFun}(\Shsp{X}\op ,\C)$. Thus, by the previous lemma and by \Cref{tenspres}, we get the conclusion.
	\end{proof}
	
	\begin{construction}\label{dfntens}
		Let $X$ and $Y$ be two topological spaces. The functor
		$$
		\begin{tikzcd}[row sep = tiny]
			\Op{X}\times\Op{Y}\ar[r] & \Shsp{X\times Y} \\
			(U,V)\arrow[r, mapsto] & y(U\times V)
		\end{tikzcd}
		$$
		extends by colimits to a functor 
		$$\Fun(\Op{X}\op , \spaces)\times\Fun(\Op{Y}\op , \spaces)\rightarrow\Shsp{X\times Y}.$$
		Since it clearly sends covering sieves to equivalences in both variables, we obtain a functor
		$$
		\begin{tikzcd}[row sep = tiny]
			\Shsp{X}\times\Shsp{Y}\ar[r] & \Shsp{X\times Y} \\
			(F,G)\arrow[r, mapsto] & F\boxtimes G.
		\end{tikzcd}
		$$
		More generally, by \Cref{tensshC}, tensoring with two presentable $\infty$-categories $\C$ and $\D$ gives
		\begin{equation}\label{unstablekunneth}
			\Sh{X}{\C}\times\Sh{Y}{\D}\rightarrow\Sh{X\times Y}{\C\otimes\D}
		\end{equation}
		for which the image of a pair $(F, G)$ in the domain will still be denoted as $F\boxtimes G$.
		Let $\Delta : X\rightarrow X\times X$ be the diagonal. By post composing with $\pb{\Delta}$ we get a functor denoted by
		\begin{equation}\label{tensshCD}
			\begin{tikzcd}[row sep = tiny]
				\Sh{X}{\C}\times\Sh{X}{\D}\ar[r] & \Sh{X}{\C\otimes\D} \\
				(F,G)\arrow[r, mapsto] &  F\otimes G\coloneqq\pb{\Delta}(F\boxtimes G).
			\end{tikzcd}
		\end{equation}
		Suppose now that $\C$ is equipped with a symmetric monoidal structure $\otimes_{\C}$ such that the functor $\otimes_{\C} : \C\times\C\rightarrow\C$ preserves colimits in each variable. This induces a functor
		\begin{equation}\label{monoidalstrcoconttens}
			\C\otimes\C\rightarrow\C.
		\end{equation}
		Combining \cite[Proposition 3.2.4.7]{lurie2017higher} and \cref{tensshC}, one sees that $\Sh{X}{\C}$ admits a symmetric monoidal structure whose tensor preserves colimits in both variables. For the sake of brevity, we which will still denote the tensor product in $\Sh{X}{\C}$ by $\otimes_{\C}$.

		More concretely, the product $\otimes_{\C}$ in $\Sh{X}{\C}$ is given by the composition of the cocontinuous functor $\Sh{X}{\C\otimes\C}\rightarrow\Sh{X}{\C}$ obtained by applying  $\Shsp{X}\otimes-$ to (\ref{monoidalstrcoconttens}) and (\ref{tensshCD}). Alternatively, one can check that the functor
		\begin{equation}\label{monshC}
			\otimes_{\C} : \Sh{X}{\C}\times \Sh{X}{\C}\rightarrow\Sh{X}{\C}
		\end{equation} 
		can be described as
		$$(F, G)\mapsto L^{\C}(U\mapsto\Sec{U}{F}\otimes_{\C}\Sec{U}{G})$$ where $L^{\C}$ is the sheafification for $\C$-valued presheaves. 
		In particular, we have that any $F\in\Sh{X}{\C}$ induces a colimit preserving functor 
		$$-\otimes_{\C} F : \Sh{X}{\C}\rightarrow\Sh{X}{\C}.$$
		Since $\Sh{X}{\C}$ is presentable, this has a right adjoint denoted by $$\sHom{X}{F}{-} : \Sh{X}{\C}\rightarrow\Sh{X}{\C}.$$ This functor supplies $\Sh{X}{\C}$ with a self-enrichment and for this reason will be called \textit{internal Hom sheaf} functor.
	\end{construction}
	
	\begin{remark}\label{tenstomon}
		Let $X$ and $Y$ be two topological spaces, $\C$, $\D$ and $\E$ be presentable $\infty$-categories, and let $\alpha : \Shsp{X}\rightarrow\Shsp{Y}$ and $\Phi : \C\otimes\D\rightarrow\E$ be cocontinuous functors. Notice that the functoriality in each variable of the tensor product of cocomplete $\infty$-categories gives a commutative diagram
		$$
		\begin{tikzcd}
			\Sh{X\times X}{\C\otimes\D}\ar[r, "{\pb{\Delta}}"] \ar[d, "{\pb{\Delta}}"] & \Sh{X}{\C\otimes\D} \ar[r, "{\alpha\otimes(\C\otimes\D)}"] & \Sh{Y}{\cate{\C\otimes\D}} \ar[d, "{\Shsp{Y}\otimes\Phi}"] \\
			\Sh{X}{\C\otimes\D} \ar[r, "{\Shsp{X}\otimes\Phi}"] & \Sh{X}{\E} \ar[r, "{\alpha\otimes\E}"] & \Sh{Y}{\E}.
		\end{tikzcd}
		$$
		In particular, the diagram above shows that whenever we prove a formula involving the functor (\ref{tensshCD}) and operations on sheaves coming from some continuous map, then we may deduce immediately a corresponding formula for the functor $(\ref{monshC})$.
	\end{remark}
	
	We may now formulate the following proposition, which could be interpreted as a sort of K\"unneth formula (we will actually see later in \cref{kunneth} how one can deduce the K\"unneth formula from this).
	
	\begin{proposition}\label{tensprodspaces}
		Let $X$ and $Y$ be topological spaces, $\C$ and $\D$ two presentable $\infty$-categories, and assume that one of the two is locally compact. Then the functor (\ref{unstablekunneth}) induces an equivalence
		$$\Sh{X}{\C}\otimes\Sh{Y}{\D}\simeq\Sh{X\times Y}{\C\otimes\D}.$$ Moreover, let $f:X\rightarrow X'$ and $g: Y\rightarrow Y'$ be two continuous maps and assume that at least one among $X'$ and $Y'$ is locally compact. Then we have an equivalence
		$$\pb{(f\times g)}_{\C\otimes\D}(F\boxtimes G)\simeq\pb{f}_{\C}F\boxtimes\pb{g}_{\D}G$$
		which is functorial on $F\in\Sh{X'}{\C}$ and $G\in\Sh{Y'}{\D}$.
	\end{proposition}
	
	\begin{proof}
		By \cite[Proposition 7.3.3.9]{lurie2009higher}, for any topological space $X$ and any $\infty$-topos $\Y$, $\Sh{X}{\Y}$ is a product of $\Shsp{X}$ and $\Y$ in the $\infty$-category $\Top$. Thus, by the previous corollary combined with \cite[Proposition 7.3.1.11]{lurie2009higher}, if $Y$ is a locally compact topological space, we have an equivalence $$\Shsp{X}\otimes\Shsp{Y}\simeq\Shsp{X\times Y}.$$ For the second part of the statement, we first observe that by \cref{tensshC} and \cref{tenspres} it suffices to prove the case when $\C= \D= \spaces$, which amounts to providing a commutative square 
		$$
		\begin{tikzcd}
			\Shsp{X'}\times\Shsp{Y'} \ar[r] \ar[d, "{\pb{f}\times\pb{g}}"] & \Shsp{X'\times Y'} \ar[d, "{\pb{(f\times g)}}"] \\
			\Shsp{X}\times\Shsp{Y} \ar[r] & \Shsp{X\times Y}.
		\end{tikzcd}
		$$
		Since both the top right and the down left composition commute with colimits in both variables, one then gets this by \cref{coshdistr}, \cref{pbcosh} (i) and by observing that $$(f\times g)^{-1}(U\times V) = f^{-1}(U)\times g^{-1}(V)$$ for any $U$ and $V$ open subsets of $X'$ and $Y'$ respectively.
	\end{proof}
	
	\begin{corollary}[Monoidality]\label{monpb}
		Let $f:X\rightarrow Y$ be a morphism of locally compact topological spaces, $\C$ and $\D$ two presentable $\infty$-categories. Let $F,G, H$ be sheaves on $Y$, and let $K$ be a sheaf on $X$. Then we have a canonical identification
		$$\pb{f}(F\otimes G)\simeq\pb{f}F\otimes\pb{f}G$$
		and in particular, when $\C = \D$ has a symmetric monoidal structure, by transposition
		$$\pf{f}\sHom{X}{\pb{f}H}{K}\simeq\sHom{Y}{H}{\pf{f}K}.$$
	\end{corollary}
	\begin{proof}
		The commutativity of the diagram
		$$
		\begin{tikzcd}[column sep = 60, row sep = 30]
			\Sh{Y}{\C}\times\Sh{Y}{\D}\arrow[r, "{(\pb{f},\pb{f})}"]\arrow[d, "\otimes"'] & \Sh{X}{\C}\times\Sh{X}{\D}\arrow[d, "\otimes"] \\
			\Sh{Y}{\C\otimes\D}\arrow[r, "\pb{f}"] & \Sh{X}{\C\otimes\D}
		\end{tikzcd}
		$$
		follows from the commutativity of
		$$
		\begin{tikzcd}[column sep = 50, row sep = 25]
			X\arrow[r, "f"]\arrow[d, "\Delta"'] & Y\arrow[d, "\Delta"] \\
			X\times X\arrow[r, "{(f, f)}"] & Y\times Y
		\end{tikzcd}
		$$
		that is trivially verified. The last part follows directly from \cref{tenstomon}.
	\end{proof}
	
	Consider now the $\infty$-category $\Sh{X}{\C}$, where $X$ is any $\infty$-site and $\C$ is complete and cocomplete. It is natural to ask oneselves whether at this level of generality one is still able to obtain a result like \Cref{tensshC}, at least when the inclusion $\Sh{X}{\C}\hookrightarrow\Fun(X\op , \C)$ admits a left adjoint. In the rest of the section we will briefly outline the reason why the answer to this question doesn't seem to be affirmative. We start with a general proposition concerning left Bousfield localizations and categories of local objects.
	\begin{proposition}\label{lfunloc}
		Let $\C$ be an $\infty$-category and $S$ a class of morphism in $\C$. Denote by $\C_S$ the full subcategory of $\C$ spanned by $S$-local objects, and assume that the inclusion $i:\C_S\hookrightarrow\C$ admits a left adjoint $L$. Thus, composition with $L$ gives a fully faithful functor
		\begin{equation}\label{locobj}
			\text{LFun}(\C_S, \D)\hookrightarrow\text{LFun}(\C, \D)
		\end{equation}
		whose essential image is given by left adjoints $\C\rightarrow\D$ sending all morphisms in $S$ to equivalences.
	\end{proposition}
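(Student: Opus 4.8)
The plan is to leverage the reflective localization $L\dashv i$ together with the formation of adjoints, handling the full faithfulness of (\ref{locobj}) and the description of its essential image separately. For full faithfulness I would first observe that the adjunction $L\dashv i$ induces an adjunction $L^{*}\dashv i^{*}$ between the precomposition functors on functor $\infty$-categories; since $i$ is fully faithful the counit $Li\to\mathrm{id}_{\cate{C}_S}$ is an equivalence, so $i^{*}L^{*}\simeq(Li)^{*}\simeq\mathrm{id}$, meaning the unit of $L^{*}\dashv i^{*}$ is an equivalence and hence $L^{*}=(-)\circ L$ is fully faithful on all of $\text{Fun}(\cate{C}_S,\cate{D})$ (this is the usual universal property of reflective localizations, cf. [Cis19]). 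As a composite of two left adjoints is again a left adjoint, $L^{*}$ carries $\text{LFun}(\cate{C}_S,\cate{D})$ into $\text{LFun}(\cate{C},\cate{D})$, and the restriction of a fully faithful functor to full subcategories remains fully faithful; this establishes (\ref{locobj}).

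It remains to identify the essential image, and I would prove the two inclusions. First, $L$ inverts every morphism of $S$: for $s:A\to B$ in $S$ and $X\in\cate{C}_S$, the adjunction $L\dashv i$ identifies the map $\Hom{}{LB}{X}\to\Hom{}{LA}{X}$ induced by $Ls$ with the map $\Hom{}{B}{iX}\to\Hom{}{A}{iX}$ induced by $s$, which is an equivalence since $iX$ is $S$-local, so $Ls$ is an equivalence by the Yoneda lemma. Consequently every functor of the form $G\circ L$ inverts $S$, so the essential image of (\ref{locobj}) is contained in the left adjoints that invert $S$.

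For the reverse inclusion, which is the crux, let $F:\cate{C}\to\cate{D}$ be a left adjoint with right adjoint $F^{R}$, and suppose $F$ inverts $S$. The decisive point is that $F^{R}$ then takes values in $S$-local objects: for $d\in\cate{D}$ and $s:A\to B$ in $S$, transposing across $F\dashv F^{R}$ identifies the map $\Hom{}{B}{F^{R}d}\to\Hom{}{A}{F^{R}d}$ with $\Hom{}{FB}{d}\to\Hom{}{FA}{d}$, an equivalence, so $F^{R}d$ is $S$-local and in particular $F^{R}d\simeq iLF^{R}d$. Writing the unit of $L\dashv i$ as $\eta:\mathrm{id}_{\cate{C}}\to iL$, I would then show that $F$ inverts each $\eta_{X}$: for every $d$, transposing across $F\dashv F^{R}$ identifies the map $\Hom{}{FiLX}{d}\to\Hom{}{FX}{d}$ induced by $F(\eta_{X})$ with the map $\Hom{}{iLX}{F^{R}d}\to\Hom{}{X}{F^{R}d}$ induced by $\eta_{X}$, and the latter is an equivalence by full faithfulness of $i$ together with the adjunction $L\dashv i$, since $F^{R}d$ is $S$-local. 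Hence $F\circ\eta:F\to F\circ iL$ is a natural equivalence, whence $F\simeq(F\circ i)\circ L$ with $F\circ i:\cate{C}_S\to\cate{D}$ a left adjoint, its right adjoint being $L\circ F^{R}$; this exhibits $F$ in the essential image of (\ref{locobj}) and finishes the identification.

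The main obstacle is precisely this last inclusion: a functor that merely inverts $S$ need not invert the $S$-local equivalences $\eta_{X}$, so the statement would genuinely fail for arbitrary functors rather than left adjoints. The mechanism that repairs this is passing to the right adjoint $F^{R}$ in order to land it in $\cate{C}_S$, which converts the question about $F(\eta_{X})$ into a mapping-space assertion about $\eta_{X}$ tested against $S$-local targets, where the reflective adjunction $L\dashv i$ forces the desired equivalence.
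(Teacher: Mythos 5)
Your proof is correct, and it hinges on the same decisive idea as the paper's: transposing across $F\dashv F^{R}$ to show that the right adjoint of an $S$-inverting left adjoint takes values in $S$-local objects. The endgames differ, though. The paper quotes [Cis19, Proposition 7.1.18], which already supplies full faithfulness of composition with $L$ together with the identification of the essential image as the left adjoints inverting $W$, the class of maps sent by $L$ to equivalences; the proof is then reduced to showing that an $S$-inverting left adjoint inverts all of $W$, via the chain $\Hom{\cate{D}}{F(c)}{d}\simeq\Hom{\cate{C}}{c}{iG'(d)}\simeq\Hom{\cate{C}_S}{L(c)}{G'(d)}$ and the invertibility of $L(f)$. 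You bypass that citation entirely: you prove full faithfulness of $(-)\circ L$ directly from the induced adjunction on functor categories, and rather than testing $F$ against all of $W$ you test it only on the unit components $\eta_X$, which yields the explicit factorization $F\simeq(F\circ i)\circ L$ (with the adjunction $F\circ i\dashv L\circ F^{R}$) and hence membership in the essential image by construction. Your version is more self-contained and exhibits the preimage concretely; the paper's is shorter given the reference and proves the slightly stronger intermediate fact that $F$ inverts every morphism in $W$, not just the units. One labelling slip to fix: precomposition reverses adjunctions, so $L\dashv i$ induces $i^{*}\dashv L^{*}$, not $L^{*}\dashv i^{*}$; your equivalence $i^{*}L^{*}\simeq(Li)^{*}\simeq\mathrm{id}$ is then the \emph{counit} of this adjunction being invertible, which makes the \emph{right} adjoint $L^{*}$ fully faithful --- the computation you display is exactly the correct ingredient, only the words ``unit'' and the direction of the adjunction need to be swapped.
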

	\begin{proof}
		Let $W$ be the class of morphisms in $\C$ which are sent by $L$ to equivalences and denote by $\cate{A}$ and $\cate{A}'$ the full subcategories of $\text{LFun}(\C, \D)$ spanned respectively by left adjoints $\C\rightarrow\D$ sending all morphisms in $W$ to equivalences and left adjoints $\C\rightarrow\D$ sending all morphisms in $S$ to equivalences. By \cite[ Proposition 7.1.18]{cisinski2019higher}, we already know that (\ref{locobj}) is fully faithful and that its essential image is given by $\cate{A}$. It follows immediately by the definition of a local object that $L$ sends all morphisms in $S$ to equivalences, thus we just need to show that $\cate{A}'$ is contained in $\cate{A}$. 
		\par 
		Consider a functor $F:\C\rightarrow\D$ in $\cate{A}'$ with right adjoint $G:\D\rightarrow\C$. By definition of $\cate{A}'$, we have that for every morphisms in $f\in S$ and every $d\in\D$
		$$
		\begin{tikzcd}
			\Hom{\C}{c}{G(d)}\arrow[r, "f"]\arrow[d,"\simeq"] & \Hom{\C}{c'}{G(d)} \arrow[d,"\simeq"] \\
			\Hom{\D}{F(c)}{d}\arrow[r, "F(f)"] & \Hom{\D}{F(c')}{d}.
		\end{tikzcd}
		$$
		Thus $G(d)$ is $S$-local, and hence there exists a functor $G' : \D\rightarrow\C_S$ suche that $G = iG'$. Let now $f$ be a morphism in $W$.  By definition of $W$ we have, functorially on $d\in\D$,
		$$
		\begin{tikzcd}
			\Hom{\D}{F(c)}{d}\arrow[r, "F(f)"]\arrow[d,"\simeq"] & \Hom{\D}{F(c')}{d} \arrow[d,"\simeq"] \\
			\Hom{\C}{c}{iG'(d)}\arrow[r, "f"] \arrow[d,"\simeq"] & \Hom{\C}{c'}{iG'(d)} \arrow[d,"\simeq"] \\
			\Hom{\C_S}{L(c)}{G'(d)}\arrow[r, "L(f)", "\simeq"'] & \Hom{\C_S}{L(c')}{G'(d)},
		\end{tikzcd}
		$$
		and hence $F(f)$ is invertible, and so we may conclude.
	\end{proof}
	We now claim that there exists a class of morphisms $S_X$ of $\Fun(X\op , \C)$ such that $\Sh{X}{\C}$ can be identified with the full subcategory of $S_X$-local objects of $\Fun(X\op , \C)$. We define $S_X$ as the class of morphisms
	$$S_X = \{R\boxtimes M\rightarrow y(x)\boxtimes M\mid R\hookrightarrow y(x) \,\text{is a sieve,}\,M\in\C\}.$$ For any sieve $R\hookrightarrow y(x)$, $M\in\C$ and $F\in\Fun(X\op , \C)$, since $R\simeq\varinjlim\limits_{y(x')\rightarrow R}y(x')$, we have a commutative diagram
	$$\begin{tikzcd}
		\Hom{\C}{-}{F(x)}\arrow[r] \arrow[d, "\simeq"] & \Hom{\C}{-}{\varprojlim\limits_{y(x')\rightarrow R}F(x')}\arrow[d, "\simeq"] \\
		\Hom{\Fun(X\op , \C)}{y(x)\boxtimes -}{F}\arrow[r, "i"] & \Hom{\Fun(X\op , \C)}{R\boxtimes -}{F}
	\end{tikzcd}
	$$
	where the upper horizontal arrow is induced by the canonical map $F(x)\rightarrow\varprojlim\limits_{y(x')\rightarrow R}F(x')$. Thus, we see that the upper horizontal arrow is invertible if and only if the lower horizontal one is, and so $F$ is a sheaf if and only if it is $S_X$-local. In paricular, by \cref{lfunloc}, whenever the inclusion $\Sh{X}{\C}\hookrightarrow\Fun(X\op , \C)$ admits a left adjoint, the $\infty$-category $\Sh{X}{\C}$ is characterized by the universal property 
	$$\text{LFun}(\Sh{X}{\C}, \D)\hookrightarrow\text{LFun}_{S_X}(\Fun(X\op , \C), \D)$$
	where the right-hand side denotes the $\infty$-category of left adjoint functors sending all morphisms in $S_X$ to equivalence. On the other hand, tensoring the usual sheafification $\Fun(X\op , \spaces)\rightarrow\Shsp{X}$ with $\C$ gives a colimit preserving functor $$L':\Fun(X\op , \C)\simeq\Fun(X\op , \spaces)\otimes\C\rightarrow\Shsp{X}\otimes\C.$$ Combining the universal property of the tensor product of cocomplete categories and \Cref{preshdual}, we see that, for any cocomplete $\infty$-category $\D$, precomposition with $L'$ may be factored as
	\begin{align*}
		\Fun_!(\Shsp{X}\otimes\C, \D)&\simeq \Fun_!(\C, \Fun_!(\Shsp{X}, \D)) \\
		&\hookrightarrow \Fun_!(\C, \Fun_!(\Fun(X\op ,\spaces),\D)) \\ 
		&\simeq \Fun_!(\Fun(X\op ,\C), \D)	
	\end{align*}
	and hence indentifies $\Fun_!(\Shsp{X}\otimes\C, \D)$ with the full subcategory of $\Fun_!(\Fun(X\op ,\C), \D)$ spanned by those functors sending maps in $S_X$ to equivalences. Hence we obtain a comparison functor $$\Shsp{X}\otimes\C\rightarrow\Sh{X}{\C}$$
	but unless $\C$ is presentable, there is no evident reason why one should expect this to be an equivalence.
	
	\begin{remark}\label{nosheafification}
		A close inspection of the proof of \cite[Proposition 6.2.2.7]{lurie2009higher} shows that the usual formula for sheafifcation provides the desired left adjoint whenever $\C$ is bicomplete and, for every $x\in X$ and every sieve $R\hookrightarrow y(x)$, the functor
		$$
		\begin{tikzcd}[row sep = tiny]
			\Fun(X\op , \C) \arrow[r] & \C \\
			F \arrow[r, mapsto] & (\pf{s}F)(x, R)\simeq\varprojlim\limits_{y(x')\rightarrow R}F(x')
		\end{tikzcd}
		$$
		is accessible: this will be true automatically for example when $\C$ is presentable, since any functor between presentable $\infty$-categories which is a right adjoint is automatically accessible. A similar observation in the case of sheaves with values in ordinary 1-categories can be found in \cite[17.4]{schapira2006categories}. However, if we drop the presentability assumption for $\C$, it is not clear a priori if the inclusion $\Sh{X}{\C}\hookrightarrow\Fun(X\op , \C)$ should admit a left adjoint. 
	\end{remark}
	
	\begin{remark}\label{natongen}
		Suppose that $\C$ is such that $\Sh{X}{\C}\hookrightarrow\Fun(X\op , \C)$ admits a left adjoint $L$. Hence, for any $x\in X$ and $M\in\C$, by applying $L$ to $x\boxtimes M$ gives an object denoted by $M_x$ with the property that, for any other sheaf $F$, we have a functorial identification
		$$\Hom{}{M_x}{F}\simeq\Hom{}{M}{F(x)}.$$
		It follows from \cite[Proposition 7.1.18]{cisinski2019higher} and \Cref{preshdual} that, for any cocomplete $\infty$-category $\D$, we have a fully faithful functor
		$$\Fun_!(\Sh{X}{\C}, \D)\hookrightarrow\Fun_!(\C, \Fun(A, \D))$$
		and thus any cocontinuous functor with domain $\Sh{X}{\C}$ is uniquely determined by its restriction along the functor 
		$$
		\begin{tikzcd}[row sep = tiny]
			X\times\C\ar[r] & \Sh{X}{\C} \\
			(x, M)\ar[r, maps to] & M_x.
		\end{tikzcd}
		$$
	\end{remark}
	
	\section{Shape theory and shape submersions}
	In this section we will deal with questions related to shape theory from the perspective of higher topos theory: we recommend \cite[Appendix A]{lurie2017higher} and \cite{hoyois2018higher} for some good introductory accounts to this subject. We will start by defining a version of shape which is relative to a geometric morphism, and give a detailed description of its functoriality as well as a proof of its homotopy invariance. After that we will define essential and locally contractible geometric mophisms: the first notion refers to morphisms $\pf{f}:\X\rightarrow\Y$ whose relative shape is constant (as a pro-object on $\Y$) locally on $\X$, while the second to essential geometric morphisms satisfying an additional push-pull formula. After that we will define shape submersions, i.e. continuous maps which are locally given by projections $X\times Y\rightarrow Y$, where $X$ is such that the unique geometric morphism $\Shsp{X}\rightarrow \spaces$ is essential. These are proven to satisfy a base change formula, which will imply that they induce locally contractible geometric morphisms.
	\subsection{Relative shape}
	For any $\infty$-category $\C$, denote by $\Pro(\C)$ the $\infty$-category of pro-objects in $\C$, i.e. the free completion of $\C$ under cofiltered limits. When $\C$ is accessible (e.g. presentable, but more generally see \cite[Definition 5.4.2.1]{lurie2009higher}) and admits finite limits, one shows (\cite[Proposition 3.1.6]{lurie2011dagxiii}) that $\Pro(\C)$ is in fact equivalent to the full subcategory of $\Fun(\C, \spaces)\op $ spanned by the accessible (see \cite[Definition 5.4.2.5]{lurie2009higher}) left exact functors. 
	\par
	Let $F: \C\rightarrow \D$ be an accessible functor between presentable $\infty$-categories, and let $G : \D\rightarrow\Fun(\C, \spaces)\op $ be the composition of the Yoneda embedding $\D\hookrightarrow\Fun(\D, \spaces)\op $ with $F^{\ast} : \Fun(\D, \spaces)\op \rightarrow\Fun(\C, \spaces)\op $. By the adjoint functor theorem, $G$ factors through $\C$ if and only if $F$ commutes with limits, and when this condition is verified $G$ is a left adjoint to $F$. If $F$ is only left exact, by the characterization stated above $G$ factors through $\Pro(\C)$: in this case, we say that $G$ is the \textit{left pro-adjoint} of $F$. Notice that, in this situation, $G$ is a genuine left adjoint of the functor $\Pro(F) : \Pro(\C)\rightarrow\Pro(\D)$.
	\par
	Specializing to the case of a geometric morphism between $\infty$-topoi $\pf{f}:\X\rightarrow\Y$, we see that the pullback $\pb{f}$ admits a pro-left adjoint, that we will denote by $\pfs{f}: \X\rightarrow\Pro(\Y)$. 
	More explicitly, for every object $U\in\Y$, $\pfs{f}(U)$ is the pro-object on $\X$ defined by the assignment
	$$V\mapsto\Hom{\X}{U}{\pb{f}(V)}.$$
	\begin{definition}
		Let $\X$ be an $\infty$-topos, $\pf{f}:\X\rightarrow\Y$ a geometric morphism. We define the \textit{shape of $\X$ relative to $\Y$} as
		$$\Pi_{\infty}^{\Y}(\X)\coloneqq\pfs{f}1_{\X},$$
		where $1_{\Y}$ is a terminal object of $\Y$. We will say that $f$ is of \textit{constant shape} if $\Pi_{\infty}^{\Y}(\X)$ belongs to $\Y$, i.e. it is constant as a pro-object. In the case where $f$ is the unique geometric morphism $\pf{a} : \X\rightarrow\spaces$, $\pfs{a}1_{\X}$ will be denoted just by $\Pi_{\infty}(\X)$ and will be called the \textit{shape} or \textit{fundamental pro-$\infty$-groupoid} of $\X$.
	\end{definition}
	\begin{remark}
		Notice that, as a left exact functor $\X\rightarrow\spaces$, $\pfs{f}1_{\X}$ can be identified with the functor $\pf{a}\pf{f}\pb{f}$, where $a:\X\rightarrow\spaces$ is the unique geometric morphism.
	\end{remark}
	\begin{proposition}\label{functrelsh}
		There exists a functor 
		$$\Pi_{\infty}^{\Y}: \Top_{/\Y}\rightarrow\Pro(\Y)$$
		whose values on objects coincides with the shape relative to $\Y$ and whose values on morphisms $\begin{tikzcd}[column sep = tiny, row sep = tiny]
			\X \arrow[rd, "f"'] \arrow[rr, "g"] &          & \cate{X^{\prime}} \arrow[ld, "f^{\prime}"] \\
			& \Y &                                           
		\end{tikzcd}$ is given by the transformation
		$$\pf{a}\pf{f'}\pb{f'}\rightarrow\pf{a}\pf{f'}\pf{g}\pb{g}\pb{f'}\simeq\pf{a}\pf{f}\pb{f}$$
		induced by the unit of the adjunction $\pb{g}\dashv\pf{g}$. 
	\end{proposition}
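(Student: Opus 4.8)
The plan is to realise $\Pi_{\infty}^{\cate{X}}$ as the composite of a \emph{functorial} formation of left pro-adjoints with the (functorial) choice of terminal objects, so that functoriality is built into the construction rather than checked on individual simplices. First I would rephrase the target. Since $\cate{X}$ is an $\infty$-topos, hence presentable and in particular accessible with finite limits, [Lur09, Proposition 3.1.6] identifies $\text{Pro}(\cate{X})$ with the full subcategory of $\text{Fun}(\cate{X},\spaces)^{op}$ spanned by the left exact functors. Under this identification, and by the Remark preceding the statement, the object $\Pi_{\infty}^{\cate{X}}(\cate{Y})=\pfs{f}1_{\cate{Y}}$ is the left exact functor $\pf{a}\pf{f}\pb{f}$. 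This reformulation is what lets me recognise the morphism in the statement, but the actual construction will be intrinsic.

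For the construction I would unstraighten the pushforward functor $\cate{T}\text{op}\hookrightarrow\text{Pr}_{R}\hookrightarrow\infcat$ (sending a geometric morphism $g$ to $\pf{g}$) into a cocartesian fibration $p:\mathcal{E}\to\cate{T}\text{op}$ whose fibre over $\cate{Y}$ is $\cate{Y}$ and whose cocartesian transport is $\pf{g}$, and then pull it back along $\cate{T}\text{op}_{/\cate{X}}\to\cate{T}\text{op}$ to get $\mathcal{E}\to\cate{T}\text{op}_{/\cate{X}}$. Because every $\pf{g}$ is a right adjoint it preserves terminal objects, so the objects $1_{\cate{Y}}$ are compatible with cocartesian transport and assemble into a section $t$ of $p$. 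On the other hand, passage to left pro-adjoints is itself functorial over the slice: for a triangle $f=f'g$ over $\cate{X}$ one has $\pb{f}\simeq\pb{g}\pb{f'}$, whence $\pfs{f}$ is obtained from $\pfs{g}:\cate{Y}\to\text{Pro}(\cate{Y}')$ followed by $\text{Pro}(\pfs{f'})$ and the flattening $\text{Pro}(\text{Pro}(\cate{X}))\to\text{Pro}(\cate{X})$, all the higher coherences being governed by the equivalence $\text{Pr}_{L}\simeq\text{Pr}_{R}^{op}$ and its pro-version. The fiberwise functors $\pfs{f}$ all take values in the \emph{same} $\text{Pro}(\cate{X})$, so they assemble into a functor $\mathcal{E}\to\text{Pro}(\cate{X})$, $U\mapsto\pfs{f}(U)$; restricting it along the terminal section $t$ produces $\Pi_{\infty}^{\cate{X}}=\pfs{(-)}\circ t:\cate{T}\text{op}_{/\cate{X}}\to\text{Pro}(\cate{X})$, whose value on objects is $\pfs{f}(1_{\cate{Y}})$ by construction.

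It then remains to check that the value on a morphism $g$ matches the transformation in the statement, which I would verify by unwinding the definitions. The map $\Pi_{\infty}^{\cate{X}}(\cate{Y})\to\Pi_{\infty}^{\cate{X}}(\cate{Y}')$ produced above is $\text{Pro}(\pfs{f'})$ applied to the canonical comparison $\pfs{g}1_{\cate{Y}}\to 1_{\cate{Y}'}$ in $\text{Pro}(\cate{Y}')$; the latter, viewed as a map of left exact functors $\cate{Y}'\to\spaces$, is $\pf{a}\pf{f'}$ applied to the unit $\mathrm{id}\to\pf{g}\pb{g}$ (using $\pb{g}1_{\cate{Y}'}\simeq 1_{\cate{Y}}$). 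Whiskering the unit on the left by $\pf{f'}$ and on the right by $\pb{f'}$ and postcomposing with $\pf{a}$ therefore reproduces exactly $\pf{a}\pf{f'}\pb{f'}\to\pf{a}\pf{f'}\pf{g}\pb{g}\pb{f'}\simeq\pf{a}\pf{f}\pb{f}$. I expect the genuine obstacle to lie not in this bookkeeping but in the middle step: rigorously producing the functor $\pfs{(-)}$ with all its coherences, i.e. upgrading the pointwise composition law for left pro-adjoints to an honest functor out of $\mathcal{E}$. This is the content that is most naturally expressed $(\infty,2)$-categorically and that ultimately promotes $\Pi_{\infty}^{\cate{X}}$ to the lax natural transformation of Remark~\ref{laxshape}; concretely I would obtain it by combining the functoriality of adjoints (the triangular identities, as in [Cis19, Theorem 6.1.23]) with the functoriality of $\text{Pro}(-)$.
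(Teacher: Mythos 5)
Your construction has a concrete variance error before you even reach the step you flag as the hard one. You unstraighten $g\mapsto\pf{g}$ into a cocartesian fibration $p:\mathcal{E}\rightarrow\cate{T}\text{op}$, so a morphism in $\mathcal{E}$ lying over $g:\cate{Y}\rightarrow\cate{Y}'$ from $U$ to $U'$ is the datum of a map $\pf{g}U\rightarrow U'$ in $\cate{Y}'$. For your asserted functor $\mathcal{E}\rightarrow\text{Pro}(\cate{X})$, $U\mapsto\pfs{f}U$, you must turn this datum into a map $\pfs{f}U\rightarrow\pfs{f'}U'$, i.e.\ (via $\pfs{f}\simeq\pfs{f'}\pfs{g}$ and the pro-adjunction $\pfs{g}\dashv\pb{g}$) into a map $U\rightarrow\pb{g}U'$ in $\cate{Y}$; but the adjunction $\pb{g}\dashv\pf{g}$ runs the other way, and there is no canonical map $\Hom{\cate{Y}'}{\pf{g}U}{U'}\rightarrow\Hom{\cate{Y}}{U}{\pb{g}U'}$. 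So the fiberwise functors $\pfs{f}$ do \emph{not} assemble over your $\mathcal{E}$. The fix is to use instead the cartesian fibration classifying $g\mapsto\pb{g}$, whose morphisms over $g$ are maps $U\rightarrow\pb{g}U'$ (these do induce $\pfs{f}U\simeq\pfs{f'}\pfs{g}U\rightarrow\pfs{f'}U'$), and whose terminal section exists because $\pb{g}$ is left exact.

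Even after this repair, the step you yourself call the ``genuine obstacle'' --- coherently assembling $(f,U)\mapsto\pfs{f}U$ into a single functor on the total space --- is the entire mathematical content of the proposition, and the tools you cite do not supply it. [Cis19, Theorem 6.1.23] characterizes one adjunction by triangular identities and yields objectwise uniqueness of adjoints, not a coherent family of them; and the equivalence $\text{Pr}_L\simeq\text{Pr}_R^{op}$ is unavailable here, since $\text{Pro}(\cate{X})$ is not presentable and $\pfs{f}$ is only a pro-adjoint, so one would need genuine $(\infty,2)$-categorical mate/adjoint-functoriality machinery (e.g.\ [Hau20, Theorem 3.22], which the paper deliberately reserves for the lax enhancement in Remark \ref{laxshape}) or the representability construction of [Lur16, Proposition E.2.2.1] via the right adjoint $\beta$ --- which the remark following the proposition explicitly sets aside because it obscures exactly the action on morphisms that the statement must identify. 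The paper's proof avoids coherent adjoint formation altogether: it reduces to the covariant assignment $f\mapsto\pf{f}\pb{f}$ valued in $\text{Fun}^{lex}(\cate{X},\cate{X})$, embeds via Yoneda into $\text{Fun}(\cate{X}^{op}\times\cate{X},\spaces)$ where this assignment is controlled by the bifunctor $\Hom{\cate{Y}}{\pb{f}(-)}{\pb{f}(-)}$, and then realizes the latter \emph{strictly}, at the level of $1$-categories, as the pullback of the twisted diagonal left fibration $\text{S}(\cate{Y})\rightarrow\cate{Y}^{op}\times\cate{Y}$ along $\pb{f}^{op}\times\pb{f}$, before localizing quasi-categories and left fibrations at the appropriate weak equivalences. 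Your final unwinding of the value on morphisms is consistent with the statement, but without an actual proof of the assembly step your argument amounts to a restatement of the proposition rather than a proof of it.
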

	\begin{proof}
		Since we clearly have a functor
		$$\Funlex(\Y, \Y)\rightarrow\Funlex(\Y, \spaces)$$
		given by post composition with $\pf{a}$, and since right adjoints between presentable categories are automatically accessible, it suffices to prove that there is a functor
		$$(\Top_{/\Y})\op \rightarrow\Funlex(\Y, \Y)$$
		that assigns $\pf{f}\pb{f}$ to any $\pf{f}: \X\rightarrow\Y$, which at the level of morphisms $\begin{tikzcd}[column sep = tiny, row sep = tiny]
			\X \arrow[rd, "f"'] \arrow[rr, "g"] &          & \X' \arrow[ld, "f'"] \\
			& \Y &                                           
		\end{tikzcd}$ is given by the transformation
		$$\pf{f'}\pb{f'}\rightarrow\pf{f'}\pf{g}\pb{g}\pb{f'}\simeq\pf{f}\pb{f}$$
		induced by the unit of the adjunction $\pb{g}\dashv\pf{g}$. We will proceed through some reduction steps.
		\par
		First of all, the Yoneda embedding induces a fully faithful functor
		$$\Fun(\Y, \Y)\hookrightarrow\Fun(\Y, \Fun(\Y\op , \spaces))\simeq\Fun(\Y\op \times\Y, \spaces)$$
		and thus it suffices to construct a functor 
		$$(\Top_{/\Y})\op \rightarrow\Fun(\Y\op \times\Y, \spaces)$$
		whose image lies in $\Funlex(\Y, \Y)$. By \cite[Remark 6.1.5]{cisinski2019higher}, standard computations with adjunctions of $1$-categories show that we also have, functorially on $F, G\in\X$, a commutative square
		$$\begin{tikzcd}
			\Hom{\Y'}{\pb{f'}F}{\pb{f'}G} \arrow[r, "\simeq"] \arrow[d, "\pb{g}"'] & \Hom{\X}{F}{\pf{f'}\pb{f'}G} \arrow[d, "\text{unit}"] \\
			\Hom{\Y}{\pb{g}\pb{f'}F}{\pb{g}\pb{f'}G} \arrow[r, "\simeq"]         & \Hom{\X}{F}{\pf{f'}\pf{g}\pb{g}\pb{f'}G},             
		\end{tikzcd}$$
		so it suffices to show that the transformation on the left hand side can be enanched to a functor $(\Top_{/\Y})\op \rightarrow\Fun(\Y\times\Y\op , \spaces)$. Recall also that we have a forgetful functor
		$$\begin{tikzcd}[row sep = small]
			(\Top_{/\Y})\op \simeq(\Top\op )_{\Y/}\arrow[r] & {\infcat}_{\Y/}                 \\
			(\pf{f}:\X\rightarrow\Y) \arrow[r, maps to]                    & (\pb{f}:\Y\rightarrow\X).
		\end{tikzcd}$$ 
		Hence, we will construct a functor $${\infcat}_{\Y/}\rightarrow\Fun(\Y\op \times\Y, \spaces).$$
		\par 
		Recall that, for any $\infty$-category $\C$, we have a left fibration $\begin{tikzcd}
			\text{S}(\C) \arrow[r, "{(s, t)}"] & \C\op \times\C,
		\end{tikzcd}$ called the \textit{twisted diagonal}, classifying the hom-bifunctor $\C\op \times\C\rightarrow\spaces$ (as it is defined in \cite[5.6.1]{cisinski2019higher}). For any functor $f:\Y\rightarrow\X$, consider the left fibration defined by the pullback
		
		\begin{equation}\label{pbtwdiag}
			\begin{tikzcd}[column sep = huge]
				T(f) \arrow[d] \arrow[r] \arrow[dr, phantom, "\usebox\pullback" , very near start, color=black]                        & \text{S}(\X) \arrow[d, "{(s, t)}"] \\
				\Y\op \times\Y \arrow[r, "f\op \times f"] & \X\op \times\X.            
			\end{tikzcd}
		\end{equation}
		
		This classifies the functor $$\Hom{\Y}{f(-)}{f(-)}: \Y\op \times\Y\rightarrow\spaces.$$
		The functoriality on $\X$ of the twisted diagonal allows one to construct a functor 
		$${\infcat}_{\Y/}\rightarrow\Fun(\Delta^1,\infcat)\rightarrow\Fun(\Lambda^2_1, \infcat)$$
		sending an arrow $f:\Y\to\X$ to the cospan in the pullback square (\ref{pbtwdiag}). The functoriality of taking pullbacks of cospans thus implies the existence of a functor 
		$${\infcat}_{\Y/}\rightarrow{\infcat}_{/\Y\op\times \Y}$$
		sending $f$ to the left vertical arrow in the square (\ref{pbtwdiag}). Moreover, since the twisted diagonal is a left fibration, and left fibrations are stable under basechange, we see that the above defined functor factors through $\text{LFib}(\Y\op \times\Y)\subset{\infcat}_{/\Y\op\times \Y}$. Therefore, using the straightening-unstraightening equivalence (see for example \cite[Theorem 4.4.14]{cisinski2019higher}), we obtain the sought functor 
		$$T: {\infcat}_{\Y/}\rightarrow\text{LFib}(\Y\op \times\Y)\simeq\Fun(\Y\op\times\Y, \spaces).\qedhere$$
			\end{proof}
			
			Recall that, for two maps $\begin{tikzcd}
				X \arrow[r, "f_0", shift left] \arrow[r, "f_1"', shift right] & X'
			\end{tikzcd}$ over a topological space $Y$, we say that \textit{$f_0$ is homotopic to $f_1$ over $Y$} if there exists a map $h:X\times I\rightarrow X'$ over $Y$ such that $f_t = hi_t$ $t=0,1$, where $i_t : X\hookrightarrow X\times I$ is the inclusion corresponding to $t\in I$.
			\begin{corollary}[Homotopy invariance]\label{hmtpyinv}
				The relative shape functor $\Pi_{\infty}^{\Shsp{Y}}$ sends homotopy equivalences over $Y$ to invertible morphisms in $\Pro(\Shsp{Y})$.
			\end{corollary}
			\begin{proof}
				It suffices to prove the analogous statement for the functor $T$ defined at the end of the proof of \cref{functrelsh}. Let $p : X\times I\rightarrow X$ be the canonical projection. By \cite[Lemma A.2.9]{lurie2017higher}, we know that $\pb{p}$ is fully faithful, and hence $T(p)$ is invertible. Since $pi_0 = pi_1 = \text{id}_X$ and $T$ is functorial, we get an equivalence $T(i_0)\simeq T(i_1)$. Thus, since there exists a homotopy $h$ over $Y$ such that $f_t = hi_t$ $t=0,1$, the functoriality of $T$ gives the desired $T(f_0)\simeq T(f_1)$.
			\end{proof}
			\begin{remark}
				Recall that, for any $\infty$-topos $\Y$, there is a fully faithful functor 
				\begin{equation}\label{slicefctrtop}
					\begin{tikzcd}[row sep = tiny]
						\Y \arrow[r, hook] & \Top_{/\Y} \\
						y \arrow[r, maps to]     & \Y_{/y}.               
					\end{tikzcd}
				\end{equation} 
				Since $\Top_{/\Y}$ has small cofiltered limits, the latter can be extended to a functor $$\beta:\Pro(\Y)\rightarrow\Top_{/\Y}.$$ It is possible to construct the functor $$\Pi_{\infty}^{\Y}: \Top_{/\Y}\rightarrow\Pro(\Y)$$ directly by showing that there is an equivalence
				$$\Hom{\Pro(\Y)}{\Pi_{\infty}^{\Y}(\X)}{Z}\simeq\Hom{\Top_{/\Y}}{\X}{\beta(Z)}$$
				
				which is functorial on $Z\in\Pro(\Y)$, as it is done in \cite[Proposition E.2.2.1]{lurie2016spectral}. See also \cite[Section 4.1]{carchedi2018relative}. However, here we preferred to give an alternative and more direct construction of the functorial structure of the relative shape. 
			\end{remark}
			\begin{remark}\label{laxshape}
				As usual, since $\Top$ has pullbacks, the slice $\Top_{/\X}$ can be equipped with a contravariantly functorial structure
				$$\Top\op \rightarrow\infcat$$
				that can be described for any geometric morphism $\pf{g}:\X\rightarrow\Y$ by sending an object $(f:\Y'\rightarrow\Y)\in\Top_{/\Y}$ to the resulting arrow over $\X$ obtained by performing the pullback of $f$ along $g$. Thus, since by \cite[Remark 6.3.5.8]{lurie2009higher} we have for any $y\in\Y$ a canonical pullback square
				$$
				\begin{tikzcd}
					\X_{/\pb{f}y}\arrow[d]\arrow[r] & \Y_{/y}\arrow[d] \\
					\X\arrow[r, "f"] &\Y
				\end{tikzcd}
				$$
				in $\Top$, we see that the functor (\ref{slicefctrtop}) is actually natural in $\X$, where the left hand side is functorial by the usual forgetful $\Top\op \rightarrow\infcat$ sending a geometric morphism $f$ to $\pb{f}$.
				In particular we obtain that 
				$$\beta:\Pro(\X)\rightarrow\Top_{/\X}$$
				is actually natural in $\X$. Notice that, if one regards $\infcat$ as an $(\infty, 2)$-category, the universal property of $\Pro(\X)$ and the definition of the slice imply that $\beta$ can be seen as a natural transformation between 2-functors. Hence, by \cite[Theorem 5.3.5]{haugseng2023lax}, by adjunction we may regard the relative shape as a lax natural transformation, where the 2-cells involved may be described as follows: any geometric morphism induces an adjunction
				$$\begin{tikzcd}
					\Pro(\X)\ar[r,bend left,"\pfs{g}",""{name=A, below}] & \Pro(\Y)\ar[l,bend left,"\pb{g}",""{name=B,above}] \ar[from=A, to=B, symbol=\dashv]
				\end{tikzcd}$$ 
				and for any commutative square of topoi
				$$
				\begin{tikzcd}
					\X'\arrow[r, "g'"]\arrow[d, "f'"] & \Y'\arrow[d, "f"] \\
					\X\arrow[r, "g"] & \Y
				\end{tikzcd}
				$$
				applying $\pb{g'}$ to the unit of the adjunction $\pfs{f}\dashv\pb{f}$ induces a natural transformation 
				$$\pb{g'}\rightarrow\pb{g'}\pb{f}\pfs{f}\simeq\pb{f'}\pb{g}\pfs{f}$$
				and hence by transposition
				$$\pfs{f'}\pb{g'}\rightarrow\pb{g}\pfs{f}$$
				called the \textit{base change transformation}, which when evaluated at $1_{\Y'}$ gives the desired
				$$\pfs{f'}\pb{g'}1_{\Y'}\simeq\pfs{f'}1_{\X'}\rightarrow\pb{g}\pfs{f}1_{\Y'}.$$
				
			\end{remark}
			\subsection{Locally contractible geometric morphisms}
			We start by recalling the definition of a locally cartesian closed $\infty$-category. We refer to \cite{gepner2017univalence} for more background on locally cartesian closed $\infty$-categories.
			\begin{definition}
				An $\infty$-category $\C$ is \textit{cartesian closed} if it admits finite products and, for any object $c\in\C$, the functor $-\times c : \C\rightarrow\C$ admits a right adjoint. 
				\par 
				Let $\C$ be any $\infty$-category, and let $f: c\rightarrow d $ be any arrow in $\C$. We say that $\C$ admits \textit{dependent products indexed by $f$} if the functor $\pb{f}:\C_{/d}\rightarrow\C_{/c}$ given by pulling back along $f$ admits a right adjoint. The right adjoint to $\pb{f}$ is called \textit{dependent product indexed by $f$}, and is usually denoted by $\prod_f : \C_{/c}\rightarrow\C_{/d}$. We say that $\C$ is \textit{locally cartesian closed} if it has pullbacks and, for any object $c\in\C$, the slice $\C_{/c}$ is cartesian closed. Equivalently, $\C$ is locally cartesian closed if, for any arrow $f: c\rightarrow d $,  $\C$ admits dependent products indexed by $f$.
				
				A functor $F:\C\rightarrow\D$ between locally cartesian $\infty$-categories is \textit{locally cartesian closed} if $F$ commutes with pullbacks and dependent products, i.e. for any arrow $f: c\rightarrow d $ in $\C$, we have a commutative square
				$$
				\begin{tikzcd}
					\C_{/c} \arrow[d, "F"'] \arrow[r, "\prod_{f}"] & \C_{/d} \arrow[d, "F"] \\
					\D_{/Fc} \arrow[r, "\prod_{Ff}"']              & \D_{/Fd}.            
				\end{tikzcd}
				$$
				We denote by $\infcat^{lcc}$ the subcategory of $\infcat$ whose objects are locally cartesian closed $\infty$-categories and morphisms are locally cartesian closed functors.
			\end{definition}
			\begin{example}
				By universality of colimits and adjoint functor theorem, any $\infty$-topos is locally cartesian closed.
			\end{example}
			Let $\pf{f}:\X\rightarrow\Y$ be a geometric morphism between $\infty$-topoi. Let $F\rightarrow G$ be a morphism in $\Pro(\Y)$ and let $H\rightarrow\pb{f}G$ be a morphism in $\Pro(\X)$. Then we have a canonical commutative square 
			$$
			\begin{tikzcd}
				\pfs{f}(\pb{f}F\times_{\pb{f}G}H) \arrow[d] \arrow[r] & \pfs{f}H \arrow[d] \\
				\pfs{f}\pb{f}F \arrow[d] \arrow[r]                    & \pfs{f}\pb{f}G \arrow[d] \\
				F \arrow[r]                                           & G                       
			\end{tikzcd}
			$$
			which determines a unique morphism
			\begin{equation}\label{projectionmorphism}
				\pfs{f}(\pb{f}F\times_{\pb{f}G}H)\rightarrow F\times_G \pfs{f}H.
			\end{equation}
			
			We need the following preliminary lemma.
			
			\begin{lemma}\label{lemmaprodvsdepprod}
				Let $\X$ be an $\infty$-topos, $\pf{\pi}: \X\rightarrow\spaces$ the unique geometric morphism. Let $A$ be any $\infty$-groupoid, and denote by $\alpha:A\rightarrow\Delta^0$ the unique map. Then dependent products in $\X$ indexed by $\pb{\pi}(\alpha)$ agree with limits indexed by $A$. More precisely, we have a natural equivalence $$\prod_{\pb{\pi}\alpha}\simeq\varprojlim_{A}$$ of functors $\text{Fun}(A,\X)\simeq\X_{/\pb{\pi}A}\rightarrow \X$.
		\end{lemma}
		
		\begin{proof}
			First of all notice that, by \cite[Lemma 6.1.3.7]{lurie2009higher} we have a cocontinuous functor
			$$
			\begin{tikzcd}[row sep = tiny]
				\spaces \arrow[r]    & \infcat\op \\
				A \arrow[r, maps to] & \X_{/\pb{\pi}A}.
			\end{tikzcd}
			$$ 
			This is naturally equivalent to the functor
			$$
			\begin{tikzcd}[row sep = tiny]
				\spaces \arrow[r]    & \infcat\op     \\
				A \arrow[r, maps to] & {\Fun(A, \X)}
			\end{tikzcd}
			$$
			since have equivalences
			$$\Fun(\Delta^0, \X)\simeq\X\simeq\X_{/\pb{\pi}\Delta^0}.$$ In particular, if $\alpha: A\rightarrow\Delta^0$ is the unique map, we have a corresponding commutative square
			$$
			\begin{tikzcd}
				\X_{/\pb{\pi}\Delta^0} \arrow[d, "\simeq"] \arrow[r, "\pb{\pi}A\times -"] & \X_{/\pb{\pi}A} \arrow[d, "\simeq"] \\
				\Fun(\Delta^0, \X) \arrow[r, "const"] & \Fun(A, \X)
			\end{tikzcd}
			$$
			where the lower horizontal arrow assigns to an object $F$ of $\X$ the constant functor at $F$. Thus we obtain an identification of the respective right adjoints, i.e. a commutative square
			$$
			\begin{tikzcd}
				\X_{/\pb{\pi}A}\arrow[d, "\simeq"] \arrow[r, "{\prod_{\pb{\pi}\alpha}}"] & \X_{/\pb{\pi}\Delta^0} \arrow[d, "\simeq"] \\
				\Fun(A, \X) \arrow[r, "{\varprojlim}"] & \Fun(\Delta^0, \X)
			\end{tikzcd}
			$$
			which is what we wanted.
		\end{proof}
		
		\begin{proposition}\label{locconstsheqdef}
			Let $\pf{f}:\X\rightarrow\Y$ be a geometric morphism between $\infty$-topoi. Consider the conditions
			\begin{enumerate}[(i)]
				\item $\pb{f}$ admits a left adjoint and, for every $F\rightarrow G$ in $\Y$ and $H\rightarrow\pb{f}G$ in $\X$, the morphism (\ref{projectionmorphism}) is invertible;
				\item $\pb{f}$ is locally cartesian closed;
				\item $\pb{f}$ admits a left adjoint and, for every $F$ in $\Y$ and $H$ in $\X$, the morphism
				$$\pfs{f}(\pb{f}F\times H)\rightarrow F\times \pfs{f}H$$ is invertible.
			\end{enumerate}
			Then (i) and (ii) are equivalent. Moreover, if $\Y$ is $0$-localic, then these are also equivalent to (iii).
		\end{proposition}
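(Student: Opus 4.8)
The plan is to run the whole argument through the induced adjunctions on slices. Whenever $f$ is essential, i.e. $\pb{f}$ admits a left adjoint $\pfs{f}$, each $G\in\cate{Y}$ gives an adjunction $(\pfs{f})_{G}\dashv(\pb{f})_{G}$ between $\cate{X}_{/\pb{f}G}$ and $\cate{Y}_{/G}$, where $(\pb{f})_{G}$ sends $(F\to G)$ to $(\pb{f}F\to\pb{f}G)$ and $(\pfs{f})_{G}$ sends $(H\to\pb{f}G)$ to $(\pfs{f}H\to\pfs{f}\pb{f}G\to G)$ via the counit. Unwinding the products of the (cartesian closed) slices, the projection morphism of the statement is precisely the \emph{Frobenius} (mate) map $(\pfs{f})_{G}((\pb{f})_{G}(F)\times H)\to F\times(\pfs{f})_{G}(H)$ of this slice adjunction. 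I would then record the general fact that, for an adjunction $L\dashv R$ of cartesian closed $\infty$-categories with $R$ product-preserving, the Frobenius map is invertible for all arguments if and only if $R$ preserves internal homs. Applied slicewise, this says that condition (i) holds exactly when each $(\pb{f})_{G}$ preserves exponentials.

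For (ii)$\Rightarrow$(i) the first point is that local cartesian closedness already forces $f$ to be essential: writing a small product $\prod_i Z_i$ as the dependent product $\prod_{I\to 1}(\coprod_i Z_i\to I)$ along a discrete object $I$ (a small coproduct of copies of $1_{\cate{Y}}$), and using that $\pb{f}$ preserves $I$, coproducts and dependent products, one gets that $\pb{f}$ preserves all small products, hence (being already left exact) all small limits; as $\pb{f}$ is accessible, the adjoint functor theorem produces $\pfs{f}$. Local cartesian closedness then gives preservation of exponentials in every slice, since $\beta^{\alpha}\simeq\prod_{\alpha}\alpha^{*}\beta$, and the mate correspondence yields (i). Conversely, for (i)$\Rightarrow$(ii) the mate correspondence shows each $(\pb{f})_{G}$ preserves exponentials; combining this with left exactness and the expression of a dependent product $\prod_{g}$ as a pullback of slice exponentials, one concludes that $\pb{f}$ preserves all dependent products, i.e. (ii).

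The implication (i)/(ii)$\Rightarrow$(iii) is immediate, since (iii) is just (i) specialized to $G=1_{\cate{Y}}$, where the relative products degenerate to absolute ones. The substance is the reverse implication (iii)$\Rightarrow$(i) under the hypothesis that $f$ comes from a continuous map. Fixing $G$, both sides of the projection morphism are cocontinuous in $F\in\cate{Y}_{/G}$ and in $H\in\cate{X}_{/\pb{f}G}$, by cocontinuity of $\pb{f}$ and $\pfs{f}$ together with universality of colimits, so it suffices to treat representable $F=y(V)$ and $H=y(W)$. When moreover $G=y(U)$ is representable it is a subterminal object of $\Shsp{Y}$ and $\pb{f}y(U)=y(f^{-1}(U))$; the source then becomes $\pfs{f}(\pb{f}y(V)\times y(W))$, which by (iii) is $y(V)\times\pfs{f}(y(W))$, while subterminality of $y(U)$ identifies $y(V)\times_{y(U)}\pfs{f}(y(W))$ with $y(V)\times\pfs{f}(y(W))$, and one checks that the projection morphism realizes this identification.

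It remains to remove the representability hypothesis on $G$, and this is the step I expect to be the main obstacle. The plan is to show that the full subcategory of those $G$ for which the projection morphism is invertible for all $F$ and $H$ is closed under colimits; since it contains the representables by the previous step and these generate $\Shsp{Y}$, it must be everything. This closure is a descent statement: for $G\simeq\colim_\alpha G_\alpha$ one has $\cate{Y}_{/G}\simeq\varprojlim_\alpha\cate{Y}_{/G_\alpha}$ and, as $\pb{f}$ is cocontinuous, $\cate{X}_{/\pb{f}G}\simeq\varprojlim_\alpha\cate{X}_{/\pb{f}G_\alpha}$, and one must verify that the projection morphism over $G$ is assembled from those over the $G_\alpha$ by universality of colimits and cocontinuity of $\pfs{f}$. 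It is precisely this localization of the absolute projection formula to slices that uses the topological hypothesis: for a space-topos the generators $y(U)$ are subterminal and satisfy $\pb{f}y(U)=y(f^{-1}(U))$, which makes the base case computable, whereas for a general geometric morphism no such reduction is available and (iii) is genuinely weaker than (i).
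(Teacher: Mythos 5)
Your treatment of (i)$\Leftrightarrow$(ii) is correct and is in substance the paper's argument: the paper likewise first shows (ii) forces essentialness, by exhibiting limits indexed by small $\infty$-groupoids as dependent products via the equivalence $\cate{X}_{/\pb{\pi}A}\simeq\text{Fun}(A,\cate{X})$ (your discrete objects $I$ give the set-indexed case, which together with pullbacks suffices), then applies the adjoint functor theorem, and finally observes directly that the square comparing $\prod_{\alpha}$ with $\prod_{\pb{f}\alpha}$ commutes if and only if the square of slice left adjoints commutes, the latter being precisely the invertibility of the projection morphisms. Your detour through the Frobenius-reciprocity lemma (projection morphisms invertible iff each $(\pb{f})_{G}$ preserves exponentials) plus the construction of $\prod_{g}$ as a pullback of slice exponentials proves the same mate statement, only less directly; both routes are sound. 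Likewise your base case for (iii)$\Rightarrow$(i) with $G=y(U)$ subterminal, where fiber products over $(-1)$-truncated objects degenerate to absolute products, is exactly the paper's endgame.

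The genuine gap is where you flag it: removing representability of $G$, which you leave as an unverified plan. Your formulation, closure of the class of good $G$ under colimits with both variables decomposed over a cover of $G$, is delicate, since reassembling then involves the fiber products $y(U)\times_{G}y(U')$ of generators, which are subobjects of $y(U\cap U')$ but not representable, so the induction does not close naively. The paper sidesteps any induction on $G$ by reducing in the $H$-variable alone: from $\pb{f}G\simeq\varinjlim_{y(U)\to G}y(f^{-1}U)$ and universality of colimits one gets $H\simeq\varinjlim_{y(U)\to G}(H\times_{\pb{f}G}y(f^{-1}U))$, and since the projection morphism is natural and cocontinuous in $H$, one may assume $H\to\pb{f}G$ factors through $y(f^{-1}U)=\pb{f}y(U)$. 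The pasting properties of pullbacks then identify the projection morphism for the triple $(G,\,F\to G,\,H)$ with the one for $(y(U),\,F\times_{G}y(U)\to y(U),\,H)$, with $F$ kept arbitrary throughout (your preliminary reduction of $F$ and $H$ to representables is unnecessary), and subterminality plus hypothesis (iii) conclude. Equivalently, your colimit-closure plan does go through if you decompose only $H$ and base-change $F$ along $G_{\alpha}\to G$, so that each term compares $F_{\alpha}\times_{G_{\alpha}}\pfs{f}H_{\alpha}$ on both sides; but this pasting identification is the actual content of the step, and it is exactly what is missing from your proposal.
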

		\begin{proof}
			We first show that (i) implies (ii). Since $\pb{f}$ commutes with finite limits, it suffices to show that it commutes with dependent products. But, for any $\alpha : F\rightarrow G$ in $\Y$, the square
			$$
			\begin{tikzcd}
				\Y_{/F} \arrow[d, "\pb{f}"'] \arrow[r, "\prod_{\alpha}"] & \Y_{/G} \arrow[d, "\pb{f}"] \\
				\X_{/\pb{f}F} \arrow[r, "\prod_{\pb{f}\alpha}"']              & \X_{/\pb{f}G}            
			\end{tikzcd}
			$$
			commutes if and only if the square given by the corresponding left adjoints commutes. This last assertion is equivalent to requiring the projection morphims to be invertible, and so we are done.
			\par 
			We now show that (ii) implies (i). By the same argument as above, it suffices to prove that $\pb{f}$ admits a left adjoint. Since $\pb{f}$ is cocontinuous and preserves finite limits, we are only left to prove that $\pb{f}$ commutes with infinite products. Using \cref{lemmaprodvsdepprod}, we may exhibite products in any $\infty$-topos as a special case of dependent products. Therefore the implication follows from assumption (ii).
			\par Assume now that $\Y$ is $0$-localic, and that $\pb{f}$ admits a left adjoint. Clearly (iii) is a special case of (i). Assume then that $f$ satisyes the hypothesis (iii). Let $\alpha : F\rightarrow G$ be a morphism in $\Y$ and let $H\rightarrow\pb{f}G$ be a morphism in $\X$. Since $\Y$ is generated by $(-1)$-truncated objects, we have that for any $H\in\X_{/\pb{f}G}$ there is an equivalence $$H\simeq\varinjlim\limits_{U\rightarrow G}(H\times_{\pb{f}G}\pb{f}U)$$, where $U$ runs through the $(-1)$-truncated objects over $G$. Therefore, since the projection morphism is a natural transformation between colimit preserving functors, we may assume that $H\rightarrow\pb{f}G$ factors as $H\rightarrow \pb{f}U\rightarrow\pb{f}G$ for some $U\in\Y$ $(-1)$-truncated. Using the pasting properties of pullbacks, we may also assume that $G = U$. Notice that, for any (-1)-truncated object $V$ in a topos $\cate{Z}$ and for any other two objects $A, B\in\cate{Z}_{/V}$, we have an identification $A\times_V B\simeq A\times B$. This follows because for any other object $C$ mapping both to $A$ and $B$, we have that $\Hom{\cate{Z}}{C}{V}$ is contractible, and thus $\Hom{\cate{Z}}{C}{A\times_V B} \simeq\Hom{\cate{Z}}{C}{A\times B}$. Thus, since both $U$ and $\pb{f}U$ are (-1)-truncated, we are only left to prove that 
			$$\pfs{f}(\pb{f}F\times H)\rightarrow F\times \pfs{f}H$$
			is invertible, which is true by assumption.
		\end{proof}
		We are now ready to introduce the notion of a locally contractible geometric morphism. This was previously considered also in \cite{aizenbud2021relative}.
		\begin{definition}\label{dfnloccontr}
			Let $\pf{f}: \X\rightarrow\Y$ be a geometric morphism of $\infty$-topoi. We say that $f$ is \textit{essential} if $\pfs{f}$ factors through $\Y$, or equivalently if $\pb{f}$ admits a left adjoint. Furthermore, we say that an essential geometric morphism is \textit{locally contractible} if it satisfies the equivalent conditions (i) and (ii) in \Cref{locconstsheqdef}. We say that a geometric morphism is of \textit{trivial shape} if $\pb{f}$ is fully faithful, or equivalently if the unit transformation  $\text{id}_{\Y}\rightarrow\pf{f}\pb{f}$ is an equivalence. When $f$ is the unique geometric morphism $\X\rightarrow\spaces$, we will say that $\X$ is locally contractible.
		\end{definition}
		
		\begin{remark}\label{opensmoothbasechange}
			For a continuous map $f:X\rightarrow Y$ of topological spaces inducing an essential geometric morphism, one may interpret the condition of being a locally contractible geometric morphism as the requirement of a base change formula for $\pfs{f}$ along open immersions. More precisely, let $U$ be an open subset of $Y$, and consider the pullback square
			$$
			\begin{tikzcd}
				f^{-1}(U)\ar[r, "f'"]\ar[d, hook, "j'"] & U\ar[d, hook, "j"] \\
				X\ar[r, "f"] & Y.
			\end{tikzcd}
			$$
			For any $F\in\Shsp{X}$, we have natural equivalences 
			$$
			\begin{tikzcd}
				\pfs{j}\pb{j}\pfs{f}F \arrow[r, "\simeq"] & \pfs{f}F\times y(U) \arrow[r, "\simeq"] & \pfs{f}(F\times y(f^{-1}(U))) \arrow[r, "\simeq"] & \pfs{f}\pfs{j'}\pb{(j')}F\arrow[r, "\simeq"] & \pfs{j}\pfs{f'}\pb{(j')}F
			\end{tikzcd}
			$$
			where the second morphism is (\ref{projectionmorphism}). Therefore, since $\pfs{j}$ is fully faithful, we obtain a natural equivalence $$\pb{j}\pfs{f}\simeq\pfs{f'}\pb{(j')},$$
			and, by transposition, an equivalence
			
			$$\pb{f}\pf{j}\simeq\pf{j'}\pb{(f')}.$$
		\end{remark}
		
		\begin{example}\label{exess}
			\begin{enumerate}[(i)]
				\item Recall that any object $U\in\X$ of an $\infty$-topos determines a geometric morphism $j: \X_{/U}\rightarrow\X$. By \cite[Proposition 6.3.5.1]{lurie2009higher} $j$ is locally contractible, and $\pfs{j} : \X_{/U}\rightarrow\X$ can be described as the usual forgetful functor.
				
				\item By \cite[Proposition A.1.9]{lurie2017higher}, an $\infty$-topos is locally contractible if and only if the unique geometric morphism $\X\rightarrow\spaces$ is essential, since in this case the projection morphism is automatically invertible.
				
				\item Let $\pf{f} : \X\rightarrow\Y$ be an essential geometric morphism. For any $\infty$-topos $\cate{Z}$, by \Cref{tensoringadj} we obtain a geometric morphism $f\otimes\cate{Z}:\X\otimes\cate{Z}\rightarrow\Y\otimes\cate{Z}$ by applying to $f$ the functor $-\otimes\cate{Z}$. Since both $\pb{f}$ and $\pfs{f}$ commute with colimits, the adjunction $\pfs{f}\dashv\pb{f}$ is preserved by $-\otimes\cate{Z}$, and so $f\otimes\text{id}_{\cate{Z}}$ is an essential geometric morphism.
			\end{enumerate}
			
		\end{example}
		\begin{remark}\label{pfess}
			Let $f:X\rightarrow Y$ be a continuous map. Notice that, since the functor
			$$f^{-1} : \Op{Y}\rightarrow\Op{X}$$ preserves open coverings, for any complete $\infty$-category $\C$ we still have a well defined pushforward $\pf{f}:\Sh{X}{\C}\rightarrow\Sh{Y}{\C}$ given as usual by $\Sec{U}{\pf{f}F} = \Sec{f^{-1}(U)}{F}$ for all $U\in\Op{Y}$. Although at this level of generality there is no reason to expect $\pf{f}$ to have a left adjoint, if $f$ induces an essential geometric morphism at the level of sheaves of spaces, then it actually does. Indeed, recall that there is an equivalence $$\Sh{X}{\C}\simeq\Fun_{\ast}(\Shsp{X}\op , \C).$$ Through this equivalence and \Cref{pbcosh} (i), we can identify $\pf{f}:\Fun_{\ast}(\Shsp{X}\op , \C)\rightarrow\Fun_{\ast}(\Shsp{Y}\op , \C)$ with precomposition with the opposite of the pullback $\pb{f} : \Shsp{Y}\rightarrow\Shsp{X}$. Thus, similarly to \Cref{tensoringadj}, by applying the 2-functor $\Fun_{\ast}((-)\op , \C)$ to the adjunction between cocontinuous functors $\pfs{f}\dashv\pb{f}$, we obtain the desired left adjoint.
		\end{remark}
		It is straightforward to check that the composition of two locally contractible geometric morphism is again locally contractible (see [AC21, Corollary 3.2.5]). We observe that the properties of being essential or locally contractible can be checked locally on the source.
		\begin{lemma}\label{loccontrisloc}
			Let $\pf{f} : \X\rightarrow\Y$ be a geometric morphism, and let $\cate{B}\subseteq\X$ which generates $\X$ under colimits. For any object $U\in\cate{B}$, consider the composite geometric morphism
			\begin{equation}\label{restrflocconst}
				\begin{tikzcd}
					\X_{/U}\ar[r] & \X\ar[r, "f"] & \Y. 
				\end{tikzcd}
			\end{equation} 
			We have the following
			\begin{enumerate}[(i)]
				\item $f$ is essential if and only if (\ref{restrflocconst}) is of constant shape for any $U\in\cate{B}$;
				\item $f$ is locally contractible if and only if (\ref{restrflocconst}) is locally contractible for any $U\in\cate{B}$.
			\end{enumerate}
		\end{lemma}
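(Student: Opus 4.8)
The plan is to reduce both statements to the behaviour of the pro-left adjoint $\pfs{f}$ on the generating subcategory $\cate{B}$, via the basic computation of the shape of a slice. Fix $U\in\cate{B}$ and write $j:\cate{X}_{/U}\rightarrow\cate{X}$ for the associated geometric morphism and $h:\cate{X}_{/U}\rightarrow\cate{Y}$ for the composite (\ref{restrflocconst}). By Example \ref{exess} (i) the morphism $j$ is locally contractible and $\pfs{j}$ is the forgetful functor; since $\pb{h}=\pb{j}\pb{f}$, the pro-left adjoint of $\pb{h}$ is $\pfs{f}\pfs{j}$, and evaluating at the terminal object $\mathrm{id}_U$ of $\cate{X}_{/U}$ together with $\pfs{j}(\mathrm{id}_U)=U$ yields $\Pi_{\infty}^{\cate{Y}}(\cate{X}_{/U})\simeq\pfs{f}(U)$. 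Hence (\ref{restrflocconst}) is of constant shape exactly when $\pfs{f}(U)$ lies in the full subcategory $\cate{Y}\subseteq\text{Pro}(\cate{Y})$.

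For (i) the forward implication is immediate from Definition \ref{dfnloccontr}: if $f$ is essential then $\pfs{f}$ factors through $\cate{Y}$, so in particular $\pfs{f}(U)\in\cate{Y}$ for all $U$. For the converse I would argue by representability rather than by formal closure under colimits, since $\cate{Y}\hookrightarrow\text{Pro}(\cate{Y})$ is not cocontinuous. Writing an arbitrary $X\in\cate{X}$ as a colimit $X\simeq\colim_i U_i$ with $U_i\in\cate{B}$, and using that $\pfs{f}(U_i)$ corepresents $\Hom{\cate{X}}{U_i}{\pb{f}(-)}$ by hypothesis, one obtains functorially in $V\in\cate{Y}$
$$\Hom{\cate{X}}{X}{\pb{f}V}\simeq\varprojlim_i\Hom{\cate{Y}}{\pfs{f}(U_i)}{V}\simeq\Hom{\cate{Y}}{\colim_i\pfs{f}(U_i)}{V},$$
the last equivalence using that $\cate{Y}$ is cocomplete. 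Thus the left exact functor $\Hom{\cate{X}}{X}{\pb{f}(-)}$ is corepresented by $\colim_i\pfs{f}(U_i)\in\cate{Y}$, so $\pfs{f}(X)\in\cate{Y}$; as this holds for every $X$, the functor $\pfs{f}$ factors through $\cate{Y}$ and $f$ is essential.

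For (ii) the forward implication follows from the stability of locally contractible morphisms under composition ([AC21, Corollary 3.2.5]) applied to $h=f\circ j$. For the converse, each (\ref{restrflocconst}) is in particular of constant shape, so $f$ is essential by (i), and it remains to verify the projection formula of Proposition \ref{locconstsheqdef} (i). Fixing $\alpha:F\rightarrow G$ in $\cate{Y}$, I regard the projection morphism $\pfs{f}(\pb{f}F\times_{\pb{f}G}H)\rightarrow F\times_G\pfs{f}H$ as a natural transformation of functors of $H\in\cate{X}_{/\pb{f}G}$. Both functors preserve colimits, by universality of colimits in $\cate{X}$ and $\cate{Y}$ together with the cocontinuity of $\pfs{f}$, and the objects $(U\rightarrow\pb{f}G)$ with $U\in\cate{B}$ generate $\cate{X}_{/\pb{f}G}$ under colimits; hence it suffices to treat $H=U\in\cate{B}$.

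The main obstacle is the slice bookkeeping at this last step. For a map $a:U\rightarrow\pb{f}G$ I would identify the corresponding morphism $\mathrm{id}_U\rightarrow\pb{h}G\simeq(\pb{f}G\times U\rightarrow U)$ in $\cate{X}_{/U}$ and compute $\pb{h}F\times_{\pb{h}G}\mathrm{id}_U\simeq(\pb{f}F\times_{\pb{f}G}U\rightarrow U)$; applying $\pfs{h}=\pfs{f}\pfs{j}$ then exhibits the projection morphism for $f$ at $H=U$ as the projection morphism for the locally contractible $h$, evaluated at the datum $(\alpha,\ \mathrm{id}_U\rightarrow\pb{h}G)$, which is invertible by Proposition \ref{locconstsheqdef} (i). This identification, though elementary, requires care to match the two comparison maps and their target maps to $G$; granting it, the projection formula for $f$ holds on generators and hence everywhere, so $f$ is locally contractible.
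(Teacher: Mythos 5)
The paper offers no argument for this lemma at all: it is proved by citation to [AC21, Propositions 3.1.5 and 3.2.6]. Your proof is, as far as I can check, correct and self-contained, and it proceeds along the lines that reference takes: the identification $\Pi_{\infty}^{\cate{Y}}(\cate{X}_{/U})\simeq\pfs{f}(U)$ via $\pfs{h}\simeq\pfs{f}\pfs{j}$ and $\pfs{j}(\mathrm{id}_U)\simeq U$ is right; the corepresentability argument for (i) is sound; and for (ii) deducing essentialness from (i), reducing the projection formula to generators by cocontinuity, and transporting the generator case into the slice $\cate{X}_{/U}$ is exactly the correct strategy.

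Three remarks, none of which is a genuine gap. First, your stated reason for preferring representability --- that $\cate{Y}\hookrightarrow\text{Pro}(\cate{Y})$ is not cocontinuous --- is actually false in the present setting: an $\infty$-topos is complete, so this inclusion admits a right adjoint (the cofiltered-limit-preserving extension of $\text{id}_{\cate{Y}}$, i.e. $\varprojlim$), hence preserves all small colimits; the formal closure argument would therefore also have worked, though the representability argument you give is perfectly valid. Second, since ``generates under colimits'' a priori only means that $\cate{X}$ is the closure of $\cate{B}$ under iterated colimits, you should not literally write $X\simeq\varinjlim_i U_i$ with $U_i\in\cate{B}$; instead, note that your Hom computation shows that the full subcategory of those $X$ with $\pfs{f}(X)\in\cate{Y}$ is closed under colimits and contains $\cate{B}$ --- the same caveat applies to the claim that the objects $U\rightarrow\pb{f}G$ generate $\cate{X}_{/\pb{f}G}$. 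Third, the matching you flag at the end is routine and can be closed cleanly: the projection morphism of the composite $h=f\circ j$ factors as $\pfs{f}$ applied to the projection morphism for $j$ (which is invertible, $j$ being locally contractible by Example \ref{exess} (i)) followed by the projection morphism for $f$ evaluated at $\pfs{j}$ of the datum $\mathrm{id}_U\rightarrow\pb{h}G$; under the counit $\pfs{j}\pb{j}\rightarrow\text{id}$ this datum is exactly $a:U\rightarrow\pb{f}G$, so invertibility for $h$ at $\mathrm{id}_U$ yields invertibility for $f$ at the generators, as you claim.
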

		\begin{proof}
			A proof can be found in \cite[Proposition 3.1.5]{aizenbud2021relative} and \cite[Proposition 3.2.6]{aizenbud2021relative}.
		\end{proof}
		\begin{remark}
			The content of part (i) in \Cref{loccontrisloc} suggests that a valid alternative way to call a geometric morphism whose pullback has a left adjoint could have been \textit{locally of constant shape}. This is actually the approach taken by Lurie in \cite[Appendix A]{lurie2017higher}; however, we have decided to stick with the more concise nomenclature which appears also in \cite{johnstone2002sketches} and \cite{aizenbud2021relative}.
		\end{remark}
		\begin{corollary}\label{loccontrhyp}
			Let $X$ be a locally contractible topological space, $a: X\rightarrow\ast$ the unique map, and assume that $\X = \Shsp{X}$ is hypercomplete. Then $\Shsp{X}$ is locally contractible and  $\pfs{a}$ is equivalent to the extension by colimits of the cosheaf $\text{Sing}$. Consequently, for sheaves of spectra, the functor $\pfs{a} : \Sh{X}{\spectra}\rightarrow\spectra$ obtained by applying $-\otimes\spectra$ is uniquely determined by the formula $\pfs{a}(\s{U}) = \infsusp_{+}U$ for any $U\in\Op{X}$.
		\end{corollary}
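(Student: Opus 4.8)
The plan is to establish the three assertions in turn: first local contractibility by a local-to-global reduction, then the identification of the shape functor with the colimit extension of $\text{Sing}$, and finally the spectral formula by tensoring with $\spectra$. Throughout, $a\colon\Shsp{X}\to\spaces$ denotes the unique geometric morphism and $1$ the terminal sheaf.

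First I would show that $a$ is essential. Since $X$ is locally contractible, the contractible open subsets form a basis, so the sheaves $y(U)$ with $U\subseteq X$ contractible and open generate $\Shsp{X}$ under colimits. By Lemma \ref{loccontrisloc}(i) it then suffices to check that for each such $U$ the composite $\Shsp{X}_{/y(U)}\simeq\Shsp{U}\to\Shsp{X}\xrightarrow{a}\spaces$ is of constant shape. As $U$ is contractible, $U\to\ast$ is a homotopy equivalence, so homotopy invariance of the shape (Corollary \ref{hmtpyinv}) gives $\Pi_{\infty}(\Shsp{U})\simeq\Pi_{\infty}(\spaces)\simeq\ast$; in particular the shape is a genuine object of $\spaces$, i.e. the composite has constant shape. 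Hence $a$ is essential, and since $a$ is the unique geometric morphism to $\spaces$, Example \ref{exess}(ii) upgrades essentialness to local contractibility of $\Shsp{X}$ (the projection morphism being automatically invertible in the absolute case).

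Next I would identify $\pfs{a}$ with the cocontinuous extension $L_{\text{Sing}}$ of the cosheaf $\text{Sing}$ (Example \ref{pbcosh}(ii), via Lemma \ref{coshdistr}). Being a left adjoint, $\pfs{a}$ is cocontinuous, so by Lemma \ref{coshdistr} it corresponds to the cosheaf $U\mapsto\pfs{a}(y(U))$. Using Example \ref{exess}(i), $y(U)\simeq\pfs{j_U}(1)$ for the \'etale morphism $j_U\colon\Shsp{U}\to\Shsp{X}$, whence $\pfs{a}(y(U))\simeq\Pi_{\infty}(\Shsp{U})$ by composing left adjoints. There is a canonical comparison of cosheaves from $\text{Sing}$ to $U\mapsto\Pi_{\infty}(\Shsp{U})$, relating the singular homotopy type to the topos-theoretic shape; by the previous paragraph both sides are $\simeq\ast$ on contractible opens, so this transformation is an equivalence on the generators $y(U)$. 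Since a natural transformation between cocontinuous functors which is invertible on a family generating $\Shsp{X}$ under colimits is invertible (its locus of invertibility is closed under colimits and contains the generators), we conclude $\pfs{a}\simeq L_{\text{Sing}}$. This is exactly where hypercompleteness is indispensable: $\text{Sing}$ is a \emph{hypercomplete} cosheaf, so $L_{\text{Sing}}$ factors through the hypercompletion $\Shsp{X}^{\wedge}$, and only the assumption $\Shsp{X}\simeq\Shsp{X}^{\wedge}$ places $L_{\text{Sing}}$ and $\pfs{a}$ on the same footing and makes the comparison an equivalence; without it the shape of $\Shsp{X}$ can carry data invisible to $\text{Sing}$. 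For the spectral statement I would then apply $-\otimes\spectra$: by Corollary \ref{tensshC} and Lemma \ref{tensstcocont} one has $\Sh{X}{\spectra}\simeq\Shsp{X}\otimes\spectra$ and $\spaces\otimes\spectra\simeq\spectra$, so $\pfs{a}\otimes\spectra\colon\Sh{X}{\spectra}\to\spectra$ is cocontinuous. By Remark \ref{natongen} such a functor is determined by its values on the generators $\s{U}$, and using the factorization through $\infsusp_{+}$ recorded after Lemma \ref{tensstcocont} one computes $(\pfs{a}\otimes\spectra)(\s{U})\simeq\infsusp_{+}(\pfs{a}(y(U)))\simeq\infsusp_{+}\text{Sing}(U)\simeq\infsusp_{+}U$, which is the asserted $\pfp{a}(\s{U})\simeq\infsusp_{+}U$; the uniqueness is precisely the generation statement of Remark \ref{natongen}.

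The main obstacle is the identification $\pfs{a}\simeq L_{\text{Sing}}$, and more specifically the production of the canonical comparison between the singular homotopy type and the topos-theoretic shape together with the verification that it is an equivalence: this is the genuinely deep input, drawn from [Lur17, Appendix A], and it is here alone that the hypercompleteness hypothesis does essential work. Everything else is formal: the local-to-global reduction via Lemma \ref{loccontrisloc}, the upgrade from essential to locally contractible via Example \ref{exess}(ii), and the passage to spectra via Corollary \ref{tensshC}, Lemma \ref{tensstcocont} and Remark \ref{natongen}.
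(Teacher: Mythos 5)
Your overall architecture coincides with the paper's: reduce local contractibility to the contractible opens via Lemma \ref{loccontrisloc}, conclude constant shape there by homotopy invariance (Corollary \ref{hmtpyinv}), upgrade essentialness to local contractibility via Example \ref{exess}(ii), and get the spectral formula by tensoring with $\spectra$. But there is a genuine gap at your very first step, and it leads you to misplace the role of the hypercompleteness hypothesis. You assert that, since the contractible opens form a basis, the representables $y(U)$ with $U$ contractible generate $\Shsp{X}$ under colimits, with no further argument. This is not automatic: the poset $B$ of contractible opens is not closed under finite intersections (an intersection of two contractible opens need not be contractible), so the comparison-of-sites mechanism used for instance in Remark \ref{submbas}(ii) is unavailable; and although $B_{/V}$ generates a covering sieve of $V$, the inclusion of $B_{/V}$ into that sieve is not obviously final --- for an open $W$ in the sieve there is no reason the poset of contractible opens between $W$ and $V$ should be weakly contractible, since local contractibility produces contractible neighbourhoods of \emph{points}, not of arbitrary open subsets. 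This generation claim is exactly where the paper spends its hypercompleteness hypothesis: one checks that the canonical map $\varinjlim_{U\in B_{/V}}y(U)\rightarrow y(V)$ is an equivalence on stalks (at a point $x\in V$ the indexing poset $\{U\in B \mid x\in U\subseteq V\}$ is cofiltered, hence has weakly contractible nerve), and stalkwise equivalences are genuine equivalences precisely because $\Shsp{X}$ is assumed hypercomplete. Your claim that ``it is here alone'' --- in the comparison with $\text{Sing}$ --- ``that the hypercompleteness hypothesis does essential work'' is therefore wrong: without it your generating family may fail to generate, and the whole local-to-global reduction collapses.

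The rest of your argument is consistent with the paper's (admittedly terse) treatment: identifying $\pfs{a}$ with the colimit extension of $\text{Sing}$ by comparing the corresponding cosheaves, which both take terminal values on $B$, and propagating invertibility along colimits is fine once generation is secured; and the computation $(\pfs{a}\otimes\spectra)(\s{U})\simeq\infsusp_{+}U$ together with uniqueness from Remark \ref{natongen} matches the paper's closing observation that under $\spaces\otimes\spectra\simeq\spectra$ an object $A\otimes\s{}$ corresponds functorially to $\infsusp_{+}A$. Note only that the ``canonical comparison of cosheaves'' between $\text{Sing}$ and $U\mapsto\Pi_{\infty}(\Shsp{U})$ is asserted rather than constructed in your write-up; that construction is the deep input from [Lur17, Appendix A] which the paper also leaves implicit, so this is a matter of citation rather than a flaw. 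Repair the generation step with the stalkwise argument and your proof becomes essentially the paper's.
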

		\begin{proof}
			Let $B$ be the poset of all contractible open subsets of $X$. Notice that, even though $B$ in general is not a sieve, by the hypercompleteness assumption of $\Shsp{X}$ we get an equivalence
			$$\varinjlim_{U\in B_{/V}}y(U)\simeq y(V)$$
			for any $V\in\Op{X}$ that can be easily checked on stalks. In particular, we see that the full subcategory $\Shsp{B}\subseteq\Shsp{X}$ generates $\Shsp{X}$ under colimits. Thus by \Cref{loccontrisloc} it suffices to check that $\Shsp{X}_{/U}\simeq\Shsp{U}$ is of constant shape for any $U\in B$, but this is true by homotopy invariance of the shape. The last assertion follows immediately by noticing that through the equivalence $\spaces\otimes\spectra\simeq\spectra$, an object $A\otimes\s{}$ corresponds (functorially on $A$) to $\infsusp_{+}A$.
		\end{proof}
		\begin{remark}
			Beware that the converse of \Cref{loccontrhyp} is not true. For this reason, to avoid confusion, from now on we will say that a topological space $X$ is essential if $\Shsp{X}$ is (or equivalently, if $\Shsp{X}$ is locally contractible by \Cref{exess} part (ii)). 
		\end{remark}

		\subsection{Shape submersions}
		\begin{definition}\label{subm}
			A continuous map $f:X\rightarrow Y$ between topological spaces is a \textit{shape submersion} if for every point $x\in X$ there exist an open neighbourhood $U$ of $x$ and a space $X^{\prime}$ which is essential, such that $f(U)$ is open in $Y$, $U$ is homeomorphic to $f(U)\times X^{\prime}$ and the diagram
			$$\begin{tikzcd}
				f(U)\times X^{\prime} \arrow[r, "\cong"] \arrow[rd, "pr_1"'] & U \arrow[r, hook] \arrow[d] & X \arrow[d, "f"] \\
				& f(U) \arrow[r, hook]        & Y               
			\end{tikzcd}$$
			commutes.
		\end{definition}
		\begin{example}
			\begin{enumerate}[(i)]
				\item By \Cref{loccontrhyp}, if $X$ is locally contractible and hypercomplete, then $X\rightarrow \ast$ is a shape submersion.
				\item Any topological submersion of fiber dimension $n$ is a shape submersion.
			\end{enumerate}
		\end{example}
		\begin{remark}\label{submbas}
			\begin{enumerate}[(i)] 
				\item It follows easily from the definition that shape submersions are stable under pullbacks of topological spaces.
				\item If $f :X\rightarrow Y$ is a shape submersion, then the set of open subsets of $X$ of the type $X^{\prime}\times V$, where $V$ is open in $Y$ and $X^{\prime}$ is essential, forms a basis for the topology of $X$. Although this basis is not closed under finite intersections, the set of representable sheaves corresponding to open subsets homeomorphic to the product of an open in $Y$ and an essential space generates $\Shsp{X}$ under colimits. To see this, consider
				$$\cate{B} = \{U\in\Op{X}\mid U\cong W, \,\text{with}\, W\in\Op{V\times S}  \,\text{for some}\, V\in\Op{Y}, S\, \text{essential}\}.$$ The set $\cate{B}$ clearly forms a basis closed under finite intersections, and then we have $\Shsp{X}\simeq\Shsp{\cate{B}}$ by \cite[Appendix A]{aoki2023tensor}. Moreover, since open immersions induce essential geometric morphisms, and since $\Op{V}\times\Op{S}$ forms a basis of $V\times S$ which is closed under finite intersections, we get our claim.
				\par 
				In the particular case of a topological submersion of fiber dimension $n$, since $\mathbb{R}^n$ is hypercomplete, we have an equivalence $\Shsp{\mathbb{R}^n}\simeq\Shsp{\cate{W}}$ where $\cate{W}\subseteq\Op{\mathbb{R}^n}$ is the poset of open balls inside $\mathbb{R}^n$, and thus, since $\mathbb{R}^n$ is locally compact, we have $\Shsp{V\times\mathbb{R}^n}\simeq\Shsp{V}\otimes\Shsp{\cate{W}}$ by \cite[Proposition 7.3.1.11]{lurie2009higher}. In particular, the set of representable sheaves corresponding to open subsets homeomorphic to the product of an open in $Y$ and an open ball in $\mathbb{R}^n$ generates $\Shsp{X}$ under colimits.
			\end{enumerate}
		\end{remark}
		For technical reasons that will be justified in a moment, from now on whenever we have a shape submersion $f:X\rightarrow Y$ we will assume that either $Y$ is locally compact or all essential spaces appearing in the basis of $X$ are locally compact.
		\begin{lemma}
			Any shape submersion $f:X\rightarrow Y$ induces an essential geometric morphism. Thus, for any presentable $\infty$-category $\C$, we obtain an adjunction $\pfs{f}\dashv\pb{f}$ for $\C$-valued sheaves.
		\end{lemma}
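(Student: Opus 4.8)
The plan is to check essentialness locally on the source, using the reduction of Lemma \ref{loccontrisloc}(i) together with the explicit colimit-generating family for $\Shsp{X}$ produced in Remark \ref{submbas}. By that lemma it suffices to find a family of objects generating $\Shsp{X}$ under colimits such that, for each such object $U$, the composite geometric morphism $\Shsp{X}_{/U}\to\Shsp{X}\xrightarrow{f}\Shsp{Y}$ is of constant shape. By Remark \ref{submbas}(ii) we may take the representables $y(U)$ attached to opens $U\cong V\times S$ with $V\in\op{Y}$ and $S$ essential, on each of which $f$ restricts to the composite $V\times S\xrightarrow{\mathrm{pr}}V\hookrightarrow Y$ (here $S$ may be taken essential because an open subset of an essential space is again essential, being locally contractible over it by Example \ref{exess}(i) and hence locally contractible over $\spaces$ by composition). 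Using the identification $\Shsp{X}_{/U}\simeq\Shsp{U}$, under which the geometric morphism $\Shsp{X}_{/U}\to\Shsp{X}$ of Example \ref{exess}(i) is the one induced by the open inclusion $U\hookrightarrow X$, the composite above is the geometric morphism induced by the continuous map $U\to Y$, which factors as $\Shsp{V\times S}\xrightarrow{p}\Shsp{V}\xrightarrow{i}\Shsp{Y}$.

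I would then verify that both factors are essential, so that their composite is essential and a fortiori of constant shape. The map $i$ is, via the identification $\Shsp{V}\simeq\Shsp{Y}_{/y(V)}$, the locally contractible geometric morphism of Example \ref{exess}(i), hence essential. For $p$ the decisive input is the local compactness hypothesis in force: by Remark \ref{tensprodspaces} it provides an equivalence $\Shsp{V\times S}\simeq\Shsp{V}\otimes\Shsp{S}$ under which $p$ becomes $\mathrm{id}_{\Shsp{V}}\otimes(\Shsp{S}\to\spaces)$. Since $S$ is essential, $\Shsp{S}\to\spaces$ is essential by Example \ref{exess}(ii), and tensoring an essential geometric morphism with $\Shsp{V}$ preserves essentialness by Example \ref{exess}(iii); thus $p$ is essential. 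As essential geometric morphisms compose (the pullbacks compose and so do their left adjoints), $i\circ p$ is essential, hence of constant shape, and Lemma \ref{loccontrisloc}(i) yields that $f$ is essential.

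The step I expect to be the main obstacle is the treatment of the projection $p$: the whole argument hinges on converting the topological product $V\times S$ into the categorical tensor product $\Shsp{V}\otimes\Shsp{S}$, which is exactly the role of the local compactness assumption standing in force; once this identification is granted, essentialness is transported along $-\otimes\Shsp{V}$ by the formal Example \ref{exess}(iii). One should also take a little care to identify the composite $\Shsp{X}_{/U}\to\Shsp{Y}$ with the geometric morphism of the continuous map $U\to Y$ and to match its factorization with the projection and the open inclusion.

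Finally, the second assertion follows by extension of scalars. For $\cate{C}$ presentable, Corollary \ref{tensshC} gives $\Sh{X}{\cate{C}}\simeq\Shsp{X}\otimes\cate{C}$ and $\Sh{Y}{\cate{C}}\simeq\Shsp{Y}\otimes\cate{C}$. Both $\pb{f}$ and its left adjoint $\pfs{f}$ are cocontinuous, so applying $-\otimes\cate{C}$ to the adjunction $\pfs{f}\dashv\pb{f}$ and invoking Remark \ref{tensoringadj} produces the desired adjunction $\pfs{f}\dashv\pb{f}$ for $\cate{C}$-valued sheaves.
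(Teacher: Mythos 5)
Your argument is correct, but it takes a genuinely different route from the paper for the main assertion. The paper argues via the adjoint functor theorem: since $\pb{f}$ is a left adjoint it is accessible, so it suffices to show it preserves limits; because restriction to an open subset preserves limits and restrictions to a basis are jointly conservative, the canonical comparison map $\pb{f}(\varprojlim F_i)\rightarrow\varprojlim\pb{f}F_i$ can be checked on the basis of Remark \ref{submbas}, reducing to a projection $X\times Y\rightarrow Y$ with $X$ essential and one factor locally compact, which is handled exactly as you handle $p$, via Remark \ref{tensprodspaces} and Example \ref{exess} (iii). You instead invoke the locality criterion of Lemma \ref{loccontrisloc} (i) and the closure of essential morphisms under composition, checking that each composite $\Shsp{X}_{/y(U)}\rightarrow\Shsp{Y}$ is essential (a fortiori of constant shape) by factoring it as the projection $p$ followed by the \'etale morphism of Example \ref{exess} (i). The two proofs hinge on the same key computation --- converting the topological product into Lurie's tensor product, which is precisely where the standing local compactness hypothesis enters, as you correctly flag --- but yours avoids the adjoint functor theorem and accessibility considerations in favour of the structural results imported from [AC21], and it constructs $\pfs{f}$ locally rather than abstractly; the paper's route is shorter once the AFT is granted. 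Your observation that open subsets of essential spaces are again essential (via Example \ref{exess} (i) and composition of locally contractible morphisms) is a point the paper glosses over in Remark \ref{submbas}, and it is needed in both arguments to know the basis consists of charts of the required form. Your treatment of the second assertion --- tensoring the adjunction $\pfs{f}\dashv\pb{f}$ with $\cate{C}$ using Corollary \ref{tensshC} and Remark \ref{tensoringadj} --- is exactly what the paper's terse ``follows immediately by Corollary \ref{tensshC}'' intends.
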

		\begin{proof}
			Since $\pb{f}$ is a left adjoint, it is in particular accessible. Hence, by adjoint functor theorem, it suffices to prove that $\pb{f}$ commutes with limits. For any functor $I\rightarrow\Shsp{Y}$ we have a canonical map
			$$\pb{f}(\varprojlim_{i\in I}F_i)\rightarrow\varprojlim_{i\in I}\pb{f}F_i$$
			and it suffices to check that this is an equivalence after restricting to any open subset in the basis of $X$ associated to $f$. Hence, since the operation of restricting a sheaf to an open subset commutes with limits, we can assume that $f$ is a projection $f: X\times Y\rightarrow Y$ where either $X$ or $Y$ is locally compact and $X$ is essential. Thus, by point (iii) in \Cref{exess} and \Cref{tensprodspaces}, we get that $f$ is essential. The last assertion follows immediately by \Cref{tensshC}.
		\end{proof}
		\begin{lemma}[Smooth base change]\label{smoothbc}
			Let $\C$ be a presentable $\infty$-category. For every given pullback square
			$$\begin{tikzcd}
				X' \arrow[r, "f'"] \arrow[d, "g'"'] 
				\arrow[dr, phantom, "\usebox\pullback" , very near start, color=black]
				& X \arrow[d, "g"] \\
				Y' \arrow[r, "f"] & Y
			\end{tikzcd}$$
			of topological spaces where $g$ and $g'$ are shape submersions, there is a natural equivalence
			$$\pb{f}\pfs{g}\simeq\pfs{g'}\pb{f'}$$
			and, by transposition, also
			$$\pb{g}\pf{f}\simeq\pf{f'}\pb{g'}$$
			for $\C$-valued sheaves. 
		\end{lemma}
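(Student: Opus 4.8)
The plan is to build the base change map as a mate, prove it is invertible for sheaves of spaces by localizing to a projection, and then transpose and tensor with $\cate{C}$.

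I would first record that the commutativity $fg'=gf'$ yields a canonical equivalence $\pb{f'}\pb{g}\simeq\pb{g'}\pb{f}$ of functors $\Shsp{Y}\to\Shsp{X'}$, since both compute $\pb{(gf')}$. Whiskering this equivalence with the unit of $\pfs{g}\dashv\pb{g}$ and the counit of $\pfs{g'}\dashv\pb{g'}$ produces the base change transformation $\pfs{g'}\pb{f'}\to\pb{f}\pfs{g}$, whose invertibility is the content of the lemma. Since $\pfs{g},\pfs{g'}$ are left adjoints and $\pb{f},\pb{f'}$ preserve colimits (being pullbacks of geometric morphisms), both composites are cocontinuous, so it suffices to test the map on a family of generators of $\Shsp{X}$ under colimits. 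By Remark \ref{submbas} I may take the representables $y(W)$ with $W\cong V\times S$, $V\in\op{Y}$ and $S$ essential, chosen so that $g$ restricts on $W$ to the projection $W\to V\hookrightarrow Y$; the running assumption ensures $V$ or $S$ is locally compact.

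For such a $W$, I would compute both branches directly. Writing $j$ for the \'etale geometric morphism $\Shsp{X}_{/y(W)}\simeq\Shsp{W}\to\Shsp{X}$, Example \ref{exess}(i) gives $y(W)\simeq\pfs{j}(1)$, hence $\pfs{g}(y(W))\simeq\pfs{g}\pfs{j}(1)$, and because $gj$ factors as $W\xrightarrow{p}V\hookrightarrow Y$ we obtain $\pfs{g}(y(W))\simeq\pfs{\iota}\pfs{p}(1)$ with $\iota:V\hookrightarrow Y$. Using the K\"unneth decomposition $\Shsp{V\times S}\simeq\Shsp{V}\otimes\Shsp{S}$ (Remark \ref{tensprodspaces}, applicable by local compactness) together with Example \ref{exess}(iii), which identifies $\pfs{p}$ with the functor obtained by tensoring $\pfs{a}:\Shsp{S}\to\spaces$ with $\Shsp{V}$, this evaluates to the copower of $y(V)$ by the shape $\pfs{a}(1)\in\spaces$; applying $\pb{f}$, which is cocontinuous and satisfies $\pb{f}(y(V))\simeq y(f^{-1}(V))$ by Example \ref{pbcosh}(i), gives the copower of $y(f^{-1}(V))$ by $\pfs{a}(1)$. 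On the other branch, over $W$ the pullback square restricts to the projection $f^{-1}(V)\times S\to f^{-1}(V)$, so $(f')^{-1}(W)\cong f^{-1}(V)\times S$ and $\pb{f'}(y(W))\simeq y(f^{-1}(V)\times S)$ by Example \ref{pbcosh}(i); the identical computation for $g'$ then produces the same copower of $y(f^{-1}(V))$ by $\pfs{a}(1)$.

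Both branches therefore agree on the generators $y(W)$, and unwinding the (co)units shows that the base change transformation restricts to this identification, so it is an equivalence on each $y(W)$ and hence, by cocontinuity, on all of $\Shsp{X}$. I would then deduce the transposed equivalence $\pb{g}\pf{f}\simeq\pf{f'}\pb{g'}$ by passing to right adjoints, using $\pb{f}\dashv\pf{f}$ and $\pfs{g}\dashv\pb{g}$ (together with their primed versions) and uniqueness of adjoints. Finally, to pass from $\spaces$ to an arbitrary presentable $\cate{C}$, I would invoke Corollary \ref{tensshC} and Remark \ref{tensoringadj}: the $\cate{C}$-valued functors arise from the space-valued ones by $-\otimes\cate{C}$, so tensoring the equivalence $\pfs{g'}\pb{f'}\simeq\pb{f}\pfs{g}$ of cocontinuous functors with $\cate{C}$ yields the statement for $\cate{C}$-coefficients, and its transpose follows again by taking right adjoints. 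The hard part will be the localization step: one must verify that $\pfs{g}$ evaluated on a representable supported in $W$ depends only on $g|_W$, and that the mate is genuinely compatible with the K\"unneth identifications so that it restricts to the computed comparison. Once these coherence checks are in place, the remaining computations are formal consequences of Example \ref{exess}(iii).
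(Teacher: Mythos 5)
Your proposal is correct and follows essentially the same route as the paper: the base change map is the mate from Remark \ref{laxshape}, cocontinuity reduces the check to the generators of Remark \ref{submbas}, the resulting projection case is handled by Example \ref{exess}(iii) together with the K\"unneth equivalence of Remark \ref{tensprodspaces}, and the $\cate{C}$-valued statement comes from tensoring via Corollary \ref{tensshC} and Remark \ref{tensoringadj}. The only cosmetic difference is that you evaluate both composites objectwise on the generators $y(W)$, whereas the paper identifies the functors $\pb{f}\pfs{g}$ and $\pfs{g'}\pb{(\mathrm{id}_X\times f)}$ directly via bifunctoriality of $\otimes$; the coherence check you flag at the end (that the mate agrees with the computed identification) is likewise left implicit in the paper's own proof.
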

		\begin{proof}
			First of all, the base change transformation as defined in \Cref{laxshape} defnes a comparison natural transformation. Since all functors appearing are colimit preserving, it suffices to check that the morphism is an equivalence only on a family of objects generating $\Sh{X}{\C}$ by colimits. Hence, applying \Cref{submbas} (ii), we see that we can assume that the pullback square is of the type
			$$\begin{tikzcd}
				X\times Y' \arrow[r, "\text{id}_X\times f"] \arrow[d, "g'"'] 
				\arrow[dr, phantom, "\usebox\pullback" , very near start, color=black]
				& X\times Y \arrow[d, "g"] \\
				Y' \arrow[r, "f"] & Y
			\end{tikzcd}$$
			where $X$ is essential, $g$ and $g'$ are the canonical projections. By \Cref{exess} point (iii) and \Cref{tensprodspaces}, we have $\pb{(\text{id}_X\times f)}\simeq\pb{(\text{id}_X)}\otimes\pb{f}$, $\pfs{g}\simeq\pfs{a}\otimes\pfs{(\text{id}_Y)}$ and $\pfs{g'}\simeq\pfs{a}\otimes\pfs{(\text{id}_{Y'})}$, where $a:X\rightarrow\ast$, and so, since $\otimes$ is a bifunctor, we have
			\begin{align*}
				\pb{f}\pfs{g}&\simeq\pb{f}(\pfs{a}\otimes\pfs{(\text{id}_{Y})})  \\ 
				&\simeq\pfs{a}\otimes\pb{f} \\
				&\simeq (\pfs{a}\otimes\pfs{(\text{id}_{Y'})})(\pb{\text{id}_X}\otimes\pb{f}) \\
				&\simeq \pfs{g'}\pb{(\text{id}_X\times f)}.\qedhere
			\end{align*}
		\end{proof}
		
		\begin{corollary}[Smooth projection formula]\label{smoothproj}
			Let $f:X\rightarrow Y$ be a shape submersion, $\C$ and $\D$ two presentable $\infty$-categories. Then for any $F\in\Sh{X}{\C}$ and $G, H\in\Sh{Y}{\D}$, we have a canonical equivalence
			$$\pfs{f}(F\otimes\pb{f}G)\simeq\pfs{f}F\otimes G.$$ Hence, by transposition, when $\C = \D$ has a symmetric monoidal structure, equivalences
			$$\pf{f}\sHom{X}{F}{\pb{f}G}\simeq\sHom{Y}{\pfs{f}F}{G}$$
			and
			$$\pb{f}\sHom{Y}{G}{H}\simeq\sHom{X}{\pb{f}G}{\pb{f}H}.$$ 
			In particular, any shape submersion induces a locally contractible geometric morphism.
		\end{corollary}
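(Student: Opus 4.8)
The plan is to obtain the projection formula as a direct consequence of smooth base change (Lemma \ref{smoothbc}), using the description of the tensor product of sheaves from Remark \ref{dfntens}. By definition $F\otimes\pb{f}G\simeq\pb{\Delta_X}(F\boxtimes\pb{f}G)$, where $\Delta_X:X\rightarrow X\times X$ is the diagonal. Since the external product is compatible with pullbacks, so that $F\boxtimes\pb{f}G\simeq\pb{(\text{id}_X\times f)}(F\boxtimes G)$ (a consequence of the construction of $\boxtimes$ in Remark \ref{dfntens} together with the bifunctoriality of Lurie's tensor product), and since $(\text{id}_X\times f)\circ\Delta_X$ is the graph embedding $\Gamma_f:X\rightarrow X\times Y$, $x\mapsto(x,f(x))$, I would first rewrite the left-hand side as $\pb{\Gamma_f}(F\boxtimes G)$. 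Symmetrically, the map $f\times\text{id}_Y$ is a shape submersion, being a pullback of $f$ (Remark \ref{submbas}), and the tensor decomposition $\Sh{X\times Y}{\cate{C}\otimes\cate{D}}\simeq\Sh{X}{\cate{C}}\otimes\Sh{Y}{\cate{D}}$ (Remark \ref{tensprodspaces} and Corollary \ref{tensshC}) identifies $\pfs{(f\times\text{id}_Y)}$ with $\pfs{f}\otimes\text{id}$, so that $\pfs{(f\times\text{id}_Y)}(F\boxtimes G)\simeq\pfs{f}F\boxtimes G$; this lets me rewrite the right-hand side as $\pb{\Delta_Y}\pfs{(f\times\text{id}_Y)}(F\boxtimes G)$.

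The crux is then the observation that
$$\begin{tikzcd}
X \arrow[r, "\Gamma_f"] \arrow[d, "f"'] & X\times Y \arrow[d, "f\times\text{id}_Y"] \\
Y \arrow[r, "\Delta_Y"] & Y\times Y
\end{tikzcd}$$
is a pullback square whose two vertical arrows are shape submersions. Applying smooth base change (Lemma \ref{smoothbc}) to it gives $\pb{\Delta_Y}\pfs{(f\times\text{id}_Y)}\simeq\pfs{f}\pb{\Gamma_f}$, and assembling the previous two rewritings yields
$$\pfs{f}(F\otimes\pb{f}G)\simeq\pfs{f}\pb{\Gamma_f}(F\boxtimes G)\simeq\pb{\Delta_Y}\pfs{(f\times\text{id}_Y)}(F\boxtimes G)\simeq\pfs{f}F\otimes G.$$

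The two $\sHom$-formulas follow formally by transposition once $\cate{C}=\cate{D}$ is closed symmetric monoidal: combining the adjunctions $\pfs{f}\dashv\pb{f}$, $\pb{f}\dashv\pf{f}$ and $-\otimes H\dashv\sHom{X}{H}{-}$ with the symmetry of the monoidal structure, a Yoneda argument on mapping spaces turns the projection formula into $\pf{f}\sHom{X}{F}{\pb{f}G}\simeq\sHom{Y}{\pfs{f}F}{G}$ and $\pb{f}\sHom{Y}{G}{H}\simeq\sHom{X}{\pb{f}G}{\pb{f}H}$. For the final assertion I would specialize to $\cate{C}=\cate{D}=\spaces$: for sheaves of spaces $\pb{\Delta}(F\boxtimes G)$ coincides with the cartesian product $F\times G$ of the $\infty$-topos $\Shsp{X}$, so the projection formula becomes $\pfs{f}(\pb{f}G\times F)\simeq G\times\pfs{f}F$, which is precisely condition (iii) of Proposition \ref{locconstsheqdef}. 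Since a shape submersion already induces an essential geometric morphism, Proposition \ref{locconstsheqdef} then gives that $f$ is locally contractible.

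The main obstacle I anticipate is the careful handling of the two external-product compatibilities, especially the identification $\pfs{(f\times\text{id}_Y)}(F\boxtimes G)\simeq\pfs{f}F\boxtimes G$: this is where the standing local compactness conventions on shape submersions are genuinely needed, in order to have the tensor decomposition of the relevant sheaf categories at one's disposal and to treat $f\times\text{id}_Y$ as $\pfs{f}\otimes\text{id}$. One must also check that the equivalence furnished by smooth base change is the canonical projection morphism and not merely an abstract equivalence. By contrast, spotting the correct pullback square is the only genuinely creative step; the rest is formal bookkeeping with adjunctions.
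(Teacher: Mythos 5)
Your proposal is correct and follows essentially the same route as the paper: the paper also rewrites $F\otimes\pb{f}G$ as $\pb{\Delta_f}(F\boxtimes G)$ via the graph, applies smooth base change to the very pullback square you identified (graph of $f$ against $f\times\text{id}_Y$ over the diagonal of $Y$), uses $\pfs{(f\times\text{id}_Y)}(F\boxtimes G)\simeq\pfs{f}F\boxtimes G$, and deduces local contractibility by specializing to $\spaces$ with the cartesian monoidal structure and condition (iii) of Proposition \ref{locconstsheqdef}. Even the point you flag as a remaining obstacle --- that the abstract equivalence must agree with the canonical projection morphism --- is exactly what the paper settles with its final commutative diagram, in which the projection morphism transposes to the smooth base change transformation.
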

		\begin{proof}
			Let $\Gamma_f: X\hookrightarrow X\times Y$ be the graph of $f$. We have
			\begin{align*}
				\pfs{f}(F\otimes\pb{f}G)&\simeq\pfs{f}\pb{\Gamma_f}(F\boxtimes G) \\
				&\simeq\pb{\Delta}\pfs{(f\times \text{id}_Y)}(F\boxtimes G) \\
				&\simeq\pfs{f}F\otimes G
			\end{align*}
			where the second equivalence follows from applying \cref{smoothbc} to the pullback square
			$$
			\begin{tikzcd}
				X\arrow[d, "f"]\arrow[r, "\Gamma_f"] & X\times Y \arrow[d, "f\times\text{id}_Y"] \\
				Y\arrow[r, "\Delta"] & Y\times Y,
			\end{tikzcd}
			$$
			and the third follows from \Cref{exess} point (iii) and \Cref{tensprodspaces}.
			
			The last assertion follows from specializing (\ref{monshC}) to the case when $\C$ is $\spaces$ equipped with the cartesian monoidal structure and by the commutativity of the diagram
			$$
			\begin{tikzcd}
				F\times\pb{f}G\arrow[r]\arrow[d, "\simeq"] & \pb{f}(\pfs{f}F\times G)\arrow[d, "\simeq"] \\
				\pb{\Gamma_f}(F\boxtimes G) \arrow[r] & \pb{\Gamma_f}\pb{(f\times\text{id}_Y)}\pfs{(f\times\text{id}_Y)}(F\boxtimes G)
			\end{tikzcd}
			$$
			where the upper horizontal arrow is the one which transposes to the projection morphism and the lower horizontal one transposes to the smooth base change transformation.
		\end{proof}
		
		\section{Localization sequences}
		We now prove a version of the localization theorem for sheaves of spaces on a topological space $X$, and deduce from that a localization theorem for sheaves of spectra. The latter essentially states that, for any closed immersion $i:Z\rightarrow X$ with open complement $j: U\rightarrow X$, there is a cofiber sequence of functors 
		$$\pfs{j}\pb{j}\rightarrow\text{id}\rightarrow\pf{i}\pb{i}.$$
		We follow the strategy outlined in \cite{khan2019morel}. The main ingredient is to show that the pushforward $\pf{i} : \Shsp{Z}\rightarrow\Shsp{X}$ commutes with \textit{contractible} colimits, i.e. colimits indexed by contractible simplicial sets. From this we are able to reduce to checking the theorem in the case of representable sheaves, which is almost straightforward. 
		\par
			
		\begin{lemma}
			Let $i: \cate{Z}\hookrightarrow \X$ be a closed immersion of $\infty$-topoi. Then $\pf{i}$ commutes with contractible colimits. In particular, if $i:Z\hookrightarrow X$ is a closed immersion of topological spaces, then $\pf{i}:\Shsp{Z}\rightarrow\Shsp{X}$ preserves contractible colimits.
		\end{lemma}
		\begin{proof}
			Let $U\in\X$ be the $(-1)$-truncated object whose complementary closed subtopos of $\X$ is given by $\cate{Z}$. Recall that, by \cite[Lemma 7.3.2.4]{lurie2009higher}, $\pf{i}$ identifies $\cate{Z}$ with the full subcategory of $\X$ spanned by objects $F$ with the property the map $p_2:F\times U\rightarrow U$ is an equivalence. Since $(-)\times U$ preserves colimits, and $\varinjlim_{i\in I}U\simeq U$ for any contractible $\infty$-category $I$, we therefore deduce that $\pf{i}$ preserves contractible colimits. The part of the statement referring to closed immersions of topological spaces follows from \cite[Corollary 7.3.2.10]{lurie2009higher}.
		\end{proof}
		\begin{corollary}\label{expbclosed}
			Let $\C$ be any pointed presentable $\infty$-category. Then the pushforward $\pf{i}^{\C} : \Sh{Z}{\C}\rightarrow\Sh{X}{\C}$ commutes with all colimits, and thus admits a right adjoint $\pbp{i}_{\C} : \Sh{X}{\C}\rightarrow\Sh{Z}{\C}$.
		\end{corollary}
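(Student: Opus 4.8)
The plan is to prove that $\pf{i}^{\cate{C}}$ is cocontinuous, from which the existence of the right adjoint $\pbp{i}_{\cate{C}}$ follows by the adjoint functor theorem, since both $\Sh{Z}{\cate{C}}$ and $\Sh{X}{\cate{C}}$ are presentable (Corollary \ref{tensshC}). The key reduction is the observation that a functor between cocomplete $\infty$-categories preserves all small colimits if and only if it preserves contractible colimits and the initial object: indeed, finite colimits are generated by pushouts and the initial object, all small colimits are generated by finite and filtered colimits, and both pushouts and filtered colimits are indexed by weakly contractible simplicial sets. Since $\cate{C}$ is pointed, the initial object of $\Sh{Z}{\cate{C}}$ is the zero sheaf, and the formula $\Sec{U}{\pf{i}^{\cate{C}}F}=\Sec{i^{-1}(U)}{F}$ shows that $\pf{i}^{\cate{C}}$ sends it to the zero sheaf on $X$. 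Thus it suffices to prove that $\pf{i}^{\cate{C}}$ preserves contractible colimits.

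I would first treat the case $\cate{C}=\spaces_{\ast}$. By Remark \ref{presptdst} we may identify $\Sh{Z}{\spaces_{\ast}}\simeq\Shsp{Z}_{\ast}$ and $\Sh{X}{\spaces_{\ast}}\simeq\Shsp{X}_{\ast}$ with the $\infty$-categories of pointed objects of the respective topoi, and under these identifications $\pf{i}^{\spaces_{\ast}}$ fits into a commutative square with the space-level pushforward $\pf{i}:\Shsp{Z}\rightarrow\Shsp{X}$ and the forgetful functors $U$ (again by the sections formula, which does not interact with the basepoint). The forgetful functor out of a coslice is conservative and preserves contractible colimits, while by the previous lemma $\pf{i}$ preserves contractible colimits; chasing a contractible colimit around the square and using conservativity of $U$ then shows that $\pf{i}^{\spaces_{\ast}}$ preserves contractible colimits as well. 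Combined with the first paragraph, this makes $\pf{i}^{\spaces_{\ast}}$ cocontinuous.

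Finally I would bootstrap to an arbitrary pointed presentable $\cate{C}$ by tensoring. The previous step exhibits an adjunction $\pb{i}_{\spaces_{\ast}}\dashv\pf{i}^{\spaces_{\ast}}$ of \emph{cocontinuous} functors, so by Remark \ref{tensoringadj} applying $-\otimes\cate{C}$ produces an adjunction of cocontinuous functors. Using the equivalence $\spaces_{\ast}\otimes\cate{C}\simeq\cate{C}$ for pointed presentable $\cate{C}$ (Remark \ref{presptdst}), associativity of the tensor product, and Corollary \ref{tensshC}, the tensored categories are identified with $\Sh{Z}{\cate{C}}$ and $\Sh{X}{\cate{C}}$ and the tensored left adjoint $\pb{i}_{\spaces_{\ast}}\otimes\cate{C}$ with $\pb{i}_{\cate{C}}$; by uniqueness of adjoints the tensored right adjoint is then $\pf{i}^{\cate{C}}$, which is therefore cocontinuous.

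The main obstacle is the case $\cate{C}=\spaces_{\ast}$, or rather the conceptual point underlying it: the previous lemma only yields preservation of contractible colimits at the level of spaces, and it is precisely the pointedness of the coefficients --- which collapses the initial and terminal objects and lets one upgrade the preservation of contractible colimits to the preservation of all colimits --- that makes $\pf{i}^{\cate{C}}$ genuinely cocontinuous. Once this is isolated, the passage to general coefficients is a formal consequence of the behaviour of Lurie's tensor product with respect to adjunctions of cocontinuous functors.
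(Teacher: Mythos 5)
Your proof is correct and follows essentially the same route as the paper's: reduce to $\cate{C}=\spaces_{\ast}$ via Lurie's tensor product (Remark \ref{tensoringadj}, Remark \ref{presptdst}, Corollary \ref{tensshC}), use pointedness to handle the initial object, and reduce arbitrary colimits to contractible ones so that the preceding lemma applies. The only cosmetic differences are that the paper passes from contractible colimits plus the initial object to all colimits by the join trick (replacing a diagram $I\rightarrow\cate{D}$ by $\Delta^0\star I\rightarrow\cate{D}$ with the same colimit) rather than your generation argument via pushouts and filtered colimits, and your explicit coslice/forgetful-functor step for $\spaces_{\ast}$ spells out a point the paper leaves implicit when invoking the lemma, which is stated only for sheaves of spaces.
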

		\begin{proof}
			By \cref{tensshC}, it suffices to prove the corollary for $\C = \spaces_{\ast}$. Note that it suffices to check that $\pf{i}$ preserves the initial object and commutes with contractible colimits: any $F: I\rightarrow\D$ from a simplicial set $I$ to an $\infty$-category $\D$ with an initial object $\emptyset_{\D}$ may be seen as $I\rightarrow\D_{\emptyset_{\D}/}$ and thus corresponds to a functor $\Delta^0\star I\rightarrow\D$ with the same colimit as $F$ but indexed by weakly contractible simplicial set. But $\pf{i}$ preserves the initial object because $\Sh{Z}{\spaces_{\ast}}$ is pointed, and thus we may conclude by the previous lemma.
		\end{proof}
		Let $i: Z\hookrightarrow X$ be a closed immersion with open complement $j: U\hookrightarrow X$. For any $F\in\Shsp{X}$, consider the functorial commutative square
		$$\begin{tikzcd}[column sep = large]
			\pfs{j}\pb{j}(F) \arrow[r] \arrow[d]   & F \arrow[d]     \\
			\pfs{j}\pb{j}\pf{i}\pb{i}(F) \arrow[r] & \pf{i}\pb{i}(F),
		\end{tikzcd}$$
		where all the morphisms are given by the obvious units and counits.
		Notice that for any $G\in\Shsp{Z}$ and $V\in\Op{U}$, we have
		$$\Sec{V}{\pf{i}G} = \Sec{U\cap Z}{G}\simeq\ast,$$
		and so we can identify $\pb{j}\pf{i}:\Shsp{Z}\rightarrow\Shsp{U}$ with a constant functor with value the terminal object $y(U)\in\Shsp{U}$. Hence the previous square may be written as
		\begin{equation}\label{localization}
			\begin{tikzcd}[column sep = large]
				\pfs{j}\pb{j}(F) \arrow[r] \arrow[d]   & F \arrow[d]     \\
				\pfs{j}(y(U)) \arrow[r] & \pf{i}\pb{i}(F).
			\end{tikzcd}
		\end{equation}
		\begin{theorem}\label{localizationtheorem}
			The canonical square (\ref{localization}) is a pushout.
		\end{theorem}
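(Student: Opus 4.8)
The plan is to reduce to the case where $F$ is representable and then to recognize the resulting square as the descent square of a two-element open cover.

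First I would note that the square (\ref{localization}) is functorial in $F$ and that all four of its corners, regarded as endofunctors of $\Shsp{X}$, preserve contractible colimits. Indeed, $\pfs{j}\pb{j}$ and the identity preserve all colimits; the bottom-left corner is the \emph{constant} functor with value $\pfs{j}(y(U))$, which preserves contractible colimits since the colimit of a constant diagram indexed by a weakly contractible simplicial set recovers its value; and the bottom-right corner $\pf{i}\pb{i}$ preserves contractible colimits by the previous lemma, as $\pb{i}$ preserves all colimits and $\pf{i}$ preserves contractible ones. Since colimits of functors are computed pointwise, the assignment $F\mapsto(\ref{localization})$ preserves contractible colimits as a functor $\Shsp{X}\to\text{Fun}(\Delta^1\times\Delta^1,\Shsp{X})$.

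Next, because being cocartesian is itself a colimit condition, a contractible colimit of pushout squares is again a pushout square: the pushout of the span of the colimit square is the colimit of the pushouts of the spans, and this maps by an equivalence to the colimit of the fourth corners. Hence the full subcategory of those $F$ for which (\ref{localization}) is cocartesian is closed under contractible colimits. Since the representables $y(V)$, $V\in\op{X}$, generate $\Shsp{X}$ under colimits, and since any colimit can be made weakly contractible by adjoining a cone point mapping to the initial object $y(\emptyset)$ (exactly as in the proof of Corollary \ref{expbclosed}), every sheaf is a contractible colimit of representables, and it suffices to treat $F=y(V)$. For such $F$ I would compute the corners directly: $\pb{j}y(V)=y(V\cap U)$ and $\pfs{j}\pb{j}y(V)=y(V\cap U)$ in $\Shsp{X}$ (extension by zero of a representable is the representable at the same open), while $\pfs{j}(y(U))=y(U)$; and using $\Sec{W}{\pf{i}\pb{i}y(V)}=\Sec{W\cap Z}{y(V\cap Z)}$, which is contractible exactly when $W\cap Z\subseteq V$, together with the fact that $W\subseteq V\cup U$ if and only if $W\cap Z\subseteq V$ (as $U$ is the complement of $Z$), I would identify $\pf{i}\pb{i}y(V)\simeq y(V\cup U)$. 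Thus (\ref{localization}) becomes the square with corners $y(V\cap U)$, $y(V)$, $y(U)$, $y(V\cup U)$ and maps induced by the evident inclusions of opens.

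Finally I would show this last square is a pushout by descent for the covering $\{U,V\}$ of $U\cup V$, without invoking hypercompleteness. In the presheaf category $\text{Fun}(\op{X}^{op},\spaces)$ one has $y(U)\times y(V)=y(U\cap V)$, and the union of the subterminal objects $y(U)$ and $y(V)$ is the effective pushout $y(U)\sqcup_{y(U\cap V)}y(V)$, which is precisely the sieve generated by $\{U,V\}$; applying the colimit-preserving sheafification $L$, which inverts covering-sieve inclusions and fixes representables of opens, yields $y(U)\sqcup_{y(U\cap V)}y(V)\simeq y(U\cup V)$ in $\Shsp{X}$. I expect the main obstacle to be the reduction step rather than the representable computation: one must check that the two corners that fail to preserve arbitrary colimits—the constant corner $\pfs{j}(y(U))$ and the corner $\pf{i}\pb{i}$—are simultaneously reconciled precisely by restricting to weakly contractible index categories, which is exactly the role of the preceding lemma on contractible colimits.
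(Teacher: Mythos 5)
Your proof is correct and follows essentially the same route as the paper's: reduce to $F=y(V)$ using that all four corners of (\ref{localization}) preserve contractible colimits (the paper uses that the canonical diagram $\op{X}_{/F}$ already has an initial object, where you adjoin a cone point at $y(\emptyset)$ --- the same trick), then identify $\pf{i}\pb{i}y(V)\simeq y(U\cup V)$ by the identical section-wise computation via $W\cap Z\subseteq V\iff W\subseteq U\cup V$. The only difference is that you spell out two steps the paper leaves implicit --- the closure of cocartesian squares under contractible colimits and the final descent identification $y(U)\sqcup_{y(U\cap V)}y(V)\simeq y(U\cup V)$ via sheafification of the covering sieve --- both of which you handle correctly.
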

		\begin{proof}
			Since all functors appearing in (\ref{localization}) commute with contractible colimits and any sheaf on $X$ is canonically written as colimit indexed by the contractible category $\Op{X}_{/F} = \Shsp{X}_{/F}\times_{\Shsp{X}}\Op{X}$ (it has an initial object), it suffices to prove the theorem when $F = y(V)$ for some $V\in\Op{X}$, and hence we just need to show that $\pf{i}\pb{i}(y(V))\simeq y(U\cup V)$. For any $W\in\Op{X}$, we have
			\begin{align*}
				\Sec{W}{\pf{i}\pb{i}(y(V))} &\simeq \Sec{W}{\pf{i}(y(V\cap Z))} \\
				& = \Sec{W\cap Z}{y(V\cap Z)} \\
				& = \Hom{\Op{Z}}{W\cap Z}{V\cap Z} \\
				& = \Hom{\Op{X}}{W}{V\cup U} \\
				& = \Sec{W}{y(V\cup U)},
			\end{align*}
			where the second to last identification follows from the usual exponential adjunction in the boolean algebra of all subsets of $X$.	
		\end{proof}
		\begin{corollary}\label{locseq}
			Let $i : Z\hookrightarrow X$ be a closed immersion with open complement $j:U\hookrightarrow X$, and let $\pf{i}^{\C}$, $\pb{i}_{\C}$, $\pbp{i}_{\C}$, $\pfs{j}^{\C}$ and $\pb{j}_{\C}$ be the induced pushforward and pullback functors at the level of $\C$-valued sheaves, where $\C$ is any pointed presentable $\infty$-category. Then we get a canonical cofiber sequence
			\begin{equation}\label{localizationseq1}
				\pfs{j}^{\C}\pb{j}_{\C}F\rightarrow F\rightarrow\pf{i}^{\C}\pb{i}_{\C}F
			\end{equation}
			and dually a fiber sequence
			\begin{equation}\label{localizationseq2}
				\pf{i}^{\C}\pbp{i}_{\C}F\rightarrow F\rightarrow\pf{j}^{\C}\pb{j}_{\C}F.
			\end{equation}denote
		\end{corollary}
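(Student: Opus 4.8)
The plan is to obtain the cofiber sequence (\ref{localizationseq1}) by upgrading the pushout square of Theorem~\ref{localizationtheorem} to pointed coefficients, where Corollary~\ref{expbclosed} makes every functor in sight cocontinuous and the corner $\pfs{j}(y(U))$ collapses to the zero object, and then to extract the fiber sequence (\ref{localizationseq2}) formally from it.

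For (\ref{localizationseq1}): since $\cate{C}$ is pointed presentable, $\pfs{j}^{\cate{C}}$, $\pb{j}_{\cate{C}}$, $\pb{i}_{\cate{C}}$ and, by Corollary~\ref{expbclosed}, $\pf{i}^{\cate{C}}$ all preserve colimits. The composite $\pfs{j}^{\cate{C}}\pb{j}_{\cate{C}}F\to F\to\pf{i}^{\cate{C}}\pb{i}_{\cate{C}}F$ of counit and unit is canonically null, since by $\pfs{j}^{\cate{C}}\dashv\pb{j}_{\cate{C}}$ it is adjoint to a map into $\pb{j}_{\cate{C}}\pf{i}^{\cate{C}}\pb{i}_{\cate{C}}F$ and $\pb{j}_{\cate{C}}\pf{i}^{\cate{C}}\simeq 0$: for $V\in\op{U}$ one has $\Sec{V}{\pf{i}^{\cate{C}}G}=\Sec{V\cap Z}{G}=\Sec{\emptyset}{G}$, the terminal and hence, $\cate{C}$ being pointed, the zero object. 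This produces a natural comparison $\mathrm{cofib}(\pfs{j}^{\cate{C}}\pb{j}_{\cate{C}}F\to F)\to\pf{i}^{\cate{C}}\pb{i}_{\cate{C}}F$, and I would check it is an equivalence by applying the jointly conservative pair $(\pb{i}_{\cate{C}},\pb{j}_{\cate{C}})$, both cocontinuous and so preserving the cofiber. Using $\pb{j}_{\cate{C}}\pfs{j}^{\cate{C}}\simeq\mathrm{id}$ and $\pb{j}_{\cate{C}}\pf{i}^{\cate{C}}\simeq 0$, the restriction to $U$ has source $\mathrm{cofib}(\mathrm{id}_{\pb{j}_{\cate{C}}F})\simeq 0$ and target $0$; using $\pb{i}_{\cate{C}}\pfs{j}^{\cate{C}}\simeq 0$ and $\pb{i}_{\cate{C}}\pf{i}^{\cate{C}}\simeq\mathrm{id}$ (full faithfulness of $\pf{i}^{\cate{C}}$), the restriction to $Z$ is $\pb{i}_{\cate{C}}F\to\pb{i}_{\cate{C}}F$; both are equivalences, so the comparison is one. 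Here the full faithfulness of $\pfs{j}^{\cate{C}}$ and $\pf{i}^{\cate{C}}$, the vanishings $\pb{i}_{\cate{C}}\pfs{j}^{\cate{C}}\simeq 0$ and $\pb{j}_{\cate{C}}\pf{i}^{\cate{C}}\simeq 0$, and the joint conservativity of the restrictions are the recollement facts for $\Shsp{X}$ underlying Theorem~\ref{localizationtheorem}, transported to $\cate{C}$ by cocontinuity.

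For (\ref{localizationseq2}) the same transport is unavailable, because $-\otimes\cate{C}$ preserves colimits but not the limit computing a fiber; I would instead deduce it formally from (\ref{localizationseq1}). Put $\Gamma_Z F:=\mathrm{fib}(F\to\pf{j}^{\cate{C}}\pb{j}_{\cate{C}}F)$. As $\pb{j}_{\cate{C}}$ is left exact with $\pb{j}_{\cate{C}}\pf{j}^{\cate{C}}\simeq\mathrm{id}$, one gets $\pb{j}_{\cate{C}}\Gamma_Z F\simeq 0$; feeding $\Gamma_Z F$ into (\ref{localizationseq1}) and using $\pfs{j}^{\cate{C}}(0)\simeq 0$ then gives $\Gamma_Z F\simeq\pf{i}^{\cate{C}}\pb{i}_{\cate{C}}\Gamma_Z F$. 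A short Yoneda argument on $W\in\Sh{Z}{\cate{C}}$ -- using the defining fiber sequence of $\Gamma_Z F$, the adjunction $\pf{i}^{\cate{C}}\dashv\pbp{i}_{\cate{C}}$, full faithfulness of $\pf{i}^{\cate{C}}$, and that $\pb{j}_{\cate{C}}\pf{i}^{\cate{C}}\simeq 0$ makes $\Hom{}{\pf{i}^{\cate{C}}W}{\pf{j}^{\cate{C}}\pb{j}_{\cate{C}}F}$ contractible -- identifies $\pb{i}_{\cate{C}}\Gamma_Z F\simeq\pbp{i}_{\cate{C}}F$, whence $\Gamma_Z F\simeq\pf{i}^{\cate{C}}\pbp{i}_{\cate{C}}F$ and (\ref{localizationseq2}) follows.

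The main obstacle is precisely this asymmetry between the two sequences. The single non-formal ingredient is the cocontinuity of $\pf{i}^{\cate{C}}$ from Corollary~\ref{expbclosed}: without it neither does the cofiber of $\pfs{j}^{\cate{C}}\pb{j}_{\cate{C}}F\to F$ compute $\pf{i}^{\cate{C}}\pb{i}_{\cate{C}}F$, nor is the comparison detected by the restrictions to $Z$ and $U$; everything else in (\ref{localizationseq1}) is recollement bookkeeping inherited from sheaves of spaces. The fiber sequence, by contrast, cannot be produced by any colimit-preserving transport and must be teased out of (\ref{localizationseq1}) by the support-functor argument above.
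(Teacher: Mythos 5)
Your derivation of the fiber sequence (\ref{localizationseq2}) from the cofiber sequence (\ref{localizationseq1}) is correct, and is essentially the passage to right adjoints (mates) that the paper compresses into the single word \enquote{dually}; likewise your construction of the nullhomotopy and of the comparison map $\mathrm{cofib}(\pfs{j}^{\cate{C}}\pb{j}_{\cate{C}}F\rightarrow F)\rightarrow\pf{i}^{\cate{C}}\pb{i}_{\cate{C}}F$ is fine. The gap is the step where you check that this comparison is an equivalence: you invoke joint conservativity of the pair $(\pb{i}_{\cate{C}},\pb{j}_{\cate{C}})$ for an \emph{arbitrary} pointed presentable $\cate{C}$, asserting it is a recollement fact \enquote{transported to $\cate{C}$ by cocontinuity.} Identities of functors such as $\pb{j}\pfs{j}\simeq\mathrm{id}$, $\pb{i}\pfs{j}\simeq 0$, $\pb{j}\pf{i}\simeq 0$ and $\pb{i}\pf{i}\simeq\mathrm{id}$ do transport along $-\otimes\cate{C}$ (Remark \ref{tensoringadj}), but conservativity is a \emph{property} of a functor, not an equivalence of functors, and Lurie's tensor product does not preserve it. The paper flags exactly this point: the remark following Corollary \ref{pfsclosed} notes that joint conservativity is deduced from the localization theorem only for \emph{stable} presentable $\cate{C}$ --- that is, it is deduced from the very corollary you are proving, so in the stable case your argument would be circular --- and that for other coefficients it is known only when $\cate{C}$ is an $\infty$-topos or compactly generated, by an independent argument of [Hai21]. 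Even for $\cate{C}=\spaces$ it is not a consequence of Theorem \ref{localizationtheorem} alone (the pushout square expresses $\pf{i}\pb{i}F$ in terms of $F$, not $F$ in terms of its restrictions) but requires identifying $\Shsp{Z}$ with the closed subtopos complementary to $\Shsp{U}$, as in [Lur09, 7.3.2].

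The repair is the reduction you never perform: since a cofiber sequence is a colimit statement and all functors in (\ref{localizationseq1}) are cocontinuous and compatible with the equivalence $\Sh{X}{\cate{C}}\simeq\Sh{X}{\spaces_{\ast}}\otimes\cate{C}$, it suffices to treat $\cate{C}=\spaces_{\ast}$. There the paper avoids conservativity altogether: it uses the forgetful functor $\alpha:\Sh{X}{\spaces_{\ast}}\rightarrow\Shsp{X}$, which is conservative and reflects pushouts and satisfies $\alpha\pf{i}^{\spaces_{\ast}}\pb{i}_{\spaces_{\ast}}\simeq\pf{i}\pb{i}\alpha$, exhibits $\alpha(\pfs{j}^{\spaces_{\ast}}\pb{j}_{\spaces_{\ast}}F)$ as an explicit pushout built from $\pfs{j}\pb{j}\alpha(F)$ and $y(X)$, and concludes by pasting pushout squares against Theorem \ref{localizationtheorem}. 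Alternatively, after the reduction to $\spaces_{\ast}$ your conservativity route can be salvaged, since $\alpha$ is conservative and commutes with the restrictions, reducing the claim to joint conservativity in $\Shsp{X}$ --- but that still requires the closed-subtopos input from [Lur09, 7.3.2], which must be cited; as written, the blanket transport claim for all pointed presentable $\cate{C}$ is unjustified and is precisely the nontrivial content isolated in [Hai21].
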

		\begin{proof}
			It suffices to treat only the case of the sequence (\ref{localizationseq1}), since (\ref{localizationseq2}) follows from the former by passing to right adjoints. Furthermore, we only need to prove the case of sheaves of pointed spaces, since all functors appearing are colimit preserving and we have a canonical equivalence $\Sh{X}{\C} \simeq\Sh{X}{\spaces_{\ast}}\otimes\C$, where $\C$ is any presentable pointed $\infty$-category and $X$ is any topological space. We define the canonical morphisms in ($\ref{localizationseq1}$) through counit and unit of the appropriate adjunctions, and we see that there is a unique null-homotopy of the composition of those two morphisms coming from the fact that $\pb{i}_{\spaces_{\ast}}\pfs{j}^{\spaces_{\ast}}$ is equivalent to a constant functor with value the zero object in $\Sh{Z}{\spaces_{\ast}}$. 
			\par
			Let $\alpha:\Sh{X}{\spaces_{\ast}}\rightarrow\Shsp{X}$ be the forgetful functor. A close inspection of the appropriate universal properties shows that, for any $F\in\Sh{X}{\spaces_{\ast}}$, there is a canonical pushout square
			$$\begin{tikzcd}
				\pfs{j}(y(U)) \arrow[d] \arrow[r]  & y(X) \arrow[d]                                           \\
				\pfs{j}\pb{j}\alpha(F) \arrow[r] & \alpha(\pfs{j}^{\spaces_{\ast}}\pb{j}_{\spaces_{\ast}}F),\arrow[ul, phantom, "\usebox\pushout", very near start]
			\end{tikzcd}    $$ 
			where the left vertical map is induced by the point of $F$. Thus, since $\alpha$ reflects pushouts and we have an equivalence $\alpha\pf{i}^{\spaces_{\ast}}\pb{i}_{\spaces_{\ast}}\simeq\pf{i}\pb{i}\alpha$, it suffices to prove that the canonical square
			$$\begin{tikzcd}
				\alpha(\pfs{j}^{\spaces_{\ast}}\pb{j}_{\spaces_{\ast}}F) \arrow[d] \arrow[r] & \alpha(F) \arrow[d]   \\
				y(X) \arrow[r]                                                               & \pf{i}\pb{i}\alpha(F).
			\end{tikzcd}$$
			induced by applying $\alpha$ to the sequence (\ref{localizationseq1}) is a pushout. For this purpose, consider the commutative diagram
			$$\begin{tikzcd}
				\pfs{j}(y(U)) \arrow[r] \arrow[d]             & y(X) \arrow[d]                                                               &                       \\
				\pfs{j}\pb{j}\alpha(F) \arrow[d] \arrow[r] & \alpha(\pfs{j}^{\spaces_{\ast}}\pb{j}_{\spaces_{\ast}}F) \arrow[d] \arrow[r] & \alpha(F) \arrow[d]   \\
				\pfs{j}\pb{j}(y(X)) \arrow[r]                     & y(X) \arrow[r]                                                               & \pf{i}\pb{i}\alpha(F).
			\end{tikzcd}$$
			The upper left square and the left vertical rectangle are both pushouts, and so also the lower left square is a pushout. But the lower horizontal rectangle is a pushout, and so we can conclude.
		\end{proof}
		
		\begin{remark}
			\cref{locseq} can alternatively be deduced from \cite[Remark A.8.5, Proposition A.8.15]{lurie2017higher}.
		\end{remark}
		
		\begin{corollary}\label{pfsclosed}
			Consider a pullback square
			$$\begin{tikzcd}
				Z' \arrow[r, "f'"] \arrow[d, hook,  "s'"'] 
				\arrow[dr, phantom, "\usebox\pullback" , very near start, color=black]
				& Z \arrow[d, hook,  "s"] \\
				X \arrow[r, "f"] & Y
			\end{tikzcd}$$ 
			where $f$ and $f'$ are shape submersions and $s$ (and consequently $s'$) is a closed immersion. Then, for sheaves with values in a pointed presentable $\infty$-category, we have a canonical equivalence
			$$\pf{s}\pfs{f'}\simeq\pfs{f}\pf{s'}$$
			or equivalently
			$$\pb{f'}\pbp{s}\simeq\pbp{s'}\pb{f}.$$
		\end{corollary}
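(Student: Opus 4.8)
The plan is to prove the first formula $\pfp{s}\pfs{f'}\simeq\pfs{f}\pfp{s'}$ and to deduce the second by passing to right adjoints: since $s$ and $s'$ are closed immersions we have $\pfp{s}=\pf{s}$ and $\pfp{s'}=\pf{s'}$, so the right adjoints of $\pfs{f}\pf{s'}$ and $\pf{s}\pfs{f'}$ are $\pbp{s'}\pb{f}$ and $\pb{f'}\pbp{s}$ respectively, and by uniqueness of adjoints the equivalence $\pfs{f}\pf{s'}\simeq\pf{s}\pfs{f'}$ transposes to $\pb{f'}\pbp{s}\simeq\pbp{s'}\pb{f}$. I work with a fixed pointed presentable $\cate{C}$ (one could also first reduce to $\cate{C}=\spaces_{\ast}$ via $\Sh{X}{\cate{C}}\simeq\Sh{X}{\spaces_{\ast}}\otimes\cate{C}$, but this is not needed). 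Note that both composites $\pf{s}\pfs{f'}$ and $\pfs{f}\pf{s'}$ are colimit preserving functors $\Sh{Z'}{\cate{C}}\to\Sh{X}{\cate{C}}$, since $\pfs{f},\pfs{f'}$ are left adjoints and $\pf{s},\pf{s'}$ preserve all colimits by Corollary \ref{expbclosed}.

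The key observation is that both functors take values in sheaves supported on $Z$, and that such sheaves are recovered from their restriction to $Z$. Write $j:U\hookrightarrow X$ and $j':U'\hookrightarrow Y'$ for the open complements of $Z$ and $Z'$, so that $U'=f^{-1}(U)$ and the restriction $f_{U'}:U'\to U$ is again a shape submersion (shape submersions are stable under pullback by Remark \ref{submbas}, and so are closed immersions). The first ingredient is a consequence of the localization cofiber sequence (\ref{localizationseq1}) of Corollary \ref{locseq}: for any $G\in\Sh{X}{\cate{C}}$ with $\pb{j}G\simeq 0$ the term $\pfs{j}\pb{j}G$ vanishes, so the sequence identifies the unit $G\to\pf{s}\pb{s}G$ with an equivalence.

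The plan is then to produce, naturally in $F\in\Sh{Z'}{\cate{C}}$, the chain
\[
\pfs{f}\pf{s'}F\;\xrightarrow{\ \simeq\ }\;\pf{s}\pb{s}\pfs{f}\pf{s'}F\;\xrightarrow{\ \simeq\ }\;\pf{s}\pfs{f'}\pb{s'}\pf{s'}F\;\xrightarrow{\ \simeq\ }\;\pf{s}\pfs{f'}F.
\]
The first arrow is the unit just discussed, which is an equivalence because $\pb{j}\pfs{f}\pf{s'}F\simeq 0$: indeed, smooth base change (Lemma \ref{smoothbc}) applied to the pullback square with horizontal open immersions $j,j'$ and vertical shape submersions $f,f_{U'}$ gives $\pb{j}\pfs{f}\simeq\pfs{f_{U'}}\pb{j'}$, while $\pb{j'}\pf{s'}\simeq 0$ since the restriction to the open complement $U'$ of a sheaf pushed forward from the closed set $Z'$ vanishes (here pointedness of $\cate{C}$ is used). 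The second arrow is the other instance of smooth base change $\pb{s}\pfs{f}\simeq\pfs{f'}\pb{s'}$, obtained from Lemma \ref{smoothbc} applied to the reflected pullback square, whose vertical maps are the shape submersions $f,f'$ and whose horizontal maps are $s,s'$. The third arrow uses $\pb{s'}\pf{s'}\simeq\mathrm{id}$, valid because $s'$ is a closed immersion.

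I expect the only real difficulty to be bookkeeping: one must apply smooth base change to two differently oriented pullback squares (the square with the open immersions $j,j'$ horizontal, and the reflected square with the shape submersions $f,f'$ vertical) and check that in each case the shape submersions sit in the vertical position required by Lemma \ref{smoothbc}; one must also verify that the three arrows above are the canonical unit and base change transformations and are natural in $F$, so that the composite is the expected exchange equivalence rather than an abstract identification. Everything else reduces to the two standard facts for the closed immersions $s,s'$, namely $\pb{s'}\pf{s'}\simeq\mathrm{id}$ and the vanishing $\pb{j}\pf{s}\simeq 0$, $\pb{j'}\pf{s'}\simeq 0$ of their restrictions to the open complements.
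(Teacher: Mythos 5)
Your proof is correct and is essentially the paper's own argument in different packaging: both rest on exactly the same three ingredients, namely the localization cofiber sequence of Corollary \ref{locseq}, smooth base change (Lemma \ref{smoothbc}) applied once to the square of open complements and once to the reflected square with the shape submersions vertical, and full faithfulness of $\pf{s'}$. Where the paper assembles these into a ladder of cofiber sequences, reads off the induced equivalence $\pfp{s}\pfs{f'}\pb{s'}\simeq\pfs{f}\pfp{s'}\pb{s'}$, and then precomposes with $\pfp{s'}$, you evaluate directly on objects $\pf{s'}F$ and use the support vanishing $\pb{j}\pfs{f}\pf{s'}\simeq 0$ to invert the unit $\mathrm{id}\rightarrow\pf{s}\pb{s}$ --- the same computation, with your adjoint-passage to the second formula matching the paper's implicit ``or equivalently''.
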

		\begin{proof}
			Let $j$ and $j'$ be the complement open immersions associated respectively to $s$ and $s'$. By the localization sequences and by smooth base change, we have a commutative diagram where all the rows are cofiber sequences and all the vertical arrows are invertible
			$$
			\begin{tikzcd}
				\pfs{j}\pb{j}\pfs{f}\arrow[r]\arrow[d, "\simeq"] & \pfs{f}\arrow[r]\arrow[d, "\simeq"] & \pf{s}\pb{s}\pfs{f} \arrow[d, "\simeq"] \\
				\pfs{j}\pfs{f'}\pb{j'}\arrow[r]\arrow[d, "\simeq"] & \pfs{f}\arrow[r]\arrow[d, "\simeq"]& \pf{s}\pfs{f'}\pb{s'}\arrow[d, dashed, "\simeq"] \\
				\pfs{f}\pfs{j'}\pb{j'}\arrow[r] & \pfs{f}\arrow[r] & \pfs{f}\pf{s'}\pb{s'}.
			\end{tikzcd}
			$$ 
			Hence we may conclude by precomposing the dotted equivalence with $\pf{s'}$, since $\pf{s'}$ is fully faithful.
		\end{proof}
		\begin{remark}
			It is not hard to see that one may deduce from \Cref{localizationtheorem} that $\pf{i}$ is fully faithful (this was already proven in \cite[Corollary 7.3.2.10]{lurie2009higher}). From this follows immediately that, in the case of sheaves of pointed spaces or of spectra, one has the identification
			$$\pbp{i}\simeq \text{fib}\begin{tikzcd}
				(\pb{i}\arrow[r, "\pb{i}(unit)"] & \pb{i}\pf{j}\pb{j}).
			\end{tikzcd}$$
		\end{remark}
		\begin{remark}
			From \Cref{localizationtheorem} one deduces immediately that, at least when $\C$ is presentable stable, the functors $\pb{i}$ and $\pb{j}$ are \textit{jointly conservative}, i.e. a morphism $\alpha: F\rightarrow G$ in $\Sh{X}{\C}$ is invertible if and only if both $\pb{i}(\alpha)$ and $\pb{j}(\alpha)$ are invertible. This implies in particular that the fully faithful functors $\pf{i}$ and $\pf{j}$ make $\Sh{X}{\C}$ a \textit{recollement} of $\Sh{Z}{\C}$ and $\Sh{U}{\C}$, in the sense of \cite[Definition A.8.1]{lurie2017higher}. However, this is true in a much greater generality: see \cite{haine2025nonabelian} for a proof in the cases when ${\C}$ is an $\infty$-topos or compactly generated. After \cref{shsptensCtoshC}, for stable coefficients, we will also be able to relax the presentability assumption to the more general requirement for $\C$ to admit both limits and colimits.
		\end{remark}
		
		\section{Pullbacks with stable bicomplete coefficients}
		From now on, unless otherwise specified, all the topological spaces we will deal with will be assumed to be \textit{locally compact and Hausdorff}. This implies that the following are equivalent
		\begin{enumerate}
			\item $f: X\rightarrow Y$ is proper (i.e. the preimage of any compact subset of $Y$ is compact);
			\item $f$ is closed with compact fibers;
			\item $f$ is universally closed.
		\end{enumerate}
		Another important consequence of the previous assumption is that any map $\begin{tikzcd}[column sep= small]
			X\ar[r, "f"] & Y
		\end{tikzcd}$
		can be factored as a composition of a closed immersion (which is in particular proper by the characterization above), an open immersion and a proper map as follows
		\begin{equation}\label{factorization}
			\begin{tikzcd}
				X\times Y\arrow[r,"j\times id_Y"]  & X^+\times Y\arrow[d,"p"] \\
				X \arrow[u,"\Gamma_f"]\arrow[r,"f"] & Y
			\end{tikzcd}
		\end{equation}
		where $\Gamma_f$ is the graph of $f$, $\begin{tikzcd}[column sep= small]
			X\ar[r, "j"] & X^+
		\end{tikzcd}$ is the inclusion of $X$ into its one point compactification and $p$ is the projection to the second coordinate: this factorization will be used very often later. Notice that one may also use the Stone-\v{C}ech compactification $\beta$ to produce a functorial functorization
		\begin{equation}\label{stonecech}
			\begin{tikzcd}
				X \arrow[rdd, hook, bend right] \arrow[rrd, "f", bend left] \arrow[rd, "j", dotted, hook] &                                             &                   \\
				& \overline{X} \arrow[d, hook] \arrow[r, "p"] \arrow[dr, phantom, "\usebox\pullback" , very near start, color=black] & Y \arrow[d, hook] \\
				& \beta X \arrow[r]                           & \beta Y          
			\end{tikzcd}
		\end{equation}
		where $j$ is an open immersion and $p$ is proper. However, in many cases it will turn out to be more convenient to have the proper map in the factorization to be a product projection.
		
		This section contains the central technical ingredients of our paper. We will explain what are the two fundamental facts that allow us to construct pullbacks with non-presentable coefficients: in brief, these are \textit{covariant Verdier duality} (see \cite[Theorem 5.5.5.1]{lurie2017higher}) and the dualizability of $\Sh{X}{\spectra}$ in $\Cocont_{\infty}^{st}$.
		
		We will start by giving an exposition of covariant Verdier Duality. Following \cite[Theorem 5.5.5.1]{lurie2017higher}, this is an equivalence between the categories of sheaves and cosheaves on a locally compact Hausdorff space. We will essentially review the proof of \cite[Theorem 5.5.5.1]{lurie2017higher}, and try to clarify a bit the last step. The reader who is aware of this result may safely skip the first part of this section. Later, we will prove that $\Sh{X}{\spectra}$ is strongly dualizable in the symmetric monoidal $\infty$-category $\Cocont_{\infty}^{st}$, by explicitely exhibiting it as a retract in $\Cocont_{\infty}^{st}$ of a compactly generated $\infty$-category. This result is not new, as it could be deduced from \cite[Proposition 21.1.7.1]{lurie2016spectral} (see also \cite{hoyoisk}): we would like to thank Peter Haine for pointing this out. We then use the above retraction to provide a convenient description of pushforwards along proper maps, that is compatible with tensor products in $\Cocont^{st}_{\infty}$. We conlude the section by showing that $\pb{f}_{\spectra}\otimes\C$ is a left adjoint of $\pf{f}^{\C}$, which is not at all immediate. It will require a use of the factorizations mentioned in the beginning, and a careful combination of all the previously mentioned results.
		
		\subsection{Recollections on covariant Verdier duality}
		We start by recalling the definition of $\cate{K}$-sheaves.
		\begin{definition}\label{KSh}
			Let $\cpt{X}$ be the poset of compact subsets of a locally compact Hausdorff topological space $X$, and let $F : \cpt{X}\op\rightarrow\C$ be any functor. Consider the following conditions:
			\begin{enumerate}[(i)]
				\item $F(\emptyset)$ is a terminal object,
				\item For every $K, K^{\prime}\in\cpt{X}$ the square
				$$\begin{tikzcd}
					\Sec{K\cup K^{\prime}}{F} \arrow[r] \arrow[d] 
					\arrow[dr, phantom, "\usebox\pullback" , very near start, color=black]
					& \Sec{K}{F} \arrow[d] \\
					\Sec{K^{\prime}}{F} \arrow[r] & \Sec{K\cap K^{\prime}}{F}
				\end{tikzcd}$$
				is pullback,
				\item For every $K\in\cpt{X}$, the canonical map
				$$\varinjlim_{K\Subset K^{\prime}}\Sec{K^{\prime}}{F}\rightarrow \Sec{K}{F}$$ is invertible, where $K\Subset K^{\prime}$ means that $K^{\prime}$ contains an open neighbourhood of $K$.
			\end{enumerate}
			Notice that (i) and (ii) together are equivalent to the sheaf condition for the Grothendieck topology on $\cpt{X}$ given by finite coverings. Hence, we will denote by $\Sh{\cpt{X}}{\C}$ the full subcategory spanned by presheaves satifying (i) and (ii). Moreover, we will say that a functor $F : \cpt{X}\op\rightarrow\C$ is a \textit{$\cate{K}$-sheaf} if it satisfies (i), (ii) and (iii). We will denote by $\KSh{X}{\C}\subseteq \Fun(\cpt{X}\op ,\C)$ the full subcategory spanned by $\cate{K}$-sheaves.
		\end{definition}
		
		It is possible to relate $\cate{K}$-sheaves with usual sheaves. Let $M(X)$ be the union $\Op{X}\cup\cpt{X}$ considered as a poset contained in the power set of $X$, and let $i:\Op{X}\hookrightarrow M(X)$ and $j:\cpt{X}\hookrightarrow M(X)$ be the corresponding inclusions. We thus get two adjunctions
		$$\begin{tikzcd}
			\Fun(\Op{X}\op , \C)\ar[r,bend left,"\pfp{i}",""{name=A, below}] & \Fun(M(X)\op , \C)\ar[l,bend left,"\pb{i}",""{name=B,above}] \ar[from=A, to=B, symbol=\dashv] \ar[r,bend left,"\pb{j}",""{name=A, below}] & \Fun(\cpt{X}\op , \C).\ar[l,bend left,"\pf{j}",""{name=B,above}] \ar[from=A, to=B, symbol=\dashv]
		\end{tikzcd}$$
		More explicitly, at the level of objects the functors are given by the formulas
		$$\begin{tikzcd}[column sep = large, row sep = tiny]
			\Fun(\Op{X}\op , \C)\ar[r, "\theta"]  & \Fun(\cpt{X}\op , \C) \\
			F \arrow[r, maps to] & (K \mapsto\varinjlim\limits_{K\subseteq U}\Sec{U}{F})	
		\end{tikzcd}
		$$
		$$\begin{tikzcd}[column sep = large, row sep = tiny]
			\Fun(\cpt{X}\op , \C)\ar[r, "\psi"]  & \Fun(\Op{X}\op , \C) \\
			G \arrow[r, maps to] & (U \mapsto\varprojlim\limits_{K\subseteq U}\Sec{K}{G}),
		\end{tikzcd}
		$$
		where $\theta\coloneqq\pb{j}\pfp{i}$ and $\psi\coloneqq\pb{i}\pf{j}$. These two functors actually restrict to an equivalence, assuming $\C$ has limits and colimits, and filtered colimits are exact.
		
		\begin{theorem}\label{Ksh=sh}
			Let $\C$ be a bicomplete $\infty$-category where filtered colimits are exact, and let $X$ be a locally compact Hausdorff topological space. Then the functors $\theta$ and $\psi$ defined above restrict to an equivalence
			$$\Sh{X}{\C}\simeq\KSh{X}{\C}.$$
		\end{theorem}
		\begin{proof}
			A proof can be found in \cite[Theorem 7.3.4.9]{lurie2009higher}. There $\C$ is assumed to be presentable but, as observed in \cite[Lemma 5.5.5.3]{lurie2017higher}, one just needs $\C$ to have limits and colimits.
		\end{proof}
		
		\begin{remark}
			Since in any stable $\infty$-category $\C$ filtered colimits are exact, and since the opposite of any stable $\infty$-category is again stable, by \cref{Ksh=sh} we get equivalences
			$$\Sh{X}{\C}\simeq\KSh{X}{\C}$$
			and
			$$\coSh{X}{\C}\simeq\Sh{X}{\C\op }\op \simeq\KcoSh{X}{\C}$$
			where we define $\KcoSh{X}{\C}$ to be $\KSh{X}{\C\op }\op $.
		\end{remark}
		\begin{definition}\label{cptsec}	Let $F\in\Sh{X}{\C}$, $U\in\Op{X}$, and $K$ any closed subset of $X$. We define the \textit{sections of $F$ supported at $K$} and \textit{compactly supported sections of $F$ over $U$} respectively as
			$$\KSec{X}{F}{K} \coloneqq \text{fib}(\Sec{X}{F}\rightarrow\Sec{X\setminus K}{F})$$ 
			$$\cSec{U}{F}\coloneqq \varinjlim_{K\subseteq U}\KSec{X}{F}{K},$$
			where the colimit ranges over all compact subsets of $U$.
		\end{definition}
		\begin{remark}\label{remK} Notice that, if $K\subseteq U$ for some open $U$, we get a pullback square
			$$\begin{tikzcd}
				\Sec{X}{F} \arrow[r] \arrow[d] 
				\arrow[dr, phantom, "\usebox\pullback" , very near start, color=black]
				& \Sec{X\setminus K}{F} \arrow[d] \\
				\Sec{U}{F} \arrow[r] & \Sec{U\setminus K}{F}
			\end{tikzcd}$$
			and hence the fibers of the two horizontal maps coincide: for this reason, we will often also $\KSec{X}{F}{K}$ by $\KSec{U}{F}{K}$. Furthermore, if $S\subseteq X$ is locally closed, we define
			$$\KSec{X}{F}{S}\coloneqq\KSec{U}{\pb{j}F}{Z}$$
			where $S = U\cap Z$, with $U$ open, $Z$ closed and $j: U\hookrightarrow X$ the inclusion, but we will also use the notation $\KSec{U}{F}{S}$.
		\end{remark}
		\begin{remark}
			The definition of the sections of a sheaf $F$ on a compact $K$ is functorial both in $F$ and in $K$: since we have an obvious functor
			$$\begin{tikzcd}[row sep = tiny]
				\cpt{X} \arrow[r]    & {\Fun(\Delta^1, \Op{X}\op )} \\
				K \arrow[r, maps to] & (X\rightarrow X\setminus K)        
			\end{tikzcd}
			$$
			we get 
			$$\begin{tikzcd}
				{\cpt{X}\times\Fun(\Op{X}\op , \C)} \arrow[d] \arrow[r]                  & {\Fun(\Delta^1, \C)} \arrow[r, "\text{fib}"] & \C \\
				{\Fun(\Delta^1, \Op{X}\op )\times\Fun(\Op{X}\op , \C)} \arrow[ru, "\o"] &                                            &         
			\end{tikzcd}$$
			where the diagonal arrow is the composition of functors and the right horizontal arrow is given by taking the fiber of an arrow in $\C$, and so by adjunction we get the desired 
			$$\begin{tikzcd}[column sep = large, row sep = tiny]
				\Fun(\Op{X}\op , \C)\ar[r]  & \Fun(\cpt{X}, \C) \\
				F \arrow[r, maps to] & (K \mapsto\KSec{X}{F}{K}).	
			\end{tikzcd}$$ 
			Finally, by further composing with the functor $\psi$ defined in \Cref{Ksh=sh}, we get
			\begin{equation}\label{verdier}
				\begin{tikzcd}[column sep = large, row sep = tiny]
					\Fun(\Op{X}\op , \C)\ar[r, "\mathbb{D}_{\C}"]  & \Fun(\Op{X}, \C) \\
					F \arrow[r, maps to] & (U \mapsto\cSec{U}{F}).	
				\end{tikzcd}	
			\end{equation}
		\end{remark}
		\begin{theorem}\label{covverddual}
			Let $\C$ be a stable and bicomplete $\infty$-category. The functor (\ref{verdier}) restricts to an equivalence
			$$\mathbb{D}_{\C}:\begin{tikzcd}
				\Sh{X} {\C}\ar[r, "\simeq"] &\coSh{X}{\C}.
			\end{tikzcd}$$
		\end{theorem}
		\begin{proof}
			We first prove that, if $F$ is a sheaf, then $\mathbb{D}_{\C}(F)$ is a cosheaf. By virtue of \Cref{Ksh=sh}, it suffices to prove that the functor
			$$K \mapsto\KSec{X}{F}{K}$$
			is a $\cate{K}$-cosheaf.
			\begin{itemize}
				\item $\KSec{X}{F}{\emptyset}\simeq 0$ since $F(X)\rightarrow F(X\setminus\emptyset)$ is an equivalence.
				\item Let $K, K^{\prime}\in\cpt{X}$. The square
				$$\begin{tikzcd}
					\KSec{X}{F}{K\cap K^{\prime}} \arrow[d] \arrow[r] & \KSec{X}{F}{K} \arrow[d]      \\
					\KSec{X}{F}{K^{\prime}} \arrow[r]                 & \KSec{X}{F}{K\cup K^{\prime}}
				\end{tikzcd}$$
				is the fiber of the obvious map between the pullback squares
				$$\begin{tikzcd}
					\Sec{X}{F} \arrow[r] \arrow[d] & \Sec{X}{F} \arrow[d] &  & \Sec{X\setminus(K\cap K^{\prime})}{F} \arrow[d] \arrow[r] & \Sec{X\setminus K}{F} \arrow[d]       \\
					\Sec{X}{F} \arrow[r]           & \Sec{X}{F}           &  & \Sec{X\setminus K^{\prime}}{F} \arrow[r]                  & \Sec{X\setminus(K\cup K^{\prime})}{F},
				\end{tikzcd}$$
				and so it is a pullback. Thus, since $\C$ is stable, it's also a pushout.
				\item For any $K\in\cpt{X}$, we have a map of fiber sequences
				$$\begin{tikzcd}
					\KSec{X}{F}{K} \arrow[rr, "a"] \arrow[d] &  & \varprojlim\limits_{K\Subset K^{\prime}}\KSec{X}{F}{K^{\prime}} \arrow[d] \\
					\Sec{X}{F} \arrow[rr, "b"] \arrow[d]     &  & \varprojlim\limits_{K\Subset K^{\prime}}\Sec{X}{F} \arrow[d]              \\
					\Sec{X\setminus K}{F} \arrow[rr, "c"]    &  & \varprojlim\limits_{K\Subset K^{\prime}}\Sec{X\setminus K^{\prime}}{F}.  
				\end{tikzcd}$$
				To prove that $a$ is an equivalence, it suffices to prove that $b$ and $c$ are. But $b$ is an equivalence because the poset $\{K^{\prime}\in\cpt{X}\mid K\Subset K^{\prime}\}$ has a contractible nerve (since it is filtered) and $c$ is an equivalence because $\{U\in\Op{X}\mid U=X\setminus K^{\prime} \, \text{for some}\, K\Subset K^{\prime}\}$ gives an open covering of $X\setminus K$.
			\end{itemize}
			We will now prove that $\mathbb{D}_{\C\op }\op $ is an inverse of $\mathbb{D}_{\C}$. By symmetry, it suffices to show that it is a left inverse. Unraveling the definitions and using the equivalence of \Cref{Ksh=sh}, this amounts to check that we have a cofiber sequence 
			\begin{equation}\label{invverd}
				\begin{tikzcd}
					\cSec{X\setminus K}{F} \arrow[r] & \cSec{X}{F} \arrow[r] & \Sec{K}{F}
				\end{tikzcd}	
			\end{equation}
			natural in $K$ and $F$. 
			\par First of all, we show that it suffices to prove that, for any fixed $K\in\cpt{X}$, $U\in\Op{X}$ containing $K$ and with compact closure, and $K^{\prime}\in\cpt{X}$ containing $U$, the sequence
			\begin{equation}\label{invverdp}
				\begin{tikzcd}
					\KSec{X}{F}{K^{\prime}\setminus U} \arrow[r] & \KSec{X}{F}{K^{\prime}} \arrow[r] & \Sec{U}{F},
				\end{tikzcd}
			\end{equation}
			where the first morphism is given by the functoriality of sections supported on a compact and the second one is given by \Cref{remK}, is a cofiber sequence. To see this, we start by noticing that the sequence is natural in $K^{\prime}$ and $F$, since both morphisms are canonically induced by the restrictions of $F$. Thus we can pass to the colimit ranging over all compacts $K^{\prime}\supseteq U$ and get a fiber sequence 
			$$\begin{tikzcd}
				\varinjlim\limits_{K^{\prime}\supseteq U}\KSec{X}{F}{K^{\prime}\setminus U} \arrow[r] & \cSec{X}{F} \arrow[r] & \Sec{U}{F},
			\end{tikzcd}$$
			since the poset $\{K^{\prime}\in\cpt{X}\mid K^{\prime}\supseteq U\}$ is filtered (it is non-empty because it contains the closure of $U$) and the inclusion $\{K^{\prime}\in\cpt{X}\mid K^{\prime}\supseteq U\}\subseteq \cpt{X}$ is cofinal. Since $K^{\prime}\supseteq U$ implies that $K'\subseteq(K^{\prime}\cup\overline{U})\setminus U$, we get an equivalence 
			$$\varinjlim\limits_{K^{\prime}\supseteq U}\KSec{X}{F}{K^{\prime}\setminus U}\simeq \varinjlim\limits_{\{K^{\prime}\mid K^{\prime}\cap U = \emptyset\}}\KSec{X}{F}{K^{\prime}},$$
			and hence, adding everything up, we obtain a fiber sequence
			$$\begin{tikzcd}
				\varinjlim\limits_{\{K^{\prime}\mid K^{\prime}\cap U = \emptyset\}}\KSec{X}{F}{K^{\prime}}\arrow[r] & \cSec{X}{F} \arrow[r] & \Sec{U}{F},
			\end{tikzcd}$$
			which is natural in $U$, since the morphism $\cSec{X}{F} \rightarrow \Sec{U}{F}$ clearly is. Hence we can get the desired sequence (\ref{invverd}) by passing to the colimit ranging over $P = \{U\in\Op{X}\mid \overline{U}\in\cpt{X}\, \text{and}\, U\supseteq K\}$ because $\Sec{K}{F} = \varinjlim\limits_{U\in P}\Sec{U}{F}$  (since open subsets with compact closure form a basis of $X$), and because we have equivalences 
			\begin{align*}
				\varinjlim\limits_{U\in P}\varinjlim\limits_{\{K^{\prime}\mid K^{\prime}\cap U = \emptyset\}}\KSec{X}{F}{K^{\prime}} &\simeq \varinjlim\limits_{\bigcup\limits_{U\in P}\{K^{\prime}\mid K^{\prime}\cap U = \emptyset\}}\KSec{X}{F}{K^{\prime}} \\
				&\simeq \varinjlim\limits_{K^{\prime}\subseteq X\setminus K}\KSec{X}{F}{K^{\prime}} \\
				&\simeq\cSec{X\setminus K}{F}
			\end{align*} 
			where the first one follows from \cite[Remark 4.2.3.9]{lurie2009higher} and \cite[Corollary 4.2.3.10]{lurie2009higher}.
			\par 
			We are now left to show that (\ref{invverdp}) is a cofiber sequence. Consider the commutative diagram
			$$\begin{tikzcd}[row sep = large]
				\KSec{X}{F}{K^{\prime}\setminus U} \arrow[rr] \arrow[d] &  & 0 \arrow[d]                                                     &  &                                          \\
				\KSec{X}{F}{K^{\prime}} \arrow[rr] \arrow[d]            &  & Z \arrow[d] \arrow[rr]                                          &  & 0      \arrow[d]                                  \\
				\Sec{X}{F} \arrow[rr]                                   &  & \Sec{X\setminus(K^{\prime}\setminus U)}{F} \arrow[rr] \arrow[d] &  & \Sec{X\setminus K^{\prime}}{F} \arrow[d] \\
				&  & \Sec{U}{F} \arrow[rr]                                           &  & 0                                       
			\end{tikzcd}$$
			where $Z\coloneqq \text{fib}(\Sec{X\setminus(K^{\prime}\setminus U)}{F} \rightarrow \Sec{X\setminus K^{\prime}}{F})$. Since the middle big horizontal rectangle is a pullback, it follows that also the left middle square is. But since the left vertical rectangle is pullback, then also the upper left square is. Then it suffices to prove that the composition $$Z\rightarrow \Sec{X\setminus(K^{\prime}\setminus U)}{F} \rightarrow \Sec{U}{F}$$
			is an equivalence, but this is clear since the lower right square is pullback because $F$ is a sheaf.
		\end{proof}

		\subsection{Dualizability of spectral sheaves}
		
		\begin{lemma}\label{Kshretract}
			Let $\C$ be a bicomplete $\infty$-category where filtered colimits are exact. Then there exists a functor $\varphi: \Sh{\cpt{X}}{\C}\rightarrow\KSh{X}{\C}$ satisfying the following properties
			\begin{enumerate}
				\item for any $F\in\Sh{\cpt{X}}{\C}$ and $K\in\cpt{X}$, we have
				$$\Sec{K}{\varphi F}\simeq\varinjlim_{K\Subset K^{\prime}}\Sec{K^{\prime}}{F},$$
				\item $\varphi$ preserves filtered colimits,
				\item it is a left inverse to the inclusion $\KSh{X}{\C}\hookrightarrow\Sh{\cpt{X}}{\C}$.
			\end{enumerate}
		\end{lemma}
		\begin{proof} 
			Let $M$ be the set whose elements are pairs $(K, i)$ with $K\in\cpt{X}$ and $i = 0, 1$, where we define $(K, i)\leq(K', j)$ if $K'\subseteq K$  and  $i =  j$ or $K'\Subset K$  and  $i < j$. It is easy to see that $\leq$ actually defines a partial order on $M$. We have two functors
			$$
			\begin{tikzcd}[row sep = tiny]
				\cpt{X}\op \ar[r, "i_0"] & M \\
				K \ar[r, mapsto] & (K, 0)
			\end{tikzcd}
			\begin{tikzcd}[row sep = tiny]
				\cpt{X}\op \ar[r, "i_1"] & M \\
				K \ar[r, mapsto] & (K, 1)
			\end{tikzcd}
			$$
			and consider the cocontinuous functor
			$$\varphi \coloneqq \pb{(i_1)}\pfp{(i_0)} : \Fun(\cpt{X}\op , \C)\rightarrow\Fun(\cpt{X}\op , \C).$$
			Unravelling the definition, we see that
			$$\Sec{K}{\varphi F}\simeq\varinjlim_{(K', 0)\rightarrow (K, 1)}\Sec{K^{\prime}}{F}\simeq\varinjlim_{K\Subset K^{\prime}}\Sec{K^{\prime}}{F},$$
			and so $\varphi F\simeq F$ whenever $F$ is a $\cate{K}$-sheaf. Hence, to conclude the proof, it suffices to show that the essential image of $\varphi_{|_{\Sh{\cpt{X}}{\C}}}$ is contained in $\KSh{X}{\C}$.
			\par 
			Indeed, for any $F\in\Sh{\cpt{X}}{\C}$ we have
			\begin{align*}
				\varinjlim_{K\Subset K''}\Sec{K''}{\varphi F}&\simeq\varinjlim_{K\Subset K''}\varinjlim_{K''\Subset K'}\Sec{K'}{F} \\
				&\simeq \varinjlim_{K\Subset K'}\Sec{K'}{F} \\
				& \simeq\Sec{K}{\varphi F},
			\end{align*}
			where the second equivalence holds by \cite[Remark 4.2.3.9]{lurie2009higher} and \cite[Corollary 4.2.3.10]{lurie2009higher} since for any $K$ compact, the full subposet of $\cpt{X}\op $ spanned by those $K'$ such that $K\Subset K'$ is filtered, and we have
			$$\bigcup_{\{K''| K\Subset K''\}}\{K'| K''\Subset K'\} = \{K'| K\Subset K'\}.$$
			Moreover, since filtered colimits are exact in $\C$, $\varphi F$ belongs to $\Sh{\cpt{X}}{\C}$, and so it is a $\cate{K}$-sheaf.
		\end{proof}
		
		\cref{Kshretract} is useful to describe conveniently pushforwards of sheaves along proper maps. Let $f : X\rightarrow Y$ be a proper continuous map between topological spaces, and let again $\C$ be a bicomplete $\infty$-category where filtered colimits are exact. Since for any $K\in\cpt{Y}$, the preimage $f^{-1}(K)$ is compact, we obtain a functor 
		$$
		\begin{tikzcd}[row sep = tiny]
			\Fun(\cpt{X}\op , \C)\ar[r, "{f_+}"] & \Fun(\cpt{Y}\op , \C) \\
			F\ar[r, mapsto] & (K\mapsto\Sec{f^{-1}(K)}{F}).
		\end{tikzcd}
		$$ 
		Notice that the restriction of $f_+$ to $\KSh{X}{\C}$ lands in $\Sh{\cpt{Y}}{\C}$, but a priori not in $\KSh{Y}{\C}$, therefore we define
		$$\pf{f}^{\cate{K}} : \KSh{X}{\C}\rightarrow\KSh{Y}{\C}$$
		as the composition of $f_+$ restricted to $\KSh{X}{\C}$ and $\varphi: \Sh{\cpt{Y}}{\C}\rightarrow\KSh{Y}{\C}$.
		\begin{lemma}\label{properpsfcocont}
			Let $f : X\rightarrow Y$ be a proper continuous map between topological spaces, and let $\C$ be bicomplete $\infty$-category where filtered colimits are exact. Then there is a commutative diagram
			$$
			\begin{tikzcd}
				\Sh{X}{\C}\ar[r, "\theta"]\ar[d, "\pf{f}"] & \KSh{X}{\C}\ar[d, "{\pf{f}^{\cate{K}}}"] \\
				\Sh{Y}{\C}\ar[r, "\theta"] & \KSh{Y}{\C},
			\end{tikzcd}
			$$
			where $\theta$ is as in \cref{Ksh=sh}.
			In particular, $\pf{f}$ preserves filtered colimits, and when $\C$ is stable it preserves all colimits.
		\end{lemma}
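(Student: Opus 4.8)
The plan is to prove the commutativity of the square first and then read off both colimit-preservation statements formally, using that $\theta$ is an equivalence (Theorem \ref{Ksh=sh}). The only genuinely geometric input I would isolate is a tube-lemma for proper maps: if $K\in\cpt{Y}$ and $U\in\op{X}$ satisfies $f^{-1}(K)\subseteq U$, then there is $V\in\op{Y}$ with $K\subseteq V$ and $f^{-1}(V)\subseteq U$. Indeed, since $X,Y$ are locally compact Hausdorff and $f$ is proper, $f$ is closed, so $f(X\setminus U)$ is closed in $Y$; from $f^{-1}(K)\subseteq U$ one checks $f(X\setminus U)\cap K=\emptyset$, so $V\coloneqq Y\setminus f(X\setminus U)$ is open, contains $K$, and satisfies $f^{-1}(V)\subseteq U$.

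Next I would identify $f_+\theta$ with $\theta\pf{f}$. For a sheaf $F$ on $X$ and $K\in\cpt{Y}$, unwinding the definitions of $\theta$ and of $\pf{f}$ (recall $\Sec{V}{\pf{f}F}=\Sec{f^{-1}(V)}{F}$) gives
$$(\theta\pf{f}F)(K)\simeq\varinjlim_{K\subseteq V}\Sec{f^{-1}(V)}{F},\qquad (f_+\theta F)(K)=(\theta F)(f^{-1}(K))\simeq\varinjlim_{f^{-1}(K)\subseteq U}\Sec{U}{F},$$
both colimits being filtered, since the opens containing a fixed subset are stable under finite intersection. The assignment $V\mapsto f^{-1}(V)$ is a morphism of the two filtered indexing posets compatible with the diagrams $\Sec{-}{F}$, hence it induces a canonical comparison $(\theta\pf{f}F)(K)\to(f_+\theta F)(K)$. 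The tube-lemma above says precisely that this morphism of indexing posets is cofinal, so the comparison is an equivalence, natural in $K$ and in $F$. Thus $f_+\theta\simeq\theta\pf{f}$ as functors $\Sh{X}{\cate{C}}\rightarrow\cate{A}_Y$.

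To finish the square, I would observe that $\pf{f}F$ is a sheaf on $Y$, so $\theta\pf{f}F$ is a $\cate{K}$-sheaf by Theorem \ref{Ksh=sh}; since $\varphi$ is a left inverse to the inclusion $\KSh{Y}{\cate{C}}\hookrightarrow\cate{A}_Y$ (Proposition \ref{Kshretract}), it acts as the identity on it. Hence
$$\pf{f}^{\cate{K}}\theta F=\varphi(f_+\theta F)\simeq\varphi(\theta\pf{f}F)\simeq\theta\pf{f}F,$$
which is exactly the asserted commutativity $\theta\pf{f}\simeq\pf{f}^{\cate{K}}\theta$.

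For the final assertions I would argue as follows. Since filtered colimits in $\cate{C}$ are exact, conditions (i)--(iii) of Definition \ref{KSh} are stable under filtered colimits, so $\KSh{X}{\cate{C}}$ and $\cate{A}_Y$ are closed under filtered colimits in the respective presheaf categories, i.e. these are computed pointwise. Then $f_+$, being a precomposition, preserves them, and $\varphi$ preserves them by Proposition \ref{Kshretract}, so $\pf{f}^{\cate{K}}=\varphi\circ f_+$ does; transporting along the equivalences $\theta$ shows $\pf{f}$ preserves filtered colimits. When $\cate{C}$ is stable, $\pf{f}$ moreover preserves finite limits (limits of sheaves are pointwise and $\pf{f}$ is pointwise a restriction along $f^{-1}$), hence is exact and so preserves finite colimits; combined with filtered colimits this yields all colimits. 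The hard part is the cofinality step of the second paragraph — producing the tube-lemma and checking that the induced map of indexing posets is cofinal in the correct direction; the remaining steps are formal.
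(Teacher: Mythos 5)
Your proposal is correct, and it reaches the commutative square by a slightly different route than the paper. The paper computes the full composite $\pf{f}^{\cate{K}}\theta F$, including the smoothing $\varphi$: it writes $\Sec{K}{\pf{f}^{\cate{K}}\theta F}$ as the double colimit $\varinjlim_{K\Subset K'}\varinjlim_{f^{-1}(K')\subseteq U}\Sec{U}{F}$, reindexes it (via [Lur09, Remark 4.2.3.9 and Corollary 4.2.3.10]) as a colimit over the poset $T$ of opens $U$ containing $f^{-1}(K')$ for some compact $K'\Supset K$, and then checks that $V\mapsto f^{-1}(V)$ is final into $T$ -- the point-set input there is local compactness of $Y$ (interpolating a compact neighbourhood $\overline{W}$ between $K$ and $V$), with properness entering only to guarantee $f^{-1}(K')\in\cpt{X}$ so that $\theta F$ can be evaluated there. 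You instead compare $f_+\theta$ with $\theta\pf{f}$ directly, before applying $\varphi$, and your point-set input is the tube lemma derived from closedness of proper maps (valid under the standing locally-compact-Hausdorff hypothesis of Section 5, as the paper notes in its list of equivalent characterizations of properness). Your cofinality check is sound: the comma posets $\{V\supseteq K : f^{-1}(V)\subseteq U\}$ are nonempty by the tube lemma and closed under finite intersection, hence filtered and weakly contractible. A pleasant by-product of your route, which the paper does not record, is the stronger statement that $f_+$ already carries $\cate{K}$-sheaves to $\cate{K}$-sheaves when $f$ is proper, so that the correction by $\varphi$ is vacuous; the paper's route buys the converse convenience of never needing the tube lemma, staying entirely within the colimit-reindexing technology it has already set up for $\varphi$. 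Your derivation of the final assertions (pointwise filtered colimits in $\KSh{-}{\cate{C}}$ by exactness of filtered colimits, preservation by the precomposition $f_+$ and by $\varphi$, then exactness of $\pf{f}$ in the stable case from pointwise finite limits, and finite plus filtered implying all colimits) is correct and matches what the paper leaves implicit -- compare the identical pattern spelled out in the proof of Corollary \ref{coshsttens}.
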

		\begin{proof}
			For any $K\in\cpt{Y}$, we define
			$$T = \{U\in\Op{X} \mid \exists K'\Supset K \:\text{with}\: f^{-1}(K')\subseteq U\}.$$
			Notice that if $V\in\Op{Y}$ contains $K$, then there exists an open neighbourhood $W$ of $K$ with compact closure in $V$, and thus, since $f$ is proper, $f^{-1}(V)\in T$. In particular we obtain a functor 
			$$\alpha: \{V\in\Op{Y}\mid K\subseteq V\}\rightarrow T$$
			which is obviously final. We have
			\begin{align*}
				\Sec{K}{\pf{f}^{\cate{K}}\theta F}&\simeq \varinjlim_{K\Subset K'}\varinjlim_{f^{-1}(K')\subseteq U}\Sec{U}{F} \\
				&\simeq \varinjlim_{U\in T}\Sec{U}{F} \\
				&\simeq \varinjlim_{K\subseteq V}\Sec{f^{-1}(V)}{F} \\
				&\simeq\Sec{K}{\theta\pf{f}F}
			\end{align*}
			where the second equivalence follows from \cite[Remark 4.2.3.9]{lurie2009higher} and \cite[Corollary 4.2.3.10]{lurie2009higher}, and the third one since $\alpha$ is final.
		\end{proof}
		
		Another straightforward application of \cref{Kshretract} is the following theorem, that will be of crucial importance for what follows.
		
		\begin{theorem}\label{coshsttens}
			The $\infty$-category $\Sh{X}{\spectra}$ is a strongly dualizable object in $\Cocont^{st}_{\infty}$. In particular, for any $\C\in\Cocont^{st}_{\infty}$, the canonical functor
			$$\coSh{X}{\spectra}\otimes\C\rightarrow\coSh{X}{\C}$$
			is an equivalence.
		\end{theorem}
		\begin{proof}
			By \Cref{cptlygenstdual}, \Cref{Ksh=sh} and \cref{dualretract}, it suffices to show that $\KSh{X}{\spectra}$ is a retract in $\Cocont^{st}_{\infty}$ of a compactly generated $\infty$-category. We see that $\Sh{\cpt{X}}{\spectra}$ is clearly compactly generated, as it is equivalent to $\Fun_{lex}(\cpt{X}\op ,\spectra)$. The proof is then concluded by noticing that the inclusion $\KSh{X}{\spectra}\subseteq\Sh{\cpt{X}}{\spectra}$ and $\varphi: \Sh{\cpt{X}}{\spectra}\rightarrow\KSh{X}{\spectra}$ are exact and preserve filtered colimits since filtered colimits in $\spectra$ are exact, and thus preserves all colimits since $\spectra$ is stable. One then concludes the proof by identifying $\coSh{X}{\spectra}$ with the dual of $\Sh{X}{\spectra}$, via \cref{coshdistr}.
		\end{proof}
		
		\begin{corollary}\label{shsptensCtoshC}
			There is a unique equivalence
			\begin{equation}
				\eta : \Sh{X}{\spectra}\otimes\C\rightarrow\Sh{X}{\C}.
			\end{equation}
			making the diagram 
			$$
			\begin{tikzcd}
				\Sh{X}{\spectra}\otimes\C\arrow[r, "\eta"]\arrow[d, "{\mathbb{D}_{\spectra}\otimes\C}"] & \Sh{X}{\C} \arrow[d, "{\mathbb{D}_{\C}}"] \\
				\coSh{X}{\spectra}\otimes\C \arrow[r]  & \coSh{X}{\C}
			\end{tikzcd}
			$$
		\end{corollary}
		\begin{proof}
			The functor $\eta$ is obtained by composing
			$$
			\begin{tikzcd}
				\Sh{X}{\spectra}\otimes\C\ar[r, "{\mathbb{D}_{\spectra}\otimes\C}"] & \coSh{X}{\spectra}\otimes\C \ar[r, "\simeq"] & \coSh{X}{\C}
				\ar[r, "{\mathbb{D}_{\C}^{-1}}"] & \Sh{X}{\C}.
			\end{tikzcd}
			$$
			where the middle map is the one in \cref{coshsttens}.
			More concretely, for any $F\in\Sh{X}{\spectra}$ and $M\in\C$, we have $\eta(F\boxtimes^{st} M)\simeq\mathbb{D}_{\C}^{-1}(M\o\mathbb{D}_{\spectra}F)$, where $M$ on the right-hand side denotes the essentially unique colimit preserving functor $\spectra\rightarrow\C$ corresponding to $M$.
		\end{proof}
		
		\subsection{The pullback $\pb{f}_{\mathscr{C}}$}
		
		\begin{lemma}\label{pushforwardandVD}
			Let $f:X\rightarrow Y$ be a continuous map of locally compact Hausdorff spaces, and let $\C$ be any stable bicomplete $\infty$-category. Then there is a natural transformation $(\pf{f}^{\C\op})\op\mathbb{D}_{\C}\rightarrow\mathbb{D}_{\C}\pf{f}^{\C}$, which is invertible when $f$ is proper.
		\end{lemma}
		
		\begin{proof}
			We first construct a natural transformation $\pf{f}\mathbb{D}\rightarrow\mathbb{D}\pf{f}$. Fix $V\in\Op{Y}$ and a compact $K\subseteq f^{-1}(V)$, so that $f(K)$ is a compact subset of $V$. For any $F\in\Sh{X}{\C}$, the commutative triangle
			$$
			\begin{tikzcd}[column sep = small]
				\Sec{X\setminus K}{F}\ar[rr] & &\Sec{X\setminus f^{-1}(f(K))}{F}\\
				&\Sec{X}{F}\ar[ru]\ar[lu]&
			\end{tikzcd}
			$$	
			provides a morphism
			$$\KSec{X}{F}{K}\rightarrow\KSec{Y}{\pf{f}F}{f(K)}\rightarrow\cSec{V}{\pf{f}F}.$$
			Since all morphisms are induced by the restrictions of $F$, the resulting map is natural in $K$ and $V$, and hence gives rise to the desired transformation as $K$ varies. Furthermore, when $f$ is proper, each compact $K\subseteq X$ is contained in the compact $f^{-1}f(K)$, so by cofinality we obtain an equivalence
			$$\varinjlim_{K\subseteq f^{-1}(V)}\KSec{X}{F}{K}\simeq\varinjlim_{C\subseteq V}\KSec{Y}{\pf{f}F}{C}$$
			where $C$ varies over the compact subsets of $V$, and thus we may conclude.
		\end{proof}
		
		\begin{proposition}\label{proppshtens}
			Let $f: X\rightarrow Y$ be a proper map, and denote by $\pf{f}^{\C} : \Sh{X}{\C}\rightarrow\Sh{Y}{\C}$ the pushforward. Then we have a commutative square
			$$
			\begin{tikzcd}
				\Sh{X}{\spectra}\otimes\C\ar[r, "\eta"]\ar[d, "{\pf{f}^{\spectra}\otimes\C}"] & \Sh{X}{\C}\ar[d, "{\pf{f}^{\C}}"] \\
				\Sh{Y}{\spectra}\otimes\C\ar[r, "\eta"] & \Sh{Y}{\C}.
			\end{tikzcd}
			$$
			In particular, $\pf{f}^{\C}$ admits a left adjoint which is identified through $\eta$ with $\pb{f}_{\spectra}\otimes\C$. 
		\end{proposition}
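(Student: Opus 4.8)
The plan is to exploit the $\cate{K}$-sheaf model together with the explicit formula $\eta(F\otimes M)\simeq\mathbb{D}_{\cate{C}}^{-1}(M\o\mathbb{D}_{\spectra}F)$ from the previous corollary. First I would record that all four functors in the square are cocontinuous: the two copies of $\eta$ are equivalences, both $\pf{f}^{\spectra}$ and $\pf{f}^{\cate{C}}$ preserve all colimits by Lemma \ref{properpsfcocont} (since $\spectra$ and $\cate{C}$ are stable), and tensoring a cocontinuous functor with $\cate{C}$ stays cocontinuous, so $\pf{f}^{\spectra}\otimes\cate{C}$ is cocontinuous. By the universal property (\ref{tensor}) of $\Sh{X}{\spectra}\otimes\cate{C}$, to produce an equivalence between the two cocontinuous composites it then suffices to exhibit a natural equivalence of their restrictions along the canonical bifunctor $\otimes$, i.e.\ to compare their values on objects $F\otimes M$ with $F\in\Sh{X}{\spectra}$ and $M\in\cate{C}$, naturally in both variables.

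Next, since $\mathbb{D}_{\cate{C}}$ identifies $\Sh{Y}{\cate{C}}$ with $\KcoSh{Y}{\cate{C}}$ via $G\mapsto(K\mapsto\KSec{Y}{G}{K})$, two sheaves on $Y$ agree as soon as their associated $\cate{K}$-cosheaves do, so I would only compare the functors $K\mapsto\KSec{Y}{-}{K}$. For the pushforward, the defining property $\Sec{V}{\pf{f}G}=\Sec{f^{-1}(V)}{G}$ gives, for any $G\in\Sh{X}{\cate{C}}$ and $K\in\cpt{Y}$,
$$\KSec{Y}{\pf{f}^{\cate{C}}G}{K}=\text{fib}\big(\Sec{X}{G}\rightarrow\Sec{X\setminus f^{-1}(K)}{G}\big)\simeq\KSec{X}{G}{f^{-1}(K)},$$
where $f^{-1}(K)$ is compact by properness. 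Thus the $\cate{K}$-cosheaf of $\pf{f}^{\cate{C}}\eta(F\otimes M)$ is controlled by the $\cate{K}$-cosheaf of $\eta(F\otimes M)$ evaluated at the compacts $f^{-1}(K)$.

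The heart of the argument is then the identity $\KSec{X}{\eta(F\otimes M)}{K}\simeq M(\KSec{X}{F}{K})$, natural in $K$, $F$ and $M$, where $M$ also denotes the cocontinuous functor $\spectra\rightarrow\cate{C}$. Indeed, $\mathbb{D}_{\cate{C}}\eta(F\otimes M)\simeq M\o\mathbb{D}_{\spectra}F$, so both $K\mapsto\KSec{X}{\eta(F\otimes M)}{K}$ and $K\mapsto M(\KSec{X}{F}{K})$ are $\cate{K}$-cosheaves whose extension $U\mapsto\varinjlim_{K\subseteq U}(-)$ recovers the same cosheaf $U\mapsto M(\cSec{U}{F})$ — here one uses that $M$ commutes with the filtered colimit defining $\cSec{}{}$. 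As the extension functor is an equivalence, the two $\cate{K}$-cosheaves coincide. Combining this with the previous step gives
$$\KSec{Y}{\pf{f}^{\cate{C}}\eta(F\otimes M)}{K}\simeq M\big(\KSec{X}{F}{f^{-1}(K)}\big)\simeq M\big(\KSec{Y}{\pf{f}^{\spectra}F}{K}\big),$$
and the right-hand side is exactly the $\cate{K}$-cosheaf of $\eta\big((\pf{f}^{\spectra}\otimes\cate{C})(F\otimes M)\big)$ by the same identity applied on $Y$. Naturality in $(F,M)$ yields the equivalence of composites, hence the commutative square.

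The final assertion is then formal: $\pf{f}^{\spectra}$ is right adjoint to $\pb{f}_{\spectra}$ (the geometric morphism $\pb{f}\dashv\pf{f}$ tensored with $\spectra$), and tensoring with $\cate{C}$ preserves this adjunction by Remark \ref{tensoringadj}, so $\pf{f}^{\spectra}\otimes\cate{C}$ has left adjoint $\pb{f}_{\spectra}\otimes\cate{C}$; transporting along the equivalences $\eta$ through the commutative square exhibits $\pb{f}_{\spectra}\otimes\cate{C}$ as a left adjoint of $\pf{f}^{\cate{C}}$. I expect the main obstacle to be precisely the key identity of the third paragraph: one must be careful that $M$, being only cocontinuous (hence exact, but not continuous) between stable $\infty$-categories, commutes with the finite limit defining $\KSec{}{}{}$ and with the filtered colimit defining $\cSec{}{}$, while it need not commute with the infinite limits elsewhere in the sheaf/$\cate{K}$-sheaf dictionary. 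The device of matching two $\cate{K}$-cosheaves through their common cosheaf extension is designed exactly to avoid appealing to any limit that $M$ fails to preserve.
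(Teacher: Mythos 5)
Your reduction to generators $F\otimes M$ and your final transport-of-adjunctions step are both sound, as is the formula $\KSec{Y}{\pf{f}^{\cate{C}}G}{K}\simeq\KSec{X}{G}{f^{-1}(K)}$ for $f$ proper. The gap is the ``key identity'' of your third paragraph: $\KSec{X}{\eta(F\otimes M)}{K}\simeq M(\KSec{X}{F}{K})$ is false in general, and with it the claim that $K\mapsto M(\KSec{X}{F}{K})$ is a $\cate{K}$-cosheaf. Dualizing condition (iii) of Definition \ref{KSh}, a $\cate{K}$-cosheaf $G$ must satisfy $G(K)\xrightarrow{\;\sim\;}\varprojlim_{K\Subset K'}G(K')$, a cofiltered \emph{limit} which the merely cocontinuous $M$ has no reason to preserve; and your matching device does not circumvent this, because the extension $H\mapsto(U\mapsto\varinjlim_{K\subseteq U}H(K))$ is an equivalence only after restriction to $\cate{K}$-cosheaves and is far from conservative on all of $\text{Fun}(\cpt{X},\cate{C})$ --- so two functors with equivalent extensions need not agree unless both are known to be $\cate{K}$-cosheaves. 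Concretely, let $X=\mathbb{N}\cup\{\infty\}$ be the one-point compactification of the discrete space $\mathbb{N}$, let $j:\mathbb{N}\hookrightarrow X$, let $F=\pf{j}G$ where $G$ has all stalks $\s{}$, take $K=\{\infty\}$ and $M=-\otimes H\mathbb{Q}:\spectra\rightarrow\spectra$. Since $\Sec{X}{F}\rightarrow\Sec{\mathbb{N}}{F}$ is the identity on $\prod_n\s{}$, one has $M(\KSec{X}{F}{K})=0$. On the other hand, $\Sec{X}{\eta(F\otimes M)}\simeq M(\prod_n\s{})$ (as $X$ is compact, global sections agree with $\cSec{X}{-}$, where the identity $\cSec{U}{\eta(F\otimes M)}\simeq M(\cSec{U}{F})$ does hold --- see below), while $\Sec{\mathbb{N}}{\eta(F\otimes M)}\simeq\prod_n M(\s{})$ by Lemma \ref{openrestrtens} and the sheaf condition for the disjoint cover; hence $\KSec{X}{\eta(F\otimes M)}{K}\simeq\mathrm{fib}\bigl(H\mathbb{Q}\otimes\prod_n\s{}\rightarrow\prod_n H\mathbb{Q}\bigr)\neq 0$, since that comparison map is not an equivalence already on $\pi_0$. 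So both of your displayed uses of the identity (on $X$ at $f^{-1}(K)$, and on $Y$ at $K$) are unjustified. Your own closing caveat identifies the danger but draws the wrong conclusion: $\KSec{X}{\eta(F\otimes M)}{K}$ is itself built from sheaf sections over non-compact opens, i.e.\ from infinite limits that $M$ does not see, and your device reintroduces exactly the cofiltered limit of condition (iii).

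What \emph{is} true --- essentially by definition of $\eta$, since $\mathbb{D}_{\cate{C}}\eta(F\otimes M)\simeq M\o\mathbb{D}_{\spectra}F$ as cosheaves --- is the identity at the level of compactly supported sections, $\cSec{U}{\eta(F\otimes M)}\simeq M(\cSec{U}{F})$, and the paper's proof only ever uses this. It first notes that the square of cosheaf categories, with $\pf{f}^{\spectra}\otimes\cate{C}$ and the cosheaf pushforward $(\pf{f}^{\cate{C}^{op}})^{op}$ as vertical arrows, commutes automatically, because the horizontal equivalences are given by postcomposition of cocontinuous functors $\Shsp{X}\rightarrow\spectra$ with $M$, and cosheaf pushforward is precomposition with $\pb{f}$. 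Everything is thereby reduced to showing, for a single fixed stable bicomplete $\cate{C}$, that $\mathbb{D}_{\cate{C}}\pf{f}\simeq\pf{f}\mathbb{D}_{\cate{C}}$; this is done by producing the natural map $\KSec{X}{F}{K}\rightarrow\KSec{Y}{\pf{f}F}{f(K)}\rightarrow\cSec{V}{\pf{f}F}$ and invoking cofinality: when $f$ is proper, every compact $K\subseteq f^{-1}(V)$ is contained in the compact $f^{-1}(f(K))$, whence $\varinjlim_{K\subseteq f^{-1}(V)}\KSec{X}{F}{K}\simeq\varinjlim_{C\subseteq V}\KSec{Y}{\pf{f}F}{C}$. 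Only filtered colimits of supported sections of one sheaf in one coefficient category appear, so no commutation of $M$ with any limit is needed. If you replace your per-compact comparison by this cosheaf-level comparison, the remainder of your outline goes through.
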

		\begin{proof}
			By a slight abuse of notation, denote as $\pf{f}^{\C} : \coSh{X}{\C}\rightarrow\coSh{X}{\C}$ the pushforward for cosheaves (i.e. $(\pf{f}^{\C\op })\op $). We have that the square
			$$
			\begin{tikzcd}
				\coSh{X}{\spectra}\otimes\C\ar[r, "\simeq"]\ar[d, "{\pf{f}^{\spectra}\otimes\C}"] & \coSh{X}{\C}\ar[d, "{\pf{f}^{\C}}"] \\
				\coSh{Y}{\spectra}\otimes\C\ar[r, "\simeq"] & \coSh{Y}{\C}
			\end{tikzcd}
			$$
			commutes since the horizontal arrows can be modelled by a composition of functors, and thus we are only left to show that the square
			$$
			\begin{tikzcd}
				\Sh{X}{\C}\ar[r, "{\mathbb{D}}"]\ar[d, "{\pf{f}}"] & \coSh{X}{\C}\ar[d, "{\pf{f}}"] \\
				\Sh{Y}{\C}\ar[r, "{\mathbb{D}}"] & \coSh{Y}{\C}
			\end{tikzcd}
			$$
			commutes, which is proven in \cref{pushforwardandVD}.
		\end{proof}
		
		\begin{lemma}\label{openpbVD}
			Let $j:U\hookrightarrow X$ be a open immersion of locally compact Hausdorff spaces. Then there is a natural isomorphism $\mathbb{D}_{\C}(\pb{j}_{\C\op})\op\simeq\pb{j}_{\C}\mathbb{D}_{\C}$.
		\end{lemma}
		
		\begin{proof}
			This follows immediately by \Cref{remK}, and by observing that $\Sec{V}{\pb{j}F}\simeq\Sec{V}{F}$ for any $V\subseteq U$, we have that
			$$\cSec{V}{\pb{j}F}\simeq\cSec{V}{F}$$
			functorially on $F$ and $V$.
		\end{proof}
		
		\begin{lemma}\label{openrestrtens}
			Let $j:U \hookrightarrow X$ be an open immersion, and denote by $\pb{j}_{\C} : \Sh{X}{\C}\rightarrow\Sh{U}{\C}$ the restriction. Then we have a commutative square
			$$
			\begin{tikzcd}
				\Sh{X}{\spectra}\otimes\C\ar[r, "\eta"]\ar[d, "{\pb{j}_{\spectra}\otimes\C}"] & \Sh{X}{\C}\ar[d, "{\pb{j}_{\C}}"] \\
				\Sh{U}{\spectra}\otimes\C\ar[r, "\eta"] & \Sh{U}{\C}.
			\end{tikzcd}
			$$
			In particular, $\pb{j}_{\C}$ admits a left adjoint which is identified through $\eta$ with $\pfs{j}^{\spectra}\otimes\C$.
		\end{lemma}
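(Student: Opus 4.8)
The plan is to imitate the proof of Proposition \ref{proppshtens} line for line, replacing the proper pushforward by the open restriction and the pushforward of cosheaves by the restriction of cosheaves. Recall that $\eta$ is defined as the composite $(\mathbb{D}_{\cate{C}})^{-1}\circ\alpha\circ(\mathbb{D}_{\spectra}\otimes\cate{C})$, where $\alpha:\coSh{X}{\spectra}\otimes\cate{C}\xrightarrow{\simeq}\coSh{X}{\cate{C}}$ is the equivalence of Corollary \ref{coshsttens}. With this description the square in the statement factors as a horizontal stack of three squares, whose intermediate vertical arrows are the restriction functors on cosheaves $\rho_{\cate{D}}:\coSh{X}{\cate{D}}\to\coSh{U}{\cate{D}}$ for $\cate{D}=\spectra$ and $\cate{D}=\cate{C}$; under Lemma \ref{coshdistr} each $\rho_{\cate{D}}$ is precomposition with the fully faithful cocontinuous functor $\pfs{j}:\Shsp{U}\to\Shsp{X}$. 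Just as in Proposition \ref{proppshtens}, proving the statement therefore reduces to two facts: an intertwining of $\mathbb{D}$ with restriction, and the compatibility of $\alpha$ with $\rho$.

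For the intertwining I would show that, for every stable and bicomplete $\cate{D}$, there is a natural equivalence $\mathbb{D}_{\cate{D}}\circ\pb{j}_{\cate{D}}\simeq\rho_{\cate{D}}\circ\mathbb{D}_{\cate{D}}$ (this is the analogue of $\mathbb{D}\pf{f}\simeq\pf{f}\mathbb{D}$ from Proposition \ref{proppshtens}, and it covers both the $\spectra$ and the $\cate{C}$ squares at once). This follows immediately from the explicit formula for $\mathbb{D}_{\cate{D}}$. Indeed, fix $F\in\Sh{X}{\cate{D}}$ and an open $V\subseteq U$. For every compact $K\subseteq V$ Remark \ref{remK} gives $\KSec{U}{\pb{j}F}{K}\simeq\KSec{X}{F}{K}$, and since the compact subsets of $V$ are the same whether computed in $U$ or in $X$, passing to the filtered colimit over such $K$ yields $\cSec{V}{\pb{j}F}\simeq\cSec{V}{F}$. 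All maps involved are induced by restrictions of $F$, so this equivalence is natural in $V$ and in $F$, producing the required equivalence of cosheaves on $U$.

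For the compatibility of $\alpha$ with $\rho$ I would argue exactly as in Proposition \ref{proppshtens}, where the horizontal equivalences are modelled by composition. Concretely, $\alpha$ sends a generator $G\otimes M$, with $G$ a cosheaf of spectra and $M\in\cate{C}$, to the cosheaf whose extension by colimits is $M\circ\widehat{G}:\Shsp{X}\to\cate{C}$, where $M$ denotes the cocontinuous functor $\spectra\to\cate{C}$ corresponding to $M$ and $\widehat{G}$ is the extension of $G$. Since $\rho_{\spectra}$ and $\rho_{\cate{C}}$ are both precomposition with $\pfs{j}$, both composites send $G\otimes M$ to the cosheaf extending $M\circ\widehat{G}\circ\pfs{j}$, because composition with $M$ and precomposition with $\pfs{j}$ commute; as these objects generate and all functors are cocontinuous, the middle square commutes. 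Pasting the three squares gives the commutativity asserted in the statement.

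Finally, for the last sentence: by Example \ref{exess}(i) the open immersion induces an essential geometric morphism, so $\pb{j}_{\spectra}$ has a cocontinuous left adjoint $\pfs{j}^{\spectra}$, and by Remark \ref{tensoringadj} tensoring the adjunction $\pfs{j}^{\spectra}\dashv\pb{j}_{\spectra}$ with $\cate{C}$ gives $\pfs{j}^{\spectra}\otimes\cate{C}\dashv\pb{j}_{\spectra}\otimes\cate{C}$; transporting this adjunction across the equivalences $\eta$ via the commutative square identifies a left adjoint of $\pb{j}_{\cate{C}}$ with $\pfs{j}^{\spectra}\otimes\cate{C}$. I expect the only point requiring care to be the bookkeeping in the middle square, namely the naturality of $\alpha$ in the space variable, but this is precisely the ingredient already isolated in Proposition \ref{proppshtens}, so the argument transfers without extra work; the genuinely new computation, the intertwining of $\mathbb{D}$ with $\pb{j}$, is by contrast entirely elementary and requires no cofinality argument of the kind needed in the proper case.
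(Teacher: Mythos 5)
Your proposal is correct and follows the paper's own proof essentially verbatim: the paper likewise factors the square through the cosheaf-level equivalence (where commutativity is immediate because the horizontal arrows are modelled by composition of functors, your middle square) and reduces to the intertwining $\mathbb{D}\,\pb{j}\simeq\pb{j}\,\mathbb{D}$, proved exactly by your computation $\cSec{V}{\pb{j}F}\simeq\cSec{V}{F}$ via Remark \ref{remK}. Your extra elaborations (the generator argument for the middle square and the transport of the tensored adjunction along $\eta$) are details the paper leaves implicit, not a different route.
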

		\begin{proof}
			By an abuse of notation, denote by $\pb{j}_{\C} : \coSh{X}{\C}\rightarrow\coSh{U}{\C}$ the restriction for cosheaves. Again we see that the square
			$$
			\begin{tikzcd}
				\coSh{X}{\spectra}\otimes\C\ar[r, "\simeq"]\ar[d, "{\pb{j}_{\spectra}\otimes\C}"] & \coSh{X}{\C}\ar[d, "{\pb{j}_{\C}}"] \\
				\coSh{U}{\spectra}\otimes\C\ar[r, "\simeq"] & \coSh{U}{\C}
			\end{tikzcd}
			$$
			is obviously commutative, and thus we only have to show that the square
			$$
			\begin{tikzcd}
				\Sh{X}{\C}\ar[r, "{\mathbb{D}}"]\ar[d, "{\pb{j}}"] & \coSh{X}{\C}\ar[d, "{\pb{j}}"] \\
				\Sh{U}{\C}\ar[r, "{\mathbb{D}}"] & \coSh{U}{\C}
			\end{tikzcd}
			$$
			commutes, which is given by \cref{openpbVD}.
		\end{proof}
		\begin{corollary}\label{pbstablecoeff}
			Let $f : X\rightarrow Y$ be any continuous map. Then the pushforward $\pf{f}^{\C} : \Sh{X}{\C}\rightarrow\Sh{Y}{\C}$ admits a left adjoint $\pb{f}_{\C}$ such that there exists a commutative square 
			$$
			\begin{tikzcd}
				\Sh{Y}{\spectra}\otimes\C\ar[r, "\eta"]\ar[d, "{\pb{f}_{\spectra}\otimes\C}"] & \Sh{Y}{\C}\ar[d, "{\pb{f}_{\C}}"] \\
				\Sh{X}{\spectra}\otimes\C\ar[r, "\eta"] & \Sh{X}{\C}.
			\end{tikzcd}
			$$
		\end{corollary}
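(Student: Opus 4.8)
The plan is to reduce to the two cases already settled---proper maps and open immersions---by means of the factorization (\ref{factorization}), and then to paste together the resulting commutative squares through $\eta$. First I would recall that, by (\ref{factorization}), the map $f$ decomposes as
$$
\begin{tikzcd}
X\ar[r, "\Gamma_f"] & X\times Y\ar[r, "j\times\mathrm{id}_Y"] & \overline{X}\times Y\ar[r, "p"] & Y,
\end{tikzcd}
$$
where $\Gamma_f$ is a closed immersion (hence proper), $j\times\mathrm{id}_Y$ is an open immersion, and $p$ is proper. Since the pushforward is manifestly functorial---being given on sections by $\Sec{U}{\pf{g}F}=\Sec{g^{-1}(U)}{F}$, so that $\pf{(h\o g)}\simeq\pf{h}\pf{g}$---we obtain $\pf{f}^{\cate{C}}\simeq\pf{p}^{\cate{C}}\pf{(j\times\mathrm{id}_Y)}^{\cate{C}}\pf{(\Gamma_f)}^{\cate{C}}$. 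By Proposition \ref{proppshtens} applied to the proper maps $\Gamma_f$ and $p$, and by Lemma \ref{openrestrtens} applied to the open immersion $j\times\mathrm{id}_Y$, each of the three factors admits a left adjoint; hence so does $\pf{f}^{\cate{C}}$, and its left adjoint $\pb{f}_{\cate{C}}$ is the composite of the three left adjoints taken in reverse order, namely $\pb{f}_{\cate{C}}\simeq\pb{(\Gamma_f)}_{\cate{C}}\pb{(j\times\mathrm{id}_Y)}_{\cate{C}}\pb{p}_{\cate{C}}$ (here $\pb{(j\times\mathrm{id}_Y)}_{\cate{C}}$ is simply the restriction functor, which is left adjoint to the open pushforward).

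Next I would establish the commutative square by stacking. Proposition \ref{proppshtens} and Lemma \ref{openrestrtens} provide, for each factor $g\in\{\Gamma_f,\,j\times\mathrm{id}_Y,\,p\}$, an identification $\pb{g}_{\cate{C}}\,\eta\simeq\eta\,(\pb{g}_{\spectra}\otimes\cate{C})$. Composing these three squares vertically, and using that $\pb{(\Gamma_f)}_{\spectra}\pb{(j\times\mathrm{id}_Y)}_{\spectra}\pb{p}_{\spectra}\simeq\pb{f}_{\spectra}$ (again by functoriality of the pushforward for spectrum-valued sheaves, passing to left adjoints), together with the fact that $-\otimes\cate{C}$ is a functor on $\cate{C}\text{ocont}^{st}_{\infty}$ and thus preserves composition---so that $(\pb{(\Gamma_f)}_{\spectra}\otimes\cate{C})(\pb{(j\times\mathrm{id}_Y)}_{\spectra}\otimes\cate{C})(\pb{p}_{\spectra}\otimes\cate{C})\simeq\pb{f}_{\spectra}\otimes\cate{C}$, by Remark \ref{tensoringadj}---one arrives at the desired $\pb{f}_{\cate{C}}\,\eta\simeq\eta\,(\pb{f}_{\spectra}\otimes\cate{C})$.

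Since all the genuine analytic content has already been extracted in Proposition \ref{proppshtens} and Lemma \ref{openrestrtens}, the argument here is entirely formal. The only point demanding care is the bookkeeping of the adjunctions: that the left adjoint of a composite of pushforwards is the composite of the individual left adjoints, and that the three comparison squares compose coherently through the equivalence $\eta$ because $-\otimes\cate{C}$ respects composition of functors. I do not expect any substantive obstacle beyond this verification.
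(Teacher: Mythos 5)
Your proposal is correct and takes essentially the same approach as the paper: the paper's entire proof of this corollary is the one-line remark that, using the factorization (\ref{factorization}), the statement follows immediately from Proposition \ref{proppshtens} (for the proper maps $\Gamma_f$ and $p$) and Lemma \ref{openrestrtens} (for the open immersion $j\times\mathrm{id}_Y$). Your elaboration---composing the three comparison squares through $\eta$ and invoking that $-\otimes\cate{C}$ preserves composition of functors to identify the composite with $\pb{f}_{\spectra}\otimes\cate{C}$---is exactly the bookkeeping the paper leaves implicit in the word ``immediately''.
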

		\begin{proof}
			Using the factorization \ref{factorization}, this follows immediately by \Cref{proppshtens} and \Cref{openrestrtens}.
		\end{proof}
		\begin{theorem}\label{sheafification}
			Let $i_{\C} : \Sh{X}{\C}\hookrightarrow\Fun(\Op{X}\op , \C)$ be the inclusion functor, and let $L_{\spectra} : \Fun(\Op{X}\op , \spectra)\rightarrow\Sh{X}{\spectra}$ be the left adjoint of $i_{\spectra}$. Denote by $L_{\C}$ the composition
			$$
			\begin{tikzcd}
				\Fun(\Op{X}\op , \spectra)\otimes\C\ar[r, "L_{\spectra}\otimes\C"] & \Sh{X}{\spectra}\otimes\C\ar[d, "\eta"] \\
				\Fun(\Op{X}\op ,\C)\ar[u, "\simeq"]\ar[r, dotted, "L_{\C}"] & \Sh{X}{\C}.
			\end{tikzcd}
			$$
			Then $L_{\C}$ is left adjoint to $i_{\C}$.
		\end{theorem}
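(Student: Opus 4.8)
The plan is to prove the statement in two stages: first establish abstractly that $i_{\cate{C}}$ admits \emph{some} left adjoint $L'$, using the existence of left adjoints to the various section functors secured earlier, and then identify $L'$ with the explicitly constructed $L_{\cate{C}}$ by comparing the two on the generators of the presheaf category. First I would record the input from the previous results. Applying Corollary \ref{pbstablecoeff} to the map $a_U : U\rightarrow\ast$ and Lemma \ref{openrestrtens} to the open immersion $j_U : U\hookrightarrow X$, the functor
$$\Sec{U}{-}\simeq\pf{(a_U)}^{\cate{C}}\pb{(j_U)}_{\cate{C}} : \Sh{X}{\cate{C}}\rightarrow\cate{C}$$
is a composite of right adjoints, hence admits a left adjoint which I denote $M\mapsto M_U$, characterised by $\Hom{}{M_U}{G}\simeq\Hom{\cate{C}}{M}{\Sec{U}{G}}$. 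By those same two results this left adjoint is compatible with the Verdier equivalence $\eta$: it is identified through $\eta$ with $(\pfs{(j_U)}^{\spectra}\pb{(a_U)}_{\spectra})\otimes\cate{C}$.

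Next I would prove that $i_{\cate{C}}$ has a left adjoint. By (the proof of) Theorem \ref{preshdual}, every $F\in\text{Fun}(\op{X}^{op},\cate{C})$ is canonically a colimit $F\simeq\varinjlim_{\alpha}(U_{\alpha}\otimes M_{\alpha})$ of objects in the image of the relative Yoneda embedding. Since $i_{\cate{C}}$ is the inclusion of a full subcategory closed under limits, for any sheaf $G$ the defining property of $U_\alpha\otimes M_\alpha$ gives
$$\Hom{}{F}{i_{\cate{C}}G}\simeq\varprojlim_{\alpha}\Hom{}{U_{\alpha}\otimes M_{\alpha}}{i_{\cate{C}}G}\simeq\varprojlim_{\alpha}\Hom{\cate{C}}{M_{\alpha}}{\Sec{U_{\alpha}}{G}}\simeq\varprojlim_{\alpha}\Hom{}{(M_{\alpha})_{U_{\alpha}}}{G}.$$
The last expression is corepresented by $\varinjlim_{\alpha}(M_{\alpha})_{U_{\alpha}}$, which exists as $\Sh{X}{\cate{C}}$ is cocomplete; hence $\Hom{}{F}{i_{\cate{C}}(-)}$ is corepresentable and $i_{\cate{C}}$ admits a left adjoint $L'$ with $L'(F)\simeq\varinjlim_{\alpha}(M_{\alpha})_{U_{\alpha}}$, so in particular $L'(U\otimes M)\simeq M_U$.

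Finally I would identify $L'$ with $L_{\cate{C}}$. Both are cocontinuous functors out of $\text{Fun}(\op{X}^{op},\cate{C})$ ($L_{\cate{C}}$ because it is a composite of equivalences with $L_{\spectra}\otimes\cate{C}$, and $L_{\spectra}$ is a left adjoint), so by the argument of Corollary \ref{tenskanext} it suffices to compare them on the generators $U\otimes M$. Under the equivalence $\text{Fun}(\op{X}^{op},\cate{C})\simeq\text{Fun}(\op{X}^{op},\spectra)\otimes\cate{C}$ of the remark following Lemma \ref{tensstcocont}, the object $U\otimes M$ corresponds to $(\infsusp_{+}y(U))\otimes M$, so $L_{\cate{C}}(U\otimes M)\simeq\eta(L_{\spectra}(\infsusp_{+}y(U))\otimes M)$. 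On the other hand, by the $\eta$-compatibility recorded above, $M_U\simeq\eta\big((\pfs{(j_U)}^{\spectra}\pb{(a_U)}_{\spectra})(\s{})\otimes M\big)$, and the sheaf $(\pfs{(j_U)}^{\spectra}\pb{(a_U)}_{\spectra})(\s{})$ agrees with $L_{\spectra}(\infsusp_{+}y(U))$, since both corepresent $H\mapsto\infloop\Sec{U}{H}$ on $\Sh{X}{\spectra}$. Therefore $L_{\cate{C}}(U\otimes M)\simeq M_U\simeq L'(U\otimes M)$, whence $L_{\cate{C}}\simeq L'$ is left adjoint to $i_{\cate{C}}$. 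The main obstacle will be precisely this last identification: the existence of the adjoint $L'$ rides entirely on the hard work of the previous subsection (Corollary \ref{pbstablecoeff}), and what remains is the careful bookkeeping of tracing a relative Yoneda generator $U\otimes M$ through the equivalence $\eta$ and matching the abstractly produced adjoint with the explicit $\eta$-transported one; all of this reduces to the $\eta$-compatibilities already established, so no genuinely new input is needed.
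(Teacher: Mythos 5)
Your proof is correct and takes essentially the same approach as the paper, which verifies the adjunction in a single chain of natural equivalences
$\Hom{}{L_{\cate{C}}F}{G}\simeq\varprojlim_{M\rightarrow\Sec{U}{F}}\Hom{}{\pfs{j}^{\cate{C}}\pb{a}_{\cate{C}}M}{G}\simeq\varprojlim_{M\rightarrow\Sec{U}{F}}\Hom{\cate{C}}{M}{\Sec{U}{G}}\simeq\Hom{}{F}{i_{\cate{C}}G}$
using exactly your ingredients: the canonical finally-small colimit presentation of $F$ by objects $U\otimes M$ from the proof of Theorem \ref{preshdual}, and the $\eta$-compatibilities of Lemma \ref{openrestrtens} and Corollary \ref{pbstablecoeff}. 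Your two-stage packaging --- first producing an abstract pointwise left adjoint $L'$ and then matching it with $L_{\cate{C}}$ on the generators $U\otimes M$, including the identification $L_{\spectra}(\infsusp_{+}y(U))\simeq\pfs{j}^{\spectra}\pb{a}_{\spectra}\s{}$ which the paper leaves implicit --- is the same computation read in two pieces rather than a genuinely different argument.
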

		\begin{proof}
			From the proof of \cref{preshdual} and \cref{relyonstable}, we see that we may write any $F\in\Fun(\Op{X}\op , \C)$ as a colimit
			$$F\simeq \varinjlim_{M\rightarrow\Sec{U}{F}}U\boxtimes^{st} M$$
			where the indexing $\infty$-category is Grothendieck construction of the functor $$(U,M)\mapsto\Hom{\C}{M}{\Sec{U}{F}}.$$ By definition, the presheaf of spectra $U\boxtimes^{st}M$ represents the functor taking sections at $U$, and thus we get an equivalence
			$$\s{U} = L_{\spectra}(U\boxtimes\s{})\simeq \pfs{j}^{\spectra}\pb{a}_{\spectra}\s{}$$
			which is natural in $U$, where $j : U\hookrightarrow X$ denotes an open inclusion, $a:U\rightarrow\ast$ the unique map.
			We have equivalences
			\begin{align*}
				\Hom{\Sh{X}{\C}}{L_{\C }F}{G}&\simeq\varprojlim_{M\rightarrow\Sec{U}{F}}\Hom{\Sh{X}{\spectra}\otimes\C}{\pfs{j}^{\spectra}\pb{a}_{\spectra}\s{}\boxtimes^{st} M}{\eta^{-1} G} \\
				&\simeq \varprojlim_{M\rightarrow\Sec{U}{F}}\Hom{\Sh{X}{\C}}{\pfs{j}^{\C}\pb{a}_{\C} M}{ G} \\
				&\simeq \varprojlim_{M\rightarrow\Sec{U}{F}}\Hom{\C}{M}{\Sec{U}{G}} \\
				&\simeq \varprojlim_{M\rightarrow\Sec{U}{F}}\Hom{\Fun(\Op{X}\op ,\C)}{U\boxtimes M}{i^{\C}G} \\
				&\simeq \Hom{\Fun(\Op{X}\op ,\C)}{F}{i^{\C}G}
			\end{align*}
			where the first equivalence follows from the observations above, and the second is a consequence of \Cref{openrestrtens} and \Cref{pbstablecoeff}. Since all identifications are functorial on $F$ and $G$, we obtain the thesis.
		\end{proof}
		\begin{remark}\label{fromprestostbic}
			After the results in this section, we are now able to extend everything we have proven so far for sheaves with presentable stable coefficients to sheaves with values in a stable bicomplete $\infty$-category. The only detail we have to handle with more care is the functor (\ref{tensshCD}): since the tensor product of two stable bicomplete $\infty$-categories is not again complete in general, (\ref{tensshCD}) will now take values in $\Sh{X}{\spectra}\otimes(\C\otimes\D)$. Nevertheless, when $\C$ has a symmetric monoidal structure such that its tensor $\otimes_{\C}$ preserves colimits in both variables, the composition of (\ref{tensshCD}) with the obvious functor 
			$$\Sh{X}{\spectra}\otimes(\C\otimes\C)\rightarrow\Sh{X}{\C}$$ may still be enhanced to a binary multiplication in a symmetric monoidal structure on $\Sh{X}{\C}$. As a consequence, we see that the equivalences in \Cref{monpb} and \Cref{smoothproj} still hold in $\Sh{X}{\spectra}\otimes(\C\otimes\D)$ and in $\Sh{X}{\C}$ when (\ref{tensshCD}) is exchanged with the tensor product in $\Sh{X}{\C}$ described above.
		\end{remark}

		\section{Six functor formalism} 
		In this section, we will define the operations $\pfp{f}$ and $\pbp{f}$, and prove all the usual formulas that one expects for these functors. A first attempt towards these results for sheaves of spectra can be found in the paper \cite{block1996homotopy}, even though it almost totally lacks proofs. The idea of using covariant Verdier duality to define the !-operations first appeared in \cite[Lecture 21]{lurietamagawa}. In this work, the author states that, under some unspecified mild assumptions on the coefficients, the pushforward of cosheaves admits a right adjoint. Its Verdier dual is then defined to be the exceptional pullback. However, \cite{lurietamagawa} does not provide any explanation for why one should expect to have this right adjoint for stable and bicomplete coefficients. The contents of the previous section fill this gap. A proof of the proper base change theorem with unstable coefficients was provided in \cite[Corollary 7.3.1.18]{lurie2009higher}, and it was later extended to spectral coefficients in \cite{haine2025nonabelian}. Our only contribution to this theorem is to explain how to further generalize it to general stable bicomplete coefficients. 
		
		A novelty of the approach presented in this section is our expression of the formulas involving tensor products, such as projection or K\"unneth formula. Here we do not a priori require the coefficients of our sheaves to be equipped with a symmetric monoidal structure, but rely instead on the tensor product of stable cocomplete $\infty$-categories. The advantage of this perspective is that, using the observations in \cref{fromprestostbic}, it clarifies how to obtain all these formulas for a general stable bicomplete $\infty$-category equipped with a closed symmetric monoidal structure. At the end of the section, building up on the our discussion of Section 3 related to shape theory, we will explain how to prove the formula
		$$\pbp{f}(\mathds{1})\otimes\pb{f}\simeq\pbp{f}$$
		for any map $f$ which induces a locally contractible geometric morphisms. 
		
		\subsection{The formulas for $\pfp{f}^{\C}$}
		
		Throughout this section, $\C$ is going to be any stable and bicomplete $\infty$-category. We will also implicitely assume all topological spaces to locally compact and Hausdorff. For any continuous map $f: X\rightarrow Y$, consider the functor 
		$$(\pf{f}^{\C^{op}})\op : \Sh{X}{\C\op}\op = \coSh{X}{\C}\rightarrow\Sh{Y}{\C\op}\op = \coSh{Y}{\C}.$$
		Since taking opposite categories switches left with right adjoints, the functor
		$$(\pb{f}_{\C\op})\op : \Sh{Y}{\C\op}\op = \coSh{Y}{\C}\rightarrow\Sh{X}{\C\op}\op = \coSh{X}{\C}$$
		is right adjoint to $(\pf{f}^{\C\op})\op$. Hence, by \cref{covverddual}, we get a corresponding adjunction at the level of sheaves
		$$\begin{tikzcd}
			\Sh{X}{\C}\ar[r,bend left,"\pfp{f}^{\C}",""{name=A, below}] & \Sh{Y}{\C}.\ar[l,bend left,"\pbp{f}_{\C}",""{name=B,above}] \ar[from=A, to=B, symbol=\dashv]
		\end{tikzcd}$$
		where $\pfp{f}^{\C}$ and $\pbp{f}_{\C}$ are defined to be the unique functors fitting in commutative squares
		$$
		\begin{tikzcd}
			\Sh{X}{\C} \arrow[d, "\mathbb{D}"'] \arrow[r, "\pfp{f}^{\C}"] & \Sh{Y}{\C} \arrow[d, "\mathbb{D}"] &  & \Sh{Y}{\C} \arrow[r, "\pbp{f}_{\C}"] \arrow[d, "\mathbb{D}"'] & \Sh{X}{\C} \arrow[d, "\mathbb{D}"] \\
			\coSh{X}{\C} \arrow[r, "{(\pf{f}^{\C^{op}})\op }"]                                   & \coSh{Y}{\C}                       &  & \coSh{Y}{\C} \arrow[r, "{(\pb{f}_{\C\op})\op}"]                                   & \coSh{X}{\C}.                      
		\end{tikzcd}
		$$
		\begin{definition}
			The functors $\pfp{f}^{\C}$ and $\pbp{f}_{\C}$ constructed as above are called respectively \textit{pushforward with proper support} and \textit{exceptional pullback}. Unless it is required from the context, we will often omit to include $\C$ in subscripts or superscripts in our notation.
		\end{definition}
		
		More concretely, $\pfp{f}$ is the functor uniquely determined by the formula
		$$\cSec{U}{\pfp{f}F} = \cSec{f^{-1}(U)}{F}$$
		for all $U\in\Op{Y}$. In particular, when $a:X\rightarrow \ast$ is the unique map, we get 
		$$\pfp{a}F \simeq\cSec{X}{F}.$$
		In different geometric contexts, when one deals with six functor formalisms, it is common to define the shriek operations making use of appropriate compactifications of maps analogous to (\ref{factorization}). This approach, however, makes it a bit tricky to verify that $\pfp{f}$ behaves well under compositions. An advantage of our definition of $\pfp{f}$ is that its functoriality is more or less immediate, as illustrated by the following lemma.
		
		\begin{lemma}\label{functpfp}
			Let $\text{LCH}$ be the category of locally compact Hausdorff spaces. Then, for any $\C$ stable and bicomplete, there is a functor
			$$\spaces\text{hv}_!(-;\C) : \text{LCH}\rightarrow\Cocont_{\infty}^{st}$$
			whose values on an object $X$ is given by $\Sh{X}{\C}$, and on a morphism $f$ is given by $\pfp{f}^{\C}$.
		\end{lemma}
		
		\begin{proof}
			We first observe that there is a functor
			$$\spaces\text{hv}_*(-;\C) : \text{LCH}\rightarrow\infcat$$
			whose value on a morphism $f:X\rightarrow Y$ is given by the pushforward $\pf{f}^{\C}$. By definition, $\pf{f}^{\C}$ is given by precomposing with the functor
			$f^{-1} : \cate{U}(Y)\rightarrow\cate{U}(X).$
			Thus, the functoriality of $\spaces\text{hv}_*(-;\C)$ descends from the functoriality of internal-homs in $\infcat$, which is straightforward. By passing to opposite categories, we obtain a similar functor $\C\text{o}\spaces\text{hv}_*(-;\C)$.
			
			Let $J$ be the interval object for the Joyal model structure (see \cite[Definition 3.3.3]{cisinski2019higher}), and consider the monomorphism of simplicial sets $Ob(\text{LCH})\hookrightarrow\text{LCH}$. By \cref{covverddual}, we have a functor	
			$$
			\begin{tikzcd}[column sep= huge]
				Ob(\text{LCH})\times J\cup \text{LCH}\times\{1\}\arrow[r, "{\mathbb{D}_{\C}\cup\C\text{o}\spaces\text{hv}_*(-;\C)}"] & \infcat.
			\end{tikzcd}
			$$
			which admits a lifting
			$$
			\begin{tikzcd}
				Ob(\text{LCH})\times J\cup \text{LCH}\times\{1\} \arrow[r] \arrow[d, hook] & \infcat \\
				\text{LCH}\times J \arrow[ru, dotted]                                      &        
			\end{tikzcd}
			$$
			since the vertical arrow is a categorical anodyne extension. Thus we get the desired functor by restricting the dotted arrow to $\text{LCH}\times\{0\}$.
		\end{proof}
		
		\begin{lemma}\label{properpshpropermap}
			There exists a natural transformation $\pfp{f}\rightarrow\pf{f}$ which is an equivalence when $f$ is proper.
		\end{lemma}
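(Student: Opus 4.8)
The plan is to transport the whole statement through Verdier duality and thereby reduce it to the natural transformation already produced, for arbitrary $f$, in the first part of the proof of Proposition \ref{proppshtens}. Recall that $\pfp{f}$ was defined precisely so that $\mathbb{D}_{\cate{C}}\pfp{f}\simeq(\pf{f}^{\cate{C}^{op}})^{op}\mathbb{D}_{\cate{C}}$, where $(\pf{f}^{\cate{C}^{op}})^{op}$ is the pushforward for $\cate{C}$-valued cosheaves; concretely its value at $V\in\op{Y}$ is $\cSec{f^{-1}(V)}{F}$. On the other hand, by the very definition of $\mathbb{D}_{\cate{C}}$, one has $(\mathbb{D}_{\cate{C}}\pf{f}F)(V)\simeq\cSec{V}{\pf{f}F}$. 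Since $\mathbb{D}_{\cate{C}}$ is an equivalence, giving a natural transformation $\pfp{f}\to\pf{f}$ of functors $\Sh{X}{\cate{C}}\to\Sh{Y}{\cate{C}}$ is the same as giving a natural transformation $\mathbb{D}_{\cate{C}}\pfp{f}\to\mathbb{D}_{\cate{C}}\pf{f}$ of cosheaves on $Y$, that is, for every $V\in\op{Y}$ a map
$$\cSec{f^{-1}(V)}{F}\to\cSec{V}{\pf{f}F}$$
natural in $V$ and in $F$.

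Such a morphism is exactly what the proof of Proposition \ref{proppshtens} constructs, and that construction does not use properness. I would reproduce it as follows. For $V\in\op{Y}$ and a compact $K\subseteq f^{-1}(V)$, the set $f(K)$ is a compact subset of $V$ and $K\subseteq f^{-1}(f(K))$. The inclusion $X\setminus f^{-1}(f(K))\subseteq X\setminus K$ induces, upon taking fibers of the two restrictions out of $\Sec{X}{F}$, a map $\KSec{X}{F}{K}\to\KSec{X}{F}{f^{-1}(f(K))}$. Using $\Sec{f^{-1}(-)}{F}\simeq\Sec{-}{\pf{f}F}$ one identifies $\KSec{X}{F}{f^{-1}(f(K))}\simeq\KSec{Y}{\pf{f}F}{f(K)}$, and composing with the structure map into the colimit gives
$$\KSec{X}{F}{K}\to\KSec{Y}{\pf{f}F}{f(K)}\to\cSec{V}{\pf{f}F}.$$
All arrows are induced by restriction maps of $F$, so this is compatible in $K$, $V$ and $F$; passing to the colimit over compacts $K\subseteq f^{-1}(V)$ yields the desired map $\cSec{f^{-1}(V)}{F}\to\cSec{V}{\pf{f}F}$, hence the natural transformation $\pfp{f}\to\pf{f}$.

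To prove that this is an equivalence when $f$ is proper I would again follow Proposition \ref{proppshtens}. If $f$ is proper, then $f^{-1}(C)$ is compact for every compact $C\subseteq V$, and each compact $K\subseteq f^{-1}(V)$ is contained in the compact $f^{-1}(f(K))=f^{-1}(C)$ with $C=f(K)$; therefore the compacts of the form $f^{-1}(C)$, with $C\subseteq V$ compact, are cofinal among all compacts of $f^{-1}(V)$. By this cofinality the colimit defining $\cSec{f^{-1}(V)}{F}$ may be computed along these compacts, and the map above becomes the identification
$$\varinjlim_{K\subseteq f^{-1}(V)}\KSec{X}{F}{K}\simeq\varinjlim_{C\subseteq V}\KSec{Y}{\pf{f}F}{C}=\cSec{V}{\pf{f}F},$$
so it is an equivalence, as wanted.

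The only genuinely delicate point is the coherence hidden in the phrase ``natural in $K$, $V$ and $F$'': strictly one must present the transformation as a map of functors out of a suitable diagram category, using the functorial constructions of $\cSec{-}{-}$ and of sections supported on a compact given in the remarks preceding the Verdier duality theorem, rather than merely objectwise. I expect this bookkeeping to be the main, though routine, obstacle; once it is in place the proper case is immediate from the cofinality argument above.
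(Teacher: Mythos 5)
Your proposal is correct and is essentially identical to the paper's argument: the paper also reduces via Verdier duality to the natural transformation $\mathbb{D}\pfp{f}\rightarrow\mathbb{D}\pf{f}$ and simply cites the second part of the proof of Proposition \ref{proppshtens}, whose construction (the map $\KSec{X}{F}{K}\rightarrow\KSec{Y}{\pf{f}F}{f(K)}$ induced by $K\subseteq f^{-1}(f(K))$, followed by the cofinality of the compacts $f^{-1}(C)$ when $f$ is proper) you have reproduced faithfully, including the observation that properness is needed only for the equivalence and not for the transformation itself.
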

		\begin{proof}
			By Verdier duality, it suffices to construct a natural tranformation between $\mathbb{D}\pfp{f}\rightarrow\mathbb{D}\pf{f}$ and show it is an equivalence when $f$ is proper, which is the content of \cref{pushforwardandVD}.
		\end{proof}
		\begin{corollary}
			Let $i:Z\hookrightarrow X$ be a closed immersion, $\C$ be any stable bicomplete $\infty$-category. Then the functor $\pbp{i} : \Sh{X}{\C}\rightarrow\Sh{Z}{\C}$ coincides with the one defined in \Cref{expbclosed}. 
		\end{corollary}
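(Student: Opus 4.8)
The plan is to reduce the statement to the identification $\pfp{i}\simeq\pf{i}$ already available from Lemma \ref{properpshpropermap}. Recall that in the six functor formalism the functor $\pbp{i}$ is defined as the right adjoint of the proper pushforward $\pfp{i}:\Sh{Z}{\spectra}\rightarrow\Sh{X}{\spectra}$, whereas the functor constructed in Corollary \ref{expbclosed} is the right adjoint of the ordinary pushforward $\pf{i}^{\spectra}$. Since $\spectra$ is stable and in particular pointed and presentable, Corollary \ref{expbclosed} indeed applies and produces such a right adjoint, so both functors under comparison are genuinely defined.

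The first step is to observe that a closed immersion $i:Z\hookrightarrow X$ is proper: under our standing local compactness and Hausdorff hypothesis the preimage $i^{-1}(K)=K\cap Z$ of any compact $K\subseteq X$ is a closed subset of a compact set, hence compact. This is precisely the remark already used to justify the factorization (\ref{factorization}), where closed immersions are recorded as being in particular proper.

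With this in hand, the second step is to invoke Lemma \ref{properpshpropermap}, which guarantees that the canonical natural transformation $\pfp{i}\rightarrow\pf{i}$ is an equivalence whenever $i$ is proper. Therefore $\pfp{i}\simeq\pf{i}$ as functors $\Sh{Z}{\spectra}\rightarrow\Sh{X}{\spectra}$. By uniqueness of adjoints, their right adjoints must agree, so the right adjoint $\pbp{i}$ of $\pfp{i}$ coincides with the right adjoint of $\pf{i}$, which is exactly the functor of Corollary \ref{expbclosed}.

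There is essentially no obstacle here: the only points to check are that the hypotheses of both constructions are satisfied (which they are, since $\spectra$ is stable, bicomplete, pointed and presentable) and that a closed immersion is proper. Everything else follows formally from the equivalence $\pfp{i}\simeq\pf{i}$ supplied by Lemma \ref{properpshpropermap} together with the uniqueness of adjoint functors.
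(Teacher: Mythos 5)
Your proof is correct and is essentially the paper's own argument: the paper also deduces the statement directly from Lemma \ref{properpshpropermap} together with the observation that a closed immersion (of locally compact Hausdorff spaces) is proper, the identification of right adjoints via uniqueness of adjoints being left implicit there. Your write-up merely makes explicit the details (properness of $i$, applicability of Corollary \ref{expbclosed} to $\spectra$, uniqueness of adjoints) that the paper compresses into one line.
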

		\begin{proof}
			This follows immediately by \cref{properpshpropermap}, since any closed immersion is proper.
		\end{proof}
		\begin{lemma}\label{open}
			Let $j : U\hookrightarrow X$ be an open immersion. Then we have $\pfp{j}\dashv\pb{j}$ or equivalently $\pb{j}\simeq\pbp{j}$.
		\end{lemma}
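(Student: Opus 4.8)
The plan is to obtain the adjunction $\pfp{j}\dashv\pb{j}$ by transporting, through Verdier duality, the tautological adjunction between pushforward and restriction for \emph{cosheaves}, and then to deduce $\pbp{j}\simeq\pb{j}$ from uniqueness of adjoints. The key point is that both functors $\pfp{j}$ and $\pb{j}$ are already known to be compatible with $\mathbb{D}_{\cate{C}}$, so the two formulations of the statement come out for free once the cosheaf adjunction is in place.

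First I would record the two compatibilities. By the construction of the exceptional functors at the beginning of this section, $\pfp{j}$ is defined precisely so that the square
$$\begin{tikzcd}
\Sh{U}{\cate{C}} \arrow[r, "\pfp{j}"] \arrow[d, "\mathbb{D}_{\cate{C}}"'] & \Sh{X}{\cate{C}} \arrow[d, "\mathbb{D}_{\cate{C}}"] \\
\coSh{U}{\cate{C}} \arrow[r, "\pf{j}"'] & \coSh{X}{\cate{C}}
\end{tikzcd}$$
commutes, where the bottom arrow $\pf{j}$ is the pushforward of cosheaves (i.e. $(\pf{j}^{\cate{C}^{op}})^{op}$). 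On the other hand, Lemma \ref{openrestrtens} asserts exactly that restriction along the open immersion commutes with Verdier duality, i.e. that
$$\begin{tikzcd}
\Sh{X}{\cate{C}} \arrow[r, "\pb{j}"] \arrow[d, "\mathbb{D}_{\cate{C}}"'] & \Sh{U}{\cate{C}} \arrow[d, "\mathbb{D}_{\cate{C}}"] \\
\coSh{X}{\cate{C}} \arrow[r, "\pb{j}"'] & \coSh{U}{\cate{C}}
\end{tikzcd}$$
commutes, the bottom arrow being the restriction of cosheaves.

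Next I would observe that for cosheaves the pushforward is left adjoint to the restriction, $\pf{j}\dashv\pb{j}$: passing to opposite categories, this is nothing other than the adjunction $\pb{j}\dashv\pf{j}$ for $\cate{C}^{op}$-valued sheaves, whose left adjoint $\pb{j}$ exists by Corollary \ref{pbstablecoeff}. Since $\mathbb{D}_{\cate{C}}$ is an equivalence over both $U$ and $X$, conjugating this cosheaf adjunction by Verdier duality — matching its left and right legs to the two commuting squares above — yields an adjunction $\pfp{j}\dashv\pb{j}$ for $\cate{C}$-valued sheaves. Finally, the general formalism already provides $\pfp{j}\dashv\pbp{j}$, so uniqueness of right adjoints forces $\pbp{j}\simeq\pb{j}$, the second formulation of the statement. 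I do not expect a genuine obstacle here: the only substantive input, that restriction commutes with $\mathbb{D}_{\cate{C}}$, is already packaged in Lemma \ref{openrestrtens}, and the remaining step is the formal fact that conjugating an adjunction by a pair of equivalences produces an adjunction; the one thing to be careful about is lining up the two compatibility squares with the correct sides of the cosheaf adjunction.
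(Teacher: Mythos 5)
Your proposal is correct and is essentially the paper's own argument in expanded form: the paper also deduces the lemma from the commutation of restriction with Verdier duality established in the proof of Lemma \ref{openrestrtens}, since $\pfp{j}$ and $\pbp{j}$ are by construction the $\mathbb{D}_{\cate{C}}$-conjugates of the cosheaf pushforward and restriction, so that $\mathbb{D}_{\cate{C}}\pb{j}\simeq\mathbb{D}_{\cate{C}}\pbp{j}$ forces $\pb{j}\simeq\pbp{j}$. Your extra step of conjugating the whole cosheaf adjunction and then invoking uniqueness of right adjoints is just a slightly more verbose packaging of the same idea, with all inputs correctly identified.
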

		\begin{proof}
			By \Cref{openpbVD}, we have a natural equivalence $\mathbb{D}\pb{j}\simeq\mathbb{D}\pbp{j}$, and thus we may conclude.
		\end{proof}
		
		\begin{remark}\label{compactifyproperpushforward}
			A useful consequence of \cref{functpfp}, \cref{open}, \cref{properpshpropermap} and (\ref{factorization}) is that the functor $\pfp{f}^{\C}$ is uniquely determined by the fact that it is right adjoint to $\pb{f}_{\C}$ when $f$ is proper and left adjoint to $\pb{f}_{\C}$ when $f$ is an open immersion. 
		\end{remark}
		
		From \cref{compactifyproperpushforward} we also see that $\pfp{f}$ behaves well with respect to tensor products in $\Cocont_{\infty}^{st}$.
		
		\begin{corollary}\label{lowershriekpushtens}
			Let $f:X\rightarrow Y$ be any continuous map. Then there is a commutative square
			$$
			\begin{tikzcd}
				\Sh{X}{\spectra}\otimes\C\ar[r, "\eta"]\ar[d, "{\pfp{f}^{\spectra}\otimes\C}"] & \Sh{X}{\C}\ar[d, "{\pfp{f}^{\C}}"] \\
				\Sh{Y}{\spectra}\otimes\C\ar[r, "\eta"] & \Sh{Y}{\C}.
			\end{tikzcd}
			$$
			
		\end{corollary}
		\begin{proof}
			This follows immediately by \cref{compactifyproperpushforward}, \cref{proppshtens} and \cref{openrestrtens}. 
		\end{proof}
		
		\begin{remark}
			Let $j: U\hookrightarrow X$ be the inclusion of any open subset with compact closure. Then a simple computation involving \Cref{properpshpropermap} and the closure of $U$ shows that, for any sheaf $F$ on $X$, one has
			$$\Sec{U}{F}\simeq\cSec{X}{\pf{j}\pb{j}F}.$$
			Since $U$ has compact closure, any closed subset of $U$ can be written as the intersection of $U$ with some compact subset of $X$, and thus we obtain
			\begin{align*}
				\cSec{X}{\pf{j}\pb{j}F}&\simeq \varinjlim_{K\subseteq X}\KSec{U}{F}{U\cap K} \\
				&\simeq	\varinjlim_{S\subseteq U}\KSec{U}{F}{S},
			\end{align*}
			where the last colimit ranges over all closed subsets of $U$.
		\end{remark}
		
		\begin{proposition}[Base change]\label{basechange}
			For every given pullback square
			$$\begin{tikzcd}
				X' \arrow[r, "f'"] \arrow[d, "g'"'] 
				\arrow[dr, phantom, "\usebox\pullback" , very near start, color=black]
				& X \arrow[d, "g"] \\
				Y' \arrow[r, "f"] & Y
			\end{tikzcd}$$
			of topological spaces, there is a natural equivalence
			$$\pb{f}\pfp{g}\simeq\pfp{g'}\pb{f'}$$
			and, by transposition, also
			$$\pbp{g}\pf{f}\simeq\pf{f'}\pbp{g'}.$$
		\end{proposition}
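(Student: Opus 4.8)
The plan is to reduce the statement first to coefficients in $\spectra$ and then, via the factorization (\ref{factorization}), to three elementary types of maps. For the reduction of coefficients, note that $\pb{f}_{\cate{C}}\simeq\pb{f}_{\spectra}\otimes\cate{C}$ by Corollary \ref{pbstablecoeff}, and that the analogous identity $\pfp{f}^{\cate{C}}\simeq\pfp{f}^{\spectra}\otimes\cate{C}$ follows by combining Proposition \ref{proppshtens} and Lemma \ref{openrestrtens} with the fact that $-\otimes\cate{C}$ preserves adjunctions (Remark \ref{tensoringadj}); hence every functor occurring in the statement, as well as the base change comparison transformation, is obtained by applying $-\otimes\cate{C}$ to its spectral counterpart, and since $-\otimes\cate{C}$ preserves equivalences it suffices to treat $\cate{C}=\spectra$. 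The comparison map itself I would construct along the factorization: for an open immersion it is the mate of the functoriality equivalence of $\pb{}$ under the adjunction $\pfp{}\dashv\pb{}$ of Lemma \ref{open}, and for a proper map $p$, where $\pfp{p}\simeq\pf{p}$ by Lemma \ref{properpshpropermap}, it is the ordinary Beck--Chevalley mate of $\pf{p}\pf{f'}\simeq\pf{f}\pf{p'}$ for the adjunctions $\pb{}\dashv\pf{}$, formed exactly as the base change transformation of Remark \ref{laxshape}. By uniqueness of adjoints the asserted equivalence is equivalent to its transpose $\pbp{g}\pf{f}\simeq\pf{f'}\pbp{g'}$, which disposes of the second formula.

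Writing $g$ as the composite of the closed immersion $\Gamma_g$, the open immersion $j\times\text{id}_Y$ and the proper projection $p\colon\overline{X}\times Y\to Y$ of (\ref{factorization}), and pulling the square back along $f$, splits the situation into three stacked pullback squares; since the base change transformation of a composite is the pasting of those of its factors, it suffices to verify the equivalence in these three cases. All the functors involved preserve colimits ($\pb{f}$ and $\pb{f'}$ are left adjoints, $\pfp{}$ of an open immersion is a left adjoint by Lemma \ref{open}, and $\pfp{}$ of a proper map equals $\pf{}$, which is cocontinuous by Lemma \ref{properpsfcocont}), so each case may be checked on the representable generators $\s{W}$ of the relevant category of sheaves of spectra.

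For an open immersion $j\colon U\hookrightarrow Y$ one computes $\pfp{j}\s{W}\simeq\s{W}$ for $W\subseteq U$ and $\pb{f}\s{W}\simeq\s{f^{-1}W}$, and since $f'^{-1}W=f^{-1}W$ when $W\subseteq U$ the two composites $\pb{f}\pfp{j}$ and $\pfp{j'}\pb{f'}$ agree on representables. For the proper projection $p=a\times\text{id}_Y$ with $a\colon\overline{X}\to\ast$, I would argue as in the proof of smooth base change (Lemma \ref{smoothbc}): under the decomposition $\Sh{\overline{X}\times Y}{\spectra}\simeq\Sh{\overline{X}}{\spectra}\otimes\Sh{Y}{\spectra}$ coming from Remark \ref{tensprodspaces} one has $\pf{p}\simeq\pf{a}\otimes\text{id}$ and $\pb{f'}\simeq\text{id}\otimes\pb{f}$, so both $\pb{f}\pf{p}$ and $\pf{p'}\pb{f'}$ coincide with $\pf{a}\otimes\pb{f}$ by bifunctoriality of $\otimes$. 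For a closed immersion $i\colon Z\hookrightarrow Y$ with open complement $l\colon V\hookrightarrow Y$ and induced maps $f_Z\colon f^{-1}Z\to Z$, $f_V\colon f^{-1}V\to V$, I would apply the colimit preserving functor $\pb{f}$ to the localization cofiber sequence $\pfp{l}\pb{l}\to\text{id}\to\pf{i}\pb{i}$ of Corollary \ref{locseq} and compare it with the corresponding sequence on $Y'$; the open case already treated identifies the first two terms (using $\pb{f}\pfp{l}\simeq\pfp{l'}\pb{f_V}$ and $\pb{l'}\pb{f}\simeq\pb{f_V}\pb{l}$), so the cofibers agree, giving $\pb{f}\pf{i}\pb{i}\simeq\pf{i'}\pb{i'}\pb{f}$; precomposing with $\pf{i}$ and using that $\pf{i}$ is fully faithful (so that $\pb{i}\pf{i}\simeq\text{id}$) together with $\pb{i'}\pb{f}\simeq\pb{f_Z}\pb{i}$ yields the desired $\pb{f}\pf{i}\simeq\pf{i'}\pb{f_Z}$.

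The step I expect to be most delicate is the closed immersion case, where one must check that the two localization sequences are compared precisely through the canonical base change maps, so that the resulting equivalence is genuinely the asserted one and not merely an abstract isomorphism; relatedly, the projection case rests on the identity $\pf{p}\simeq\pf{a}\otimes\text{id}$, which requires the naturality of the tensor decomposition of Remark \ref{tensprodspaces} with respect to proper pushforward. Constructing the base change transformation coherently along the factorization, and checking its compatibility with the pasting of pullback squares, is the other point that demands care.
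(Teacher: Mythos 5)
Your argument is correct, but it takes a genuinely different route from the paper's, which is much shorter: after the factorization (\ref{factorization}) the paper outsources the entire proper part --- both the closed immersion $\Gamma_g$ and the compactified projection, these being proper maps --- to the external proper base change theorem [Hai21, Corollary 3.2], and proves only the open immersion case, constructing the mate from $\pfp{g}\dashv\pb{g}$ (Lemma \ref{open}) exactly as you do and checking it on the generators $\const{M}{U}$ via Remark \ref{natongen}. Two differences are worth recording. First, the paper needs no reduction to $\cate{C}=\spectra$: the generator computation $\pfp{g'}\pb{f'}\const{M}{U}\simeq\const{M}{f^{-1}(U)}\simeq\pb{f}\pfp{g}\const{M}{U}$ works verbatim for any stable bicomplete $\cate{C}$, so your (correct) reduction via Corollary \ref{pbstablecoeff} and $\pfp{f}^{\cate{C}}\simeq\pfp{f}^{\spectra}\otimes\cate{C}$ is sound but dispensable. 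Second, your in-house treatment of the proper part is valid and buys self-containedness: the closed immersion case via the localization sequences of Corollary \ref{locseq} and full faithfulness of $\pf{i}$ is standard, and your derivation of $\pf{p}\simeq\pf{a}\otimes\text{id}$ by uniqueness of adjoints from $\pb{p}\simeq\pb{a}\otimes\text{id}$ (tensoring the adjunction with $\Sh{Y}{\spectra}$, legitimate since $\pf{a}$ is cocontinuous by Lemma \ref{properpsfcocont}) is exactly the right move: note that the analogous identity appearing in the corollary after Proposition \ref{kunneth} is deduced there from the K\"unneth formula, whose proof uses the present proposition, so your adjoint-uniqueness argument is what keeps you non-circular. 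The price is the coherence debt you flag yourself: one must verify that the equivalences obtained from bifunctoriality of $\otimes$ and from comparing localization sequences coincide with the canonical Beck--Chevalley mates. These checks are routine --- the comparison square of localization sequences commutes by the triangle identities, since the open base change equivalence is itself built from the unit of $\pfp{l}\dashv\pb{l}$, and for the projection one can instead evaluate the canonical mate on the generators $\s{W\times V}$, which generate $\Sh{\overline{X}\times Y}{\spectra}$ under colimits by Remark \ref{dfntens} --- but they are genuine obligations that the paper's strategy of computing the canonical transformation directly on generators avoids, at the cost of the citation.
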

		\begin{proof}
			By (\ref{stonecech}), we have $g = p\o j$, where $j$ is an open immersion and $p$ is proper. Decompose the given pullback diagram as
			$$
			\begin{tikzcd}
				X' \arrow[d, "j'"', hook] \arrow[r, "f'"]                 & X \arrow[d, "j", hook]      \\
				\overline{X}' \arrow[d, "p'"'] \arrow[r, "\overline{f}'"] & \overline{X} \arrow[d, "p"] \\
				Y' \arrow[r, "f"]                                         & Y.                          
			\end{tikzcd}
			$$
			By \cref{open} and \cref{properpshpropermap}, we have natural transformations
			\begin{equation}\label{openbasechangetransf}
				\pfp{j'}\pb{f'}\rightarrow\pb{\overline{f}'}\pfp{j},
			\end{equation}
			\begin{equation}\label{properbasechangetransf}
				\pb{f}\pfp{p}\rightarrow\pfp{p'}\pb{\overline{f}'},
			\end{equation}
			and therefore a zig zag of natural transormations
			$$\pb{f}\pfp{g}\simeq\pb{f}\pfp{p}\pfp{j}\rightarrow\pfp{p'}\pb{\overline{f}'}\pfp{j}\leftarrow\pfp{p'}\pfp{j'}\pb{f'}\simeq\pfp{g'}\pb{f'}.$$ Hence, to get the desired invertible natural transformation, it will suffice to prove that (\ref{openbasechangetransf}) and (\ref{properbasechangetransf}) are invertible.
			
			Since all functors appearing are colimit preserving, by \cref{lowershriekpushtens} it suffices to prove the proposition in the case of sheaves of spectra. The open immersion case follows immediately by \cref{open} and \cref{smoothbc}, while the proper case follows from \cite[Corollary 3.2]{haine2025nonabelian}. For the reader's convenience, let us briefly summarize the strategy of \cite{haine2025nonabelian}.
			
			By \cite[Corollary 7.3.1.18]{lurie2009higher} we know that the statement of the theorem is true for sheaves of spaces. Since $\spectra$ is compactly generated, by \cref{tenspres} we 
			\begin{align*}
				\Sh{X}{\spectra}&\simeq\Fun_{\ast}(\spectra\op, \Shsp{X}) \\
				& \simeq \Fun_{lex}((\spectra^{\omega})\op, \Shsp{X})
			\end{align*}
			where $\spectra^{\omega}$ denotes the full subcatgory of $\spectra$ spanned by all compact objects. One checks easily that, for any continuous map $h: W\rightarrow T$ of topological spaces, there is a commutative square 
			$$
			\begin{tikzcd}
				\Sh{W}{\spectra}\arrow[r, "\simeq"]\arrow[d, "{\pf{h}^{\spectra}}"] & \Fun_{lex}((\spectra^{\omega})\op, \Shsp{W})\arrow[d, "{\pf{h}^{\spaces}\o(-)}"] \\
				\Sh{T}{\spectra}\arrow[r, "\simeq"] & \Fun_{lex}((\spectra^{\omega})\op, \Shsp{T}),
			\end{tikzcd}
			$$
			where the right hand vertical square denotes a post-composition with the pushforward $\pf{h}^{\spaces}$ of sheaves of spaces. Reasoning analogously to \cref{tensoringadj}, we see that the functor $$\Fun_{lex}((\spectra^{\omega})\op, -)$$ preserves adjunctions between left exact functors, and so we get a similar commutative square involving the pullbacks $\pb{h}_{\spectra}$ and $\pb{h}_{\spaces}$. Hence, we obtain base change for spectral sheaves by applying $\Fun_{lex}((\spectra^{\omega})\op, -)$ to Lurie's nonabelian proper base change in \cite[Corollary 7.3.1.18]{lurie2009higher}.
		\end{proof} 
		
		\begin{remark}\label{tensstablesh}
			It follows from \Cref{presptdst} and the definition of the tensor (\ref{tensshCD}) that for any topological space $X$ and any bicomplete stable $\infty$-category $\C$, $\Sh{X}{\C}$ is tensored over $\Sh{X}{\spectra}$. When there is no possibility of confusion we will denote by $F\otimes G$ the image through the canonical variablewise colimit preserving functor 
			$$\Sh{X}{\C}\times\Sh{X}{\spectra}\rightarrow\Sh{X}{\C}$$ of a pair $(F, G)$, and, when $G\in\Sh{X}{\spectra}$ by $\sHom{X}{G}{F}$ the image of any $F\in\Sh{X}{\C}$ through the right adjoint of $-\otimes G$.
		\end{remark}
		
		Let $f: X\rightarrow X'$ and $g: Y\rightarrow Y'$ be morphisms of topological spaces, and let $f\times g: X\times Y\rightarrow X'\times Y'$ be the induced map on the products. For any two stable bicomplete $\infty$-categories $\C$ and $\D$, the variable-wise colimit preserving functor
		$$
		\begin{tikzcd}
			\Sh{X}{\C}\times\Sh{Y}{\D}\arrow[r, "{\pfp{f}\times\pfp{g}}"] & \Sh{X'}{\C}\times\Sh{Y'}{\D} \arrow[r, "\boxtimes"] & \Sh{X'\times Y'}{\spectra}\otimes(\C\otimes\D)
		\end{tikzcd}
		$$
		induces a functor 
		$$\pfp{f}\boxtimes\pfp{g} : \Sh{X\times Y}{\spectra}\otimes(\C\otimes\D)\rightarrow \Sh{X'\times Y'}{\spectra}\otimes(\C\otimes\D).$$
		\begin{proposition}[K\"{u}nneth formula]\label{kunneth}
			We have a natural equivalence
			$$\pfp{f}\boxtimes\pfp{g}\simeq\pfp{(f\times g)}.$$
			Notice that, since $X$ and $Y$ are locally compact, $\pfp{f}\boxtimes\pfp{g}$ is the image of the pair $(\pfp{f}, \pfp{g})$ through the bifunctor given by the tensor product of cocomplete $\infty$-categories.
		\end{proposition}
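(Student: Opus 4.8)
The plan is to verify the formula on external products and then extend it by colimits. Both functors in the statement preserve colimits: $\pfp{(f\times g)}$ because it is a left adjoint, and $\pfp{f}\boxtimes\pfp{g}$ because it is, by construction, the variablewise cocontinuous functor induced on the tensor product $\Sh{X}{\cate{C}}\otimes\Sh{Y}{\cate{D}}\simeq\Sh{X\times Y}{\spectra}\otimes(\cate{C}\otimes\cate{D})$. Since the external products $F\boxtimes G$, with $F\in\Sh{X}{\cate{C}}$ and $G\in\Sh{Y}{\cate{D}}$, generate this tensor product under colimits, it suffices to produce a natural equivalence $\pfp{(f\times g)}(F\boxtimes G)\simeq\pfp{f}F\boxtimes\pfp{g}G$.

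First I would factor $f\times g=(f\times\text{id}_{Y'})\circ(\text{id}_X\times g)$, so that by functoriality of the pushforward with proper support one has $\pfp{(f\times g)}\simeq\pfp{(f\times\text{id}_{Y'})}\circ\pfp{(\text{id}_X\times g)}$, and analyse one factor at a time. Writing $p_X,p_Y$ for the projections out of $X\times Y$ and $q_X,q_{Y'}$ for those out of $X\times Y'$, the definition of the external product (Remark \ref{dfntens}), after a check on representables, gives $F\boxtimes G\simeq\pb{p_X}F\otimes\pb{p_Y}G$. Since $q_X\circ(\text{id}_X\times g)=p_X$, we have $\pb{p_X}F\simeq\pb{(\text{id}_X\times g)}\pb{q_X}F$, so, setting $h\coloneqq\text{id}_X\times g$, the projection formula (Proposition \ref{proj}) yields
$$\pfp{h}(F\boxtimes G)\simeq\pfp{h}\big(\pb{p_Y}G\otimes\pb{h}\pb{q_X}F\big)\simeq\pb{q_X}F\otimes\pfp{h}\pb{p_Y}G.$$

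It then remains to identify $\pfp{h}\pb{p_Y}G$. The square
$$\begin{tikzcd}
X\times Y\arrow[r,"p_Y"]\arrow[d,"h"'] \arrow[dr, phantom, "\usebox\pullback" , very near start, color=black] & Y\arrow[d,"g"]\\
X\times Y'\arrow[r,"q_{Y'}"] & Y'
\end{tikzcd}$$
is cartesian, so base change (Proposition \ref{basechange}) gives $\pfp{h}\pb{p_Y}G\simeq\pb{q_{Y'}}\pfp{g}G$, whence $\pfp{(\text{id}_X\times g)}(F\boxtimes G)\simeq\pb{q_X}F\otimes\pb{q_{Y'}}\pfp{g}G\simeq F\boxtimes\pfp{g}G$. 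Running the symmetric argument for the factor $f\times\text{id}_{Y'}$ gives $\pfp{(f\times\text{id}_{Y'})}(F\boxtimes\pfp{g}G)\simeq\pfp{f}F\boxtimes\pfp{g}G$; composing the two produces the required natural equivalence on external products, which upgrades to an equivalence of cocontinuous functors by the generation remark above.

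The hard part is not conceptual but organisational: one must check that the projection and base change formulas, stated for sheaves with coefficients in two possibly distinct stable bicomplete categories, apply verbatim to the mixed external products living in $\Sh{X\times Y}{\spectra}\otimes(\cate{C}\otimes\cate{D})$ (Remark \ref{fromprestostbic}), and that every orientation in the cartesian squares is matched against the precise form of Proposition \ref{basechange}. The one genuinely structural input is that the external products generate $\Sh{X}{\cate{C}}\otimes\Sh{Y}{\cate{D}}$ under colimits, which is exactly what the identification with $\Sh{X\times Y}{\spectra}\otimes(\cate{C}\otimes\cate{D})$ supplies.
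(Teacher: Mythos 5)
Your proof is correct, and it runs on the same two computational inputs as the paper's proof --- the projection formula (Proposition \ref{proj}) and base change (Proposition \ref{basechange}) --- but the reduction scheme is genuinely different. The paper never factors $f\times g$: it observes instead that the functors $\pfp{(a_U)}=\cSec{U}{-}$, with $a_U : U\rightarrow\ast$ ranging over the opens $U$ of the target, are jointly conservative (via Verdier duality they are ordinary sections of the dual cosheaf), which reduces the statement to the absolute case $X'=Y'=\ast$; there a single application of projection formula plus base change along the square formed by $p_X$, $p_Y$, $a_X$, $a_Y$ yields $\pfp{(a_X)}F\otimes\pfp{(a_Y)}G\simeq\pfp{(a_{X\times Y})}(\pb{p_X}F\otimes\pb{p_Y}G)$. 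You prove the relative statement directly, factoring $\pfp{(f\times g)}\simeq\pfp{(f\times\text{id})}\circ\pfp{(\text{id}\times g)}$ and applying projection formula and base change once per factor, over $X\times Y'$ and $X'\times Y'$ respectively; your orientations in both cartesian squares match the precise form of Proposition \ref{basechange}, and the mixed-coefficient tensors are legitimate by Remark \ref{fromprestostbic}, as you note. What your route buys: no conservativity step at all (the paper's is slightly delicate, resting on Verdier duality), and the argument stays relative throughout. What it costs: you need functoriality of the pushforward with proper support under composition, and you must make the identification $F\boxtimes G\simeq\pb{p_X}F\otimes\pb{p_Y}G$ explicit --- though the paper's own chain of equivalences uses that identity tacitly in its last line, so this is no extra burden in substance. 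Finally, your upgrade from external products to an equivalence of cocontinuous functors is the same move the paper makes when it ``checks that the triangle commutes'': a natural equivalence of variablewise cocontinuous bifunctors induces an equivalence of the induced functors via the universal property (\ref{tensor}) of Lurie's tensor product.
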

		\begin{proof}
			By the factorization (\ref{factorization}), it will suffice to prove the statement when both $f$, $g$ and $f\times g$ are either open immersions or proper maps. By uniqueness of adjoints and \cref{tensoringadj}, both cases will then follow by \cref{tensprodspaces}. More precisely, in the open immersions case we use that $\pfp{f}\boxtimes\pfp{g}$ is left adjoint to $\pb{f}\boxtimes\pb{g}$, while in the proper case we use that $\pfp{f}\boxtimes\pfp{g}$ is right adjoint to $\pb{f}\boxtimes\pb{g}$, and by \cref{tensprodspaces} we always have an equivalence $\pb{f}\boxtimes\pb{g}\simeq\pb{(f\times g)}$.
		\end{proof}
		
		\begin{proposition}[Projection formula]\label{proj}
			Let $f: X\rightarrow Y$ be a continuous map of locally compact Hausdorff spaces, and let $\C$ and $\D$ be two stable and bicomplete $\infty$-categories. Then, for any $F\in\Sh{X}{\C}$ and $G, H\in\Sh{Y}{\D}$, we have a canonical equivalence
			$$\pfp{f}F\otimes G\simeq\pfp{f}(F\otimes\pb{f}G).$$ 
		    When $\C = \D$ has a closed symmetric monoidal structure, by transposition
			$$\pf{f}\sHom{X}{F}{\pbp{f}G}\simeq\sHom{Y}{\pfp{f}F}{G}$$
			and
			$$\pbp{f}\sHom{Y}{G}{H}\simeq\sHom{X}{\pb{f}G}{\pbp{f}H}.$$
		\end{proposition}
		\begin{proof}
			Exactly as for the \cref{smoothproj}, one may deduce this result from \cref{basechange} and \cref{kunneth} applied to $\pfp{(f\times \text{id}_Y)}$.
		\end{proof}
		
		\begin{corollary}\label{0ext}
			Let $k: Z\rightarrow X$ be the inclusion of a locally closed subset of $X$, $F\in\Sh{X}{\C}$ with $\C$ stable and bicomplete. Let $\D$ be another stable bicomplete $\infty$-category, $M\in\D$ any object. Then we have a canonical equivalence
			$$\pfp{k}\pb{k}(F\otimes M)\simeq F\otimes M_Z.$$
			Moreover, when $\C$ has a closed symmetric monoidal structure, we have
			$$\pfp{k}\pb{k}F\simeq F\otimes_{\C} \mathds{1}_Z$$
			or equivalently
			$$\pf{k}\pbp{k}F\simeq\sHom{X}{\mathds{1}_Z}{F},$$
			where $\mathds{1}$ and is the monoidal unit of $\C$.
		\end{corollary}
		\begin{proof}
			This is an immediate consequence of \cref{proj} and \cref{monpb}.
		\end{proof}
		
		\begin{corollary}\label{ex!*}
			Given a pullback square
			$$\begin{tikzcd}
				X' \arrow[r, "f'"] \arrow[d, "g'"'] 
				\arrow[dr, phantom, "\usebox\pullback" , very near start, color=black]
				& X \arrow[d, "g"] \\
				Y' \arrow[r, "f"] & Y
			\end{tikzcd}$$
			of locally compact Hausdorff spaces where $f$ and $f'$ are shape submersions, there is a natural equivalence
			$$\pfp{g}\pfs{f'}\simeq\pfs{f}\pfp{g'}$$
			or equivalently
			$$\pb{f'}\pbp{g}\simeq\pbp{g'}\pb{f}.$$
		\end{corollary}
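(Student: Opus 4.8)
The plan is to establish the first equivalence $\pfp{g}\pfs{f'}\simeq\pfs{f}\pfp{g'}$; the second then follows formally, since $\pfp{g}\pfs{f'}$ and $\pfs{f}\pfp{g'}$ admit right adjoints $\pb{f'}\pbp{g}$ and $\pbp{g'}\pb{f}$ respectively (using $\pfs{f'}\dashv\pb{f'}$, $\pfs{f}\dashv\pb{f}$ and $\pfp{g'}\dashv\pbp{g'}$, $\pfp{g}\dashv\pbp{g}$), and adjoints are unique. By the compatibility of all the functors involved with $-\otimes\cate{C}$ recorded in Remark \ref{fromprestostbic}, it is enough to treat $\cate{C}$ presentable, and in fact $\cate{C}=\spectra$. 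I would then reduce an arbitrary $g$ to three elementary cases using the factorization (\ref{factorization}): write $g = g_3 g_2 g_1$ with $g_1$ a closed immersion, $g_2$ an open immersion and $g_3$ the proper projection $p:\overline{X}\times Y\to Y$, and pull the whole square back along $f$ to obtain a vertical stack of pullback squares all of whose horizontal legs are shape submersions (Remark \ref{submbas}). Since $\pfp{-}$ and $\pfs{-}$ are compatible with composition (their right adjoints $\pbp{-}$ and $\pb{-}$ are), splicing the identities for $g_1, g_2, g_3$ yields the identity for $g$.

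It then remains to treat the three cases. For $g_1$ a closed immersion this is precisely Corollary \ref{pfsclosed}. For $g_2$ an open immersion $j$ its base change $j'$ is again an open immersion, and Lemma \ref{open} gives $\pfp{j}\simeq\pfs{j}$ and $\pfp{j'}\simeq\pfs{j'}$; the assertion becomes $\pfs{j}\pfs{f'}\simeq\pfs{f}\pfs{j'}$, which is the functoriality of $\pfs{-}$ applied to the commuting square $j f' = f j'$. For the proper projection $p:\overline{X}\times Y\to Y$, the relevant square has horizontal shape submersions $\mathrm{id}_{\overline{X}}\times f$ and $f$ and base change $p':\overline{X}\times Y'\to Y'$. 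Since $\overline{X}$ is compact, hence locally compact, Remark \ref{tensprodspaces} identifies $\Sh{\overline{X}\times Y}{\spectra}\simeq\Sh{\overline{X}}{\spectra}\otimes\Sh{Y}{\spectra}$ and likewise over $Y'$; under these, writing $a:\overline{X}\to\ast$, one has $\pf{p}\simeq\pf{(a)}\otimes\mathrm{id}$ and $\pfs{(\mathrm{id}_{\overline{X}}\times f)}\simeq\mathrm{id}\otimes\pfs{f}$, where $\pf{(a)}$ is cocontinuous by Lemma \ref{properpsfcocont} so that the tensor products of functors make sense. Using $\pfp{p}\simeq\pf{p}$ and $\pfp{p'}\simeq\pf{p'}$ (Lemma \ref{properpshpropermap}), bifunctoriality of $\otimes$ then shows that both composites equal $\pf{(a)}\otimes\pfs{f}$.

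The main obstacle is the proper case: it rests entirely on the product decompositions of the sheaf categories and of the two functors, which in turn require the local compactness of $\overline{X}$ (for the Künneth equivalence) and the cocontinuity of proper pushforward. The closed case is absorbed into Corollary \ref{pfsclosed} (whose proof uses the localization sequences), and the open case is purely formal. A point to handle with some care is coherence: one should check that the three case-by-case equivalences are the values of a single canonical base change transformation, so that the splicing in the reduction step is legitimate. I would construct this transformation from the units and counits of $\pfs{f'}\dashv\pb{f'}$ and $\pfs{f}\dashv\pb{f}$ together with the smooth base change equivalence $\pb{f}\pf{p}\simeq\pf{p'}\pb{f'}$ of Lemma \ref{smoothbc}, and verify its invertibility in each of the three cases.
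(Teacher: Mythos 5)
Your proposal is correct and matches the paper's proof essentially step for step: the paper likewise constructs a single natural base change transformation, reduces via the factorization (\ref{factorization}) --- with the closed case handled by Corollary \ref{pfsclosed} and the open case left implicit, being formal via Lemma \ref{open} --- and settles the remaining proper projection $a\times\mathrm{id}_Y$ (with compact fiber) by exactly your tensor computation, packaged there as the K\"unneth formula (Proposition \ref{kunneth}) plus bifunctoriality of the tensor product of cocomplete $\infty$-categories. The one adjustment worth making: for the global comparison map the paper applies $\pfp{g'}$ to the unit of $\pfs{f'}\dashv\pb{f'}$ and invokes Proposition \ref{basechange} (the equivalence $\pb{f}\pfp{g}\simeq\pfp{g'}\pb{f'}$, valid for arbitrary $g$), rather than the smooth base change of Lemma \ref{smoothbc} that you suggest, since the latter concerns the ordinary pushforward $\pf{g}$ and agrees with the exceptional pushforward $\pfp{g}$ only when $g$ is proper.
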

		\begin{proof}
			First of all we construct a natural transformation. Applying $\pfp{g'}$ to the unit of the adjunction $\pfs{f'}\dashv\pb{f'}$ and using base change we obtain
			$$\pfp{g'}\rightarrow\pfp{g}\pb{f'}\pfs{f}\simeq\pb{f}\pfp{g}\pfs{f'}$$
			and hence by transposition the desired transformation. By factorization (\ref{factorization}) and \Cref{pfsclosed}, we are only left to prove the case of a pullback square
			$$\begin{tikzcd}
				X\times Y' \arrow[r, "\text{id}_X\times f"] \arrow[d, "a\times\text{id}_{Y'}"'] 
				\arrow[dr, phantom, "\usebox\pullback" , very near start, color=black]
				& X\times Y \arrow[d, "a\times\text{id}_Y"] \\
				Y' \arrow[r, "f"] & Y
			\end{tikzcd}$$
			where $X$ is compact and $a:X\rightarrow\ast$ is the unique map, but this follows from \cref{kunneth} and by the functoriality of the tensor product of cocomplete $\infty$-categories as follows
			\begin{align*}
				\pfp{(a\times\text{id}_Y)}\pfs{(\text{id}_X\times f)}&\simeq(\pfp{a}\otimes\pfp{(\text{id}_Y)})(\pfs{(\text{id}_X)}\otimes\pfs{f}) \\
				&\simeq\pfp{a}\pfs{f} \\
				&\simeq(\pfs{(\text{id}_{\ast})}\pfp{a})\otimes(\pfs{f}\pfp{(\text{id}_{Y'})}) \\
				&\simeq(\pfs{(\text{id}_{\ast})}\otimes\pfs{f})(\pfp{a}\otimes\pfp{(\text{id}_{Y'})}).\qedhere
			\end{align*}
		\end{proof}
		
		\begin{remark}\label{mannsix}
			In \cite{mann2022}, the author provides a list of properties for a functor $\text{LCH}\op\rightarrow\infcat$ to extend to a \textit{coherent six functor formalism}, that is a lax symmetric monoidal functor 
			$\text{Corr(LCH)}\rightarrow\infcat$ (see \cite[Definition A.5.9]{mann2022} for more details). We don't spell out these properties in detail here for the sake of brevity, but we refer the reader to \cite[Proposition A.5.10]{mann2022} for a complete list. One can check that our results (\ref{stonecech}), \cref{smoothbc}, \cref{smoothproj}, \cref{pbstablecoeff}, \cref{basechange}, \cref{proj}, \cref{ex!*} imply that all the assumptions of \cite[Proposition A.5.10]{mann2022} are verified for the functor 
			$$
			\begin{tikzcd}[row sep= tiny]
				\text{LCH}\op \arrow[r]              & \infcat     \\
				X \arrow[r, maps to]                 & \Sh{X}{\C}  \\
				(f: X\rightarrow Y) \arrow[r, maps to] & \pb{f}_{\C},
			\end{tikzcd}
			$$
			where $\C$ is any stable and bicomplete $\infty$-category. As a consequence, we deduce that $\Sh{-}{\C}$ to a coherent six functor formalism on the category of locally compact Hausdorff spaces.
		\end{remark}
		
		\subsection{$\pbp{f}_{\C}$ when $f$ is a locally contractible geometric morphism}
		
		Let $f:X\rightarrow Y$ be a continuous map inducing an essential geometric morphism (see \cref{dfnloccontr}). In particular, we have an adjunction $\pfs{f}\dashv\pb{f}$ for $\C\op $-valued sheaves, and thus, after passing to opposite $\infty$-categories and applying \cref{covverddual}, we get an adjunction
		$$\begin{tikzcd}
			\Sh{Y}{\C}\ar[r,bend left,"\pbp{f}",""{name=A, below}] & \Sh{X}{\C}.\ar[l,bend left,"f_{\o}",""{name=B,above}] \ar[from=A, to=B, symbol=\dashv]
		\end{tikzcd}$$
		We now want to show that, in the special case when $f$ induces a locally contractible geometric morphism (see \cref{dfnloccontr}), the exceptional pullback coincides with the usual pullback up to a twist. In this way, we will vastly generalize the classical formula relating the $\pbp{f}$ and $\pb{f}$ when $f$ is a topological submersion.
		
		\begin{proposition}\label{smoothpb}
			Let $f:X\rightarrow Y$ be a continuous map inducing a locally contractible geometric morphism, and let $\C$ and $\D$ be two stable and bicomplete $\infty$-categories. Then, for any $F\in\Sh{Y}{\C}$ and $G\in\Sh{Y}{\D}$, we have a natural equivalence
			$$\pbp{f}_{\C}F\otimes\pb{f}_{\D}G\simeq\pbp{f}_{\C\otimes\D}(F\otimes G)$$
			of functors $\Sh{X}{\C}\times\Sh{Y}{\D}\rightarrow\Sh{X\times Y}{\C\otimes\D}$. When $\C = \D$ and $\C$ has a closed symmetric monoidal structure, we equivalently get a canonical equivalence for any $K\in\Sh{X}{\C}$
			$$\sHom{Y}{F}{f_{\o}K}\rightarrow\pf{f}\sHom{X}{\pbp{f}F}{K}.$$
		\end{proposition}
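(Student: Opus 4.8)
The plan is to build the comparison map out of the ordinary projection formula and then check it is invertible by reducing, in the variable $G$, to the generating sheaves $\s{V}$, on which it unwinds into a composite of base change identities already at our disposal. Throughout, the hypothesis that $f$ be locally contractible enters exactly through the existence of the right adjoint $f_{\o}$ to $\pbp{f}$, which is what makes $\pbp{f}$ preserve colimits.

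First I would reduce to the universal case $\cate{C}=\cate{D}=\spectra$. Using the equivalence $\Sh{-}{\cate{C}}\simeq\Sh{-}{\spectra}\otimes\cate{C}$ together with the fact that $\pbp{f}$, $\pb{f}$ and the sheaf tensor are all obtained from their spectral avatars by applying $-\otimes(\cate{C}\otimes\cate{D})$ (for $\pbp{f}$ this follows from $\pfp{f}^{\cate{C}}\simeq\pfp{f}^{\spectra}\otimes\cate{C}$ and Remark \ref{tensoringadj}, which transports adjunctions and hence the right adjoints), it is enough to prove the equivalence for sheaves of spectra. Next I would construct the comparison transformation: applying $\pfp{f}$ to the left-hand side and invoking the projection formula (Proposition \ref{proj}) yields $\pfp{f}(\pbp{f}F\otimes\pb{f}G)\simeq\pfp{f}\pbp{f}F\otimes G$, and composing with the counit $\pfp{f}\pbp{f}F\rightarrow F$ gives a map to $F\otimes G$; transposing along the adjunction $\pfp{f}\dashv\pbp{f}$ produces the desired $\pbp{f}F\otimes\pb{f}G\rightarrow\pbp{f}(F\otimes G)$.

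To see this is an equivalence, observe that both sides are cocontinuous in $G$ (the tensor and $\pb{f}$ preserve colimits, and $\pbp{f}$ does too, being a left adjoint of $f_{\o}$), so by Remark \ref{natongen} it suffices to treat $G=\s{V}$ for $V\in\op{X}$. Writing $j:V\hookrightarrow Y$ and letting $j':f^{-1}(V)\hookrightarrow X$ be its base change, one has $\s{V}\simeq\pfp{j}\pb{j}\s{Y}$, so Corollary \ref{0ext} identifies the right-hand side with $\pbp{f}\pfp{j}\pb{j}F$; and since $\pb{f}\s{V}\simeq\s{f^{-1}(V)}$ (proper base change, Proposition \ref{basechange}, combined with the monoidality of $\pb{f}$ from Corollary \ref{monpb}), Corollary \ref{0ext} identifies the left-hand side with $\pfp{(j')}\pb{(j')}\pbp{f}F$. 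These coincide: from the open base change $\pb{j}f_{\o}\simeq f'_{\o}\pb{(j')}$ of Remark \ref{opensmoothbasechange} one takes left adjoints to get $\pbp{f}\pfp{j}\simeq\pfp{(j')}\pbp{f'}$, while pseudofunctoriality of $\pbp{(-)}$ along $f\circ j'=j\circ f'$ gives $\pbp{f'}\pb{j}\simeq\pb{(j')}\pbp{f}$ (using $\pbp{j}\simeq\pb{j}$ and $\pbp{(j')}\simeq\pb{(j')}$ for open immersions, Lemma \ref{open}); splicing the two yields $\pbp{f}\pfp{j}\pb{j}F\simeq\pfp{(j')}\pb{(j')}\pbp{f}F$. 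Finally the closed internal-hom forms follow formally by transposition, using the tensor--hom adjunction of Remark \ref{tensstablesh} together with $\pb{f}\dashv\pf{f}$ and $\pbp{f}\dashv f_{\o}$.

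The main obstacle I anticipate is bookkeeping rather than conceptual: one must set up the base change $\pbp{f}\pfp{j}\simeq\pfp{(j')}\pbp{f'}$ with the correct variance by dualizing Remark \ref{opensmoothbasechange}, and — the genuinely delicate point — verify that the comparison map manufactured in the second step is literally the morphism realizing the chain of identifications on $\s{V}$, rather than merely exhibiting an abstract equivalence between the two functors. Checking this compatibility amounts to tracing the counit of $\pfp{f}\dashv\pbp{f}$ through the projection formula and Corollary \ref{0ext}, which is routine but is where care is required.
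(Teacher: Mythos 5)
Your proof is correct and essentially the paper's own: you build the comparison map identically (projection formula for $\pfp{f}$ plus the counit, transposed along $\pfp{f}\dashv\pbp{f}$), reduce to $\cate{C}=\cate{D}=\spectra$ by cocontinuity, and your verification on generators $G=\s{V}$ is precisely the transpose of the paper's argument, which instead checks the internal-hom form $\sHom{Y}{F}{f_{\o}G}\simeq\pf{f}\sHom{X}{\pbp{f}F}{G}$ section-wise over opens $U\subseteq Y$ using the same two ingredients you splice together, namely the open base change $\pb{(j')}\pbp{f}\simeq\pbp{(f')}\pb{j}$ (from Lemma \ref{openrestrtens}) and Remark \ref{opensmoothbasechange}. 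One small correction to your framing (not to the argument, which does invoke the right statement): local contractibility does not enter ``exactly through the existence of $f_{\o}$'' --- essentialness alone already yields $f_{\o}$ --- but substantively through Remark \ref{opensmoothbasechange}, whose identity $\pb{j}f_{\o}\simeq f'_{\o}\pb{(j')}$ rests on the projection morphism of Proposition \ref{locconstsheqdef} being invertible; also, the generators should be indexed by $V\in\op{Y}$, not $\op{X}$.
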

		\begin{proof}
			For any $F\in\Sh{Y}{\C}$, $G\in\Sh{Y}{\D}$ and $H\in\Sh{Y}{\C\otimes\D}$, we have the map $$\text{counit}\otimes G : \pfp{f}\pbp{f}F\otimes G\rightarrow F\otimes G$$
			which by adjunction and \cref{proj} gives the desired natural transformation 
			$$\pbp{f}(-)\otimes\pb{f}(-)\rightarrow\pbp{f}(-\otimes -).$$
			Since all functors appearing are cocontinuous, by \cref{tensstablesh} it suffices to prove the invertibility of the map when $\C= \D= \spectra$, and after evaluation on pairs of the type $(F, \s{U})$, where $F\in\Sh{Y}{\spectra}$, $\s{}$ denotes the sphere spectrum, and $j:U\hookrightarrow Y$ is an open subset.
			Keeping the same notations as in \cref{opensmoothbasechange}, we see that by \Cref{covverddual} we have an equivalence 
			$$\pbp{f}_{\C}\pfp{j}^{\C}\simeq\pfp{(j')}^{\C}\pbp{(f')}_{\C}.$$
			Thus, we get
			\begin{align*}
				\pbp{f}(F)\otimes\pb{f}(\s{U})&\simeq\pbp{f}(\s{Y})\otimes \s{f^{-1}(U)} \\
				& \simeq \pfp{j'}\pbp{(f')}\pb{j}(F) \\
				& \simeq \pbp{f}\pfp{j}\pb{j}(F) \\
				& \simeq \pbp{f}(F\otimes\s{U}),
			\end{align*}
			where the second equivalence follows from \Cref{0ext} and \cref{open}, the third by \cref{opensmoothbasechange} and the fourth again by \Cref{0ext}.
		\end{proof}
		
		\begin{remark}
			We thank Marc Hoyois for pointing out that a result similar to \cref{smoothpb} can be found in \cite[Section 5]{verdier1965dualite}. By adapting our proof of \cref{smoothproj}, one can actually deduce the theorem in \cite{verdier1965dualite} from \cref{smoothpb}.
		\end{remark}
		
		We need the following elementary lemma.
		
		\begin{lemma}\label{invertibilitylocally}
			Let $X$ be any topological space, and let $F\in\Sh{X}{\spectra}$ be a locally constant sheaf with the property that, for any $x\in X$, the stalk $F_x\in\spectra$ is invertible with respect to smash product of spectra. Then $F$ is invertible.
		\end{lemma}
		
		\begin{proof}
			Let $a:X\rightarrow\ast$ be the unique map, and let $M\in\spectra$ be invertible. Since $\pb{a}$ is symmetric monoidal, $\pb{a}M$ is invertible. Now assume that $F$ is as in the statement of the lemma. We want to show that the canonical map $\sHom{X}{F}{\s{X}}\otimes F\rightarrow\s{X}$ is invertible. This can be checked locally on $X$, since restriction to an open commutes both with taking tensor products and internal homs. The proof is then concluded by observing that the assumption for $F$ in the lemma is equivalent to requiring $F$ to be locally of the form $\pb{a}M$ for some invertible spectrum $M$.
		\end{proof}
		
		\begin{proposition}\label{topsubmpb}
			Let $f:X\rightarrow Y$ be a topological submersion of fiber dimension $n$, $\C$ be any stable bicomplete $\infty$-category. Then:
			\begin{enumerate}[(i)]
				\item if $f$ is a trivial submersion, then there is a canonical equivalence $$\pbp{f}\simeq\Sigma^n\pb{f};$$
				\item $\pbp{f}$ preseves locally constant sheaves;
				\item for all $F\in\Sh{Y}{\C}$ there is a canonical equivalence
				$$\pbp{f}\s{Y}\otimes\pb{f}F\simeq\pbp{f}F$$
				or equivalently by adjunction, for every $G\in\Sh{X}{\C}$
				$$\pfs{f}G\simeq\pfp{f}(G\otimes\pbp{f}\s{Y}).$$
			\end{enumerate}
		\end{proposition}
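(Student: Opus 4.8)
The plan is to establish (i) by reducing to an explicit computation over Euclidean space, and then to deduce (ii) formally, using (i) only to guarantee that $\pbp{f}\s{Y}$ is $\otimes$-invertible. Since being locally constant and having prescribed stalks are conditions local on $X$, it suffices by Definition \ref{subm} to treat the restriction of $\pbp{f}\s{Y}$ to an open $U\subseteq X$ of the form $U\cong V\times\mathbb{R}^n$ with $V\in\op{Y}$, on which $f$ factors as the projection $p_V\colon V\times\mathbb{R}^n\to V$ followed by the open inclusion $j_V\colon V\hookrightarrow Y$, i.e. $f\circ j_U = j_V\circ p_V$ for $j_U\colon U\hookrightarrow X$. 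Because $\pfp{(-)}$ is functorial and $\pbp{j}\simeq\pb{j}$ for open immersions (Lemma \ref{open}), passing to right adjoints gives $\pb{j_U}\pbp{f}\simeq\pbp{(f\circ j_U)}\simeq\pbp{p_V}\pbp{j_V}$; evaluating at $\s{Y}$ and using $\pbp{j_V}\s{Y}\simeq\s{V}$ yields $\pbp{f}\s{Y}|_U\simeq\pbp{p_V}\s{V}$.

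\emph{Künneth reduction to the absolute case.} Taking right adjoints in the Künneth formula (Proposition \ref{kunneth}) and using that tensoring with a fixed category preserves adjunctions (Remark \ref{tensoringadj}), I obtain $\pbp{(g\times h)}\simeq\pbp{g}\boxtimes\pbp{h}$. Applying this to $p_V=\text{id}_V\times p$, where $p\colon\mathbb{R}^n\to\ast$, and to the external monoidal unit gives $\pbp{p_V}\s{V}\simeq\s{V}\boxtimes\pbp{p}\s{}$; a second application of Künneth, via $p=p_{\mathbb{R}}\times\cdots\times p_{\mathbb{R}}$, reduces the computation of $\pbp{p}\s{}$ on $\mathbb{R}^n$ to the one-dimensional case.

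\emph{The core computation.} For an open interval $I\subseteq\mathbb{R}$ with $a_I\colon I\to\ast$, the adjunctions $\pfp{j_I}\dashv\pb{j_I}$ and $\pfp{p}\dashv\pbp{p}$ identify $\Sec{I}{\pbp{p}\s{}}\simeq\Hom{\spectra}{\pfp{{a_I}}\s{I}}{\s{}}=\Hom{\spectra}{\cSec{I}{\s{I}}}{\s{}}$. I then compute $\cSec{I}{\s{I}}\simeq\s{}[-1]$ directly: for a compact interval $K$ one has $\KSec{\mathbb{R}}{\s{}}{K}\simeq\text{fib}(\s{}\xrightarrow{\Delta}\s{}\oplus\s{})\simeq\s{}[-1]$, and the transition maps are equivalences, so the colimit defining $\cSec{I}{\s{I}}$ is $\s{}[-1]$. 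Hence $\Sec{I}{\pbp{p}\s{}}\simeq\s{}[1]$; moreover the restriction maps between intervals correspond under the identification above to the extension-by-zero maps $\cSec{I'}{\s{}}\to\cSec{I}{\s{}}$, which are equivalences (each side is $\s{}[-1]$ and the map is an isomorphism on the unique nonzero homotopy group), so $\pbp{p}\s{}$ is locally constant with value $\s{}[1]$, i.e. $\pbp{p}\s{}\simeq\s{\mathbb{R}}[1]$. Combining the three paragraphs, $\pbp{f}\s{Y}|_U\simeq\s{V}\boxtimes\s{\mathbb{R}^n}[n]\simeq\s{U}[n]$, which proves (i).

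\emph{Deducing (ii).} As $f$ is a shape submersion it induces a locally contractible geometric morphism (Corollary \ref{smoothproj}), so Proposition \ref{smoothpb} applies; taking its first argument to be the monoidal unit $\s{Y}$ and using $\s{Y}\otimes F\simeq F$ gives the first equivalence $\pbp{f}\s{Y}\otimes\pb{f}F\simeq\pbp{f}F$. For the adjoint form, (i) shows that $\pbp{f}\s{Y}$ is $\otimes$-invertible, being locally constant with invertible stalks $\s{}[n]$ and invertibility being checkable on stalks via Corollary \ref{monpb}. Consequently the right adjoint of $G\mapsto\pfp{f}(G\otimes\pbp{f}\s{Y})$, namely $H\mapsto\sHom{X}{\pbp{f}\s{Y}}{\pbp{f}H}$, is identified with $\pb{f}$ upon combining the first equivalence with invertibility; since $\pb{f}$ is also the right adjoint of $\pfs{f}$, uniqueness of adjoints gives $\pfs{f}G\simeq\pfp{f}(G\otimes\pbp{f}\s{Y})$. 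The main obstacle is the core computation of the third paragraph: identifying the compactly supported sections $\cSec{I}{\s{I}}$ and, above all, verifying that the restriction maps of $\pbp{p}\s{}$ are equivalences, which is precisely the content of local constancy.
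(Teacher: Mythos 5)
Your part (ii) is exactly the paper's argument: the first equivalence from Proposition \ref{smoothpb} with the unit in the first slot, the adjoint form from invertibility of $\pbp{f}\s{Y}$ and uniqueness of adjoints. And your plan for (i) --- reduce to the projection $V\times\mathbb{R}^n\to V$ and compute via compactly supported sections of intervals --- is in substance the proof of [KS90, Proposition 3.2.3], which is all the paper offers for (i) ("follow verbatim"). However, your K\"unneth reduction has a genuine circularity. Remark \ref{tensoringadj} preserves adjunctions \emph{of cocontinuous functors}: to identify the right adjoint of $\pfp{f}\boxtimes\pfp{g}\simeq\pfp{f}\otimes\pfp{g}$ with $\pbp{f}\otimes\pbp{g}$ --- in particular to write $\pbp{p_V}\s{V}\simeq\s{V}\boxtimes\pbp{p}\s{}$ --- you need $\pbp{p}$ to be cocontinuous; otherwise $\pbp{f}\boxtimes\pbp{g}$ is not even defined as a tensor of functors, and the right adjoint of $\mathrm{id}\otimes\pfp{p}$ need not act on pure tensors by $d\otimes c\mapsto d\otimes\pbp{p}c$ (the canonical map $d\otimes\pbp{p}c\to\pbp{p_V}(d\boxtimes c)$ exists but is not an equivalence in general). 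For the non-proper map $p:\mathbb{R}^n\to\ast$, cocontinuity of $\pbp{p}$ is not known at this stage: it is essentially equivalent to the formula $\pbp{p}\simeq\s{\mathbb{R}^n}[n]\otimes\pb{p}(-)$ you are proving, it fails for general maps (e.g.\ closed immersions, where $\pbp{i}$ involves $\pf{j}$), and it cannot be rescued by a compactness-transfer argument since $\Sh{\mathbb{R}^n}{\spectra}$ has no nonzero compact objects. So the step "taking right adjoints in K\"unneth" is the gap.

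The repair is to stay on the left-adjoint side, where K\"unneth is legitimately available: for a basic box $W=V'\times I\subseteq V\times\mathbb{R}^n$ one has, by Lemma \ref{open} and adjunction, $\Sec{W}{\pbp{p_V}\s{V}}\simeq\sHom{}{\pfp{(p_V j_W)}\s{W}}{\s{V}}$, and Proposition \ref{kunneth} applied to $\pfp{}$ together with your (correct) interval computation $\cSec{I}{\s{I}}\simeq\s{}[-1]$ gives $\pfp{(p_V j_W)}\s{W}\simeq\pfp{j_{V'}}\s{V'}[-n]$; one then builds the comparison map $\s{U}[n]\to\pbp{p_V}\s{V}$ from the unit class in $\pi_0\Sec{V}{\s{}}$ and checks it is an equivalence on this basis --- which is exactly the KS90 argument. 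Two smaller slips you should also fix: a self-map of $\s{}[-1]$ does not have "a unique nonzero homotopy group" (the sphere has infinitely many); instead note that by Remark \ref{remK} one has $\KSec{I}{\s{}}{K}\simeq\KSec{I'}{\s{}}{K}$ for $K\subseteq I\subseteq I'$, so the extension maps $\cSec{I}{\s{}}\to\cSec{I'}{\s{}}$ are colimits of equivalences over the cofinal family of compact intervals, with no degree computation needed. And invertibility of $\pbp{f}\s{Y}$ should be checked locally on an open cover (restrictions along open immersions are monoidal and jointly conservative by descent), not "on stalks": stalkwise criteria require hypercompleteness, which this paper deliberately avoids.
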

		\begin{proof}
			Since $\pb{f}$ preserves locally constant sheaves, we see that (ii) follows from (i). Hence, we now prove (i). By assumption, $f$ is a projection $p :X\times\mathbb{R}^n\rightarrow X$. By \cref{smoothpb}, it suffices to show that $\pbp{p}\s{X}\simeq\Sigma^n\s{X\times\mathbb{R}^n}$. Since $p$ induces a locally contractible geometric morphism, we have that $\pbp{p}$ preserves colimits. Hence, by \cref{kunneth} and the uniqueness of adjoints, we may assume that $p$ is the unique map $a : \mathbb{R}^n\rightarrow\ast$. 
			
			We first show that $\pbp{p}M$ is locally constant for each $M\in\spectra$. By a standard argument (see \cite[Proposition 3.1]{haine2020homotopy} or \cite[Proposition 3.14]{volpe2025verdier}) it suffices to show that for any $U\subseteq\mathbb{R}^n$ euclidean chart, the restriction
			$$\Sec{\mathbb{R}^n}{\pbp{p}M}\rightarrow\Sec{U}{\pbp{p}M}$$ is an equivalence. By adjunction, we know that for any open $V$ there is an equivalence
			$$\Sec{V}{\pbp{p}M}\simeq\sHom{\spectra}{\cSec{V}{\s{\mathbb{R}^n}}}{M}.$$
			Thus, it will suffice to prove that the canonical map $$\cSec{U}{\s{\mathbb{R}^n}}\rightarrow\cSec{\mathbb{R}^n}{\s{\mathbb{R}^n}}$$ is invertible. Since any compact subset of a vector space is contained in some compact closed ball, it suffices to show that, for any $K$ compact closed ball in $U$, the map $$\KSec{U}{\s{\mathbb{R}^n}}{K}\rightarrow\KSec{\mathbb{R}^n}{\s{\mathbb{R}^n}}{K}$$ is invertible. But this follows from homotopy invariance of the shape \cref{hmtpyinv}, since the inclusion $U\setminus K\hookrightarrow\mathbb{R}^n\setminus K$ is a homotopy equivalence. In particular, we also see that the canonical map
			$$\KSec{\mathbb{R}}{\s{\mathbb{R}}}{\{0\}}\rightarrow\cSec{\mathbb{R}}{\s{\mathbb{R}}}$$ 
			is an equivalence.
			
			Since all locally constant sheaves on $\mathbb{R}^n$ are constant (see \cite[Corollary 4.13]{haine2020homotopy}), to conclude the proof of (i) it suffices to check that the global sections of $\pbp{p}M$ are equivalent to $\Sigma^n M$. We start with the case $n=1$. Arguing as above, this amounts to proving that we have an equivalence $$\KSec{\mathbb{R}}{\s{\mathbb{R}}}{\{0\}}\simeq\Omega\s{}.$$
			By \cref{hmtpyinv}, we may identify $\KSec{\mathbb{R}}{\s{\mathbb{R}}}{\{0\}}$ with the fiber of the diagonal map
			$$\Sec{\mathbb{R}}{\s{\mathbb{R}}}\rightarrow\Sec{\mathbb{R}}{\s{\mathbb{R}}}\oplus\Sec{\mathbb{R}}{\s{\mathbb{R}}}.$$
			It is then a straightforward exercise in pasting pushouts to verify that, for any object $A$ of a stable $\infty$-category, the fiber of the diagonal map is equivalent to $\Omega A$. For $n>1$, we see that by \cref{kunneth} we have
			$$\cSec{\mathbb{R}^n}{\s{\mathbb{R}^n}}\simeq\cSec{\mathbb{R}}{\s{\mathbb{R}}}\otimes\cdots\otimes\cSec{\mathbb{R}}{\s{\mathbb{R}}},$$
			and thus what we wanted.
			
			To conclude, we observe that (iii) follows from \Cref{smoothpb} and \cref{invertibilitylocally}.
		\end{proof}
		
		\section{Relative Atiyah Duality}
		Recall that, if $X$ is a compact smooth manifold, then the \textit{Atiyah duality} states that $\infsusp_{+}X$ is strongly dual to the \textit{Thom spectrum} associated to the virtual vector bundle given by the inverse of the tangent bundle of $X$, which is denoted by $\text{Th}(-TX)$. In this setion we will revisit Atiyah duality using the six functor formalism, in the spirit of motivic homotopy theory. 
		
		By what we have achieved up to now, we can see very easily that whenever $f:X\rightarrow Y$ is a proper map inducing a locally contractible geometric morphism, $\pfs{f}(\s{X})\in\Sh{Y}{\spectra}$ is strongly dualizable with dual $\pfp{f}(\s{X})$. The question is then about identifying $\pfp{f}(\s{X})$ with some sheaf theoretic construction reminiscent of Thom spectra, at least in geometric situations (e.g. when $X$ and $Y$ are manifolds). We will provide a natural trasformation 
		$$\text{Th}:\text{Vect}(X)\rightarrow\text{Pic}(\Sh{X}{\spectra})$$
		where the left-hand side denotes the $\infty$-groupoid of real vector bundles over $X$, the right-hand side is the $\infty$-groupoid of invertible sheaves of spectra, and $X$ ranges through all paracompact Hausdorff spaces. By observing that the left-hand side is a constant sheaf on the site of all paracompact Hausdorff spaces, the natural transformation above will automatically be induced by a functor
		$$\text{Vect}(\ast)\rightarrow\text{Pic}(\spectra)$$
		given by one-point compactification followed by infinite suspension. After that, we will show that for any vector bundle $E$, $\text{Th}(E)$ can be described through six operations analogously to the definition in motivic homotopy theory. The advantage of our perspective on the definition of $\text{Th}(E)$ is that it makes the verification of all its expected properties, such as compatibility with pulling back vector bundles or short exact sequences, essentially trivial. We will then conclude the section by showing that for a submersion $f$ between smooth manifolds, there is an equivalence
		$$\pbp{f}(\s{Y})\simeq\text{Th}(-T_f)$$
		and hence obtaining a generalization of Atiyah duality. 
		
		\subsection{Thom spaces and the J-homomorphism} 
		
		Let $\text{Vect}$ be the $\infty$-groupoid obtained by taking the coherent nerve of the topological groupoid whose objects are finite dimensional real vector spaces, and morphisms are spaces of linear isomorphisms between. Here we consider $\text{GL}_n(\mathbb{R})$ equipped with the usual topology of a manifold (or equivalently the compact-open topology). As usual, one may equip Vect with a symmetric monoidal structure given by sum, and hence we may regard it as an object of $\text{CMon}(\spaces)$. Here $\text{CMon}(\spaces)$ denotes the $\infty$-category of commutative monoids in $\spaces$ (see \cite[Definition 2.4.2.1, Remark 2.4.2.2]{lurie2017higher}). Notice that we have an homotopy equivalence $\text{Vect}\simeq\coprod\limits_{n\in\mathbb{N}}\text{BGL}_n(\mathbb{R})$.
		
		Let $\text{PH}$ be the category of paracompact Hausdorff spaces with continuous maps between them. Throughout this section, we will assume all topological spaces appearing to be in PH. Recall also that $\text{PH}$ can be equipped with a Grothendieck topology with the usual open coverings. Let $\mathbf{U}$ be a universe such that PH is $\mathbf{U}$-small, and denote by $\spaces'$ the $\infty$-category of $\mathbf{U}$-small spaces. Hence, the inclusion of $\Sh{\text{PH}}{\spaces'}\hookrightarrow\Fun(\text{PH}\op, \spaces')$ admits a left adjoint.
		\begin{definition}
			We denote by $$\text{Vect}_{\text{PH}} : \text{PH}\op\rightarrow\text{CMon}(\spaces')$$ the sheafification of the constant presheaf with value $\text{Vect}$.
		\end{definition} 
		The next lemma will justify our choice of notation.
		
		\begin{lemma}\label{antonio}
			There is an equivalence
			$$\text{Vect}_{\text{PH}}(X)\simeq \text{Sing}(\text{Map}(X,|\text{Vect}|)),$$
			which is natural on $X\in\text{PH}$. Here $\text{Map}$ denotes the mapping space equipped with the compact-open topology, and $|\text{Vect}|$ is a choice of a CW-complex whose weak homotopy type is equivalent to Vect. In particular, $\pi_0(\text{Vect}_{\text{PH}}(X))$ is in bijection with the set of equivalence classes of real vector bundles over $X$.
		\end{lemma}
		
		\begin{proof}
			The first statement follows from \cite[Corollary 7.1.4.4]{lurie2009higher} and \cite[Proposition 7.1.5.1]{lurie2009higher} (see also the discussion before \cite[Proposition 7.1.5.1]{lurie2009higher} for further clarification). The last part of the statement is standard: see for example \cite[Theorem 1.16]{hatcher2003vector}.
		\end{proof}
		Consider the map
		$$\text{GL}_n(\mathbb{R})\rightarrow\Hom{\spaces_{\ast}}{\text{S}^n}{\text{S}^n}\simeq\Omega^n\text{S}^n$$
		given by the functoriality of the one-point compactification. The restriction of the map above to $\text{O}_n$ is what's known as the \textit{J-homomorphism}. We thus obtain a functor
		$$\text{Vect}\rightarrow\spaces_{\ast}^{\simeq}$$
		that at the level of objects sends a finite dimensional real vector space $V$ to its one-point compactification $V^{+}$.
		Moreover, this is easily seen to be symmetric monoidal, where the right hand side is equipped with the smash product. By post composing with $\infsusp$, we get $\text{Vect}\rightarrow\spectra^{\simeq}$. Since $V^{+}$ is homeomorphic to a sphere, this factors as $\text{Vect}\rightarrow\text{Pic}(\spectra)$.
		Here $\text{Pic}(\spectra)$ denotes the \textit{Picard $\infty$-groupoid} of $\spectra$, i.e. the full subcategory of $\spectra^{\simeq}$ spanned by the objects which are invertible with respect to the smash product of spectra. 
		
		Let $f:X\rightarrow Y$ be any map of paracompact Hausdorff spaces. Since the pullback functor $\pb{f} : \Sh{Y}{\spectra}\rightarrow\Sh{X}{\spectra}$ is symmetric monoidal, we obtain consequently a functor
		$$
		\begin{tikzcd}[row sep = tiny]
			\text{PH}\op  \arrow[r] & \text{CMon}(\spaces) \\
			X\arrow[r, mapsto] & \text{Pic}(\Sh{X}{\spectra})
		\end{tikzcd}
		$$
		whose global sections are $\text{Pic}(\Sh{\ast}{\spectra})\simeq\text{Pic}(\spectra)$. Since taking spectrum objects and Picard $\infty$-groupoids both commute with limits, such functor is a sheaf. Thus, \cref{antonio} yields a map
		\begin{equation}\label{stJshvect}
			\text{Th}:\text{Vect}_{\text{PH}}\rightarrow\text{Pic}(\Sh{-}{\spectra}).
		\end{equation}
		in $\Sh{\text{PH}}{\text{CMon}(\spaces)}$. 
		
		\begin{definition}
			For each vector bundle $E\rightarrow X$, we define $\text{Th}(E)$ to be the image of $E$ under the morphism (\ref{stJshvect}).
		\end{definition}
		
		\begin{remark}\label{splitandinvert}
			Denote by $\text{CMon}^{gp}(\spaces)$ the full subcategory of $\text{CMon}(\spaces)$ spanned by those commutative monoids $M$ such that $\pi_0(M)$ is a group. The inclusion $\text{CMon}^{gp}(\spaces)\hookrightarrow\text{CMon}(\spaces)$ admits a left adjoint, denoted by $(-)^{gp}$, which is called the \textit{group completion}. Since $\pi_0(\text{Pic}(\Sh{X}{\spectra}))$ is a group, for any $X\in\text{PH}$ we obtain a morphism
			$$\text{Vect}_{\text{PH}}(X)^{gp}\rightarrow \text{Pic}(\Sh{X}{\spectra}).$$
			In particular we see that, for any vector bundle $E\rightarrow X$ it makes sense to define $\text{Th}(-E)$ as the tensor inverse of $\text{Th}(E)$, where $-E$ denotes the inverse of the class of $E$ in $\pi_0(\text{Vect}_{\text{PH}}(X))^{gp}$. Moreover, for any split exact sequence
			$$0\rightarrow E\rightarrow V \rightarrow E'\rightarrow 0$$
			we get an equivalence
			$$\text{Th}(V)\simeq\text{Th}(E)\otimes\text{Th}(E').$$
		\end{remark}
		
		Out next goal is to describe $\text{Th}(E)$ in terms of the six operations. Let $p:E\rightarrow X$ be a real vector bundle over $X$, and denote by $s:X\hookrightarrow E$ its zero section and by $j:E^{\times}\hookrightarrow E$ its open complement. Consider the sheaf
		\begin{equation}\label{thomascofiber}
			\pfs{p}\text{cofib}(\pfs{j}\pb{j}\s{E}\rightarrow\s{E})\in\Sh{X}{\spectra}
		\end{equation}
		where the morphism $\pfs{j}\pb{j}\s{E}\rightarrow\s{E}$ is the counit. Notice that $\pfs{p}$ exists since any vector bundle is obviously a shape submersion, so indeed the definition above makes sense. Notice also that, using \cref{localizationtheorem}, one has $\pfs{p}\text{cofib}(\pfs{j}\pb{j}\s{E}\rightarrow\s{E})\simeq\pfs{p}\pfp{s}\s{X}$.
		
		We will need the following lemma.
		
		\begin{lemma}\label{lchpb}
			Let 
			$$
			\begin{tikzcd}
				X'\arrow[r] \arrow[d] & X \arrow[d, "f"] \\
				Y'\arrow[r] & Y
			\end{tikzcd}
			$$
			be any pullback square of topological spaces. Assume that $f$ is a fiber bundle projection, with fiber given by a locally compact space $F$. Then the corresponding diagram 
			$$
			\begin{tikzcd}
				\Shsp{X'}\arrow[r] \arrow[d] & \Shsp{X} \arrow[d] \\
				\Shsp{Y'}\arrow[r] & \Shsp{Y}
			\end{tikzcd}
			$$
			is a pullback of $\infty$-topoi.
		\end{lemma}
		
		\begin{proof}
			Consider the geometric morphism $g:\Shsp{X'}\rightarrow\Shsp{Y'}\times_{\Shsp{Y}}\Shsp{X}$ and let $\{U_i\}_{i\in I}$ be a covering sieve of $Y$ along which $f$ is trivialized. Using descent, and the fact the left adjoint in a geometric morphism preserves effective epimorphisms, we see that it suffices to show that $g$ is an equivalence after pulling back along $\Shsp{U_i}\rightarrow\Shsp{Y}$ for each $i\in I$. Therefore, we may assume that $f$ is the standard projection $Y\times F\rightarrow Y$. Hence, we have $X'=Y'\times F$. Consider the squares in $\Top$
			 	$$
			 \begin{tikzcd}
			 	\Shsp{Y'\times F}\arrow[r] \arrow[d] & \Shsp{Y\times F} \arrow[d] \arrow[r] & \Shsp{F} \arrow[d] \\
			 	\Shsp{Y'}\arrow[r] & \Shsp{Y} \arrow[r] & \Shsp{\ast}.
			 \end{tikzcd}
			 $$
			 By \cite[Proposition 7.3.1.11]{lurie2009higher}, the outer square and the right square are both pullbacks. Therefore, the left square is a pullback too, and so the proof is concluded.
		\end{proof}
		
		\begin{proposition}\label{natthom}
			The assignment \begin{equation}\label{thomascofibernat}
				E\mapsto\pfs{p}\text{cofib}(\pfs{j}\pb{j}\s{E}\rightarrow\s{E})\in\Sh{X}{\spectra}
			\end{equation} defines a natural transformation
			$$\text{Vect}_{\text{PH}}\rightarrow\text{Pic}(\Sh{-}{\spectra})$$
			which is equivalent to (\ref{stJshvect}).
		\end{proposition}
		\begin{proof}
			First of all, we show that (\ref{thomascofibernat}) induces a natural transformation
			\begin{equation}\label{Thnat}
				\text{Vect}_{\text{PH}}\rightarrow\Sh{-}{\spectra}
			\end{equation}
			of presheaves of $\infty$-categories. Let $p: E\rightarrow X$ be a vector bundle, $p^{\times}: E^{\times}\rightarrow X$ be the induced map on the complement of the zero section. It will suffice to show that the associations $E\mapsto \pfs{p}\s{E}$ and $E\mapsto\pfs{p^{\times}}\s{E^{\times}}$ induce natural transformations. Consider a pullback square in $\text{Top}$ 
			$$\begin{tikzcd}
				\pb{f}E \arrow[r, "f'"] \arrow[d, "p'"'] 
				\arrow[dr, phantom, "\usebox\pullback" , very near start, color=black]
				& E \arrow[d, "p"] \\
				X' \arrow[r, "f"] & X
			\end{tikzcd}$$
			where $p$ is a vector bundle. By \cref{lchpb} 
			$$\begin{tikzcd}
				\Shsp{\pb{f}E} \arrow[r, "f'"] \arrow[d, "p'"'] 
				& \Shsp{E} \arrow[d, "p"] \\
				\Shsp{X'} \arrow[r, "f"] & \Shsp{X}
			\end{tikzcd}$$
			is a pullback square in $\Top$. Similarly one has that the square
			$$\begin{tikzcd}
				\Shsp{\pb{f}E^{\times}} \arrow[r, "f'"] \arrow[d, "p'^{\times}"'] 
				& \Shsp{E^{\times}} \arrow[d, "p^{\times}"] \\
				\Shsp{X'} \arrow[r, "f"] & \Shsp{X}
			\end{tikzcd}$$ is a pullback. Hence we have two natural transformations
			$$\text{Vect}_{\text{PH}}\rightarrow\Top_{/\Shsp{-}}$$
			given respectively by sending a vector bundle $E\rightarrow X$ to $\Shsp{E}\rightarrow\Shsp{X}$ and to $\Shsp{E^{\times}}\rightarrow\Shsp{X}$, and thus, by further composing with the relative shape, by \Cref{laxshape} we obtain lax natural transformations
			$$\text{Vect}_{\text{PH}}\rightarrow\Pro(\Shsp{-})$$
			which factors as 
			$$\text{Vect}_{\text{PH}}\rightarrow\Shsp{-}$$
			since any shape submersion induces a locally contractible geometric morphism by \cref{smoothproj}.
			Furthermore, by \cref{smoothbc}, we see that these are actually natural transformations, and thus, composing with
			$$\Shsp{-}\rightarrow\Sh{-}{\spectra}$$
			we get the natural transformation (\ref{Thnat}).
			\par 
			Since $\text{Vect}_{{\text{PH}}}$ is the constant sheaf associated to $\text{Vect}$, for any sheaf $F\in\Sh{\text{PH}}{\text{CMon}(\spaces)}$, we have 
			$$
			\begin{tikzcd}
				\Hom{}{\text{Vect}_{\text{PH}}}{\text{Pic}(F)}\arrow[r, "\simeq"] & \Hom{\text{CMon}}{\text{Vect}}{\text{Pic}(F(\ast))}.
			\end{tikzcd}
			$$
			Therefore, to conclude the proof it will suffice to show that (\ref{Thnat}) agrees with Th after taking global sections. But this follows from observing that, for any real vector space $E$, the cofiber of the inclusion $E\setminus\{0\}\hookrightarrow E$ is homotopy equivalent to $\overline{E}$.
		\end{proof}
		
		\begin{remark}\label{classicthom}
			Assume that $X$ is essential, and let $a:X\rightarrow\ast$ be the unique map. Then, for any vector bundle $E$ over $X$, $\pfs{a}\text{Th}(E)$ is equivalent to the Thom spectrum of $E$ (see \cite{thom1954quelques} or \cite{atiyah1961thom}). Indeed, let $b: E\rightarrow\ast$ and $c:E^{\times}\rightarrow\ast$ be the unique maps. By \Cref{loccontrhyp}, we have equivalences
			\begin{align*}
				\pfs{a}\text{Th}(E) &\simeq\pfs{b}\text{cofib}(\pfs{j}\pb{j}\s{E}\rightarrow\s{E}) \\
				&\simeq\text{cofib}(\pfs{c}\s{E^{\times}}\rightarrow\pfs{b}\s{E}) \\
				&\simeq\text{cofib}(\infsusp_{+}E^{\times}\rightarrow\infsusp_{+}E)
			\end{align*}
			and the spectrum on the last line coincides with the Thom spectum of $E$.
		\end{remark}
		
		\begin{corollary}\label{thominverse}
			Let $p:E\rightarrow X$ be a real vector bundle over $X$, and denote $s:X\hookrightarrow E$ its zero section. Then $\text{Th}(E)$ is invertible with inverse given by $\pbp{s}\s{E}$.
		\end{corollary}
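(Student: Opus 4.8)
The plan is to read the statement as a \emph{purity} (or relative Atiyah duality) result for the zero section: I will show that $\text{Th}(E)$ is invertible and that its monoidal inverse is computed by the exceptional pullback $\pbp{s}\s{E}$. The invertibility is essentially free. Since $X\in\text{CW}^h$ and $E$ determines a point of $\text{Vect}_{\mathbb{R}}(X)$, Proposition \ref{natthom} sends $E$ to a point of $\text{Pic}(\Sh{X}{\spectra})$; hence $\text{Th}(E)$ is an invertible object of the closed symmetric monoidal $\infty$-category $(\Sh{X}{\spectra},\otimes,\s{X})$. For an invertible object the two-sided inverse is canonically its monoidal dual, since $-\otimes\text{Th}(E)$ is then an equivalence whose inverse is represented by $\sHom{X}{\text{Th}(E)}{\s{X}}$; so it remains to produce a natural equivalence $\sHom{X}{\text{Th}(E)}{\s{X}}\simeq\pbp{s}\s{E}$.

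The core of the argument is a short chain of six-functor identities starting from the identification $\text{Th}(E)\simeq\pfs{p}\pfp{s}\s{X}$ recorded after the definition of the Thom spectrum. Because $p$ is a vector bundle, hence a shape submersion, and $\spectra$ is closed symmetric monoidal, the transposed smooth projection formula of Corollary \ref{smoothproj} applied with $G=\s{X}$, together with $\pb{p}\s{X}\simeq\s{E}$, gives
$$\sHom{X}{\text{Th}(E)}{\s{X}}\simeq\pf{p}\sHom{E}{\pfp{s}\s{X}}{\s{E}}.$$
I then note that $\pfp{s}\s{X}\simeq\pfp{s}\pb{s}\s{E}$ is exactly the object $\s{s(X)}$ of Corollary \ref{0ext} attached to the closed subset $s(X)\subseteq E$, so that corollary yields $\sHom{E}{\pfp{s}\s{X}}{\s{E}}\simeq\pf{s}\pbp{s}\s{E}$. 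Finally, since $s$ is a closed immersion I replace $\pfp{s}$ by $\pf{s}$ via Lemma \ref{properpshpropermap} and collapse $\pf{p}\pf{s}\simeq\pf{(p\circ s)}\simeq\pf{\mathrm{id}_X}\simeq\mathrm{id}$ using $p\circ s=\mathrm{id}_X$. Chaining the three steps produces the desired equivalence $\sHom{X}{\text{Th}(E)}{\s{X}}\simeq\pbp{s}\s{E}$.

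The main obstacle is not conceptual but one of hygiene: every projection and duality formula must be invoked in the form valid for stable bicomplete coefficients (Remark \ref{fromprestostbic}), and the tensoring conventions of Remark \ref{tensstablesh} must be matched so that $\s{E}\simeq\pb{p}\s{X}$ really is the monoidal unit pulled back. The most delicate point to state carefully is the abstract lemma ``invertible implies inverse equals dual'', which here is legitimate precisely because Proposition \ref{natthom} places $\text{Th}(E)$ in the Picard groupoid, making $-\otimes\text{Th}(E)$ an equivalence of $\Sh{X}{\spectra}$. A secondary verification is the naturality of $\text{Th}(E)\simeq\pfs{p}\pfp{s}\s{X}$ and of $\pfp{s}\s{X}\simeq\s{s(X)}$; both are instances of the localization sequence of Corollary \ref{locseq} for the open--closed decomposition of $E$ by the zero section and its complement $E^{\times}$, so no new input is needed beyond what is already established.
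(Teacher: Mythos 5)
Your proposal is correct and follows essentially the same route as the paper: invertibility via Proposition \ref{natthom}, identification of the inverse with the monoidal dual, then the transposed smooth projection formula followed by the projection formula for $\pfp{s}$ and the collapse $\pf{p}\pf{s}\simeq\mathrm{id}$ since $p\circ s=\mathrm{id}_X$. The only cosmetic differences are that you route the second step through Corollary \ref{0ext} (itself an immediate consequence of Proposition \ref{proj}, which the paper applies directly) and that your appeal to Lemma \ref{properpshpropermap} is unnecessary, since $\pf{s}$ already appears in the formula you invoke.
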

		\begin{proof}
			By definition, we already know that $\text{Th}(E)$ is invertible, and thus to compute its inverse we just need to look at its dual $\sHom{X}{\text{Th}(E)}{\s{X}}$. Then we may conclude by \cref{smoothproj} and \cref{proj} since
			\begin{align*}
				\sHom{X}{\pfs{p}\pfp{s}\s{X}}{\s{X}}&\simeq\pf{p}\sHom{E}{\pfp{s}\s{X}}{\s{E}} \\
				&\simeq\pf{p}\pf{s}\pbp{s}\s{E} \\
				&\simeq\pbp{s}\s{E}.\qedhere
			\end{align*}
		\end{proof}
		\subsection{Relative Atiyah duality}
		Let $f:X\rightarrow Y$ be a submersion of smooth manifolds. Recall that the \textit{relative tangent bundle of $f$}, denoted by $T_f$, is the vector bundle over $X$ defined by the short exact sequence 
		\begin{equation}\label{reltgt}
			0\rightarrow T_f\rightarrow TX\rightarrow\pb{f}TY\rightarrow0.
		\end{equation}
		\begin{theorem}
			Let $f:X\rightarrow Y$ be a submersion between smooth manifolds. Then we have an equivalence $\pbp{f}\s{Y}\simeq\text{Th}(T_f)$.
		\end{theorem}
		\begin{proof}
			First of all we prove the case when $f$ is the unique map $a:X\rightarrow\ast$ and $X$ is a smooth manifold. Choose a closed embedding $i: X\hookrightarrow\mathbb{R}^n$, and let $a' : \mathbb{R}^n\rightarrow\ast$ be the unique map. By \Cref{topsubmpb}, we have $\pbp{a'}\s{}\simeq\Sigma^n\s{\mathbb{R}^n}$ and thus, since $T\mathbb{R}^n$ is a trivial vector bundle of fiber dimension $n$, we have $\text{Th}(T\mathbb{R}^n)\simeq\Sigma^n\s{\mathbb{R}^n}\simeq\pbp{a'}\s{}$. Let $p:N_i\rightarrow X$ be the conormal bundle of the embedding $i$, defined by the short exact sequence 
			\begin{equation}\label{conbundle}
				0\rightarrow TX\rightarrow\pb{i}T\mathbb{R}^n\rightarrow N_i\rightarrow0,
			\end{equation} $s : X\hookrightarrow N_i$ its zero section. Let $k:U\hookrightarrow N_i$ be a tubular neighbourhood of $X$ in $\mathbb{R}^n$. Thus, we get a commutative triangle
			$$
			\begin{tikzcd}
				& X \arrow[ld, "\tilde{s}"', hook] \arrow[rd, "i", hook] &              \\
				U \arrow[rr, "g", hook] &                                                        & \mathbb{R}^n
			\end{tikzcd}
			$$ 
			where $g$ is an open immersion and $\tilde{s}$ is a closed immersion. Hence we get an equivalence
			$$\pbp{i}\s{\mathbb{R}^n}\simeq\pbp{\tilde{s}}\s{U}.$$ 
			Then, by \Cref{topsubmpb}, (\ref{conbundle}) and \cref{thominverse} we have
			\begin{align*}
				\pbp{a}\s{}&\simeq\pbp{i}\pbp{a'}\s{} \\
				&\simeq\pbp{i}\s{}\otimes\pb{i}\text{Th}(T\mathbb{R}^n) \\
				&\simeq\text{Th}(N_i)^{-1}\otimes\text{Th}(\pb{i}T\mathbb{R}^n)\\
				&\simeq\text{Th}(TX).
			\end{align*}
			Suppose now that $f:X\rightarrow Y$ is any submersion between smooth manifolds, $a:X\rightarrow \ast$ and $b:Y\rightarrow\ast$ be the unique maps. Then, by \Cref{topsubmpb} and \cref{splitandinvert}, we have
			\begin{align*}
				\text{Th}(T_f)&\simeq\text{Th}(TX)\otimes\text{Th}(\pb{f}TY)^{-1} \\
				&\simeq\pbp{a}\s{}\otimes(\pb{f}\pbp{b}\s{})^{-1} \\
				&\simeq\pbp{a}\s{}\otimes(\pbp{f}\pbp{b}\s{})^{-1}\otimes\pbp{f}\s{Y} \\
				&\simeq\pbp{f}\s{Y}
			\end{align*}
			and thus we can conclude.
		\end{proof}
		\begin{corollary}[Relative Atiyah Duality]\label{relatiyahdual}
			Let $f:X\rightarrow Y$ be a proper map inducing a locally contractible geometric morphism. Then $\pfs{f}\s{X}\in\Sh{Y}{\spectra}$ is strongly dualizable with dual $\pfp{f}\s{X}$. Moreover, if $X$ and $Y$ are smooth manifolds and $f$ is a proper submersion, then  $\pfs{f}\s{X}$ is strongly dualizable with dual $\pfs{f}\text{Th}(-T_f)$.
		\end{corollary}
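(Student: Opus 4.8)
The plan is to use the internal-hom criterion for strong dualizability recalled in the Remark containing (\ref{dualizable}): it suffices to identify the internal dual $\sHom{Y}{\pfs{f}\s{X}}{\s{Y}}$ and to check that the canonical map $\sHom{Y}{\pfs{f}\s{X}}{\s{Y}}\otimes N\to\sHom{Y}{\pfs{f}\s{X}}{N}$ is an equivalence for every $N\in\Sh{Y}{\spectra}$. Throughout I work in the closed symmetric monoidal $\infty$-category $\Sh{Y}{\spectra}$, whose unit is $\s{Y}$, and I use that $f$, being a shape submersion, induces a locally contractible geometric morphism, so that $\pfs{f}$ exists and the smooth projection formula of Corollary \ref{smoothproj} applies.

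First I would identify the dual. Applying the transposed smooth projection formula $\pf{f}\sHom{X}{F}{\pb{f}G}\simeq\sHom{Y}{\pfs{f}F}{G}$ of Corollary \ref{smoothproj} with $F=\s{X}$ and $G=\s{Y}$, together with $\sHom{X}{\s{X}}{\pb{f}\s{Y}}\simeq\sHom{X}{\s{X}}{\s{X}}\simeq\s{X}$, gives
$$\sHom{Y}{\pfs{f}\s{X}}{\s{Y}}\simeq\pf{f}\s{X}.$$
Since $f$ is proper, Lemma \ref{properpshpropermap} yields $\pf{f}\simeq\pfp{f}$, so the candidate dual is $\pfp{f}\s{X}$.

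Second, to verify strong dualizability I would compute both sides of (\ref{dualizable}) as functors of $N$. On the one hand, the projection formula of Proposition \ref{proj} gives $\pfp{f}\s{X}\otimes N\simeq\pfp{f}(\s{X}\otimes\pb{f}N)\simeq\pfp{f}\pb{f}N$. On the other hand, the transposed smooth projection formula gives $\sHom{Y}{\pfs{f}\s{X}}{N}\simeq\pf{f}\sHom{X}{\s{X}}{\pb{f}N}\simeq\pf{f}\pb{f}N$. Thus the source and target of the canonical comparison map are identified, naturally in $N$, with $\pfp{f}\pb{f}N$ and $\pf{f}\pb{f}N$ respectively, and under these identifications the comparison is precisely the natural transformation induced by $\pfp{f}\to\pf{f}$ of Lemma \ref{properpshpropermap}, which is an equivalence by properness. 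I expect the only delicate point to be the verification that the abstract map (\ref{dualizable}) really coincides with this properness transformation: I would settle this by tracing through the evaluation map $\pfp{f}\s{X}\otimes\pfs{f}\s{X}\to\s{Y}$ underlying (\ref{dualizable}) and reducing to a set of generators of $\Sh{Y}{\spectra}$, using that both $\pfp{f}\pb{f}$ and $\pfp{f}\s{X}\otimes-$ preserve colimits.

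Finally, for the ``in particular'' statement I would specialize to a proper topological submersion $f:X\to Y$ of fiber dimension $n$ between smooth manifolds and rewrite the dual $\pfp{f}\s{X}$. By Proposition \ref{topsubmpb}(ii) we have $\pfs{f}G\simeq\pfp{f}(G\otimes\pbp{f}\s{Y})$ for every $G\in\Sh{X}{\spectra}$; taking $G=\text{Th}(-T_f)$ and using the equivalence $\pbp{f}\s{Y}\simeq\text{Th}(T_f)$ of the preceding theorem together with the invertibility of Thom spectra, so that $\text{Th}(-T_f)\otimes\text{Th}(T_f)\simeq\s{X}$, I obtain
$$\pfs{f}\text{Th}(-T_f)\simeq\pfp{f}(\text{Th}(-T_f)\otimes\text{Th}(T_f))\simeq\pfp{f}\s{X}.$$
Hence the dual $\pfp{f}\s{X}$ is identified with $\pfs{f}\text{Th}(-T_f)$, as claimed. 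The main obstacle in the whole argument is the dualizability check of the second step, where the canonical comparison must be matched with the properness equivalence; the identification of the dual and the deduction of the manifold case are then direct applications of the projection and smooth projection formulas.
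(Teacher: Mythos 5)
Your argument is correct and follows essentially the same route as the paper's proof, which likewise combines the transposed smooth projection formula of Corollary \ref{smoothproj}, the properness equivalence $\pfp{f}\simeq\pf{f}$ of Lemma \ref{properpshpropermap}, and the projection formula of Proposition \ref{proj} to obtain $\sHom{Y}{\pfs{f}\s{X}}{F}\simeq\pfp{f}\s{X}\otimes F$ functorially in $F$, and then deduces the manifold case exactly as you do, from Proposition \ref{topsubmpb} and the identification $\pbp{f}\s{Y}\simeq\text{Th}(T_f)$. The one divergence is that the ``delicate point'' you flag is avoidable: a natural-in-$F$ equivalence $\sHom{Y}{\pfs{f}\s{X}}{F}\simeq\pfp{f}\s{X}\otimes F$ already exhibits $\pfp{f}\s{X}\otimes(-)$ as right adjoint to $\pfs{f}\s{X}\otimes(-)$, and the unit and counit of this adjunction evaluated at $\s{Y}$ give coevaluation and evaluation maps satisfying the triangle identities, so strong dualizability with dual $\pfp{f}\s{X}$ follows without matching the abstract comparison map (\ref{dualizable}) against the properness transformation.
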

		\begin{proof}
			Since $f$ is proper, by \cref{smoothproj} and \cref{proj}, we have, functorially on $F\in\Sh{Y}{\spectra}$
			\begin{align*}
				\sHom{Y}{\pfs{f}\s{X}}{F}&\simeq\pf{f}\sHom{X}{\s{X}}{\pb{f}F} \\
				&\simeq\pf{f}\pb{f}F \\
				&\simeq\pfp{f}\pb{f}F \\
				&\simeq\pfp{f}\s{X}\otimes F.
			\end{align*}
			In particular, when $f$ is a submersion of smooth manifolds, by \Cref{topsubmpb} and the previous theorem, we have $\pfp{f}\s{X}\simeq\pfs{f}\text{Th}(-T_f)$.
		\end{proof}
		\begin{remark}
			Let $X$ be a smooth manifold, $a:X\rightarrow\ast$ the unique map. By specializing the previous corollary to $a$ and \Cref{classicthom}, we see that we recover the classical Atiyah duality (see \cite{atiyah1961thom}).
		\end{remark}

		\newpage
		\nocite{*}
		\bibliographystyle{alpha}
		\bibliography{supports}
	\end{document}